\documentclass{amsart}
\usepackage[english]{babel}
\usepackage{amssymb}
\usepackage{amsmath}
\usepackage{amsthm}
\usepackage{hyperref}
\usepackage{url}
\usepackage{graphicx}
\usepackage[all]{xy}
\usepackage{tikz}

\DeclareMathOperator{\spam}{span}

\DeclareMathOperator{\End}{End}
\DeclareMathOperator{\Aut}{Aut}
\DeclareMathOperator{\GL}{GL}
\DeclareMathOperator{\PGL}{PGL}
\DeclareMathOperator{\PSL}{PSL}

\DeclareMathOperator{\Id}{Id}

\DeclareMathOperator{\im}{Im}
\DeclareMathOperator{\Gr}{Gr}
\DeclareMathOperator{\chr}{char}

\DeclareMathOperator{\Tr}{Tr}

\DeclareMathOperator{\Mat}{Mat}

\DeclareSymbolFont{txsyc}{U}{txsyc}{m}{n}
\DeclareMathSymbol{\coloneqq}{\mathrel}{txsyc}{66}
\DeclareMathSymbol{\eqqcolon}{\mathrel}{txsyc}{67}

\DeclareMathOperator{\Ass}{Ass}

\newtheorem{theorem}{Theorem}[section]
\newtheorem{lemma}[theorem]{Lemma}
\newtheorem{proposition}[theorem]{Proposition}
\newtheorem{definition/proposition}[theorem]{Definition/Proposition}
\newtheorem{corollary}[theorem]{Corollary}

\newtheorem{question}[theorem]{Question}

\newtheorem*{claim1}{Claim 1}
\newtheorem*{claim2}{Claim 2}
\newtheorem*{claim3}{Claim 3}
\newtheorem*{theorem*}{Theorem}
\newtheorem*{lemma*}{Lemma}
\newtheorem*{proposition*}{Proposition}

\theoremstyle{definition}

\newtheorem{definition}[theorem]{Definition}
\newtheorem{example}[theorem]{Example}
\newtheorem{convention}[theorem]{Convention}

\newtheorem{remark}[theorem]{Remark}

\newtheorem{notation}[theorem]{Notation}

\newcommand{\ZZ}{\mathbb{Z}}
\newcommand{\QQ}{\mathbb{Q}}
\newcommand{\RR}{\mathbb{R}}
\newcommand{\CC}{\mathbb{C}}
\newcommand{\HH}{\mathbb{H}}
\newcommand{\OO}{\mathbb{O}}

\newcommand{\tensor}{\otimes}
\newcommand{\isom}{\cong}
\newcommand{\nin}{\not\in}

\newcommand{\lamba}{\lambda}

\newcommand{\Prod}{\prod}
\newcommand{\union}{\cup}

\newcommand{\qbinom}[2]{\genfrac{[}{]}{0pt}{}{#1}{#2}_q}
\title[$q$-Fano planes over infite fields]{Cayley algebras give rise to $q$-Fano planes over certain infinite fields and $q$-covering designs over others}
\author{Vincent van der Noort}

\begin{document}
\begin{abstract}
Let $F$ be a field. A $2$-$(7, 3, 1)_F$-subspace design or \emph{$q$-Fano plane} over $F$, is a $7$-dimensional vector space $V$ over $F$ together with a collection $\mathfrak{B}$ of $3$-dimensional subspaces of $V$ such that every two-dimensional subspace of $V$ is contained in exactly one element $B$ of $\mathfrak{B}$. The question of existence of such a subspace design over any field has been open since the 1970s and has attracted considerable attention in the special case that $F$ is finite. Here we show the existence of $2$-$(7, 3, 1)_F$-subspace designs over a collection of \emph{infinite} fields $F$, including (among others) the fields $\mathbb{Q}$ and $\mathbb{R}$ and the function fields $\mathbb{F}_q(x, y, z)$ with $q$ odd. 

The space $V$ is taken to be the 7-dimensional space of imaginary elements in a non-associative, 8-dimensional Cayley division algebra $O$ over $F$ and the collection $\mathfrak{B}$ consists of the intersections with $V$ of all 4-dimensional (quaternion) subalgebras of $O$. We will present all relevant facts about quaternion and Cayley algebras in a nearly self-contained fashion.  

The second part of the paper studies what happens if we apply the same procedure to the split Cayley algebra over $F$ rather than a Cayley division algebra. (This is relevant since over a large collection of fields, including $\CC$, $\mathbb{F}_{p^n}$ and $\QQ_p$ for all $p$, only split Cayley algebras exist.) By identifying all four-dimensional subalgebras of these algebras, we show that in this case our construction still yields an inclusion minimal $(7, 3, 2)$ $q$-covering design. That is: every two-dimensional subspace of $V$ is contained in at least one element of the resulting set $\mathfrak{B}$ of three dimensional subspaces of $V$ and no proper subset of $\mathfrak{B}$ has this property. However none of these $q$-covering designs are $q$-Fano planes. In the case that $F$ is finite we compute the number of elements of $\mathfrak{B}$. 

For general $F$ of characteristic $\neq 2$, we also give a purely combinatorial `direct' construction (not mentioning the underlying algebra structure) of our $q$-Fano planes and $q$-covering designs for an abstract 7-dimensional $F$-vector space $V$ by identifying the collection $\mathfrak{B}$ as a subvariety of the Grassmanian $\Gr_3(V)$ defined entirely in terms of the classical Fano plane. 
\end{abstract}

\maketitle 
\tableofcontents
%\section{introduction}
\section{Introduction}\label{introduction}
\subsection{Subspace designs, $q$-covering designs and $q$-Fano planes}
A $t$-$(v, k, \lambda)$ \emph{subspace design} over a field $F$ is a collection $\mathcal{B}$ of $k$-dimensional linear subspaces (called \emph{blocks}) of a $v$-dimensional $F$-vector space $V$, with the property that every $t$-dimensional linear subspace of $V$ is contained in exactly $\lambda$ elements of the set $\mathcal{B}$. They are considered the $q$-analogue of the more familiar (combinatorial) \emph{designs} which are defined in a similar way but with sets replacing $F$-vector spaces. Subspace designs over $F$ with $\lambda = 1$ are called \emph{$q$-Steiner systems} over $F$, by analogy with the ordinary Steiner systems.

According to Kiermaier and Laue \cite{KiermaierLaue}, subspace designs were first introduced in 1973 by Cameron \cite{cameron} but the first explicit construction of non-trivial subspace designs appeared only in 1989 in the work of Thomas \cite{thomasdesigns}. Meanwhile the existence of $q$-Steiner systems remained open until the construction, in 2013, of various $2$-$(13, 3, 1)$ subspace designs over $\mathbb{F}_2$ by Braun, Etzion, Ostergard, Vardy and Wasserman \cite{BEOVqSteiner}.

The existence of a $2$-$(7, 3, 1)$ subspace design, also known as a \emph{$q$-Fano plane} over any finite field is unknown. Severe restrictions on a hypothetical $q$-Fano plane over the field $\mathbb{F}_2$ have been proposed by various authors. In particular, from results of Braun, Kiermaier, Kurz, Nakic and Wasserman \cite{BKNqFano} \cite{KiermaierKurzWassermann} we know that the order of the automorphism group of such a design would be at most 2.

Kiermaier \cite{Kiermaiertalk} has called the existence of a $q$-Fano plane over any finite field the `most important open problem in $q$-analogs of designs'.

In this paper we will show the existence of $q$-Fano planes over certain \emph{infinite} fields, most notably all subfields of the field $\mathbb{R}$ of real numbers and the function fields $K(\alpha, \beta, \gamma)$ where $K$ is any field of characteristic unequal to 2.

A concept closely related to that of subspace designs is that of a $q$-covering design. A $q$-covering design with parameters $(v, k, t)$ on a $v$-dimensional vector space $V$ over a field $F$ is a set $\mathfrak{B}$ of $k$-dimensional subspaces of $V$ such that every $t$-dimensional subspace of $V$ is contained in \emph{at least} one elements of $\mathfrak{B}$. Since the set $\Gr_k(V)$ of all $k$-dimensional subspaces of $V$ is a rather trivial example of a $q$-covering design it stands to reason to consider the $q$-covering designs more interesting when they have a smaller number of blocks. By this measure $t$-$(v, k, 1)$-subspace designs are the most interesting $(v, k, t)$-$q$-covering designs. It is easy to see that over a finite field no $q$-covering design can have a smaller number of blocks than a hypothetical $q$-Steiner system over the same field with the same parameters would have: this number can be computed as the number of $t$-dimensional subspaces of $V$ divided by the number of $t$-dimensional subspaces in a given block and neither number depends on the chosen set of blocks. However since for most parameters the existence of subspace designs over finite fields is unknown, it is an open question what is the minimum number $\mathcal{C}_q$ of blocks in a $q$-covering design over a field with $q$ elements. $q$-Covering designs have application in network design as described by Lambert \cite{Lambert}, who also gives upper and lower bounds on the numbers $\mathcal{C}_q$.

\subsection{Statement of the main result and some limitations}\label{introresults}
In the current paper we will show the existence of $q$-Fano planes (that is $2$-$(7, 3, 1)$-subspace designs) over certain infinite fields, including the function fields $K(\alpha, \beta, \gamma)$ where $K$ is any field of characteristic unequal to 2, and all subfields of the field $\mathbb{R}$ of real numbers, so in particular $\RR$ itself. In this introduction we focus on the latter case as it is the easiest to work with.

Formulated in purely combinatorial terms we prove:

\begin{theorem}\label{mainintro} Let $F$ be field of characteristic $\neq 2$ and let $(\mathcal{V}, \mathcal{L})$ be a Fano-plane with vertex set $\mathcal{V}$ and line set $\mathcal{L} \subset \mathcal{V}^3$. We fix an automorphism $\phi$ of $(\mathcal{V}, \mathcal{L})$ of order 7 and choose a cyclic ordering on the three elements of $l$ for each $l \in \mathcal{L}$. The orderings must be `compatible' in the sense that application of $\phi$ will preserve the orderings on the lines. A rather standard way to achieve this is to label the elements of $\mathcal{V}$ $v_0, \ldots, v_6$, and define the ordered lines to be the (cyclically ordered) triples $(v_n, v_{n+1}, v_{n+3})$ where the indices are read modulo 7. (Here the automorphism $\phi$ amounts to cyclically permuting the indices.)

We define $V$ to be the seven-dimensional $F$-vector space with basis $\mathcal{V}$. Let $W = \bigwedge^3 V$, so $W$ is thirtyfive-dimensional. Let $\Delta = v_0 \wedge \phi(v_0) \wedge \phi^2(v_0) \ldots \wedge \phi^6(v_0) \in \bigwedge^7 V \isom F$ where $v_0$ is some element of $\mathcal{V}$. Note that, since 7 is an odd number, $\Delta$ does not depend on the choice of $v_0 \in \mathcal{V}$. Similarly, for each $l \in \mathcal{L}$ let $w_l \in W$ be the wedge-product of the three points in $l$ in the given cyclic order. (Again this is well-defined due to the oddness of 3.) These data in turn define, for every $v \in \mathcal{V}$ a linear functional $\eta_v \colon W \to F$ by $v \wedge (\sum_{l \in \mathcal{L}} w_l) \wedge w = \eta_v(w)\Delta$.

Let $\Gr_3(V)$ be the set of three-dimensional subspaces of $V$ and $\Gr_1(W)$ the set of one-dimensional subspaces of $W$. Let $\psi \colon \Gr_3(V) \to \Gr_1(W)$ be the Pl\"{u}cker embedding. Recall that $\psi$ sends a three-dimensional subsapce $B \subset V$ to the line $F(b_1 \wedge b_2 \wedge b_3) \in W$ where $\{b_1, b_2, b_3\}$ is \emph{any} basis of $B$; this assignment is well-known to be both well defined and injective.

Then the set $\mathfrak{B} = \{B \in \Gr_3(V) \colon \psi(B) \subset \ker \eta_v \textnormal{ for all } v \in \mathcal{V} \}$ is an inclusion-minimal $q$-covering design with parameters $(7, 3, 2)$ on $V$. In other words: every two-dimensional subspace of $V$ is contained in at least one element of $\mathfrak{B}$ and no proper subcollection of $\mathfrak{B}$ has this property. 

Moreover, if $F$ is a subfield of the field of real numbers $\mathbb{R}$ (such as $\mathbb{R}$ itself) then $\mathfrak{B}$ is a $q$-Fano plane, that is: if $F \subset \mathbb{R}$ we have that every two-dimensional subspace of $V$ is contained in \emph{exactly one} element of $\mathfrak{B}$. 

Further, if $F$ is a finite field with $q$ elements (recall that we required $\chr(F)$ and hence $q$ to be odd) then the $q$-covering design has $\qbinom{6}{2}$ blocks. 
\end{theorem}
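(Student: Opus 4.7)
The strategy is to recognize the combinatorially defined $\mathfrak{B}$ as the collection of imaginary parts of four-dimensional subalgebras of the Cayley algebra $O$ associated to the Fano plane data, and then invoke the results of the earlier (algebraic) sections. Concretely, I would construct $O := F\cdot\mathbf{1}\oplus V$ by declaring $\mathcal{V}$ to be an orthonormal basis of $V$ (with respect to $\langle v_i, v_j\rangle = \delta_{ij}$), setting $v_i^2 = -\mathbf{1}$, and putting $v_iv_j := v_k$ whenever $(v_i, v_j, v_k)\in\mathcal{L}$ carries the chosen cyclic orientation (extending anticommutatively to the opposite orientation). Compatibility of the orderings with $\phi$ guarantees well-definedness, and the algebraic sections establish both that $O$ is indeed a Cayley algebra and that $B\in\Gr_3(V)$ is the imaginary part of a four-dimensional subalgebra of $O$ precisely when $B$ is closed under the cross product $x\times y := \tfrac12(xy-yx)$. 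A short calculation on basis triples identifies $\omega := \sum_{l\in\mathcal{L}}w_l\in\bigwedge^3 V$ with the image, under $V\cong V^*$ via $\langle\cdot,\cdot\rangle$, of the structure 3-form $\Phi(x,y,z):=\langle xy,z\rangle$: both sides take the value $\pm 1$ on cyclically oriented line-triples and $0$ on every other triple of distinct basis vectors.

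The heart of the argument is the equivalence
\[
\omega\wedge(b_1\wedge b_2\wedge b_3)=0 \ \Longleftrightarrow\ B:=\mathrm{span}(b_1,b_2,b_3)\ \text{is closed under}\ \times.
\]
Decomposing $\omega = \omega^{30}+\omega^{21}+\omega^{12}+\omega^{03}$ along $V=B\oplus B^\perp$, with $\omega^{ij}\in\bigwedge^i B\otimes\bigwedge^j B^\perp$, the three summands with $i\geq 1$ are automatically annihilated by wedging with $b_1\wedge b_2\wedge b_3$ (each would contribute more than $\dim B = 3$ vectors from $B$, forcing a repetition), so $\omega\wedge\psi(B)=0$ is equivalent to $\omega^{03}=0$. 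Translated back through $V\cong V^*$ this reads $\Phi|_{B^\perp}=0$: the 4-plane $B^\perp$ is \emph{coassociative}. The duality between associative 3-planes and coassociative 4-planes---in our setting an immediate consequence of the Cayley--Dickson construction applied to the quaternion algebra $Q := F\cdot\mathbf{1}\oplus B$, which yields $Q^\perp\cdot Q^\perp\subset Q$, together with its reverse implication---then identifies $\Phi|_{B^\perp}=0$ with $B$ being associative, as required.

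With the combinatorial $\mathfrak{B}$ thereby identified with the algebraic one, the three remaining assertions of the theorem become direct applications of results proved earlier: the inclusion-minimal $(7,3,2)$ $q$-covering design property over any $F$ of characteristic $\neq 2$; the $q$-Fano plane property whenever $O$ is a division algebra, which holds for subfields of $\mathbb{R}$ by positive-definiteness of $\langle\cdot,\cdot\rangle$; and the count $|\mathfrak{B}|=\qbinom{6}{2}$ for finite $F$ of odd characteristic, carried out in the split Cayley algebra section. The most delicate step is the associative/coassociative duality in the middle paragraph---conceptually transparent but requiring careful Cayley--Dickson bookkeeping to pin down in a characteristic-free fashion.
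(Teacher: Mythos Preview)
Your approach is genuinely different from the paper's and conceptually appealing: you recognize $\omega := \sum_{l\in\mathcal{L}}w_l$ as the associative calibration $3$-form $\Phi$ of $G_2$-geometry (under $V\cong V^*$ via the standard inner product), reduce the seven conditions $\eta_v=0$ to the single condition $\omega\wedge\psi(B)=0$, and then aim to identify that with coassociativity of $B^\perp$, which you want to dualise to associativity of $B$. The paper instead proves the stronger linear-algebra statement $\bigcap_{v}\ker\eta_v = \ker\Ass$ directly, by choosing a combinatorial basis of $W$ (the seven $w_l$ together with twenty-eight elements $w_{v,i}$, four for each vertex) and evaluating both operators on it; the crux is Proposition~\ref{workhorse}, a bare-hands Fano-plane calculation showing that $\Ass(w_{v,i})$ is a multiple of $v$ that is \emph{independent of $i$}.

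Your argument, however, has two related gaps. First, the orthogonal splitting $V=B\oplus B^\perp$ can fail: over a finite field the norm form on $\im(O)$ is isotropic, so there exist $3$-planes $B$ with $B\cap B^\perp\neq 0$, and for these your decomposition $\omega=\omega^{30}+\cdots+\omega^{03}$ is not available. Since the finite-field clause is part of the theorem, this cannot be swept aside. Second, and more substantively, the ``associative/coassociative duality'' you invoke is precisely the content to be established. Your Cayley--Dickson sketch supplies only the easy direction ($B$ associative $\Rightarrow$ $Q^\perp\cdot Q^\perp\subset Q$ $\Rightarrow$ $\Phi|_{B^\perp}=0$), and even that presupposes the splitting. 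For the converse you must show that $\Phi|_{B^\perp}=0$ (equivalently $B^\perp\times B^\perp\subset B$) forces $B\times B\subset B$; these are the vanishing of two \emph{different} graded pieces $\Phi^{03}$ and $\Phi^{21}$ of $\Phi$ along $B\oplus B^\perp$, and there is no formal reason one should imply the other without genuine input from the octonion multiplication. The paper's explicit basis computation is exactly what provides that input, in a way that is uniform in the field.
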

Thus, in the finite field case, the number of blocks in the subspace design has the same leading term ($q^8$) as the number $\qbinom{7}{2}/\qbinom{3}{2}$ of blocks in a hypothetical $q$-Fano plane over $F$, but exceeds this number by $q[7]_q + q^4$.

We stress that the failure of the above construction to produce a $q$-Fano plane over finite fields is really due to the finiteness of the field and not to it having positive characteristic. As remarked above a different but closely related construction yields a $q$-Fano plane over fields $F = K(\alpha, \beta, \gamma)$ which, in case $K$ has a finite (but odd) number of elements, is an infinite field of positive characteristic. A more general version of Theorem \ref{mainintro}, which encompasses both cases as well as a number of others is given in Section \ref{combinatorics} (Thm. \ref{maincombinatorially}). 

Besides finite fields there is a another well known class of fields which are necessarily infinite but over which our construction fails to produce a $q$-Fano plane: fields that are algebraically closed. A precise criterion for over which fields our construction gives a $q$-Fano plane is given in Section \ref{Serre}.

Working over infinite fields robs us of the opportunity of comparing which of two $q$-covering designs is smaller by counting the number of blocks. It still seems fair however to say that a $q$-covering design $\mathfrak{B}'$ is smaller than a $q$-covering design $\mathfrak{B}$ with the same parameters when $\mathfrak{B}' \subset \mathfrak{B}$. In that sense it is encouraging that for the $q$-covering designs given by Theorem \ref{mainintro} no such `subdesigns' $\mathfrak{B}'$ exist. At the same time however, our construction shows that this `inclusion-minimality' is of limited use as a means to compare $q$-covering designs: the more general version of Thm \ref{mainintro} (Thm \ref{maincombinatorially}) implies that inclusion-minimal $(7, 3, 2)$-$q$-covering designs that are \emph{not} $q$-Fano planes can be constructed over \emph{any} field $F$, including fields $F$ where $q$-Fano planes do exist, such as $\mathbb{R}$. Also, in the case of $F = \mathbb{F}_q$ the fact that the $q$-covering design of Thm \ref{mainintro} is inclusion minimal does not by itself prove that the upper bound of  $\qbinom{6}{2} = q^8 + q^7 + 2q^6 + 2q^5 + 3q^4 + 2q^3 + 2q^2 + q + 1$ on $\mathcal{C}_q(7, 3, 2)$ is sharp. In fact it is not: carrying out the construction in Section 4.4.2 of Lambert's thesis \cite{Lambert}  for parameters $(7, 3, 2)$ one arrives at a lower upper bound of $q^8 + 2q^6 + 3q^4 + q^3 + 2q^2 + q + 1$.

In the present work, we will not discuss fields of characteristic 2 nor $q$-covering designs with different parameters than $(7, 3, 2)$.

\subsection{Real numbers, complex numbers, quaternions, octonions}
Theorem \ref{mainintro} was formulated in combinatorial terms, starting with the classical Fano plane and `quantizing' it into the $q$-Fano plane. This was done partly order to obtain a self-contained statement and partly in the hope that it would allow my future self or the reader to guess the right generalization to designs with other parameters than $2$-$(7, 3, 1)$. However most of the proof (and of the current paper) does not refer to the classical Fano plane at all and instead revolves around \emph{Cayley algebras}, the generalization to arbitrary fields of an extension of the real numbers called \emph{octonions}, named so since they occupy an 8-dimensional real vector space. It is with pleasure that I quote Baez \cite{baez}:

\begin{quote}
`There are exactly four normed division algebras: the real numbers ($\RR$), complex numbers ($\CC$), quaternions ($\HH$), and octonions ($\OO$). The real numbers are the dependable breadwinner of the family, the complete ordered field we all rely on. The complex numbers are a slightly flashier but still respectable younger brother: not ordered, but algebraically complete. The quaternions, being noncommutative, are the eccentric cousin who is shunned at important family gatherings. But the octonions are the crazy old uncle nobody lets out of the attic: they are nonassociative.'
\end{quote}

It is because of this non-associativity (and subsequent obscurity) that we spend a sizable portion of the paper on presenting the preliminaries. Section \ref{preliminaries} will recall in detail the theory of quaternion and Cayley algebras over general fields of characteristic $\neq 2$. However in this introduction we will first discuss in a more concrete fashion the real algebras $\HH$ and $\OO$ that stand at the cradle of this theory. We'll spend more time at the quaternions as they are easier to work with and already illustrate the most important concepts. 

\subsubsection{Complex numbers and quaternions}
The octonions $\OO$ were independently discovered by Graves in 1843 and Cayley in 1845, both building on Hamilton's 1843 discovery of the quaternions $\HH$. (\cite{baez}). Hamilton was motivated by geometry (\cite{baez}): understanding how rotations and translations in two-dimensional geometry could be understood as multiplication and addition of complex numbers, Hamilton sought an algebraic structure that would give a similar description for rotations and translations in three dimensions. The quaternions achieve this exceptionally well (\cite{baez}, \cite{ConwaySmith}) and their construction from the reals is very similar to that of the complex numbers. Indeed, where the complex numbers are obtained by attaching a square root $i$ of $-1$ to $\RR$, the quaternions are obtained by attaching \emph{three} square roots of $-1$, called $i, j, k$ and which are related by
$$ij = k; \qquad ji = -k$$
and cyclic permutions of those equations, so
$$jk = i; \qquad kj = -i; \qquad ki = j; \qquad ik = -j.$$
A \emph{quaternion} is any element of the four-dimensional vectorspace spanned by $1, i, j, k$ with multiplication implied by the above equations and $i^2 = j^2 = k^2 = -1$. Before moving on to the octonions we discuss a few properties of the quaternions which will be important in the sequel.

First, as mentioned before, the quaternions form a \emph{division algebra}, which for the context of this paper can be understood to mean that there are no zero-divisors:
\begin{equation}\label{zerodivisors}
ab = 0 \Rightarrow a = 0 \textrm{ or } b = 0.
\end{equation}
(The precise relation between this property and what one would ordinarily call division is discussed in Section \ref{divisionalg}.) A standard way to see that this property holds in the complex numbers is to show that the Euclidian norm $\|.\|$ on $\CC \isom \RR^2$ is multiplicative: for complex numbers $x, y$ we have that $\|xy\| = \|x\|\|y\|$, assigning to any hypothetical counterexample to (\ref{zerodivisors}) in $\CC$ a corresponding example in $\RR$ where we already know that (\ref{zerodivisors}) holds. Multiplicativity of the norm in turn is achieved by expressing the square of the norm as the product of a complex number $a = \alpha + \beta i$ by its complex conjugate $a^* \coloneqq \alpha - \beta i$ and noticing that taking complex conjugates is an automorphism of $\CC$: $(xy)^* = x^*y^* = y^*x^*$ so that
\begin{equation}\label{normmult}
\|xy\|^2 = xyy^*x^* = \|y\|^2xx^* = \|y\|^2\|x\|^2 = \|x\|^2\|y\|^2
\end{equation}
Interestingly the proof in the quaternion case is almost exactly the same. Here we define the analogue of complex conjugation $x \mapsto x^*$ by 
\begin{equation}\label{quaternionconj}
(\alpha + \beta i + \gamma j + \delta k)^* = \alpha - \beta i - \gamma j - \delta k
\end{equation}
(which reduces to the ordinary complex conjugation when $\delta = \gamma = 0$) and we notice that again $xx^* = \|x\|^2$, where $\|.\|$ denotes the standard Euclidean norm on $\RR^4 \isom \HH$. Here the non-commutativity in the definition of the multiplication nicely helps us to get rid of the cross-terms and end up with a sum of squares. At the same time this non-commutativity prevents the conjugation map from being an automomorphism this time. However, it does make it into an \emph{anti-automorphsim}, that is: we have
$$(xy)^* = y^*x^*$$
and hence (\ref{normmult}) goes through unmodified and we conclude that indeed $\HH$ is a division algebra.

The second feature of $\HH$ we want to stress, before moving on to discussing octonions, is its richness in subalgebras isomorphic to $\CC$. We `created' the quaternions by attaching three squareroots ($i, j, k$) of $-1$ to $\mathbb{R}$. But in doing so we introduced infinitely many more. Every element $i'$ of Euclidean norm 1 in the three-dimensional space $\im(\mathbb{H}) = \spam(i, j, k)$ is a squareroot of $-1$. (This follows directly from the claim above that $xx^* = \|x\|^2$: for every $x \in \im(\HH)$ we have by (\ref{quaternionconj}) that $x^* = -x$ and hence $x^2 = -\|x\|^2 \in \mathbb{R}_{\leq 0}$.) Elements of $\im(\mathbb{H})$ are called \emph{imaginary quaternions}. 

For every imaginary norm-one quaternion $i'$, the two-dimensional subspace $\CC' \coloneqq \spam(1, i') \subset \HH$ is isomorphic (not just as a field, but also as an $\mathbb{R}$-algebra) to $\CC$; where complex conjugation is just the restriction to $\CC'$ of the quaternion conjugation (\ref{quaternionconj}) and where the one-dimensional space $\im(\CC')$ (the scalar multiples of $i'$) is just the intersection of $\CC'$ with $\im(\HH)$. 

More generally, the vector space decomposition $\HH = \mathbb{R}1 \oplus \im(\mathbb{H})$ is very similar to the decompostion $\CC = \RR 1 \oplus \im(\CC)$: in both cases the first summand consists of all elements whose square is a positive real number and the second of all elements whose square is a negative real number and in both cases the linear map ${}^*$ which acts as $1$ on the first summand and as $-1$ on the second is an anti-automorphism that reproduces the euclidean norm through $x^*x = \|x\|^2$. Of course, setting $\im(\mathbb{\RR}) = \{0\}$, the same facts hold in $\RR$ as well. 

\subsubsection{Octonions and the classical Fano plane}\label{octonions}
Given the above the following will not come as a surprise. The octonions $\OO$ are formed by attaching to $\RR$ no less than $7$ squareroots of $-1$ (following Conway-Smith \cite{ConwaySmith} we will call them $e_0, \ldots, e_6$) that, for $i \neq j$ satisfy $e_ie_j = -e_je_i \in \im(\OO)$ where $\im(\OO) = \spam(e_0, \ldots, e_6)$. From here it follows as before that $\OO = \mathbb{R}1 \oplus \im(\OO)$ and that by defining ${}^*$ as the linear involution which acts as the identity on the first summand and as minus the identity on the second we have that ${}^*$ is an anti-automorphism satisfying $x^*x = \|x\|^2$ where $\|.\|$ denotes the Euclidean norm on $\RR^8$. What all this does not tell us is how exactly the $e_i$ interact. The multiplication table is given in Table \ref{multtabclassic}, but the structure becomes a bit clearer in the following description taken from \cite{ConwaySmith}:

$$e_ie_{i+1} = e_{i + 3} \qquad e_{i+1}e_i = -e_{i + 3}$$
Where indices are read $\mod 7$, together with cyclic permutations of these equations, so
$$e_{i+1}e_{i +3} = e_i; \qquad e_{i+3}e_{i +1} = -e_i; \qquad e_{i+3}e_{i} = e_{i+1}; \qquad e_{i}e_{i +3} = -e_{i+1};.$$

It is well known that the cyclic group on seven elements together with the set of all seven triples $(i, i + 1, i+ 3)$ from that group forms a model of the classical Fano plane. Indeed the Steiner triple system property of the Fano plane is why the above equations (together with $e_i^2 = -1$) are enough to determine the multiplication on any pair of generators (and hence on all of $\OO$). The Fano plane structure also sheds some light on the infamous non-associativity of $\OO$. For every `line' $(e_i, e_{i+1}, e_{i + 3})$ we have that products of elements in their span \emph{is} associative: in fact we see from the above that $\spam(1, e_i, e_{i+1}, e_{i+3})$ is a subalgebra of $\OO$ isomorphic to $\mathbb{H}$. On the other hand, for triples of basis elements \emph{not} on a Fano-line, associativity fails (mildly): for instance for the triple $(e_0, e_1, e_2)$ we have that $(e_0e_1)e_2 = e_3e_2 = -e_5$ while $e_0(e_1e_2) = e_0e_4 = e_5$. 

While the seven Fano-lines each determine a quaternion subalgebra of $\OO$, the total number of such subalgebras is much larger. Analogous to how each line in $\im(\HH)$ generates a subalgebra of $\HH$ isomorphic to $\mathbb{C}$ we have in $\OO$ the following result:

\begin{proposition}\label{HinO}
Each two-dimensional subspace of $\im(\mathbb{O})$ generates a four-dimensional subalgebra of $\OO$ isomorphic to $\mathbb{H}$.
\end{proposition}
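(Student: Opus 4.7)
The plan is to produce an explicit four-dimensional subalgebra containing $U$ from a carefully chosen basis, check by hand that it satisfies the quaternion multiplication relations, and identify it with $\HH$.

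First I would choose an orthonormal basis $\{a, b\}$ of $U$ with respect to the standard Euclidean inner product on $\RR^8 \cong \OO$, which is possible because the inner product is positive definite. Since $a, b \in \im(\OO)$, the already-established identities $x^* = -x$ for imaginary $x$ and $x^*x = \|x\|^2$ give $a^2 = b^2 = -1$. Expanding $\|a+b\|^2 = (a+b)^*(a+b) = 2$ and distributing (which requires no associativity) yields $ab + ba = 0$, so $a$ and $b$ anti-commute; meanwhile $(ab)^* = b^*a^* = (-b)(-a) = ba = -ab$ forces $ab \in \im(\OO)$. So $ab$ is a third imaginary element, orthogonal to both $a$ and $b$.

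Next I would invoke the (standard) alternative property of $\OO$, to be developed in Section \ref{preliminaries}: by Artin's theorem, the subalgebra generated by any two elements of $\OO$ is associative. Freely associating inside the subalgebra generated by $\{a, b\}$, one computes $(ab)(ab) = a(ba)b = -a^2 b^2 = -1$, and likewise $b(ab) = (ba)b = -a b^2 = a$ and $(ab)a = a(ba) = -a^2 b = b$, together with anti-commutativity of each pair in $\{a, b, ab\}$. These are precisely the defining relations of $\HH$; hence $\spam(1, a, b, ab)$ is a four-dimensional subspace closed under multiplication, and the $\RR$-linear map sending $1, a, b, ab$ to $1, i, j, k$ is an algebra isomorphism with $\HH$. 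Since any subalgebra containing $U$ must contain $a$, $b$, and hence $ab$, this is precisely the subalgebra generated by $U$.

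The main obstacle---and only genuinely new ingredient compared to the classical construction of $\HH$ from two anti-commuting imaginary units in a larger associative algebra---is the legitimate use of associativity: $\OO$ itself is not associative, so every rebracketing above must be justified by confirming that it takes place entirely inside the subalgebra generated by the two specific elements $a, b$. Once Artin's alternative theorem is available, each step of the computation is routine.
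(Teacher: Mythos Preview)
Your argument is correct, with one small gap: you assert that $ab$ is ``orthogonal to both $a$ and $b$'' (hence that $1, a, b, ab$ are linearly independent and the span is genuinely four-dimensional) without justification at that point. It does follow easily once your later alternativity computations are in hand---e.g.\ if $ab = \beta a + \gamma b$ with $\beta,\gamma\in\RR$, then from $a(ab)=a^2b=-b$ one gets $-\beta + \gamma(\beta a+\gamma b)=-b$, forcing $\gamma^2=-1$, a contradiction---or from the composition-algebra identity $\langle xy,xz\rangle=\|x\|^2\langle y,z\rangle$; but you should say so explicitly.

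Your route is a close cousin of the paper's but exploits a simplification only available over $\RR$. The paper does not prove Proposition~\ref{HinO} directly; it deduces it from the more general Proposition~\ref{main}, valid for any Cayley algebra over a field of characteristic $\neq 2$. There one cannot appeal to a positive-definite inner product, so instead of picking an orthonormal basis one takes any non-nilpotent $u\in U$ and replaces the second basis vector $v$ by $i=v+\frac{\tau(uv)}{n(u)}u$, engineered so that $ui\in\im(O)$. This is exactly your Gram--Schmidt step, rewritten in terms of the trace form $\tau$ rather than the Euclidean form. From there the paper identifies $\langle U\rangle$ with the Dickson double $D_\gamma(\langle u\rangle)$ and invokes simplicity of quaternion algebras to get four-dimensionality, rather than checking the $\HH$-relations by hand. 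Your version is more elementary and direct for $F=\RR$; the paper's buys the generalization to arbitrary fields and, ultimately, to split Cayley algebras where nilpotent elements complicate the picture.
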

The proof of this proposition is non-trivial, but we won't discuss it here as we will prove a more general result down the line. 

We \emph{will} discuss here however a number of interesting consequences of Proposition \ref{HinO}. First we notice:
\begin{corollary}\label{altO}
Every subalgebra of $\OO$ generated by 2 elements is associative.
\end{corollary}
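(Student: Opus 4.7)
The plan is to reduce the problem to Proposition \ref{HinO} by splitting off the real part of each generator. Write any $x, y \in \OO$ uniquely as $x = \alpha + x'$ and $y = \beta + y'$ with $\alpha, \beta \in \RR$ and $x', y' \in \im(\OO)$. Since $1 \in \OO$ commutes and associates with everything, the (unital) subalgebra $A$ generated by $x$ and $y$ coincides with the unital subalgebra generated by $x'$ and $y'$. It therefore suffices to show that the subalgebra generated by two imaginary octonions is associative.

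Next I would split into cases according to whether $x'$ and $y'$ are linearly independent. If they are, then $\spam(x', y')$ is a two-dimensional subspace of $\im(\OO)$, so by Proposition \ref{HinO} the subalgebra of $\OO$ it generates is a four-dimensional subalgebra isomorphic to $\HH$. Since $A$ is by definition the smallest subalgebra containing $x'$ and $y'$, it is contained in this $\HH$-copy, and hence inherits associativity from $\HH$. If instead $x'$ and $y'$ are linearly dependent, then (after discarding the zero case, which gives $A = \RR$) they are both scalar multiples of a single nonzero imaginary element $e \in \im(\OO)$, which satisfies $e^2 \in \RR$; then $A \subseteq \spam(1, e)$ is a subalgebra of dimension at most two, isomorphic to $\RR$ or to a copy of $\CC$, and therefore associative.

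Combining the two cases, $A$ is always contained in an associative subalgebra of $\OO$, hence is itself associative. The only nontrivial ingredient is Proposition \ref{HinO}; everything else is routine linear algebra together with the observation that adding the scalar part of an element does not change the subalgebra it generates together with $1$.
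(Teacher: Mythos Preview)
Your argument is correct and is precisely the intended derivation: the paper states this result as an immediate corollary of Proposition~\ref{HinO} without spelling out a proof, and you have supplied exactly the natural details (split off real parts, then apply Proposition~\ref{HinO} in the linearly independent case and handle the degenerate case directly). One minor remark: in the linearly independent case you say $A$ is \emph{contained in} the $\HH$-copy, but in fact $A$ \emph{equals} it, since Proposition~\ref{HinO} asserts that the two-dimensional imaginary subspace \emph{generates} that four-dimensional algebra; this does not affect the conclusion.
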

Algebras with this property are called \emph{alternative}. Since it is not hard to see that the subalgebras generated by 2 elements are closed under the involution ${}^*$ as well, we conclude from Corollary \ref{altO} (and the already estabilished properties of ${}^*$) that (\ref{normmult}) holds for all $x, y$ in spite of $\OO$ not being associative. Hence we conclude:
\begin{corollary}
The multiplication of $\OO$ satisfies $\|xy\| = \|x\|\|y\|$ where $\|.\|$ is the Euclidean norm on $\RR^8$ and hence $\OO$ is a division algebra.
\end{corollary}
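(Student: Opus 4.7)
The plan is to mimic the quaternionic multiplicativity proof (\ref{normmult}) inside $\OO$, using Corollary \ref{altO} to recover just enough associativity to push it through. Concretely, fix $x, y \in \OO$ and let $A \subset \OO$ be the unital subalgebra they generate. By Corollary \ref{altO}, $A$ is associative; the goal is then to evaluate the entire chain $(xy)(xy)^* = (xy)(y^*x^*) = x(yy^*)x^*$ inside $A$, where rebracketing is harmless.

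First I would verify that $A$ is closed under the involution ${}^*$. Any $z \in \OO$ decomposes as $z = \alpha 1 + u$ with $\alpha \in \RR$ and $u \in \im(\OO)$, whence $z^* = \alpha 1 - u = 2\alpha 1 - z$. Since $A$ is a unital subspace containing $z$, it contains the scalar $2\alpha \cdot 1$, and hence $z^*$. Applying this to $z = x$ and $z = y$ gives $x^*, y^* \in A$.

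With $x, y, x^*, y^* \in A$ and $A$ associative, the derivation
\begin{equation*}
\|xy\|^2 = (xy)(xy)^* = (xy)(y^*x^*) = x(yy^*)x^* = \|y\|^2(xx^*) = \|x\|^2\|y\|^2
\end{equation*}
is valid line by line inside $A$; the anti-automorphism property $(uv)^* = v^*u^*$ and the identity $zz^* = \|z\|^2$ have already been established globally on $\OO$, and the real scalar $\|y\|^2$ can be pulled out freely. Taking square roots yields $\|xy\| = \|x\|\|y\|$. From this the division-algebra property is immediate: if $xy = 0$ then $\|x\|\|y\| = 0$ in $\RR$, forcing $x = 0$ or $y = 0$, which is property (\ref{zerodivisors}).

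The only conceptually delicate point — and the main obstacle — is keeping careful track of which products and rebracketings occur within which subalgebra. Once one observes that every element appearing in the derivation lies in the two-generator subalgebra $A$, where Corollary \ref{altO} guarantees associativity, the argument is essentially identical to the quaternion case, and no further special features of $\OO$ are needed.
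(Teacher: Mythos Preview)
Your proof is correct and follows precisely the approach sketched in the paper just before the corollary: use Corollary~\ref{altO} to get associativity of the subalgebra $A = \langle x, y\rangle$, observe that $A$ is closed under ${}^*$ (via $z^* = 2\tau(z)1 - z$), and then run the computation~(\ref{normmult}) inside $A$. There is nothing to add.
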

Of more interest to us, however, is the following corollary to Proposition \ref{HinO}:
\begin{theorem}\label{Oqfano}
Let $\mathcal{H}$ be the collection of all four-dimensional subalgebras of $\OO$ that are isomorphic to $\mathbb{H}$ and let $\mathcal{B} = \{\im(H) \colon H \in \mathcal{H}\}$ be the collection of their three-dimensional subspaces of imaginary elements. Then each $B \in \mathcal{B}$ is a three-dimensional subspace of the seven-dimensional real vector space $\im(\OO)$ and the pair $(\im(\OO), \mathcal{B})$ is a $q$-Fano plane over $\RR$.
\end{theorem}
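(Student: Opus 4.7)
The plan is to reduce Theorem \ref{Oqfano} almost entirely to Proposition \ref{HinO}. I need to show both \emph{existence} and \emph{uniqueness} of a block $B \in \mathcal{B}$ containing any given two-dimensional subspace $P \subset \im(\OO)$.

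For existence I would take an arbitrary two-dimensional $P \subset \im(\OO)$ and let $H$ be the subalgebra of $\OO$ generated by $P$. Proposition \ref{HinO} tells me that $H$ is four-dimensional and isomorphic to $\HH$, so $H \in \mathcal{H}$. It then remains to verify $P \subset \im(H)$, for which I would identify $\im(H) = H \cap \im(\OO)$. Any isomorphism $H \isom \HH$ must send the scalar summand of $H$ to $\RR \cdot 1 \subset \OO$ (since $\RR \cdot 1$ is the unique one-dimensional subalgebra of $\OO$), so the intrinsic decomposition $H = \RR \cdot 1 \oplus \im(H)$ coincides with the restriction of the global decomposition $\OO = \RR \cdot 1 \oplus \im(\OO)$. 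Because $P$ lies in both $H$ and $\im(\OO)$, it lies in $\im(H)$, exhibiting $\im(H)$ as a block in $\mathcal{B}$ containing $P$.

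For uniqueness, suppose $H' \in \mathcal{H}$ is another quaternion subalgebra with $P \subset \im(H')$. Then $P \subset H'$, so the subalgebra generated by $P$ — namely $H$ — is contained in $H'$. Since both are four-dimensional, $H = H'$ and hence $\im(H) = \im(H')$. The same argument in passing shows that $H \mapsto \im(H)$ is a bijection $\mathcal{H} \to \mathcal{B}$, with inverse $B \mapsto \RR \cdot 1 \oplus B$, so passing between a block and its containing quaternion subalgebra loses no information.

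The only genuine obstacle is Proposition \ref{HinO} itself: the statement that a two-dimensional subspace of $\im(\OO)$ generates a subalgebra of dimension exactly four, isomorphic to $\HH$. Once this is granted, the proof above is essentially bookkeeping. As the excerpt signals, the proof of Proposition \ref{HinO} is deferred to the body of the paper, where it will be absorbed into a more general statement about quaternion subalgebras of Cayley algebras over arbitrary fields of characteristic $\neq 2$.
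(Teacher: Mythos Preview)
Your proposal is correct and follows essentially the same approach as the paper: the general version (Theorem~\ref{quantumfano}) is proved by exactly this existence/uniqueness argument, taking $H = \langle U \rangle$, invoking the analogue of Proposition~\ref{HinO} (Proposition~\ref{main}) for four-dimensionality, using $\im(H) = H \cap \im(O)$ (Lemma~\ref{H}) for existence, and forcing $H' = \langle U \rangle$ by dimension for uniqueness. Your justification of $\im(H) = H \cap \im(\OO)$ via the isomorphism to $\HH$ is slightly roundabout compared to the paper's use of the intrinsic characterization $\im(A) = \{a : a^2 \in F1,\, a \notin F1\} \cup \{0\}$, but it works over~$\RR$.
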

Doubtless, Proposition \ref{HinO} has been known to experts in the field for a long time, but to the best of my knowledge nobody has openly made the connection with $q$-Fano planes before. The reason that the current article is not just two lines long, is that we want to understand what happens over fields different from $\RR$.

\begin{table}\caption{Multiplication table of $\OO$ with respect to the basis $e_0, \ldots, e_6$ described in Section \ref{octonions} \label{multtabclassic}}
\begin{tabular}{l|rrrrrrrr}
      &  $1$  &    $e_0$ &   $e_1$ &   $e_2$ &   $e_3$ &   $e_4$ &   $e_5$ &   $e_6$  \\ \hline
  $1$ &  $1$  &    $e_0$ &   $e_1$ &   $e_2$ &   $e_3$ &   $e_4$ &   $e_5$ &   $e_6$ \\
$e_0$ & $e_0$ &     $-1$ &   $e_3$ &   $e_6$ &  $-e_1$ &   $e_5$ &  $-e_4$ &  $-e_2$ \\
$e_1$ & $e_1$ &   $-e_3$ &    $-1$ &   $e_4$ &   $e_0$ &   $e_2$ &   $e_6$ &  $-e_5$ \\
$e_2$ & $e_2$ &   $-e_6$ &  $-e_4$ &    $-1$ &   $e_5$ &   $e_1$ &  $-e_3$ &   $e_0$ \\
$e_3$ & $e_3$ &    $e_1$ &  $-e_0$ &  $-e_5$ &    $-1$ &   $e_6$ &   $e_2$ &  $-e_4$ \\
$e_4$ & $e_4$ &   $-e_5$ &   $e_2$ &  $-e_1$ &  $-e_6$ &    $-1$ &   $e_0$ &   $e_3$ \\
$e_5$ & $e_5$ &    $e_4$ &  $-e_6$ &   $e_3$ &   $e_2$ &  $-e_0$ &    $-1$ &   $e_1$ \\
$e_6$ & $e_6$ &    $e_2$ &   $e_5$ &  $-e_6$ &   $e_4$ &  $-e_3$ &  $-e_1$ &    $-1$
\end{tabular}
\end{table}

\subsection{Overview of the article}
In 1923 Dickson generalized the procedure of obtaining $\CC$ from $\mathbb{R}$, $\HH$ from $\CC$ and $\OO$ from $\HH$ into what is now known as the `Cayley-Dickson Procedure', a recipe of creating a family of $2^n$-dimensional algebras over a given field $F$ of characteristic $\neq 2$. The eight-dimensional members of this family are called \emph{Cayley algebras} and share with $\OO$ the important property of being alternative (cf Cor. \ref{altO} above). Also the decomposition $\mathcal{O} = F1 \oplus \im(\mathcal{O})$ makes sense over these algebras (as well as their lower-dimensional counterparts) and behaves largely the same as in the real cases described above. However not all Cayley algebras are division algebras. Cayley algebras that contain zero-divisors are called \emph{split}.

Section \ref{preliminaries} covers the preliminaries from non-associative algebra, focussing towards the structure of the 1, 2, 4 and 8-dimensional algebras generated in the Cayley-Dickson process over arbitrary fields of characteristic not 2. In Section \ref{qfano} we then prove our first main result:

\begin{theorem*}[\ref{quantumfano}]
Let $\mathcal{O}$ be a Cayley division algebra (i.e. a Cayley algebra that is also a division algebra) over $F$ and let $V = \im(\mathcal{O})$. Let $\mathcal{H}$ be the collection of four-dimensional associative subalgebras of $\mathcal{O}$ and $\mathcal{B} \coloneqq \{\im(H) \colon H \in \mathcal{H}\}$. Then the pair $(V, \mathcal{B})$ is a $q$-Fano plane over $F$.
\end{theorem*}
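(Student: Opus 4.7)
The plan is to reduce the subspace design axiom to the following key claim: any two $F$-linearly independent imaginary elements $a, b \in V$ generate a unique $4$-dimensional associative subalgebra $H_{a,b} = \spam_F(1, a, b, ab)$ of $\mathcal{O}$, and every element of $\mathcal{H}$ arises this way. Once this is in hand, existence and uniqueness of a block containing a given two-dimensional subspace $W \subset V$ follow trivially: picking any basis $\{a, b\}$ of $W$, the block $\im(H_{a,b})$ contains $W$, and any $H \in \mathcal{H}$ with $W \subset \im(H)$ must contain $H_{a,b}$, so equals it by dimension.

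First I would verify that blocks have the right dimension. Since $\mathcal{O}$ is alternative and carries its canonical involution $*$ with $V = \im(\mathcal{O}) = \ker \Tr$, any subalgebra $H \in \mathcal{H}$ contains $1$, is closed under $*$, and decomposes as $H = F\cdot 1 \oplus \im(H)$; hence $\dim \im(H) = 3$ and the map $H \mapsto \im(H)$ is injective on $\mathcal{H}$.

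Next I would establish the key claim. Closure of $\spam_F(1, a, b, ab)$ under multiplication follows from alternativity (so the subalgebra generated by $a, b$ is associative) together with $a^2, b^2 \in F$ and $ab + ba \in F$, all immediate from $x^* = -x$ for $x$ imaginary. The heart of the matter is proving $ab \notin \spam_F(1, a, b)$. Suppose for contradiction $ab = \alpha + \beta a + \gamma b$. Multiplying on the right by $b$ and invoking associativity together with $b^2 \in F$,
\begin{equation*}
b^2 \cdot a \;=\; (ab) b \;=\; \alpha b + \beta(\alpha + \beta a + \gamma b) + \gamma b^2.
\end{equation*}
Since $1, a, b$ are linearly independent over $F$, comparing the coefficient of $a$ forces $\beta^2 = b^2$ in $F$. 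But then $(b - \beta)(b + \beta) = b^2 - \beta^2 = 0$, which is impossible in the division algebra $\mathcal{O}$ because $b \neq 0$ and $b \notin F \cdot 1$ imply both factors are nonzero.

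The main obstacle is precisely this final step: it is here, and only here, that the division algebra hypothesis is indispensable. The identity $(b-\beta)(b+\beta)=0$ is exactly the obstruction that disappears in a split Cayley algebra (where $\mathcal{O}$ has isotropic imaginary vectors with $b^2$ a square in $F$), which anticipates why the second half of the paper will recover only a $q$-covering design in the split case. Everything else in the proof is formal, relying only on the general facts about alternative algebras and the decomposition $\mathcal{O} = F \cdot 1 \oplus V$ established in the preliminaries.
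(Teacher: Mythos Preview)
Your argument is correct and constitutes a genuinely different route from the paper's. Both proofs reduce to showing that $\spam_F(1,a,b,ab)$ is $4$-dimensional for linearly independent $a,b\in V$, but the paper (Proposition~\ref{main}) does this structurally: it replaces $v$ by a carefully chosen $i=v+\tfrac{\tau(uv)}{n(u)}u$ so that $xi=ix^*$ for $x\in\langle u\rangle$, then builds a nonzero homomorphism $D_\gamma(\langle u\rangle)\to\langle u,v\rangle$ and invokes simplicity of the quaternion algebra $D_\gamma(\langle u\rangle)$ to force injectivity. Your argument bypasses the Dickson-double machinery entirely: assuming $ab=\alpha+\beta a+\gamma b$, right-multiplying by $b$ and comparing coefficients forces $b^2=\beta^2$, whence $(b-\beta)(b+\beta)=0$ contradicts the division property. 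This is more elementary and self-contained for the purpose of Theorem~\ref{quantumfano}. What the paper's approach buys is extra structural information---the block $H_{a,b}$ is identified as a quaternion (Cayley--Dickson) subalgebra---which becomes essential in Section~\ref{split} when classifying two-dimensional subspaces by the isomorphism type of the algebra they generate. Your closing remark is also apt: the factorisation $(b-\beta)(b+\beta)=0$ is exactly the obstruction that survives in the split case, foreshadowing the type~U and type~Z spaces that spoil the $q$-Fano property there.
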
 

Theorem \ref{quantumfano} generalizes Theorem \ref{Oqfano} and hence it is not surprising that it is derived from a more general version of Proposition \ref{HinO} above:

\begin{proposition*}[\ref{main}]
In the setting of Thm \ref{quantumfano} above, every two-dimensional subspace of $V$ generates a four-dimensional associative subalgebra of $\mathcal{O}$.
\end{proposition*}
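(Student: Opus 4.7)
The plan is, given linearly independent $a, b \in V = \im(\mathcal{O})$, to show that the subalgebra $\langle a, b\rangle \subset \mathcal{O}$ they generate has dimension exactly $4$. Together with the alternativity of $\mathcal{O}$ (the general-field analogue of Corollary \ref{altO}), which asserts that any subalgebra generated by two elements is automatically associative, this proves the proposition: the fact that $\langle a, b\rangle$ depends only on $\spam(a, b)$ and not on the particular basis $(a, b)$ is then formal, since any other basis of $\spam(a, b)$ lies in $\langle a, b\rangle$ and so generates a subalgebra of dimension at most $4$ which contains $a$ and $b$, hence coincides with $\langle a, b\rangle$.

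For the upper bound $\dim \langle a, b\rangle \leq 4$ I would use the structural identities to be recalled in Section \ref{preliminaries}: for each $x \in V$ one has $x^2 = -N(x) \in F$, and by polarising the norm one obtains $ab + ba \in F$ for all $a, b \in V$. In an associative algebra these three relations ($a^2, b^2, ab + ba \in F$) allow every word in $a, b$ to be rewritten, modulo $F$-scalars, as one of $1, a, b, ab$. Since the subalgebra is associative by alternativity, this gives $\langle a, b\rangle \subseteq \spam(1, a, b, ab)$.

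The substantive step, and where the division hypothesis on $\mathcal{O}$ must enter, is showing $ab \notin \spam(1, a, b)$. I would argue by contradiction: assume $ab = \lambda + \mu a + \nu b$ with $\lambda, \mu, \nu \in F$. Working inside the associative subalgebra $\langle a, b\rangle$, compute $a \cdot (ab)$ two ways. On one hand $a(ab) = a^2 b = -N(a)\, b$; on the other hand expanding yields $a(\lambda + \mu a + \nu b) = (\lambda\nu - \mu N(a)) + (\lambda + \mu\nu)\, a + \nu^2\, b$. Matching the coefficient of $b$ gives the key identity $\nu^2 = -N(a)$.

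To turn this into a contradiction I would form $\nu \cdot 1 + a \in \mathcal{O}$. Since $\nu \in F$ commutes with $a$ and $(\nu + a)^{*} = \nu - a$, one has $N(\nu + a) = (\nu + a)(\nu - a) = \nu^2 - a^2 = \nu^2 + N(a) = 0$. But $a \notin F \cdot 1$ forces $\nu + a \neq 0$, contradicting the fact that the norm form of a Cayley division algebra is anisotropic. Hence $ab \notin \spam(1, a, b)$ and the dimension count is complete. The only delicate point in the plan is precisely this final anisotropy step: the computations up to $\nu^2 = -N(a)$ remain valid in any Cayley algebra, and it is exactly the division hypothesis that turns the resulting equation into a contradiction -- mirroring the fact, flagged in the introduction, that over split Cayley algebras the analogous construction fails to yield a $q$-Fano plane.
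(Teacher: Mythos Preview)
Your argument is correct. The upper bound via the identities $a^2, b^2, ab+ba \in F$ is the content of the paper's Lemma~\ref{3of4}, and your contradiction for the lower bound is clean: the coefficient match $\nu^2 = -N(a)$ indeed forces $N(\nu + a) = 0$, and anisotropy of $n$ on a Cayley division algebra (Lemma~\ref{splitn0}) finishes it.

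The paper's proof of Proposition~\ref{main} takes a different, more structural route. Rather than arguing by contradiction, it replaces $v$ by the ``orthogonalised'' element $i = v + \frac{\tau(uv)}{n(u)}u$, checks that $ui \in \im(O)$, and uses this to exhibit $\langle u, v\rangle$ explicitly as the Dickson double $D_\gamma(\langle u\rangle)$ with $\gamma = i^2$; four-dimensionality then follows from simplicity of quaternion algebras (Theorem~\ref{CDsimple}). The division hypothesis enters there only to guarantee $n(u) \neq 0$ and $\gamma \neq 0$.

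What each approach buys: yours is more elementary (no Dickson doubling, no simplicity theorem) and makes the role of anisotropy transparent. The paper's construction, on the other hand, actually identifies the isomorphism type of $\langle U\rangle$ as a specific quaternion algebra, and this structural information is reused later (see Remark~\ref{special} and the classification in Section~\ref{split}). Moreover, the paper states Proposition~\ref{main} under the weaker hypothesis that $U$ itself contains no nilpotents, rather than that $O$ be a division algebra; your argument as written does not immediately cover this, though it could be extended: computing $(ab)b$ by the same method yields $\mu^2 = -N(b)$, and together with $\lambda = -\mu\nu$ one finds that $\mu a + \nu b \in U$ is nilpotent, recovering the contradiction from the weaker hypothesis.
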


Section \ref{split} focusses on understanding the role of the requirement that $\mathcal{O}$ is a division algebra in the statement and proof of Proposition \ref{main}, by studying what would happen in case $\mathcal{O}$ is split. To this end we divise a classification of all two-dimensional subspaces of $\im(\mathcal{O})$ into six types, based on the properties of the subalgebras they generate. Here we are greatly helped by the fact that, quite contrary to the famous dictum by Tolstoy, each Cayley division algebra is a divsion algebra in its own way, but all split Cayley algebras are alike. `Alike' here means concretely that not only are all split Cayley algebras over a given field $F$ isomorphic (Theorem \ref{splitunique}), but even stronger: they can all be obtained by tensoring $F$ over $\mathbb{Z}$ with the same non-associative ring (Lemma \ref{tab}). Based on our classification of two-dimensional subspaces we conclude that, with notation as in Thm. \ref{quantumfano} above, no subset of $\mathcal{B}$ can be a $q$-Fano plane on $V$ in case $\mathcal{O}$ contains zero divisors (Corrolary \ref{hopeless}).

In Section \ref{qcover} we then put a positive spin on the results of the previous section as we prove the second main result of the paper:
\begin{theorem*}[\ref{main2}]
Let $\mathcal{O}$ be a Cayley algebra over a field $F$ of characteristic $\neq 2$ and let $V = \im(\mathcal{O})$. Let $\mathcal{H}$ be the collection of four-dimensional subalgebras of $\mathcal{O}$ and $\mathcal{B} \coloneqq \{\im(H) \colon H \in \mathcal{H}\}$. Then the pair $(V, \mathcal{B})$ is a $q$-covering design over $F$ with parameters  $(7, 3, 2)$, which is inclusion-minimal in the sense that no proper subset of $\mathcal{B}$ is has the $q$-covering design property. If moreover $\mathcal{O}$ is a division algebra, then $(V, \mathcal{B})$ is a $q$-Fano plane over $F$.
\end{theorem*}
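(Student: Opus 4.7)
The statement decomposes into three claims: (i) $(V, \mathcal{B})$ is a $(7,3,2)$ $q$-covering design for every Cayley algebra $\mathcal{O}$; (ii) $\mathcal{B}$ is inclusion-minimal; (iii) if $\mathcal{O}$ is a division algebra then $(V, \mathcal{B})$ is a $q$-Fano plane. Part (iii) is Theorem \ref{quantumfano}, which also gives (i) and (ii) in the division case, since every $2$-dimensional subspace of $V$ then lies in a unique block, so removing any block uncovers the subspaces inside it. The work therefore reduces to proving (i) and (ii) when $\mathcal{O}$ is split.

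For the covering property in the split case, my plan is to argue type-by-type using the classification developed in Section \ref{split} of $2$-dimensional subspaces of $V = \im(\mathcal{O})$ into six types according to the subalgebra they generate in $\mathcal{O}$. For each type I would exhibit at least one $H \in \mathcal{H}$ with the given subspace inside $\im(H)$. Types on which the division-case behavior persists, namely those subspaces $W$ such that $\langle 1, W\rangle$ is a $4$-dimensional associative subalgebra $H$, cover themselves via the block $\im(H)$. For the genuinely split/degenerate types (subspaces containing isotropic or nilpotent imaginary elements, which do not generate a $4$-dimensional subalgebra) the same classification should show that such a $W$ in fact lies inside \emph{several} distinct $H \in \mathcal{H}$; this is entirely consistent with Corollary \ref{hopeless}, which records only that some $2$-dimensional subspaces get covered more than once, not that any fail to be covered. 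The computations can be done once and for all in the canonical split model, whose uniqueness and explicit presentation are given by Theorem \ref{splitunique} and Lemma \ref{tab}.

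For inclusion-minimality I would show that for each $H \in \mathcal{H}$ there exists a $2$-dimensional subspace $W \subset \im(H)$ lying in no other $\im(H')$. I would take $W = \spam(x, y)$ for two non-commuting imaginary elements $x, y \in H$; such a pair exists because $H$ is a $4$-dimensional non-commutative algebra with centre $F$, and the standard quaternion argument then shows that $\{1, x, y, xy\}$ is a basis of $H$, so $W$ together with $1$ generates $H$. If $W \subset \im(H')$ for some $H' \in \mathcal{H}$, then $H = \langle 1, W\rangle \subset H'$, hence $H = H'$ by dimension. The main obstacle is thus the split-case covering step: the degenerate $2$-dimensional subspaces, particularly those contained entirely in the isotropic cone of the norm form on $\im(\mathcal{O})$, are precisely where the argument of Proposition \ref{main} breaks down, and one must handle them by hand via the six-type classification of Section \ref{split}.
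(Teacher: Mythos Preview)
Your decomposition into (i), (ii), (iii) matches the paper's, and reducing to the split case is correct. However, your argument for inclusion-minimality contains a genuine gap, and your covering argument underestimates the work needed.

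\textbf{Inclusion-minimality.} Your plan is to pick non-commuting imaginary $x, y \in H$ and invoke a ``standard quaternion argument'' to conclude $\{1, x, y, xy\}$ is a basis of $H$. This fails on two counts. First, not every $H \in \mathcal{H}$ is central: when $H$ is of type F1J3 (in the paper's notation, $H = F1 \oplus J$ with $J = \im(H)$ a three-dimensional nilpotent ideal as in Proposition~\ref{typeJ}), the element $uv$ satisfies $(uv)x = x(uv) = 0$ for all $x \in \im(H)$, so the centre of $H$ is at least two-dimensional. Second, and more seriously, non-commuting imaginary elements need not generate $H$: if $\spam(x, y)$ is of type~U (Proposition~\ref{typeU}), then $xy \in \spam(x, y)$ even though $xy \neq yx$, so $\langle x, y\rangle$ is only three-dimensional. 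Type~U subspaces do occur inside four-dimensional subalgebras of both type M4 and type S2J2 (cf.\ Table~\ref{Tvalues}), so your argument breaks precisely in the split case you are trying to handle. The paper closes this gap with the nontrivial Proposition~\ref{QMDJ}, which rules out by explicit computation (using Lemma~\ref{Z3} and Table~\ref{multtab}) the existence of a four-dimensional subalgebra in which \emph{every} two-dimensional subspace of the imaginary part is of type U or Z; hence some subspace of type Q, M, D or J survives, and that one generates $H$ uniquely.

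\textbf{Covering.} Your sketch for types Q, D, M, J is correct: those subspaces generate their own block. For types U and Z you write that ``the same classification should show'' they lie in several blocks, but the classification by itself only describes $\langle W\rangle$, not larger algebras containing $W$. The paper's device here is Theorem~\ref{transitive} (from Springer--Veldkamp): any isomorphism of subalgebras extends to an automorphism of $\mathcal{O}$. Since all type~U (resp.\ type~Z) subspaces generate isomorphic three-dimensional algebras, $\Aut(\mathcal{O})$ acts transitively on each class (Corollary~\ref{ZUtrans}), reducing the covering check to one explicit example of each type. Without this transitivity, verifying coverage for \emph{all} type~U and type~Z subspaces directly in the split model is substantially more work than you suggest.
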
 

The next three sections are dedicated to reformulating Theorem \ref{main2} into more beautiful forms. To see what we mean by that, we return to the appearance of the classical Fano-plane inside the real octonions $\OO$. Having settled on taking the basis elements $e_0, \ldots, e_6$ as the point set of our Fano plane, Theorem \ref{main2} suggests a way of describing the collection of lines: a triple $\{e_i, e_j, e_k\}$ forms a line if and only if the eight element set $\{\pm 1, \pm e_i, \pm e_j, \pm e_k \}$ is closed under multiplication. Indeed it is easy to verify from the definition of octonion multiplication in Section \ref{octonions} that this reproduces the same collection of lines we met in that section. However this description of the Fano plane is also a bit ugly, involving the 9 additional elements $\pm 1, -e_0, \ldots, -e_6$ that are not needed as points in the Fano-plane itself. In Section \ref{octonions} we met a much prettier description of the line set: $\{e_i, e_j, e_k\}$ is a line in the Fano plane if and only if $(e_ie_j)e_k = e_i(e_je_k)$. In Section \ref{associative} we show that this description of the Fano-plane has a $q$-analog as well: the set $\mathcal{B}$ of Theorem \ref{main2} can equivalently be described as 

\begin{align*}
\mathcal{B} 
& = \{B \in \Gr_3(V) \colon (ab)c = a(bc) \textrm{ for all } a, b, c \in B\} \\
& = \{\spam(a, b, c) \colon a, b, c \in V, (ab)c = a(bc) \}
\end{align*}

Here the Grassmanian $\Gr_3(V)$ is the set of all three-dimensional subspaces of $V$.

As a (non-obvious) consequence of this equality we moreover derive (in Section \ref{geometric}) that the set $\mathcal{B}$ allows a more geometric description: embedding $\Gr_3(V)$ into $\mathbb{P}(W)$ where $W \coloneqq \bigwedge^3V$ by the Pl\"ucker embedding $\psi$, we find that the subset $\mathcal{B}$ is described as the intersection of $ \psi(\Gr_3(V))$ with the image under the projection map of a 28-dimensional \emph{linear} subspace $K$ of the 35-dimensional space $W$.

In Section \ref{combinatorics} we then set out to describe this space $K$ in entirely combinatorial terms. The result is a slightly more general version of Theorem \ref{mainintro} above (Theorem \ref{maincombinatorially}) that provides a recipe for creating a $(7, 3, 2)$-$q$-covering design over a given field $F$ (with $\chr(F) \neq 2$) from the classical Fano plane in an `algebra-free' way. The motivation for including such a result is the hope that upon reading it someone might `guess' the correct way to create $q$-versions of Steiner triple systems at different parameters, where the underlying algebra is not available. Needless to say, the proof of such a result would have to be quite different from the proof of Theorem \ref{maincombinatorially} (which relies on Thm \ref{main2} above) and we won't speculate on it here. 

Although similar in form, Thm \ref{maincombinatorially} is more general than Theorem \ref{mainintro} above, encompassing among others the $q$-Fano planes (not covered by Thm \ref{mainintro} but mentioned just below it) over the function fields $F = K(\alpha, \beta, \gamma)$ of transedence degree three over any field $K$ with $\chr(K) \neq 2$. All statements in Theorem \ref{mainintro} above follow as straightforward special cases of Theorem \ref{maincombinatorially} except for the last, more quantative part, where it states that the number of blocks in the $q$-covering design over a field with $q$ elements is $q^8 + q^7 + 2q^6 + 2q^5 + 3q^4 + 2q^3 + 2q^2 + q + 1$.

The verification of this number is the content of the final section of the paper, Section \ref{counting}. Assisted by the classic fact (discussed in Section \ref{preliminaries}) that all Cayley algebras over a finite field contain zero divisors and hence are split, we use our classification of Section \ref{split} to count the number of four-dimensional subalgebras of such an algebra and hence the minimum number of blocks in a $(7, 3, 2)$-$q$-covering design over a finite field obtainable by our method. 

\subsection{Acknowledgements}
Section 2 of the current paper is loosely based on Chapters 1 and 2 of my (unpublished) master's thesis \cite{scriptie}. Although the objective of that thesis was perpendicular to that of the current paper (defining $q$-analogs of the octonions rather than using the octonions to define $q$-analogs of something else) it was the research done for that project that enabled me, more than a decade later, to recognize the truth of Theorem \ref{Oqfano} upon learning the definition of a $q$-Fano plane. I'd like to thank my thesis advisors, Tom Koornwinder and Eric Opdam for suggesting the topic and for their guidance in those days. 

Most of all I'd like to thank Relinde Jurrius for introducing me to the notions of $q$-Fano plane and $q$-covering design, for careful comments on an earlier version of this paper and for being the go-to expert on all things related to finite geometry over the years.

%\section{Preliminaries from algebra}\label{preliminaries}
\section{Preliminaries from algebra}\label{preliminaries}
\subsection{Conventions and definitions}\label{divisionalg}%raar label, heeft te maken met vooruitwijzing in introductie
\begin{convention}
Throughout the paper, let $F$ be a field of characteristic unequal to 2. We will denote elements of $F$ (scalars) by lower case Greek letters, vector spaces over $F$ will be denoted by capital roman letters and elements of such vector spaces will be denoted by lower case roman letters. 
\end{convention}

\begin{definition}\label{algebra}
An \emph{algebra} over $F$ is a vector space $A$ over $F$ equiped with an bilinear multiplication map $A \times A \to A$. An algebra $A$ is called \emph{unital} if there exists an element $1 \in F$ such that $1a = a1 = a$ for all $a \in A$. It is called associative when $(ab)c = a(bc)$ for all $a, b, c$ in $A$. 
\end{definition}

\begin{convention}
Throughout this paper, by an \emph{algebra} we will denote a finite-dimensional, unital, but not necessarily associative algebra over $F$.
\end{convention}

Let $A$ be an algebra. The unit element of $A$ will be denoted $1$ and the one-dimensional subalgebra of scalar multiples of 1 will be denoted $F1$. By bilinearity of the product on $A$ we have that %for all $\alpha \in F1 \subseteq A, a, b \in A$: 
%
%\begin{equation}\label{center}
%(\alpha a)b = \alpha(ab) = (a \alpha)b = a(\alpha b) =  (ab)\alpha = a(b\alpha).
%\end{equation}
%Conversely the set $Z(A)$ of elements $\alpha \in A$ satisfying (\ref{center}) for all $a, b$ in $A$ is called the center of $A$. An algebra is called \emph{central} if $Z(A) = F1$.
%
%An algebra that contains no non-trivial two-sided ideals is called \emph{simple}. A \emph{quaternion algebra} is defined as a four-dimensional, central simple associative algebra.
all elements of $F1$ commute and associate with all elements of $A$.

\begin{notation}
Let $A$ be an algebra. For any subset $B$ of $A$ we denote by $\langle B \rangle$ the subalgebra of $A$ generated by the elements of $B$. That is: the intersection of all subalgebras of $A$ containing $B$. When $B$ is finite, e.g. $B = \{u, v\}$, we will write $\langle u, v \rangle$ for $\langle \{u, v\}\rangle$.
\end{notation}
\begin{convention} Since we required all our algebras to be unital, we demand the same of their subalgebras. In particular we have that $F1 \subseteq \langle B \rangle$ for any subset $B$ of $A$. %(We can even make the convention $\langle \emptyset \rangle = F$ but we won't use it.) 
\end{convention}
The following elementary observation is stated explicitly, not because I think the reader is stupid but because I woke up one night in panick, believing that there were a hole in the proof amounting to the following fact being false. However it is true and promoted to a lemma so that I can refer back to it later.
\begin{lemma}\label{generated}
Let $A$ be an algebra, let $U \subseteq A$ be a linear subspace and let $u_1, \ldots, u_k$ be a basis of $U$. Then $\langle u_1, \ldots, u_k \rangle = \langle U \rangle$.
\end{lemma}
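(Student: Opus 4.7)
The plan is to prove the two inclusions separately, with the easy direction coming immediately from the definition of the subalgebra generated by a set.

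For $\langle u_1, \ldots, u_k \rangle \subseteq \langle U \rangle$: since each $u_i$ lies in $U$, and $\langle U \rangle$ is by definition a subalgebra containing $U$, it is in particular a subalgebra containing $\{u_1, \ldots, u_k\}$. By the minimality defining $\langle u_1, \ldots, u_k \rangle$ as the intersection of all subalgebras containing these elements, this yields the inclusion.

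For the reverse inclusion $\langle U \rangle \subseteq \langle u_1, \ldots, u_k \rangle$, the crucial observation I would use is that any subalgebra of $A$ is, in particular, an $F$-linear subspace of $A$ (this is immediate from Definition \ref{algebra}: the multiplication on a subalgebra is the restriction of a bilinear map, so subalgebras are closed under $F$-scalar multiplication, addition, and products, hence are $F$-subspaces). Consequently $\langle u_1, \ldots, u_k \rangle$ is an $F$-subspace of $A$ that contains every $u_i$, and therefore contains every $F$-linear combination of the $u_i$'s. Since $\{u_1, \ldots, u_k\}$ is a basis of $U$, every element of $U$ is such a linear combination, whence $U \subseteq \langle u_1, \ldots, u_k \rangle$. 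Invoking once more the minimality defining $\langle U \rangle$, we conclude $\langle U \rangle \subseteq \langle u_1, \ldots, u_k \rangle$.

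There is really no obstacle here; the entire content of the lemma is the observation that subalgebras are linear subspaces, so they are closed under the operation of taking $F$-linear combinations of basis elements. I suspect the midnight panic the author alludes to came from momentarily forgetting that the definition of an algebra bakes in the $F$-vector space structure, making the distinction between \emph{spanning} a subspace and \emph{generating} it as an algebra collapse at the level of what subalgebras must contain.
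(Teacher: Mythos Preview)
Your proof is correct and follows essentially the same two-inclusion argument as the paper: both directions rest on the minimality of the generated subalgebra, with the key step being that any subalgebra containing $\{u_1,\ldots,u_k\}$ is a linear subspace and hence contains all of $U$. The paper's version states this last observation slightly more tersely (``every algebra containing $\{u_1,\ldots,u_k\}$ contains $U$''), while you make the linearity explicit, but the content is identical.
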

\begin{proof}
The inclusion $\langle u_1, \ldots, u_k \rangle \subseteq \langle U \rangle$ is obvious from the inclusion $\{ u_1, \ldots, u_k \} \subseteq  U$. For the converse inclusion we note that every algebra containing $\{u_1, \ldots, u_k\}$ contains $U$ and hence so does the intersection $\langle u_1, \ldots, u_k \rangle$ of all such algebras. But this means that $\langle u_1, \ldots, u_k \rangle$ is an algebra containing $U$ and hence it certainly contains the intersection of all such algebras, that is $\langle u_1, \ldots, u_k \rangle \supseteq \langle U \rangle$.
\end{proof}

\begin{definition}
An algebra is \emph{alternative} if every subalgebra generated by two elements is associative.
\end{definition}

Alternative algebras need not be associative, but many results that are familiar for associative algebras do hold for more general alternative algebras as well. We will see an example below in Lemma \ref{division}, many more examples can be found in the book by Schafer \cite{Schafer}. 

\begin{definition}
We say that an algebra $A$ is a \emph{division algebra} if for every non-zero $a \in A$ the linear maps $L_a \colon x \mapsto ax$ and $R_a \colon x \mapsto xa$ are invertible in $\End(A)$.
\end{definition}
\begin{definition}
A non-zero element $x$ in an algebra $A$ is called a \emph{zero-divisor} if there exist non-zero $y \in A$ such that $xy = 0$ or $yx = 0$.
\end{definition}
\begin{lemma}\label{division}
Let $A$ be a finite-dimensional alternative algebra. Then the following are equivalent:
\begin{enumerate}
\item $A$ is a division algebra
\item $A$ has no zero-divisors
\item For every non-zero $a \in A$ there exist an element $a^{-1} \in A$ such that $aa^{-1} = a^{-1}a = 1$.
\end{enumerate}
\end{lemma}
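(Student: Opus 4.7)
The plan is to prove $(1) \Leftrightarrow (2)$ as an essentially linear-algebraic fact and then link in (3) through two standard consequences of alternativity. For the easy equivalence, I would note that for any $a \in A$ the left multiplication $L_a \in \End(A)$ is injective iff $a$ is not a left zero-divisor, and similarly for $R_a$; since $\dim_F A < \infty$, injectivity of a linear endomorphism is equivalent to bijectivity, which gives $(1) \Leftrightarrow (2)$ without invoking alternativity at all.

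For $(2) \Rightarrow (3)$, given $a \neq 0$ the surjectivity of $L_a, R_a$ from (1) produces $b, c$ with $ab = 1 = ca$. I would then invoke the flexible law $(ba)b = b(ab) = b$, a consequence of alternativity in characteristic $\neq 2$ (the associator $[b,a,b]$ vanishes because $[x,y,z]$ is alternating in an alternative algebra). This gives $(ba - 1)b = 0$, hence $ba = 1$ by (2) and $b \neq 0$. Symmetrically $ac = 1$; then $ab = ac$ together with (2) yield $b = c$, the desired two-sided inverse.

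For $(3) \Rightarrow (2)$, I would suppose $ax = 0$ with $a \neq 0$ and work toward $x = 0$. The linearized left-alternative identity gives $a(a^{-1}x) + a^{-1}(ax) = (aa^{-1} + a^{-1}a)x = 2x$, so $a(a^{-1}x) = 2x$. Replacing $x$ by $a^{-1}x$ and using $(a^{-1})^2 x = a^{-1}(a^{-1}x)$ (left-alternative law for $a^{-1}$) produces $a((a^{-1})^2 x) = 0$. Artin's theorem applied to the two-generator, hence associative, subalgebra $\langle a, a^{-1}\rangle$ gives $a(a^{-1})^2 = (a^{-1})^2 a = a^{-1}$; combined with the alternating identity $[a, (a^{-1})^2, x] = -[(a^{-1})^2, a, x]$, a short calculation then yields $a^{-1}x = -a^{-1}x$, whence $a^{-1}x = 0$. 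Substituting back, $2x = a \cdot 0 = 0$, so $x = 0$ (using $\chr F \neq 2$).

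The main obstacle is the final implication, which essentially forces us to prove the ``left inverse property'' $a^{-1}(ay) = y$ --- the innocuous ``multiply by $a^{-1}$'' trick that is automatic in associative algebras but genuinely needs the full force of alternativity here. The route sketched above relies on two standard consequences of alternativity that I would either recall briefly or cite from Schafer \cite{Schafer}: the alternating-associator identity (obtained by polarizing $[x,x,y] = 0$ in characteristic $\neq 2$) and Artin's theorem on two-generator subalgebras.
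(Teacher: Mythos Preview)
Your argument is correct in all three parts. The paper in fact leaves this lemma as an exercise, so there is no proof there to compare against; your write-up would fill that gap.

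One remark on $(3)\Rightarrow(2)$: your route works, but a shorter path is available once you observe that $a^{-1}\in\langle a\rangle$. Indeed, $\langle a,a^{-1}\rangle$ is associative by Artin's theorem and $a$ is a unit there, so $L_a$ restricted to this subalgebra is injective; its further restriction to the finite-dimensional subalgebra $\langle a\rangle$ (which it maps into itself) is then injective, hence bijective, producing a polynomial $p(a)$ with $a\,p(a)=1$. Uniqueness of inverses in the associative algebra $\langle a,a^{-1}\rangle$ forces $a^{-1}=p(a)\in\langle a\rangle$. Now for arbitrary $x$, the two-generator subalgebra $\langle a,x\rangle$ is associative and already contains $a^{-1}$, so $ax=0$ gives $x=a^{-1}(ax)=0$ directly. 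This bypasses the explicit associator computations, though what you did is essentially a hands-on verification of the left inverse property $a^{-1}(ay)=y$, which is the standard fact underlying either approach. You should also mention (as you implicitly do) that the case $xa=0$ follows symmetrically from right-alternativity.
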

The proof is left as an exercise to the reader. A counterexample to the implications $2 \implies 1$ and $2 \implies 3$ when we drop the condition that $A$ is finite-dimensional is given by the algebra $F[x]$ of polynomials over $F$. A counterexample to the implication $3 \implies 2$ when we drop the condition that $A$ is alternative is given in Section \ref{CayleyDickson}.

\begin{definition}\label{quadraticdef}
An algebra is \emph{quadratic} if the subalgebra generated by the single element $a$ is two-dimensional for all $a \nin F1$.
\end{definition}

\begin{definition}
By an \emph{involution} $*$ of an algebra $A$ we denote a linear anti-automorphism of order dividing 2. In other words and involution is a linear map $a \mapsto a^*$ satisfying 

\begin{equation}\label{star}
(ab)^* = b^*a^* \qquad (a^*)^* = a
\end{equation}
for all $a, b \in A$. 
\end{definition}

By linearity $\alpha^* = \alpha$ for all $\alpha \in F1$. 

\begin{definition}\label{stronginv}
We say that an involution $*$ is a \emph{strong involution} if the converse to the above holds, i.e. if $a^* = a$ if and only if $a \in F1$.
\end{definition}

The most famous example of a strong involution is complex conjugation in the algebra $A = \mathbb{C}$ over $F = \mathbb{R}$. The quaternion conjugation (\ref{quaternionconj}) on the $\mathbb{R}$-algebra $\HH$ discussed in the Introduction is another example of a strong involution. Moreover $\CC$ and $\HH$ are both examples of quadratic algebras over $\mathbb{R}$. We will see in Section \ref{nice} that this is no coincidence.

\subsection{A theorem of Artin}
We defined an alternative algebra as an algebra in which the subalgebra generated by any two elements is associative. Alternativity can be viewed as a `controlled' form of non-associativity, where we bear in mind that all associative algebras are automatically alternative. Clearly for any $a, b$ in an alternative algebra we have that
\begin{eqnarray}
(aa)b = a(ab) \label{leftalt}\\
(ab)a = a(ba) \label{flexible} \\
(ba)a = b(aa) \label{rightalt}
\end{eqnarray}
It is an interesting result of Artin that the converse holds:
\begin{theorem}[\cite{Schafer}, Thm III.3.1]\label{associator}
Let $A$ be an algebra. The following are equivalent
\begin{enumerate}
\item $A$ is alternative
\item At least two of the three equations (\ref{leftalt}, \ref{flexible}, \ref{rightalt}) hold for all $a, b \in A$
\item The \emph{associator}, the trilinear map $(. , ., .) \colon A \times A \times A \to A$ defined by $(a, b, c) = (ab)c - a(bc)$ \emph{alternates}, that is: changes sign under odd permutations of its entries.
\end{enumerate}
\end{theorem}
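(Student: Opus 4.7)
The plan is a cyclic chain of implications $(1) \Rightarrow (2) \Rightarrow (3) \Rightarrow (1)$. The implication $(1) \Rightarrow (2)$ is immediate: each of (\ref{leftalt}), (\ref{flexible}), (\ref{rightalt}) only involves products of elements of $\langle a, b\rangle$, a subalgebra which is associative by hypothesis.

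For $(2) \Rightarrow (3)$ I would linearise the trilinear associator. Each of the three identities asserts that the associator vanishes whenever two specified arguments coincide. Substituting $a \mapsto a + a'$ in (\ref{leftalt}) and subtracting the original identity gives $(a, a', b) + (a', a, b) = 0$, so (\ref{leftalt}) is equivalent to antisymmetry of the associator in positions $1$ and $2$. Analogously (\ref{flexible}) and (\ref{rightalt}) translate to antisymmetry in positions $(1, 3)$ and $(2, 3)$ respectively. Any two of these antisymmetries force the third: for example, if the associator is skew in positions $(1, 2)$ and $(2, 3)$ then the chain $(a, b, c) = -(b, a, c) = (b, c, a) = -(a, c, b)$ produces antisymmetry in $(1, 3)$. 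Combined with $\chr F \neq 2$ this is precisely the alternating property.

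The main obstacle is $(3) \Rightarrow (1)$: from alternation on all of $A$ one must deduce that every two-generated subalgebra $\langle a, b\rangle$ is associative. By multilinearity of the associator it suffices to prove $(p, q, r) = 0$ for arbitrary parenthesised monomials $p, q, r$ in $\{a, b\}$. My approach would be induction on the combined length of $p, q, r$: the base cases (each factor a single letter) are immediate since among three elements of $\{a, b\}$ two must coincide, so alternation together with $\chr F \neq 2$ kills the associator. For the inductive step, write one factor (say $p$) as a product $uv$ of shorter monomials and invoke the Teichm\"{u}ller cocycle identity
\[
(uv, q, r) - (u, vq, r) + (u, v, qr) = u(v, q, r) + (u, v, q)\, r,
\]
which holds in any algebra by direct expansion. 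Combined with the alternating symmetries, this rewrites $(p, q, r)$ as a combination of associators that are simpler in a carefully chosen well-founded complexity measure on parenthesised monomials. Setting up this measure correctly, and deriving the auxiliary Moufang-type identities that arise as by-products of alternation, is the technical heart of the argument; a complete treatment is given in Schafer \cite[Thm.~III.3.1]{Schafer}.
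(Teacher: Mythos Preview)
The paper does not supply a proof of this theorem: it is quoted verbatim from Schafer \cite[Thm.~III.3.1]{Schafer} and used as a black box, so there is no in-paper argument to compare your proposal against.

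Your outline is the standard one and is correct. The implications $(1)\Rightarrow(2)$ and $(2)\Rightarrow(3)$ are complete as written; the linearisation argument is exactly right, and the observation that any two of the three transpositions generate $S_3$ is all that is needed. One small remark: since the paper \emph{defines} ``alternates'' as ``changes sign under odd permutations'' (rather than ``vanishes on repeated arguments''), the appeal to $\chr F\neq 2$ is not needed for $(2)\Rightarrow(3)$ itself. Where $\chr F\neq 2$ is genuinely used is in your base case for $(3)\Rightarrow(1)$: from skew-symmetry alone one only gets $2(a,a,b)=0$, and the paper's standing convention $\chr F\neq 2$ is what lets you conclude $(a,a,b)=0$.

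For $(3)\Rightarrow(1)$ you have correctly identified both the strategy (induction on total monomial length in $a,b$) and the key tool (the Teichm\"uller/cocycle identity for the associator), and you are honest that making the induction go through requires setting up the right complexity measure and deriving several Moufang-type auxiliary identities. That is exactly what Schafer does in the cited reference, and since the paper itself defers to Schafer, your proposal matches the intended level of detail.
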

It is the third item that explains the name alternative. Note that $(a, b, c)  = 0$ for all $a, b, c$ when $A$ is associative, thus providing a second way of seeing that all associative algebras are alternative.

\subsection{A particularly nice class of algebras}\label{nice}
Let $A$ be a quadratic algebra and $a \in A, a \nin F1$. By definition of quadratic algebra there are unique scalars $\tau(a), n(a) \in F$ such that 
\begin{equation}\label{taun}
a^2 = 2\tau(a)a - n(a)
\end{equation}
This defines maps $\tau, n$ from $A \backslash F1 \to F$ and we extend these maps to all of $A$ by setting $\tau(\alpha) = \alpha, n(\alpha) = \alpha^2$ for $\alpha \in F1$. Note that with this convention equation (\ref{taun}) holds for all $a \in A$.

It is an easy but important observation that the map $\tau \colon A \to F$ is linear. Moreover, and this is the reason for the appearance of the number 2 in (\ref{taun}), we have that $\tau$ is a linear projection operator onto the space $F1$, that is, we have that $\tau \circ \tau = \Id$. When $F = \mathbb{R}$ we think of $\tau$ as `taking the real part of an element'. 

We define $\im(A) = \ker \tau$. Since $\tau$ is a projection operator we obtain a decompostion (of linear spaces, not of algebras):
\begin{equation}\label{ReIm}
A = \tau(A) \oplus \im(A).
\end{equation} 

The elements of the co-dimension-1 linear space $\im(A)$ are called \emph{imaginary}. The reason for this terminology stems from the special case $F = \mathbb{R}, A = \mathbb{C}$ and becomes apparent from the following alternative characterization of $\im(A)$ (the equivalence of the two characterizations dates back to Frobenius and follows easily from (\ref{taun})):
\begin{equation}\label{im}
\im(A) = \{a \in A \colon a^2 \in F1 \textrm{ but } a \nin F1\} \union \{0\}.
\end{equation}
We will be interested in quadratic algebras satisfying the additional condition
\begin{equation}\label{tauab}
\tau(ab) = \tau(ba) \textnormal{ for all } a, b \in A.
\end{equation}
We have:
\begin{theorem}\label{quadratic}
Let $A$ be an algebra. The following are equivalent: 
\begin{enumerate}
\item $A$ is a quadratic algebra satisfying (\ref{tauab}), 
\item $A$ possesses a strong involution.
\end{enumerate}
\end{theorem}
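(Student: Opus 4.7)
The plan is to construct each side of the equivalence explicitly from the other and verify the required identities directly.

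For the direction (2)$\Rightarrow$(1), suppose $A$ admits a strong involution ${}^*$. The key observation is that for every $a \in A$ both $a+a^*$ and $aa^*$ are fixed by ${}^*$: the first by linearity of ${}^*$, the second because $(aa^*)^* = (a^*)^*a^* = aa^*$. By strongness, both lie in $F1$, so I can define scalars $\tau(a), n(a) \in F$ by $a+a^* = 2\tau(a)\cdot 1$ and $aa^* = n(a)\cdot 1$. Substituting $a^* = 2\tau(a) - a$ into $aa^* = n(a)$ yields the quadratic relation $a^2 = 2\tau(a)a - n(a)$, proving $A$ is quadratic. Linearity of $\tau$ is inherited from linearity of ${}^*$. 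To verify (\ref{tauab}), I would use the decomposition $A = F1 \oplus \im(A)$ and the fact that $F1$ commutes with every element of $A$ to reduce to the case $a,b \in \im(A)$; for such elements $\tau(a)=\tau(b)=0$, hence $a^* = -a$ and $b^* = -b$, so $(ab)^* = b^*a^* = ba$. Therefore $\tau(ab) = \tfrac{1}{2}(ab + (ab)^*) = \tfrac{1}{2}(ab + ba) = \tau(ba)$.

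For (1)$\Rightarrow$(2), I would define $a^* := 2\tau(a) - a$. Linearity is clear from linearity of $\tau$, and $(a^*)^* = 2\tau(a^*) - a^* = 2\tau(a) - a^* = a$ using that $\tau$ is a projection fixing $F1$ pointwise. Strongness is equally immediate: $a^* = a$ iff $a = \tau(a)\cdot 1 \in F1$.

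The main obstacle, as expected, is verifying the anti-homomorphism identity $(ab)^* = b^*a^*$. To obtain the needed ingredient, I would linearize the defining quadratic relation (\ref{taun}) by computing $(a+b)^2$ in two ways and subtracting the relations for $a^2$ and $b^2$; using linearity of $\tau$ this yields
\begin{equation*}
ab + ba \;=\; 2\tau(a)b + 2\tau(b)a + n(a) + n(b) - n(a+b).
\end{equation*}
Applying $\tau$ to both sides (with $\tau(\alpha)=\alpha$ for scalars, together with the hypothesis $\tau(ab)=\tau(ba)$) gives $4\tau(ab) = 4\tau(a)\tau(b) + 2\bigl(n(a)+n(b)-n(a+b)\bigr)$. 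Expanding $b^*a^* = 4\tau(a)\tau(b) - 2\tau(a)b - 2\tau(b)a + ba$ and $(ab)^* = 2\tau(ab) - ab$, the difference $(ab)^* - b^*a^*$ becomes precisely the linearization identity rearranged to equal zero. This completes the verification of the anti-automorphism property and hence the proof.
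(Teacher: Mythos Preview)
Your proof is correct. The direction $(2)\Rightarrow(1)$ follows the same path as the paper (define $\tau(a)=\tfrac12(a+a^*)$ and $n(a)=aa^*$, check \eqref{taun}), with somewhat more detail in the verification of \eqref{tauab}.

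For $(1)\Rightarrow(2)$ you take a genuinely different route. The paper first reduces to the imaginary case: for $u,v\in\im(A)$ it uses the characterization \eqref{im} to see that $uv+vu\in F1$, hence $\im(uv)=-\im(vu)$; together with \eqref{tauab} this gives $(uv)^*=vu=v^*u^*$, and one then extends to general $a,b$ by writing each as a scalar plus an imaginary part. You instead polarize the full quadratic relation \eqref{taun} at arbitrary $a,b$ to obtain $ab+ba=2\tau(a)b+2\tau(b)a+n(a)+n(b)-n(a+b)$, apply $\tau$ (using \eqref{tauab}) to get $2\tau(ab)=4\tau(a)\tau(b)+n(a)+n(b)-n(a+b)$, and then check $(ab)^*-b^*a^*=0$ by direct substitution. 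Both arguments are short; the paper's version isolates the purely imaginary case where everything is transparent, while yours avoids the case split and makes the role of the norm polarization explicit.
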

\begin{proof}
`$1 \implies 2$': Given the quadratic algebra structure we write $\tau$ for the map $a \mapsto \tau(a)1$ (in other words, we reinterpret $\tau$ as mapping into the subalgebra $F1$ of $A$ rather than into the `external' field $F$.) We write $\im = \Id - \tau$ for the linear projection onto the space $\im(A)$ along the decompostion (\ref{ReIm}).

The involution $*$ is then given by 
\begin{equation}\label{starfromtau}
a^* = \tau(a) - \im(a)
\end{equation}
Let $u, v \in \im(A)$. From (\ref{im}) we have that $(u + v)^2$, $u^2$, and $v^2$ are all in $F1$ so that from expanding $(u + v)^2$ we find that $uv + vu \in F1$ as well. It follows that $\tau(uv + vu) = uv + vu$ and hence $\im(uv + vu) = 0$ so that $\im(uv) = -\im(vu)$. 

Combining this with (\ref{tauab}) and (\ref{starfromtau}) yields $(uv)^* = \tau(uv) - \im(uv) = \tau(vu) + \im(vu) = vu$. Now since $u, v \in \im(A)$, $v^* = -v$ and $u^* = -u$ by (\ref{starfromtau}) and hence the equation $(uv)^* = vu$ extends to $(uv)^* = v^*u^*$, showing that for $u, v \in \im(A)$ the first relation in (\ref{star}) holds. Writing $a = \alpha + u$, $b = \beta + v$ with $\alpha, \beta \in F1, u, v \in \im(A)$ we can easily extend this to general $a, b \in A$ exploiting the linearity of the $*$-operator.

The second equation in (\ref{star}) is obvious from (\ref{starfromtau}). This shows that ${}^*$ is an involution; the fact that it is strong follows directly from (\ref{starfromtau}).

`$2 \implies 1$': Given a strong involution $*$ we define the maps $\tau$ and $n$ by 
\begin{equation}\label{taustar}
\tau(a) = \frac{1}{2}(a + a^*)
\end{equation}
 and
\begin{equation}\label{n}
n(a) = a^*a
\end{equation}
Note that the strongness of the involution guarantees that $\tau$ and $n$ take values inside $F1$, allowing us to reinterpret them as taking values in $F$. It is then straightforward to verify from (\ref{star}) that (\ref{taun}) and (\ref{tauab}) hold. (\ref{taun}) in turn implies that $A$ is quadratic. 
\end{proof}

Theorem \ref{quadratic} provides a third characterization of $\im(A)$ for algebras $A$ with a strong involution:
\begin{equation}\label{im2}
\im(A) = \{a \in A \colon a^* = -a\}
\end{equation}
In other words: $\im(A)$ is the $(-1)$-eigenspace of the strong involution. Since by definition of `strongness' the $1$-eigenspace is one-dimensional and since linear transformations of order $\leq 2$ cannot have proper Jordan blocks, we obtain a second proof of a fact that we will now give its own number: 

\begin{lemma}\label{codim1}
The co-dimension of $\im(A)$ in an algebra $A$ equipped with a strong involution equals 1.
\end{lemma}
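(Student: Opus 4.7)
The plan is to use the characterization $\im(A) = \{a \in A \colon a^* = -a\}$ given in equation (\ref{im2}) together with the strong involution hypothesis, and show that $A$ decomposes as a direct sum of the $(+1)$- and $(-1)$-eigenspaces of $*$, with the former being exactly $F1$.

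First I would observe that since $*$ is an involution we have $(*)^2 = \Id$, so the minimal polynomial of $*$ (viewed as an $F$-linear endomorphism of $A$) divides $x^2 - 1 = (x-1)(x+1)$. Because $\chr(F) \neq 2$, this is a product of distinct linear factors, so $*$ is diagonalizable and $A$ decomposes as $A = A_+ \oplus A_-$, where $A_{\pm} = \{a \in A \colon a^* = \pm a\}$.

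Next I would identify the two eigenspaces. By the definition of \emph{strong} involution (Definition \ref{stronginv}), $A_+ = F1$, which is one-dimensional. By equation (\ref{im2}), $A_- = \im(A)$. Hence $\dim A = \dim(F1) + \dim(\im(A)) = 1 + \dim(\im(A))$, which yields $\codim(\im(A)) = 1$.

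There is no real obstacle here; the argument is essentially a restatement of what is already indicated in the paragraph preceding the lemma ("linear transformations of order $\leq 2$ cannot have proper Jordan blocks"). The only subtlety to keep in mind is that diagonalizability uses $\chr(F) \neq 2$ (standing convention in the paper), and that equation (\ref{im2}) was derived as a consequence of Theorem \ref{quadratic}, so we are legitimately allowed to use it in the setting of an algebra with a strong involution.
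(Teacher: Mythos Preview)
Your proof is correct and follows exactly the argument the paper sketches in the paragraph immediately preceding the lemma: decompose $A$ into the $(\pm 1)$-eigenspaces of the involution (using $\chr(F)\neq 2$), identify the $(+1)$-eigenspace with $F1$ via strongness and the $(-1)$-eigenspace with $\im(A)$ via (\ref{im2}). There is nothing to add.
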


A very useful property of imaginary elements is
\begin{equation}\label{anticommutative}
uv = -vu \qquad \textrm{ for all } u, v \in \im(A)
\end{equation}
Which follows from combining (\ref{im2}) with (\ref{star}). 

Finally we note that from (\ref{n}) and (\ref{star}) if follows that in alternative algebras $A$ with a strong involution the function $n$ is multiplicative:
\begin{equation}\label{nmult}
n(ab) = n(a)n(b) \qquad \textnormal{for all } a, b \in A.
\end{equation}
For the $(F = \mathbb{R})$-algebras discussed in the introduction we have that the function $n$ equals the square of the Euclidean norm on $A$ and the proof of (\ref{nmult}) is spelled out in (\ref{normmult}).

\subsection{The Dickson Double}
\begin{definition}\label{DicksonDouble}
Let $A$ be an $F$-algebra with a strong involution $*$ and let $\gamma \in F \backslash \{0\}$ be a non-zero element of $F$. The \emph{Dickson double} $D_\gamma(A)$ of $A$ is an algebra with dimension twice the dimension of $A$ formed by attaching to $A$ an element $i$ satisfying 
\begin{equation}\label{i2}
i^2 = \gamma.
\end{equation} 
In more detail we have that as a vector space $D_\gamma(A)$ equals $A \oplus iA$ where elements of the subspace $iA$ are interpreted as the product of the `new' element $i = i1$ with elements of $A$. Multiplication is defined by the formula
\begin{equation}\label{eyesleft}
(a + ib)(c + id) = (ac + \gamma d b^*) + i(a^*d + cb)
\end{equation} 
The involution $*$ on $A$ is extended to $D_\gamma(A)$ as the unique involution agreeing with the original $*$ on the subalgebra $A$ and satisfying $i^* = -i$.
\end{definition}

\begin{lemma}\label{multiplication}
The multiplication (\ref{eyesleft}) can be equivalently be defined by the three equations 
\begin{eqnarray}
p(iq) = i(p^*q)  \label{pq1}\\
(pi)q = (pq^*)i \label{pq2} \\
(ip)(qi) = \gamma(pq)^* \label{pq3}
\end{eqnarray}
for all $p, q \in A$ or by the single equation
\begin{equation}
(a + bi)(c + di) = (ac + \gamma d^*b) + (da + bc^*)i.
\end{equation}
\end{lemma}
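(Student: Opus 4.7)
The plan is to verify both reformulations by direct bilinear expansion, relying on a single auxiliary identity $pi = ip^*$ (equivalently $iy = y^*i$) which, once established, lets us move the generator $i$ past any element of $A$ at the cost of applying the involution. This identity is immediate from (\ref{eyesleft}) by taking $a = p, b = 0, c = 0, d = 1$ and comparing the two sides; conversely it also falls out of (\ref{pq1}) or of the single closed-form equation simply by setting one of the arguments equal to $1$, so it is available under any of the three proposed descriptions of the product.

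For the equivalence with the three equations, I would first derive (\ref{pq1})--(\ref{pq3}) from (\ref{eyesleft}) by specialising the four parameters. Putting $b = 0, c = 0$ gives $a(id) = i(a^*d)$, which is (\ref{pq1}); putting $a = 0, d = 0$ gives $(ib)c = i(cb)$, and rewriting $ib = b^*i$ on the left together with the anti-automorphism property of $*$ on the right turns this into (\ref{pq2}); and putting $a = 0, c = 0$ gives $(ib)(id) = \gamma d b^*$, which after replacing $id$ by $d^*i$ becomes (\ref{pq3}). Conversely, starting only from the three equations, I would expand $(a + ib)(c + id)$ bilinearly into the four summands $ac$, $a(id)$, $(ib)c$, $(ib)(id)$ and treat them in turn using (\ref{pq1}), the interchange rule combined with (\ref{pq2}), and the interchange rule combined with (\ref{pq3}). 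The four pieces reassemble into the right-hand side of (\ref{eyesleft}) on the nose.

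The equivalence with the single closed-form equation is then cosmetic: substituting $bi = ib^*$ and $di = id^*$ into its left-hand side reduces the problem to applying (\ref{eyesleft}) with $b, d$ replaced by $b^*, d^*$, after which the $i(\cdots)$ part of the answer is moved back to the right of its argument via $iy = y^*i$. Using the anti-automorphism property (\ref{star}) and the involution identity $y^{**} = y$, the resulting expression matches the claimed right-hand side termwise (in particular the awkward-looking $d^*b$ and $da + bc^*$ in the second formula arise precisely from double-starring the analogous $db^*$ and $a^*d + cb$ of (\ref{eyesleft})).

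The only real difficulty I foresee is bookkeeping: each of the steps nests several applications of $*$ inside one another, and it is easy to drop or misplace a star when unwinding $(a^*d^*)^* = da$ and its cousins. I would therefore record the two interchange rules $pi = ip^*$ and $iy = y^*i$ once at the outset and invoke them mechanically throughout, so that the only remaining content of the verification is comparison of linear expressions inside $A$, where the anti-automorphism property (\ref{star}) does all the work.
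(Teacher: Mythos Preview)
Your proposal is correct. The paper states this lemma without proof, treating it as a routine verification, and then simply records the corollary $xi = ix^*$ (equation (\ref{xi})) immediately afterward; your write-up is exactly the kind of direct bilinear expansion the paper implicitly has in mind, and your decision to isolate the interchange rule $pi = ip^*$ up front matches how the paper itself uses (\ref{xi}) in all subsequent computations.
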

Particularly useful in computations is also the corollary
\begin{equation}\label{xi}
xi = ix^* \textnormal{ for all } x \in A
\end{equation}
from which in turn it follow that
\begin{equation}\label{xistar}
(xi)^* = -(xi) \textnormal{ for all } x \in A
\end{equation}

From (\ref{xistar}) we note:
\begin{corollary}\label{strong}
The involution $*$ on $D_\gamma(A)$ is strong.
\end{corollary}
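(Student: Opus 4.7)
The plan is to show directly that the only elements of $D_\gamma(A)$ fixed by the extended involution are scalar multiples of $1$, leveraging the direct-sum decomposition $D_\gamma(A) = A \oplus iA$ and the fact (\ref{xistar}) that the extended involution acts as $-1$ on all of $iA$.

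First I would take a general element $d \in D_\gamma(A)$ and write it uniquely as $d = a + bi$ with $a, b \in A$, using that $D_\gamma(A) = A \oplus iA$ as a vector space. By linearity of $*$ together with (\ref{xistar}), this decomposes as $d^* = a^* + (bi)^* = a^* - bi$. The key structural point is that $A$ is preserved by $*$ (and in fact the restriction there is the original strong involution on $A$), while the complementary subspace $iA$ is sent to itself by multiplication by $-1$.

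Next I would impose the condition $d^* = d$. This gives $a^* - bi = a + bi$, and projecting onto the two summands of $A \oplus iA$ yields separately $a^* = a$ and $2bi = 0$. Since $\chr(F) \neq 2$ and since the map $b \mapsto bi$ is a linear isomorphism $A \to iA$ by construction of the Dickson double, the latter equation forces $b = 0$. Hence $d = a$ lies in the subalgebra $A$ and satisfies $a^* = a$ there.

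Finally, since the original involution on $A$ is by hypothesis strong, $a^* = a$ forces $a \in F1$, and therefore $d \in F1 \subset D_\gamma(A)$. The converse inclusion (that every scalar is fixed) is immediate from the linearity of $*$ and $1^* = 1$. I do not anticipate a real obstacle here; the only step that needs a moment of care is the verification that $b \mapsto bi$ is injective, but this is built into the definition of $D_\gamma(A)$ as $A \oplus iA$, so the argument is essentially just unwinding the definitions together with (\ref{xistar}).
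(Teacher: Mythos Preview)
Your argument is correct and is exactly the unpacking of what the paper leaves implicit: the text simply writes ``From (\ref{xistar}) we note'' before stating the corollary, and the intended reasoning is precisely that $*$ restricts to the given strong involution on $A$ and acts as $-1$ on $iA$, forcing any fixed element into $F1$. Your write-up spells out this one-line observation carefully, including the minor bookkeeping that $b \mapsto bi$ is injective.
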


We'll use the last corollary to try and avoid doing computations using (\ref{eyesleft} - \ref{xistar}) in this paper and instead rely on the properties of algebras with a strong involution derived in Section \ref{nice}.

\begin{remark}\label{allfromxi}
One of the things apparent from equations (\ref{pq1}, \ref{pq2}, \ref{pq3}) is that we cannot assume the Dickson double $D$ of an algebra $A$ to be associative, even if $A$ is. On the other hand it is easy to see that if we happen to know that $A$ is a subalgebra of an associative algebra $D$ and $i$ is an element of $D$ not contained in $A$ then (\ref{xi}), if true in $D$, implies equations (\ref{pq1}, \ref{pq2}, \ref{pq3}) to hold in $D$ as well.
\end{remark}

The following theorem, due to Albert, (which can be derived directly from equations (\ref{pq1}, \ref{pq2}, \ref{pq3}) in conjunction with Artin's theorem above) states however that for $A$ of dimension 1, 2 or 4 (and $D$ subsequently of dimension 2, 4 or 8) the non-associativity is controlled.

\begin{theorem}\label{Albert}
Let $A$ be a an $F$-algebra with a strong involution and let $D$ be its Dickson double. Then:

$D$ is commutative if and only if $A = F$.

$D$ is associative if and only if $A$ is commutative.

$D$ is alternative if and only if $A$ is associative.
\end{theorem}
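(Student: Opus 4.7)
The plan is to verify each biconditional by expanding the relevant identity---commutator for the first, associator for the second and third---on elements of $D$ decomposed along $D = A \oplus iA$, using bilinearity/trilinearity to reduce to a finite list of cases and applying the multiplication rules (\ref{pq1})--(\ref{pq3}) together with the swap $xi = ix^*$ (\ref{xi}) to rewrite each case as an identity purely inside $A$.

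For commutativity, the case of two elements in $A$ forces $A$ itself to be commutative, while for $p, q \in A$ a short computation using (\ref{xi}) gives $p(iq) = i(p^*q)$ and $(iq)p = i(pq)$, so $[p, iq] = i((p^* - p)q)$; requiring this to vanish for all $q$ forces $p^* = p$ for every $p \in A$, which by strongness of the involution is equivalent to $A = F1$. Conversely, if $A = F$ then (\ref{eyesleft}) is manifestly symmetric in $(a, b)$ and $(c, d)$.

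For associativity, the triple $(p, q, r) \in A^3$ gives the associator in $A$, so $A$ must be associative, while evaluating $(p, q, ir)$ with the rules produces $i((q^*p^*)r - p^*(q^*r))$; using associativity of $A$ this reduces to $i((q^*p^* - p^*q^*)r)$, forcing $A$ commutative as well. Conversely, once $A$ is commutative---which forces $A$ associative via (\ref{anticommutative}), since imaginary elements then both commute and anticommute and hence multiply to zero---the direct check on the remaining cases gives zero.

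For alternativity I appeal to Artin's theorem \ref{associator} and test whether $(x, x, y) = (y, x, x) = 0$ on $D$. By bilinearity this decomposes into four diagonal cases and two families of cross terms; each diagonal case reduces, after using $pp^* = n(p) \in F$, to an instance of the alternative law already present in $A$, while the decisive cross term $(p, ir, q) + (ir, p, q)$ computes to $i\bigl(q(p^* + p)r - (p + p^*)qr\bigr)$, which vanishes identically once $A$ is associative because $p + p^* = 2\tau(p)$ is a scalar. Conversely, if $D$ is alternative then this cross-term identity holds on the nose in $A$; specializing at $p = 1 + u$ for imaginary $u \in \im(A)$ isolates $(u, q, r) = 0$, which combined with the trivial $(1, q, r) = 0$ yields associativity of $A$. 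The main obstacle I anticipate is bookkeeping: a single misplaced conjugation in the iterated use of $xi = ix^*$ produces spurious extra constraints (for instance the false impression that $A$ must be commutative where only associativity is actually needed), so pinpointing each minimal condition on $A$---no more, no less---is where the real work lies.
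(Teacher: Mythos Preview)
Your approach is exactly what the paper's one-line sketch calls for (the paper only says the theorem ``can be derived directly from equations (\ref{pq1}, \ref{pq2}, \ref{pq3}) in conjunction with Artin's theorem''), and your computations for the commutativity equivalence and for both directions of the alternativity equivalence are correct.

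There is, however, a genuine gap in the ``if'' direction of the associativity statement. You claim that commutativity of $A$ forces associativity of $A$ via (\ref{anticommutative}), because imaginary elements would then both commute and anticommute and hence multiply to zero. But equation (\ref{anticommutative}) as written is not correct in general: for $u, v \in \im(A)$ one only has $uv + vu \in F1$ (this is what is actually derived in the proof of Theorem~\ref{quadratic}), not $uv + vu = 0$; already $u = v = i$ in $\HH$ gives $uv = -1 \neq 1 = -vu$. More to the point, your implication is simply false. Take $A = F1 \oplus Fu \oplus Fv$ with $u^2 = v^2 = 0$ and $uv = vu = 1$, and involution determined by $u^* = -u$, $v^* = -v$. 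One checks that $*$ is a strong involution and $A$ is commutative, yet $(u, u, v) = 0 - u\cdot 1 = -u \neq 0$, so $A$ is not associative---and then $D_\gamma(A)$, containing $A$, cannot be associative either. The standard form of Albert's theorem (e.g.\ in Schafer) reads ``$D$ associative iff $A$ commutative \emph{and} associative''; the paper's phrasing is harmless only because in its applications $A$ is always a Cayley--Dickson algebra of dimension at most $4$, where commutativity already forces $\dim A \leq 2$ and hence associativity. Your argument, however, is pitched at arbitrary $A$ with strong involution, and there this step does not go through.
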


Now that alternativity is on the table it is interesting to know that Remark \ref{allfromxi} in fact generalizes to the alternative case though with a much more complicated proof:

\begin{lemma}\label{allfromxi2}
Let $D$ be an alternative algebra with strong involution $*$, $A$ a subalgebra of $D$ that is closed under application of $*$, and let $i \in \im(D)$ be an element \emph{not} contained in $A$ satisfying (\ref{xi}). Define $\gamma \in F$ by $i^2 = \gamma1$. Then $(\ref{pq1}, \ref{pq2}, \ref{pq3})$ hold in $D$ for all $p, q \in A$. It follows that if $\gamma \neq 0$ then $D$ contains a copy of the Dickson double $D_\gamma(A)$ of $A$.
\end{lemma}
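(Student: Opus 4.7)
The overall approach is to derive all three identities from two ingredients: Artin's alternating-associator theorem (Theorem \ref{associator}) and the commutation rule (\ref{xi}) $xi = ix^*$. All of (\ref{pq1})--(\ref{pq3}) are $F$-linear in each of $p, q \in A$ and involve exactly two occurrences of $i$, so they can be verified by rewriting parentheses via the identity $(ab)c = a(bc) + (a, b, c)$, pushing $i$ past elements of $A$ using (\ref{xi}), and then cleaning up the residual associators by alternation -- using in particular that $(a, b, c)$ vanishes whenever two of its arguments coincide or one of them lies in $F1$.

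For (\ref{pq1}) I start from $p(iq) = (pi)q - (p, i, q)$, rewrite $pi = ip^*$ via (\ref{xi}), and expand $(ip^*)q = i(p^*q) + (i, p^*, q)$. Alternation turns $(i, p^*, q)$ into $-(p^*, i, q)$, and the discrepancy $p(iq) - i(p^*q)$ therefore collapses to $-(p + p^*, i, q) = -(2\tau(p), i, q)$, which vanishes because $\tau(p) \in F$ acts as a scalar on any triple. Equation (\ref{pq2}) then follows by a purely formal manoeuvre: apply the involution $*$ to both sides of (\ref{pq1}), expand using $(ab)^* = b^* a^*$ and $i^* = -i$, and relabel $p \mapsto q^*$, $q \mapsto p^*$ to read off (\ref{pq2}) verbatim. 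For (\ref{pq3}) I invoke the middle Moufang identity $(xy)(zx) = x(yz)x$, valid in every alternative algebra (see Schafer \cite{Schafer}), with $x = i$, $y = p$, $z = q$: this gives $(ip)(qi) = i(pq)i$. The product $i(pq)i$ is unambiguous because $(i, pq, i) = 0$ by alternation with equal outer entries, so bracketing as $i((pq)i)$ and applying (\ref{xi}) with $x = pq \in A$ yields $i(i(pq)^*) = \gamma (pq)^*$, using $(i, i, (pq)^*) = 0$ once more.

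Once (\ref{pq1})--(\ref{pq3}) are established they, together with the existing multiplication on $A$, exactly reproduce the defining multiplication of the Dickson double recorded in Lemma \ref{multiplication}; hence the $F$-linear map $\Phi \colon D_\gamma(A) \to D$ which is the identity on $A$ and sends the abstract square root of $\gamma$ to our specific $i \in \im(D)$ is an algebra homomorphism with image $A + iA$. The statement that $D$ \emph{contains a copy} of $D_\gamma(A)$ amounts to the injectivity of $\Phi$, i.e.\ to $A \cap iA = 0$ in $D$, and this is the step I expect to be the hardest: the naive division argument ``$i^{-1}(ib) = b$'' is unavailable since $D$ need not be associative. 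My plan is to show that any $c = ib \in A \cap iA$ must satisfy $c^* = -c$ (by applying $*$ to $c = ib$ and invoking (\ref{xi})), that consequently $b = \gamma^{-1} ic$ is likewise imaginary, and that the alternator computation $(ic)c = i(c^2) = -n(c)\, i$ forces $n(c)\, i$ into $A$ and hence $n(c) = 0$ because $i \notin A$. Then, using $\gamma \neq 0$ (so that $i \cdot ic = \gamma c$ is nonzero whenever $c$ is) together with the multiplicativity of the norm (\ref{nmult}) on the alternative algebra $D$, I expect to propagate $n(c) = 0$ to $b = 0$, and thus $c = 0$.
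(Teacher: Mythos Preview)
Your derivation of (\ref{pq1})--(\ref{pq3}) is correct and, for (\ref{pq3}), actually cleaner than the paper's. The paper proves (\ref{pq2}) first by an associator calculation essentially dual to yours, then obtains (\ref{pq1}) from (\ref{pq2}) by applying $*$, and finally handles (\ref{pq3}) by a longer bare-hands argument that first extends (\ref{xi}) to $qi$ in place of $i$ and then repeats the associator trick. Your appeal to the middle Moufang identity $(ip)(qi) = i(pq)i$ short-circuits all of that.

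Your injectivity argument, however, has a real gap that cannot be filled. You correctly reach $n(c) = 0$ for any $c \in A \cap iA$, but $n(c) = 0$ does not force $c = 0$ without further hypotheses, and multiplicativity of $n$ only yields $n(b) = 0$ as well, which is no improvement. In fact the ``contains a copy'' clause is \emph{false} at the stated level of generality. In the split Cayley algebra with the basis of Table~\ref{multtab}, take $A = \spam(1, q_1)$, which is a $*$-closed subalgebra since $q_1^* = -q_1$ and $q_1^2 = 0$, and set $i = p_1 - p_2 \in \im(D)$. Then $i \notin A$, $\gamma = i^2 = 1 \neq 0$, and one checks from the table that $q_1 i = -q_1 = i q_1^*$, so (\ref{xi}) holds. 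But $i q_1 = q_1$, so $q_1 \in A \cap iA$ is a nonzero element, and the homomorphism $D_1(A) \to D$ has nontrivial kernel. Note that the paper's own proof stops after (\ref{pq3}) and never addresses injectivity; in the one place the lemma is actually invoked (Proposition~\ref{doubling}), the intersection $B \cap iB = \{0\}$ is verified beforehand by a separate argument using the division-algebra hypothesis, and elsewhere in the paper injectivity of such maps is obtained from simplicity of the relevant Cayley--Dickson algebra (Theorem~\ref{CDsimple}). So the ``copy'' sentence in the lemma is an overreach of the hypotheses rather than something you should try to prove.
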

\begin{proof}
We'll start by writing out the proof of (\ref{pq2}).
$$(pi)q = p(iq) + (p, i, q) = p(q^*i) + (p, i, q) = (pq^*)i - (p, q^*, i) + (p, i, q)$$
where the first and third equality apply the definition of the associator $(.,.,.)$ given in Theorem \ref{associator} and the second equality applies (\ref{xi}). Hence equation (\ref{pq2}) follows as soon as we show that $(p, i, q) - (p, q^*, i) = 0$. But by alternativity and trilineairty of the associator (Thm \ref{associator}) we find that
$$(p, i, q) - (p, q^*, i) = (p, i, q) + (p, i, q^*) = (p, i, q + q^*) = (p, i, 2\tau(q)1)$$
where the last equality uses (\ref{taustar}). And since elements of $F1$ associate with all elements of $D$ we find that $(p, i, 2\tau(q)1) = 0$ as  desired.

Equality (\ref{pq1} can be derived from (\ref{pq2}) and (\ref{xi}) as follows. By (\ref{xi}) and (\ref{star}) we see that the right hand side of (\ref{pq1}) equals $(q^*p)i$. Applying (\ref{pq2}) with $q^*$ in the role of $p$ and $p^*$ in the role of $q$ we obtain that this equals $(q^*i)p^*$. We have thus obtained the equality $(pq^*)i = (q^*i)p^*$. 

Applying the $*$-operator to both sides of this last equality we find on te left hand side $-(pq^*)i$ by (\ref{xistar}) and on the right hand side $p(i^*q)$ by applying (\ref{star}) both inside and outside the brackets. Since $i^* = -i$ the right hand side simplifies to $-p(iq)$ and multiplying both sides with $-1$ we finally obtain $(pq^*)i = p(iq)$, which is (\ref{pq1}).

Finally to see that (\ref{pq3}) holds as well we first establish that (\ref{xi}) also holds with $qi$ in the role of $i$. 
Concretely, for any $x \in A$ we have
\begin{equation}\label{xqi}
x(qi) = \tau(x)(qi) + \im(x)(qi) = (qi)\tau(x) - (qi)\im(x) = (qi)x^*.
\end{equation}
Here the second equality uses (\ref{anticommutative}) and the fact that elements of $F1$ commute with everything and the third equality uses (\ref{taustar}).

Now we use this to attack (\ref{pq3}). Using the definition of the associator for the first and third equality and (\ref{xqi}) for the second we obtain:
\begin{equation}\label{ipqi1}
(ip)(qi) = i(p(qi)) + (i, p, qi) = i((qi)p^*) + (i, p, qi) = (i(qi))p^* - (i, qi, p^*) + (i, p, qi).
\end{equation}
As above we find that $-(i, qi, p^*) + (i, p, qi) = (i, 2\tau(p), qi) = 0$ so that (\ref{ipqi1}) reduces to
\begin{equation}\label{ipqi2}
(ip)(qi) = (i(qi))p^*.
\end{equation}
The left hand side of (\ref{ipqi2}) equals the left hand side of (\ref{pq1}). We look at the term $i(qi)$ within the outermost brackets of the right hand side of (\ref{ipqi2}). From (\ref{xi}) we see that it equals $i(iq^*)$ and since this term lives in the associative subalgebra generated by the elements $i$ and $q$ we can shift the brackets and conclude that it equals $i^2q^* = \gamma q^*$. The right hand side of (\ref{ipqi2}) then becomes $\gamma q^*p^*$ which needs no brackets as $\gamma \in F$. By (\ref{star}) this equals the right hand side of (\ref{pq3}).
\end{proof}

\subsection{The Cayley-Dickson algebras}\label{CayleyDickson}
We inductively define the class of Cayley-Dickson algebras. Each such algebra is a finite-dimensional $F$-algebra with a strong involution. 
\begin{definition}
The (unique) one-dimensional $F$-algebra $F$ is a Cayley-Dickson algebra where the strong involution is the identity.
An $n$-dimensional $F$-algebra $A$ for $n > 1$ is a Cayley-Dickson algebra if it is the Dickson Double of a Cayley-Dickson algebra $B$ of lower dimension.
\end{definition}

It follows that every Cayley-Dickson algebra has dimension equal to a power of 2. 

It also follows from Theorem \ref{quadratic} that all Cayley-Dickson algebras $A$ are quadratic algebras with a co-dimension 1 subspace $\im(A)$ of imaginary elements $u$ satisfying $u^2 \in F1$, $u^* = -u$. We note the similarity to equations (\ref{i2}) and (\ref{xistar}) implying that the `special' element $i$ used in the construction of $A$ from the smaller algebra $B$ is always contained in the space $\im(A)$. 

\begin{remark}\label{special} 
In fact the element $i$ and subalgebra $B$ are not \emph{that} special. Once we are presented with a Cayley-Dickson algebra $A$ of dimension $2^n \leq 8$, it is impossible to tell which of its many $2^{n-1}$-dimensional Cayley-Dickson subalgebras $B$ was used to construct $A$, in the sense that $A$ can be realized as a Dickson double of each of them. Various choices of the subalgebra $B$ will allow various elements $u \in \im(A)$ with $u^2 \neq 0$ to play  the role of the `special' element $i$. We will split the proof this remarkable fact into two parts. The proof in case that $A$ is a division algebra is presented here (Prop. \ref{doubling} below); the remaining cases will be dealt with in Theorem \ref{doubling2}. A different proof of the division algebra case in the special case that $\dim A = 4$ appears as part of the proof of Proposition \ref{main}.
\end{remark}

\begin{proposition}\label{doubling}
Let $A$ be a Cayley-Dickson division algebra of dimension $2^{n}$ for $n \in \{1, 2, 3\}$ and let $B$ be a $2^{n-1}$-dimensional subalgebra of $A$ closed under the action of the strong involution. Then there exists a $\gamma \in F^\times$ and $i \in \im(A)$, not contained in $B$ such that $A = B \oplus iB$ as a vector space and such that $(\ref{pq1}, \ref{pq2}, \ref{pq3})$ hold in $A$ for all $p, q \in B$. Consequently, $A$ is isomorphic to the Dickson double $D_\gamma(B)$.
\end{proposition}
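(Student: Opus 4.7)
The plan is to produce the element $i$ as a nonzero vector of $\im(A)$ orthogonal to $B$ with respect to the norm form, to verify the hypothesis $xi = ix^*$ of Lemma \ref{allfromxi2}, and then invoke that lemma together with a dimension count.

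First I would introduce the symmetric bilinear form $\langle a, b \rangle \coloneqq \tau(ab^*)$, which by Theorem \ref{quadratic} and the fact that $A$ is a division algebra (so $n$ is anisotropic) is non-degenerate on $A$ and also on the $*$-closed subalgebra $B$; consequently $A = B \oplus B^\perp$ and $B \cap B^\perp = 0$. I would then pick any non-zero $i \in B^\perp$. Since $1 \in B$, orthogonality forces $\tau(i) = \langle i, 1 \rangle = 0$, so $i \in \im(A)$ and in particular $i^2 = -n(i) \in F1$; because $A$ has no zero-divisors this is a nonzero scalar $\gamma \in F^\times$.

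The main obstacle is the verification of (\ref{xi}), namely $xi = ix^*$ for every $x \in B$. I would argue as follows: since $i \in \im(A)$ gives $i^* = -i$, one has $(xi)^* = i^*x^* = -ix^*$, so the identity $a + a^* = 2\tau(a)1$ yields $xi - ix^* = 2\tau(xi)1$. It therefore suffices to show $\tau(xi) = 0$ for all $x \in B$; but by (\ref{tauab}) we have $\tau(xi) = \tau(ix) = \langle i, x^* \rangle$, and this vanishes because $x^* \in B$ while $i \in B^\perp$. Once (\ref{xi}) is in hand, Lemma \ref{allfromxi2} immediately provides the relations (\ref{pq1}, \ref{pq2}, \ref{pq3}) for all $p, q \in B$.

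Finally, to conclude $A = B \oplus iB$ I would argue by dimensions: $iB$ has the same dimension as $B$, since left multiplication by $i$ is injective in a division algebra, so the claim reduces to $B \cap iB = 0$. If $ib \in B$ with $b \in B$, then for every $c \in B$,
\[ \langle ib, c \rangle = \tau((ib)c^*) = \tau(c^*(ib)) = \tau(i(cb)) = \langle i, (cb)^* \rangle = 0, \]
using (\ref{tauab}) and the already established (\ref{pq1}) in the form $c^*(ib) = i(cb)$. Hence $ib \in B \cap B^\perp = 0$, forcing $b = 0$ as $i \neq 0$. Combined with the validity of (\ref{pq1}, \ref{pq2}, \ref{pq3}) on $B$ and $i^2 = \gamma$, this identifies $A$ with the Dickson double $D_\gamma(B)$. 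The only genuinely substantive step in this plan is the verification of $xi = ix^*$; everything else is either a dimension count or a one-line consequence of non-degeneracy of $\langle \cdot, \cdot \rangle$.
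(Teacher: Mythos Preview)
Your proof is correct and follows essentially the same approach as the paper: the paper's space $\bigcap_j \ker(\tau \circ R_{b_j})$ is precisely your $B^\perp$ (since $B$ is $*$-closed), and the derivation of (\ref{xi}) and the appeal to Lemma~\ref{allfromxi2} are identical. The only cosmetic difference is in the verification of $B \cap iB = \{0\}$: the paper argues that $ia = b$ with $a \neq 0$ would force $i = ba^{-1} \in B$ and hence $1 = i \cdot i^{-1} \in \im(A)$, whereas you more slickly show $iB \subseteq B^\perp$ using (\ref{pq1}) and the form.
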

\begin{proof}
Let $\{b_1, \ldots, b_{2^{n-1}}\}$ be a basis of $B$. Let the linear maps $R_{b_j}$ be defined as in Lemma \ref{division}. Since $\ker \tau$ has co-dimension 1 (Lemma \ref{codim1}), for each $j \in \{1, \ldots, 2^{n-1}\}$ the space $\ker (\tau \circ R_{b_j})$ has co-dimension at most 1. (In fact these co-dimensions are exactly 1 by Lemma \ref{division}.) It follows that the space $\bigcap_{j = 1}^{\dim B} \ker (\tau \circ R_{b_j})$ of elements $i$ such that $ib \in \im(A)$ for every $b \in B$ has co-dimension at most $2^{n-1}$ and hence in particular is non-zero. Pick any $i \neq 0$ from this space. We note that $i \in \im(A)$ and hence $i^2 = \gamma1$ for some $\gamma \in F$ where moreover $\gamma \neq 0$ since otherwise $i$ would be a divisor of zero.

Now since $A$ is a division algebra, $L_i$ is invertible (Lemma \ref{division}) and hence the space $iB$ is $2^{n-1}$-dimensional. We also have that $iB \cap B = \{0\}$ since if $ia = b$ for some $a, b \in B$ with $a \neq 0$ it follows that $i = ba^{-1} \in B$, and hence $i^{-1} = \frac{1}{\gamma} i \in B$. But this would imply that $1 = ii^{-1} \in \im(A)$ by our choice of $i$ -- a clear contradiction. We conclude that $B + iB$ is $2^n$-dimensional and hence equal to all of $A$. 

It remains to verify $i$ and $B$ satisfy (\ref{pq1}, \ref{pq2}, \ref{pq3}). For this we first note that $i$ satisfies (\ref{xistar}) for all $x \in B$ since, by our construction of $i$, $ix \in \im(A)$ for each $x \in B$. From (\ref{xistar}) we then conclude that (\ref{xi}) holds for all $x \in B$ as well and then deduce (\ref{pq1}, \ref{pq2}, \ref{pq3}) from (\ref{xi}) as in Lemma \ref{allfromxi2}. In this last step we exploit that, as $\dim(A) \leq 8$ we have that $A$ is alternative by Theorem \ref{Albert}.
\end{proof} 

\begin{definition}
A Cayley-Dickson algebra of dimension 8 is called a \emph{Cayley algebra} or an \emph{octonion algebra}. A Cayley-Dickson algebra of dimension 4 is called a \emph{quaternion algebra}.
\end{definition}

By Theorem \ref{Albert}, all Cayley algebras are alternative and all quaternion algebras are associative. In the literature on associative algebras a different, yet equivalent, definition of the term quaternion algebra is used, we'll come back to that issue in Section \ref{nilp}. 

By the same theorem, all two-dimensional Cayley-Dickson algebras are commutative. In particular, when they are division algebras, they are fields. And, as they are formed by attaching to $F$ a new squareroot (the element $i$ from the definition) of some $\alpha \in F$, they are quadratic field extensions of $F$. In particular:

\begin{lemma}\label{2dim}
Let $\alpha \in F^\times$. The two-dimensional Cayley-Dickson algebra $D_\alpha(F)$ is a quadratic field extension of $F$ if and only if $\alpha$ is a non-square in $F$ and contains divisors of zero if and only if $\alpha$ is a square in $F$. 
\end{lemma}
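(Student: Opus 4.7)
The plan is to identify $D_\alpha(F)$ explicitly with the familiar ring $F[x]/(x^2 - \alpha)$ and then read off both biconditionals from elementary considerations. First I would unpack Definition \ref{DicksonDouble} in the special case $A = F$: because the involution on $F$ is the identity, the product formula (\ref{eyesleft}) collapses to
\[
(a + ib)(c + id) = (ac + \alpha b d) + i(ad + bc),
\]
which is precisely multiplication in $F[x]/(x^2 - \alpha)$ under the identification $i \leftrightarrow x$. By Theorem \ref{Albert} (applied with the trivial one-dimensional Cayley-Dickson algebra $F$) the algebra $D_\alpha(F)$ is both commutative and associative, hence in particular alternative, so the machinery of Section \ref{nice} and Lemma \ref{division} applies.

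Next I would bring in the norm form $n$ from (\ref{n}): for $a = x + yi$ we have $a^* = x - yi$, and commutativity gives $n(a) = a^*a = x^2 - \alpha y^2$. By (\ref{nmult}) the form $n$ is multiplicative on $D_\alpha(F)$.

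The two halves of the statement now split cleanly. If $\alpha = \beta^2$ for some $\beta \in F^\times$, the factorisation $(\beta - i)(\beta + i) = \beta^2 - \alpha \cdot 1 = 0$ exhibits an explicit pair of nonzero zero divisors. Conversely, if $\alpha$ is a non-square, then $n(x + yi) = 0$ is impossible with $y \neq 0$ (else $\alpha = (x/y)^2$ would be a square), and it forces $x = 0$ when $y = 0$; so $n$ is anisotropic. Multiplicativity of $n$ then rules out zero divisors, because $ab = 0$ gives $n(a)n(b) = n(ab) = 0$, forcing $a = 0$ or $b = 0$. Since $D_\alpha(F)$ is finite-dimensional and alternative, Lemma \ref{division} upgrades this to the statement that $D_\alpha(F)$ is a division algebra; combined with commutativity this means it is a field, and its dimension being $2$ makes it a quadratic extension of $F$.

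Every element of $F^\times$ is either a square or a non-square, so the two cases are mutually exclusive and exhaustive, delivering both biconditionals simultaneously. I do not foresee any genuine obstacle; the one subtlety is to notice up front that Theorem \ref{Albert} is needed to guarantee alternativity (so that Lemma \ref{division} and equation (\ref{nmult}) are available), after which the argument reduces to the familiar analysis of the quadratic form $x^2 - \alpha y^2$ over $F$.
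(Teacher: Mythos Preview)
Your proof is correct and follows essentially the same structure as the paper's: both identify $D_\alpha(F)$ with $F[x]/(x^2-\alpha)$, invoke Theorem~\ref{Albert} for commutativity, and use the factorisation $(\beta-i)(\beta+i)=0$ in the square case. The one genuine difference is in the non-square direction: the paper simply notes that $x^2-\alpha$ is irreducible and invokes the standard fact that adjoining a root of an irreducible polynomial yields a field extension, whereas you argue via anisotropy of the norm form $n(x+yi)=x^2-\alpha y^2$ together with its multiplicativity (\ref{nmult}) and Lemma~\ref{division}. Your route is slightly longer but has the virtue of anticipating Lemma~\ref{splitn0}, which later in the paper characterises splitness of Cayley--Dickson algebras in all dimensions via isotropy of $n$; so your argument is really the $2$-dimensional instance of that general principle.
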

\begin{proof}
We only need to prove the `if' directions. In both cases we tacitly exploit that $D_\alpha(F)$ is commutative by Theorem \ref{Albert}. If $\alpha$ is a non-square the polynomial $x^2 - \alpha$ is irreducible over $F$ and attaching the root $i$ of this polynomial to $F$ yields a quadratic field extension by the standard argument. On the other hand when $\alpha = \beta^2$ for some $\beta \in F$ we find that $(\beta - i)(\beta + i)  = 0$ by the `strange product' formula.
\end{proof}

%dit verplaatsen naar sectie nilp of die misschien zelfs in tweeen splitsen en bovenstaande referentie aanpassen.
%It also follows from the same theorem that the four-dimensional Cayley-Dickson algebras are central (meaning that the only elements commuting and associating with all other elements are the elements of $F1$). We state here without proof that these four-dimensional central associative algebras are also simple. %, resulting in their being quaternion algebras. 
%Conversely every quaternion algebra (definined as a central simple associative algebra) is a Cayley-Dickson algebra. The proof of these facts can be found in any text on associative algebras, e.g. \cite{Pierce} or \cite{Vigneras}.
%

The following surprising fact forms the basis underlying all our considerations from Section \ref{split} onwards.

\begin{theorem}[Uniqueness theorem, \cite{Schafer}, p. 24-27]\label{splitunique} 
In each of the dimensions 2, 4, 8 there is up to isomorphism exactly one Cayley-Dickson algebra containing divisors of zero, called the \emph{split} Cayley Dickson algebra in that dimension.
\end{theorem}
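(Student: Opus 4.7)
My plan is to proceed by induction on $n$ (where the dimension is $2^n$), aiming to show that every split Cayley--Dickson algebra of dimension $2^n$, for $n \in \{1, 2, 3\}$, is isomorphic to the iterated Dickson double $D_1(\cdots D_1(F)\cdots)$ formed by doubling with parameter $1$ at each stage. The base case $n = 1$ follows immediately from Lemma~\ref{2dim}: if $D_\alpha(F)$ is split, then $\alpha = \mu^2$ for some $\mu \in F^\times$, and the substitution $i \mapsto i/\mu$ yields an explicit isomorphism $D_\alpha(F) \cong D_1(F)$.

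For the inductive step, with $A$ a split Cayley--Dickson algebra of dimension $2^n$ and $n \in \{2, 3\}$, the key intermediate step is to produce an imaginary element $w \in \im(A)$ with $w^2 = 1$. Since $A$ is alternative by Theorem~\ref{Albert}, the norm $n$ is multiplicative by~(\ref{nmult}), so the presence of a zero divisor forces a nonzero $v = \alpha + u \in A$ with $n(v) = 0$, whence $u^2 = \alpha^2 \cdot 1$. If $\alpha \neq 0$ one takes $w = u/\alpha$; otherwise one has an isotropic $u \in \im(A)$ with $u^2 = 0$, and one invokes the universality of nondegenerate isotropic quadratic forms of dimension $\geq 2$ over a field of characteristic $\neq 2$ applied to the form $x \mapsto -x^2$ on $\im(A)$ to produce an imaginary element whose square is $1$. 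With $w$ in hand, $B \coloneqq F \oplus Fw$ is a split $2$-dimensional subalgebra of $A$, isomorphic to $D_1(F)$ by the base case; and the extension of Proposition~\ref{doubling} to the split case (promised below as Theorem~\ref{doubling2}) then realizes $A$ as $D_\gamma(B)$ for some $\gamma \in F^\times$.

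The final step is to show that the isomorphism class of $D_\gamma(B)$ is independent of $\gamma \in F^\times$ when $B$ is split $2$-dimensional. Searching inside $D_\gamma(B)$ for a second imaginary generator of the form $i' = \beta w + (r + s w)i$, where $i$ is the outer Dickson generator and $\beta, r, s \in F$, and imposing $(i')^2 = 1$ together with orthogonality to $w$ (which forces anticommutation), reduces via the Dickson multiplication formulas to the system $\beta = 0$ and $\gamma(r^2 - s^2) = 1$; this is solvable for every $\gamma$ precisely because the norm on $B$ is hyperbolic and therefore universal. The resulting basis $\{1, w, i', w i'\}$ of $D_\gamma(B)$ has a multiplication table independent of $\gamma$ and coinciding with that of $D_1(D_1(F))$, proving uniqueness in dimension $4$; for $n = 3$ one iterates once more inside the $8$-dimensional algebra to adjoin a third mutually anticommuting imaginary generator squaring to $1$. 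The main obstacle I anticipate is the case $\alpha = 0$ in the production of the first $w$, where upgrading an isotropic $u$ with $u^2 = 0$ to a genuine square root of $1$ genuinely needs input from the theory of isotropic quadratic forms and depends on verifying that $n|_{\im(A)}$ is itself nondegenerate and isotropic, not merely $n|_A$.
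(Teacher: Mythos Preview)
The paper does not give its own proof of this theorem; it is cited from Schafer. So there is no paper argument to compare to, only the question of whether your outline stands on its own.

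There is a genuine circularity in your plan. You invoke Theorem~\ref{doubling2} to realize the split algebra $A$ as a Dickson double $D_\gamma(B)$ of your chosen split subalgebra $B$. But look at the paper's proof of Theorem~\ref{doubling2}: its very first step in the split case is ``When $A$ is split, Theorem~\ref{splitunique} \ldots\ implies that $A$ is isomorphic to the abstract Dickson double $D_\gamma(B)$.'' So Theorem~\ref{doubling2}, as proved in this paper, rests on the theorem you are trying to establish. Proposition~\ref{doubling} alone does not help: its proof needs that a nonzero imaginary $i$ has $i^2 \neq 0$ and that every nonzero element of $B$ is invertible, both of which fail in the split case.

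The approach is salvageable, but you must replace the appeal to Theorem~\ref{doubling2} by a direct argument. One route: verify that the norm form $n$ on $A$ (and hence on $\im(A)$) is nondegenerate, take $i$ in the orthogonal complement $B^\perp$ with $n(i)\neq 0$ (possible since $n|_{B^\perp}$ is nondegenerate), check that $i\in B^\perp$ forces $xi = ix^*$ for $x\in B$, and then apply Lemma~\ref{allfromxi2} to conclude $A\cong D_\gamma(B)$ by a dimension count. For $n=3$ your sketch ``iterate once more'' hides real work: after finding $w_1$ you need $w_2 \in \im(A)\cap w_1^\perp$ with $w_2^2 = 1$, i.e.\ you need the form on the $6$-dimensional space $(F1\oplus Fw_1)^\perp$ to represent $-1$; this is not automatic from isotropy of $n$ on $A$, since the hyperbolic plane $F1\oplus Fw_1$ already absorbs one isotropic vector. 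You will want something like Witt cancellation here. Finally, the nondegeneracy of $n|_{\im(A)}$ that you flag as an obstacle is genuine but routine: show $n(a+ib) = n(a) - \gamma\, n(b)$ on $D_\gamma(B)$ and induct.
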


The number of alternative Cayley-Dickson algebras \emph{not} containing divisors of zero (hence division algebras by Lemma \ref{division}) depends on the field $F$. Cayley-Dickson algebras of dimension 16 and greater (equivalently: non-alternative Cayley-Dickson algebras) will be of no concern of us, although we note that the 16-dimensional $\mathbb{R}$-algebra $D_{-1}(D_{-1}(D_{-1}(D_{-1}(\mathbb{R}))))$ (the sedenions) are interesting as an example of an algebra containing zero-divisors while at the same time every non-zero element has a multiplicative inverse (cf Lemma \ref{division}).

We collect some consequences of the uniqueness theorem.

\begin{corollary}\label{sqrt1}
An alternative Cayley-Dickson algebra is split if and only if it contains an imaginary square root of 1
\end{corollary}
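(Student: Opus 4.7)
My plan is to handle the two directions separately, using the uniqueness theorem (Thm \ref{splitunique}) to reduce the harder direction to checking explicit examples.

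The easy direction goes as follows. Suppose $u \in \im(A)$ satisfies $u^2 = 1$. Since $\pm 1 \in F1$ commutes and associates with every element of $A$, bilinearity gives
$$(u-1)(u+1) \;=\; u^2 + u - u - 1 \;=\; u^2 - 1 \;=\; 0.$$
Neither factor vanishes: $u$ is imaginary and nonzero, while $\pm 1$ are not imaginary (indeed $\tau(\pm 1) = \pm 1 \neq 0$). Hence $A$ contains zero-divisors and is split by definition.

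For the converse I propose invoking Theorem \ref{splitunique}: in each of the dimensions $2$, $4$, $8$ there is, up to isomorphism, a \emph{unique} split alternative Cayley-Dickson algebra. It therefore suffices to exhibit in each of these dimensions a single concrete split alternative Cayley-Dickson algebra that contains an imaginary square root of $1$; every isomorphic copy will inherit the property. The natural candidates are the Dickson doubles formed with $\gamma = 1$: in dimension $2$ take $D_1(F)$; in dimension $4$ take $D_1(K)$ for any $2$-dimensional Cayley-Dickson algebra $K$; in dimension $8$ take $D_1(Q)$ for any quaternion algebra $Q$. In each case the adjoined element $i$ of Definition \ref{DicksonDouble} satisfies $i^2 = 1$ and $i^* = -i$, so it lies in $\im$ and is the required imaginary square root. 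By Theorem \ref{Albert} each of these algebras is alternative, and by the easy direction each is split — so they are legitimate realizations of the split algebra in their dimension and the uniqueness theorem extends the conclusion to every isomorphic copy. The degenerate case $\dim A = 1$ is vacuous, since $F$ itself has no zero-divisors and no imaginary elements.

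The main obstacle, such as it is, sits in the forward direction: one could imagine wanting a purely intrinsic argument that manufactures an imaginary square root of $1$ directly from an arbitrary pair of zero-divisors, but this looks unpleasant. Leaning on the uniqueness theorem sidesteps it entirely by letting us pick the most convenient presentation of each split algebra, in which the square root is visible by inspection.
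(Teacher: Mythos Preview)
Your proof is correct and follows essentially the same route as the paper: factor $(u-1)(u+1)=0$ for the ``if'' direction, then invoke the uniqueness theorem (Thm.~\ref{splitunique}) together with an explicit Dickson double built with $\gamma = 1$ for the ``only if'' direction. The only cosmetic difference is that the paper places the $\gamma=1$ step at the bottom of the tower (using $D_1(F)\subset D_\beta(D_1(F))\subset D_\alpha(D_\beta(D_1(F)))$) while you place it at the top ($D_1(F)$, $D_1(K)$, $D_1(Q)$); both yield the required imaginary square root of $1$.
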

\begin{proof}
For the `if' direction, let $e$ be a imaginary square-root of 1. Then $(1 + e) \neq 0, (1 - e) \neq 0, (1+e)(1-e) = 0$, showing the existence of zero-divisors. Conversely: $D_1(F)$ and hence $D_\beta(D_1(F))$ and $D_\alpha(D_\beta(D_1(F)))$ contain an imaginary square-root of 1 by construction and hence the `only if' direction follows from the uniqueness theorem.
\end{proof}

We can however be a bit more explicit.

\begin{proposition}\label{CD2}
The (by Theorem \ref{splitunique}) unique two-dimensional split Cayley-Dickson algebra is isomorphic to the algebra $F \oplus F$ of pairs of elements in $F$ with pointwise addition and multiplication. Here the $*$ operation is given by $(\alpha, \beta)^* = (\beta, \alpha)$
\end{proposition}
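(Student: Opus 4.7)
The plan is to invoke the uniqueness theorem (Theorem~\ref{splitunique}) to reduce the statement to exhibiting a \emph{single} two-dimensional split Cayley--Dickson algebra that is isomorphic to $F \oplus F$ (with the claimed involution). By Lemma~\ref{2dim}, an algebra $D_\gamma(F)$ is split precisely when $\gamma$ is a square in $F$, so I would take $\gamma = 1$ and work with $D_1(F) = F \oplus iF$, where $i^2 = 1$ and $i^* = -i$.

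The key step is to use the hypothesis $\chr(F) \neq 2$ to split $D_1(F)$ into two orthogonal one-dimensional ideals via idempotents. Set
\[
e_1 = \tfrac{1}{2}(1 + i), \qquad e_2 = \tfrac{1}{2}(1 - i).
\]
A direct computation using $i^2 = 1$ shows that $e_1 + e_2 = 1$, $e_1^2 = e_1$, $e_2^2 = e_2$, and $e_1 e_2 = e_2 e_1 = 0$ (this last identity is exactly the ``strange product'' instance from the proof of Lemma~\ref{2dim}). Thus $\{e_1, e_2\}$ is an $F$-basis of $D_1(F)$ consisting of orthogonal idempotents.

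Next I would define the $F$-linear map
\[
\phi \colon D_1(F) \longrightarrow F \oplus F, \qquad \phi(\alpha e_1 + \beta e_2) = (\alpha, \beta).
\]
Since $\phi$ sends a basis to a basis it is an $F$-linear isomorphism, and the idempotent/orthogonality relations above imply $\phi((\alpha e_1 + \beta e_2)(\gamma e_1 + \delta e_2)) = \phi(\alpha\gamma\, e_1 + \beta\delta\, e_2) = (\alpha\gamma, \beta\delta) = \phi(\alpha e_1 + \beta e_2)\cdot \phi(\gamma e_1 + \delta e_2)$, so $\phi$ is a unital algebra isomorphism onto $F \oplus F$ with pointwise multiplication.

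Finally I would check the involution. Writing an arbitrary element as $\alpha e_1 + \beta e_2$, linearity of $*$ together with $1^* = 1$ and $i^* = -i$ gives $e_1^* = e_2$ and $e_2^* = e_1$, hence $(\alpha e_1 + \beta e_2)^* = \beta e_1 + \alpha e_2$; transported through $\phi$ this is exactly $(\alpha, \beta) \mapsto (\beta, \alpha)$, as claimed. There is no real obstacle here; the only subtlety worth flagging is that the construction of $e_1, e_2$ genuinely requires $\chr(F)\neq 2$, which is part of our standing assumption.
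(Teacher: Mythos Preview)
Your proof is correct and takes essentially the same approach as the paper: both reduce via Theorem~\ref{splitunique} to exhibiting an isomorphism $D_1(F) \cong F \oplus F$, and your map $\phi$ is precisely the inverse of the paper's map $1 \mapsto (1,1)$, $i \mapsto (1,-1)$. Your version is more detailed---in particular you explicitly verify the involution, which the paper's proof omits---but the underlying idea is identical.
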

\begin{proof}
Taking uniqueness for granted we only need to show that this algebra is isomorphic to $D_1(F)$. Let $i$ be the element of that algebra used in the definition of the doubling process (so in this case $i^2 = 1$). Then $\{1, i\}$ is a basis of $D_1(F)$ and the isomorphism is given by $1 \mapsto (1, 1)$, $i \mapsto (1, -1)$.
\end{proof}
\begin{proposition}\label{Mat2F}
With $\tau, n, *$ as in Section \ref{nice}, the (by Theorem \ref{splitunique} unique) four-dimensional Cayley-Dickson algebra over $F$ containing divisors of zero is isomorphic to the matrix algebra $\Mat(2, F)$ with $1 = I$, $\tau(X) = \frac{1}{2} \Tr(X)$, $n(x) = \det(X)$ and $\begin{pmatrix}
a & b \\ c & d
\end{pmatrix}^* = \begin{pmatrix}
d & -b \\ -c & a
\end{pmatrix}$.
\end{proposition}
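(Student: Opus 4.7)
The plan is to build the proof on Lemma \ref{allfromxi2}, which is tailor-made to recognise an alternative algebra with strong involution as a Dickson double. The argument splits naturally into three stages: first verify that the prescribed $*$ is a strong involution on $\Mat(2, F)$ with $\tau$ and $n$ reproducing the stated trace/determinant formulas; then exhibit $\Mat(2, F)$ as the Dickson double of a two-dimensional Cayley-Dickson subalgebra; and finally check that the resulting Cayley-Dickson algebra is split.

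For the first stage, I would compute directly that for $X = \bigl(\begin{smallmatrix} a & b \\ c & d \end{smallmatrix}\bigr)$ one has $X + X^* = \Tr(X)\cdot I$ and $X^*X = XX^* = \det(X)\cdot I$. The identity $(XY)^* = Y^*X^*$ can either be verified by a four-line matrix multiplication or be read off from the observation that $X^* = \det(X)X^{-1}$ for invertible $X$ (which extends by a polynomial identity argument to all $X$). Strongness is immediate: $X^* = X$ forces $b = c = 0$ and $a = d$, hence $X \in F\cdot I$. By Theorem \ref{quadratic} this already makes $\Mat(2,F)$ into a quadratic algebra, and comparison with (\ref{taustar}) and (\ref{n}) delivers the claimed formulas $\tau(X) = \frac{1}{2}\Tr(X)$ and $n(X) = \det(X)$.

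For the second stage, I would take $B \subset \Mat(2,F)$ to be the subalgebra of diagonal matrices. This is closed under $*$ and is manifestly isomorphic, as an algebra with involution, to the two-dimensional Cayley-Dickson algebra $F \oplus F$ of Proposition \ref{CD2}. Now take $i = \bigl(\begin{smallmatrix} 0 & 1 \\ 1 & 0 \end{smallmatrix}\bigr)$, which lies in $\im(\Mat(2, F)) \setminus B$ and satisfies $i^2 = I$, so $\gamma = 1 \neq 0$. A direct computation confirms $xi = ix^*$ for every diagonal $x \in B$. Since $\Mat(2, F)$ is associative, it is alternative, and Lemma \ref{allfromxi2} therefore produces an embedded copy of $D_1(F \oplus F)$. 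A dimension count ($4 = 4$) forces $\Mat(2,F) = D_1(F \oplus F)$, which is a four-dimensional Cayley-Dickson algebra by construction.

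The third stage is essentially free: $\Mat(2, F)$ contains the obvious zero divisor $\bigl(\begin{smallmatrix} 0 & 1 \\ 0 & 0 \end{smallmatrix}\bigr)$, so by the Uniqueness Theorem \ref{splitunique} it must be the split four-dimensional Cayley-Dickson algebra over $F$. (Equivalently, the imaginary element $i$ already is a square root of $1$, so Corollary \ref{sqrt1} applies.) The only step that requires a little care is checking $xi = ix^*$ and showing $*$ is an anti-automorphism; everything else is bookkeeping. In particular, no `genuine' obstacle arises, because Lemma \ref{allfromxi2} was designed precisely to move from the local identity (\ref{xi}) to the full Dickson-double multiplication.
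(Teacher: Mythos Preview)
Your proof is correct and follows the same overall template as the paper---realise $\Mat(2,F)$ as a Dickson double of a two-dimensional Cayley--Dickson subalgebra, then invoke Theorem~\ref{splitunique}---but you choose a different pair $(B,i)$ than the paper does.

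The paper takes $B = \bigl\{\bigl(\begin{smallmatrix} a & -b \\ b & a \end{smallmatrix}\bigr)\bigr\} \cong D_{-1}(F)$ and $i = \bigl(\begin{smallmatrix} 1 & 0 \\ 0 & -1 \end{smallmatrix}\bigr)$, exhibiting $\Mat(2,F)$ as $D_1(D_{-1}(F))$. You instead take $B$ to be the diagonal matrices, identify $B$ with the split algebra $F\oplus F \cong D_1(F)$ via Proposition~\ref{CD2}, and use $i = \bigl(\begin{smallmatrix} 0 & 1 \\ 1 & 0 \end{smallmatrix}\bigr)$, realising $\Mat(2,F)$ as $D_1(D_1(F))$. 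Your choice is arguably the more transparent one: the diagonal subalgebra is the obvious commutative subalgebra, its involution (swap of entries) matches Proposition~\ref{CD2} on the nose, and the verification of $xi = ix^*$ is a one-line check. The paper's choice has the mild advantage that $D_{-1}(F)$ is sometimes a field, connecting to the picture of $\Mat(2,F)$ as a cyclic algebra, but that plays no role here. Your explicit appeal to Lemma~\ref{allfromxi2} also makes the passage from the single identity~(\ref{xi}) to the full Dickson-double structure cleaner than the paper's ``a little more effort'' remark; just note that to conclude $\Mat(2,F) = D_1(B)$ rather than merely $\supseteq$, one should observe (as you implicitly do via the dimension count) that $B \cap iB = \{0\}$, which is immediate since $iB$ consists of anti-diagonal matrices.
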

This relation to the matrix trace is also the reason that the operator $\tau$ is called $\tau$.  
\begin{proof} Taking uniqueness for granted, we only need to show an isomorphism between the matrix algebra and $D_1(D_{-1}(F))$. It is easy to verify that the subalgebra $\{\begin{pmatrix}
a & - b \\ b & a
\end{pmatrix} \colon a, b \in F\}$ of $\Mat(2, F)$ is isomorphic to $D_{-1}(F)$ with the involution being the restriction to this subalgebra of the the involution described in the proposition. Denoting this subalgebra by $B$ for the moment, it takes only a little more effort to verify that the algebra $\Mat(2, F)$ is isomorphic to the Dickson Double $D_1(B)$ of $B$ where the role of the element $i$ (the `new square-root of 1') is played by the matrix $i = \begin{pmatrix}
1 & 0 \\
0 & -1
\end{pmatrix}$.
\end{proof}

In a sense the split Cayley and quaternion algebras are the richest Cayley and quaternion algebras:

\begin{corollary}\label{rich}
Every isomorphism class of Cayley-Dickson algebras of dimension $2^{n-1}$ $(n = 1, 2, 3)$ is represented among the subalgebras of the split Cayley-Dickson algebra of dimension $2^n$.
\end{corollary}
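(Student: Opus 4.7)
The plan is to show that for every Cayley-Dickson algebra $A$ of dimension $2^{n-1}$, the specific Dickson double $D_1(A)$ (i.e., the one with parameter $\gamma = 1$) is a split Cayley-Dickson algebra of dimension $2^n$. Since $A$ sits inside $D_1(A)$ as a subalgebra by construction, and since Theorem \ref{splitunique} tells us that up to isomorphism there is only one split Cayley-Dickson algebra in each of the dimensions $2, 4, 8$, this immediately yields the corollary.

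Concretely, I would first observe that $D_1(A)$ is a Cayley-Dickson algebra of dimension $2^n \in \{2, 4, 8\}$, hence alternative by Theorem \ref{Albert}: a double of a commutative algebra is associative, and a double of an associative algebra is alternative (and the one-dimensional $F$ is of course commutative). Next I would point out that the element $i$ used to construct the Dickson double satisfies $i^2 = \gamma \cdot 1 = 1$ by Definition \ref{DicksonDouble} and $i^* = -i$ by construction, so $i \in \im(D_1(A))$ is an imaginary square root of $1$. Applying Corollary \ref{sqrt1} to the alternative algebra $D_1(A)$ then shows that $D_1(A)$ is split, and Theorem \ref{splitunique} identifies it up to isomorphism with the unique split Cayley-Dickson algebra of dimension $2^n$.

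Finally, by the definition of the Dickson double, $A$ is a $2^{n-1}$-dimensional subalgebra of $D_1(A)$, closed under the extended involution. Hence every isomorphism class of $2^{n-1}$-dimensional Cayley-Dickson algebras is represented as a subalgebra of the split Cayley-Dickson algebra of dimension $2^n$, as claimed.

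There is no real obstacle here: all the work has already been done in the preceding machinery, and the only minor point to keep in mind is the dimension restriction $n \le 3$, which is precisely what guarantees that $D_1(A)$ stays within the alternative range so that Corollary \ref{sqrt1} is applicable.
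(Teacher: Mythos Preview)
Your proof is correct and follows exactly the same idea as the paper's one-line argument, which simply says that the split Cayley-Dickson algebra of dimension $2^n$ can be realized as $D_1(A)$. You have just spelled out the implicit reasoning (via Corollary~\ref{sqrt1} and Theorem~\ref{splitunique}) that justifies why $D_1(A)$ is split.
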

\begin{proof}
Given a $2^{n-1}$-dimensional Cayley-Dickson algebra $A$ we can realize the split Cayley-Dickson algebra of dimension $2^n$ as $D_1(A)$.
\end{proof}

However, over certain important fields this richness is not too impressive, as the following two corollaries show.

\begin{corollary}
Every alternative Cayley-Dickson algebra over an algebraically closed field $F$ is split.
\end{corollary}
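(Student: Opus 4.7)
My plan is to use Corollary \ref{sqrt1}, which reduces the problem (for the alternative Cayley-Dickson algebras, i.e. those of dimension 2, 4, or 8) to exhibiting an imaginary square root of 1. The key observation is that over an algebraically closed field, the obstruction to turning any nonzero imaginary element into such a square root disappears.

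Concretely, let $A$ be an alternative Cayley-Dickson algebra over $F$ of dimension $2^n$ with $n \in \{1,2,3\}$ (the one-dimensional case $A = F$ is not split, but then $A$ contains no imaginary elements and we implicitly exclude it). By Lemma \ref{codim1} the space $\im(A)$ has codimension 1, so in particular it is nonzero; pick any $0 \ne u \in \im(A)$. By characterization (\ref{im}) of the imaginary elements, $u^2 = \beta \cdot 1$ for some $\beta \in F$. I would then split into two cases:

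First, if $\beta = 0$, then $u \cdot u = 0$ with $u \neq 0$, so $u$ is itself a zero divisor and $A$ is split by definition. Second, if $\beta \neq 0$, then because $F$ is algebraically closed there exists $\mu \in F^\times$ with $\mu^2 = \beta$. The element $e := \mu^{-1} u$ then lies in $\im(A)$ (because $\im(A)$ is an $F$-subspace) and satisfies $e^2 = \mu^{-2} \beta \cdot 1 = 1$. Since $e \in \im(A)$ and the decomposition $A = F1 \oplus \im(A)$ of (\ref{ReIm}) is a direct sum, $e$ is not in $F1$, so $e$ is a genuine imaginary square root of $1$. Corollary \ref{sqrt1} then concludes that $A$ is split.

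No step here looks technically hard; the whole argument is essentially a one-line application of Corollary \ref{sqrt1} once one notices that algebraic closure provides the missing square root of $-n(u)$. The only subtle point is remembering that the statement must tacitly exclude the one-dimensional algebra $A = F$ (which has no imaginary elements at all and is a field, hence not split in the sense of containing zero divisors); this is the only place where the argument could appear to fail, and it is resolved by the convention that ``split'' is only meaningful in dimension $\geq 2$.
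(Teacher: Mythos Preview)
Your proof is correct and essentially identical to the paper's: pick a nonzero imaginary $u$, use $u^2\in F1$, and either $u^2=0$ gives a zero divisor directly or algebraic closedness yields a scalar $\mu$ with $(\mu^{-1}u)^2=1$, after which Corollary~\ref{sqrt1} applies. Your remark about the one-dimensional case is a reasonable aside that the paper leaves implicit.
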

\begin{proof}
Let $u$ be a non-zero imaginary element in such an algebra. By (\ref{im}), $u^2 \in F$ and by algebraic closedness there exists an $\alpha \in F$ such that $\alpha^2 = u^2$. If $u^2  = 0$ we have that $u$ is a zero-divisor and we are done. Otherwise $(u/\alpha)$ is an imaginary square-root of 1 and we can apply Corollary \ref{sqrt1}.
\end{proof}

\begin{corollary}\label{splitfinite}
Over a finite field $F$ there is up to isomorphism only one quaternion algebra and only one Cayley algebra.
\end{corollary}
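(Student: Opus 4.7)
The plan is to reduce the statement to showing that every quaternion algebra and every Cayley algebra over the finite field $F$ is \emph{split}, after which uniqueness follows immediately from the Uniqueness Theorem \ref{splitunique}. So the whole task becomes: produce a zero divisor in any such algebra $A$ over $F$.

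For this I would exploit the multiplicative norm $n \colon A \to F$ defined in Section \ref{nice} by $n(a) = a^*a$, which is a quadratic form on the $F$-vector space $A$ of dimension $4$ (quaternion case) or $8$ (Cayley case). The key observation is that $n$ is \emph{isotropic}: by the Chevalley--Warning theorem, any quadratic form in more than $2$ variables over a finite field has a non-trivial zero, so there exists $a \in A$ with $a \neq 0$ and $n(a) = 0$. Since $*$ is an involution (hence injective), $a^* \neq 0$ as well, and the relation $a^*a = n(a) = 0$ then exhibits $a$ (and $a^*$) as a zero-divisor. Hence $A$ contains divisors of zero and is split in the sense introduced just before Theorem \ref{splitunique}.

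Applying Theorem \ref{splitunique} in dimensions $4$ and $8$ then shows that there is at most one isomorphism class of split quaternion algebras and at most one of split Cayley algebras over $F$; since we have just shown that \emph{every} quaternion algebra and \emph{every} Cayley algebra over $F$ lies in that class, the corollary follows. The only even mildly non-trivial step is the invocation of Chevalley--Warning (or, equivalently, the elementary fact that any non-degenerate quadratic form in $\geq 3$ variables over a finite field of odd characteristic is isotropic); everything else is formal. If one wished to avoid citing Chevalley--Warning, the same isotropy statement can be proved directly by a counting argument, noting that for odd $q$ the set of squares in $F^\times$ has size $(q-1)/2$, so the sets $\{x^2\}$ and $\{-y^2 - 1\}$ in any two-variable sub-form have overlapping cardinalities and must therefore intersect.
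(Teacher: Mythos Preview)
Your argument is correct. The reduction to Theorem \ref{splitunique} via producing a zero-divisor is exactly the right shape, and the isotropy of the norm form $n$ over a finite field (by Chevalley--Warning, or the pigeonhole counting you sketch) does the job cleanly; this is essentially Lemma \ref{splitn0} applied in reverse.

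The paper takes a different route. Rather than analysing the norm form, it invokes Wedderburn's little theorem: a finite division algebra is commutative, so by Theorem \ref{Albert} no $4$-dimensional Cayley--Dickson algebra over $F$ can be a division algebra, hence every quaternion algebra is split. The Cayley case is then handled structurally: any Cayley algebra contains a quaternion subalgebra, which we just showed has zero-divisors, and these persist in the ambient algebra. Your approach treats the $4$- and $8$-dimensional cases in a single stroke and stays closer to the quadratic-form machinery already set up in Section \ref{nice}, whereas the paper's argument trades Chevalley--Warning for Wedderburn and exploits the inclusion of subalgebras instead. Both external inputs are classical, but yours is arguably the more elementary one and fits more naturally with the later discussion around Lemma \ref{splitn0} and equation (\ref{generaln}).
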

\begin{proof}
By Wedderburn's little theorem, every division algebra over $F$ is commutative. By Theorem \ref{Albert} this means that no 4-dimensional Cayley-Dickson algebra can be a division algebra and hence the last proposition implies that all 4-dimensional Cayley-Dickson algebras over $F$ are isomorphic to $\Mat(2, F)$. Moreover, since every Cayley algebra $C$ over $F$ is obtained by doubling a quaternion algebra $Q$ over $F$, which it contains as a subalgebra, any zero-divisor present in the quaternion algebra used to obtain $C$ will also be an element of $C$. As we just saw that the presence of zero-divisors in $Q$ is inevitable, the uniqueness theorem tells us that $C$ is unique up to isomorphism.
\end{proof}

To end this subsection on a happy note we recall that we already saw in the introduction an example of a Cayley algebra that \emph{is} a division algebra, the algebra $\OO \coloneqq D_{-1}(D_{-1}(D_{-1}(\mathbb{R})))$ over $F = \mathbb{R}$. The proof presented there that $\OO$ is a division algebra relied on the fact that the Euclidean norm $\|.\|$ on $\RR^8$ respects the octonion multiplication: $\|x\|^2\|y\|^2 = \|xy\|^2$. In the context of more general Cayley-Dickson algebras this is just equation (\ref{nmult}) combined with the fact that in the special case of $\OO$ the function $n$ equals the square of the Euclidean norm. This suggests a path forward to finding other fields over which a Cayley division algebra might exist.

\begin{lemma}\label{splitn0}
Let $A$ be a Cayley-Dickson algebra. Then $A$ contains a zero divisor if and only it contains a non-zero element $x$ such that $n(x) = 0$.
\end{lemma}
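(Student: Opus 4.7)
The plan is to deduce both directions directly from two facts already on the table: (i) for any Cayley-Dickson algebra $A$ with its strong involution $*$ we have $n(x) = x^*x = xx^*$ (this is the definition (\ref{n}) extracted in the proof of Theorem \ref{quadratic}, and a one-line computation using $x = \tau(x) + \im(x)$ and the definition of $*$ confirms the second equality), and (ii) the multiplicativity $n(xy) = n(x)n(y)$ of (\ref{nmult}), which holds in alternative algebras. Since every Cayley-Dickson algebra of dimension $\leq 8$ is alternative by Theorem \ref{Albert}, this is available in all cases of interest in the present paper.

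For the easy direction, suppose $x \in A$ is non-zero with $n(x) = 0$. Then $x^*x = 0$. Because the involution $*$ is a linear map of order dividing $2$, it is its own inverse, hence a bijection on $A$; in particular $x^* \neq 0$. Thus the pair $(x^*, x)$ consists of two non-zero elements whose product is zero, exhibiting $x$ (and also $x^*$) as a zero-divisor.

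For the other direction, suppose there exist non-zero $x, y \in A$ with $xy = 0$ (the case $yx = 0$ is symmetric after swapping the roles of $x$ and $y$ under $*$, since $(xy)^* = y^*x^*$). Applying $n$ and invoking (\ref{nmult}) gives $n(x)n(y) = n(xy) = n(0) = 0$ in $F$, so at least one of $n(x)$, $n(y)$ vanishes, supplying the required non-zero element of $A$ with trivial norm.

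There is no real obstacle here; the statement is essentially a packaging of the observation, advertised in the paragraph just above the lemma, that the norm in a Cayley algebra behaves the same way the squared Euclidean norm does in $\OO$. The one point worth flagging is that the forward implication does rely on alternativity via (\ref{nmult}); this is precisely why the lemma is stated within the Cayley-Dickson framework of the paper (dimensions $1, 2, 4, 8$) and cannot be extended naively to the sedenions, where the norm ceases to be multiplicative and zero-divisors of non-zero norm do occur.
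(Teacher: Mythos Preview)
Your proof is correct and follows essentially the same route as the paper: the if-direction uses $n(x) = xx^*$ (with your extra care in noting $x^* \neq 0$ via bijectivity of $*$, which the paper leaves implicit), and the only-if-direction uses multiplicativity of $n$ together with $n(0) = 0$. Your closing remark on the sedenions is a nice addition but not in the paper's proof.
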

\begin{proof}
For the if-direction it is enough to recall that $xx^* = n(x)$ (equation (\ref{n})). For the only-if-direction let $a, b \in A$ such that $ab = 0$. We note that by (\ref{taun}) we have that $n(0) = 0$ so that the multiplicativity of $n$ (equation (\ref{nmult})) implies that $n(a) = 0$ or $n(b) = 0$. 
\end{proof}

Apparently the key to understanding the (non-)existence of zero divisors is understanding the quadratic form $n \colon A \to F$. We'll return to this in Section \ref{Serre}.

\subsection{Multiplication table and relation to the Fano plane}\label{OtoFano}
In Section \ref{introduction} we gave a multiplication table (Table \ref{multtabclassic}) of $\OO = D_{-1}(D_{-1}(D_{-1}(\RR)))$ with respect to the `standard' basis. We also indicated how, when viewing the seven imaginary elements among these standard basis elements as the points in a classical Fano plane, the lines in that Fano plane can be used to describe the multiplication. In this section we will provide the proof of those statements by providing a similar table, Table \ref{multtabgeneral}, for the general Cayley algebra $\mathcal{O} \coloneqq D_\gamma(D_\beta(D_\alpha(F)))$, where $F$ is any field and $\alpha, \beta, \gamma$ are elements of $F^\times$. A special case of interest (see also Section \ref{Serre}) occurs when $\alpha, \beta, \gamma$ are algebraically independent, transcedental elements w. r. t. a subfield $K$ of $F$, but this requirement is not necessary for the correctness of the table. In particular we see that when choosing $\alpha = \beta = \gamma = -1$ in Table \ref{multtabgeneral} we recover Table \ref{multtabclassic}, showing that Table \ref{multtabclassic} is indeed the correct multiplication table, not only of $\OO = D_{-1}(D_{-1}(D_{-1}(\RR)))$ but for the Cayley algebras $D_{-1}(D_{-1}(D_{-1}(F)))$ over any $F$.

Although Table \ref{multtabgeneral} is a bit less transparant than Table \ref{multtabclassic}, the underlying Fano plane structure is still visible when igorning the scalars. In particular: for $a, b, c \in \{e_0, \ldots, e_6\}$ we have that $abc \in F1$ (for both placements of the brackets) if and only if $\{a, b, c\}$ is a line in the standard Fano plane structure on the $e_i$. Since moreover for \emph{any} imaginary elements $a, b \in \im(\mathcal{O})$ with $a^2 \neq 0$ we have that $ab \in F1$ if and only if $b \in Fa$, we find that the product of two imaginary basis elements $e_i, e_j$ is always a scalar multiple of the third basis element $e_k$ lying on the line spanned by $e_i, e_j$ in the Fano plane structure on $\{e_0, \ldots, e_6\}$. 

The goal of this subsection is to prove these statements and compute the mentioned `scalar multiples', i.e. to verify the correctness of Table \ref{multtabgeneral}. In other words, our goal is to recover the points and lines of the Fano plane, together with the additional information in Table \ref{multtabgeneral}, from the Cayley algebra $\mathcal{O}$. The opposite direction, obtaining the Cayley algebra (and subsequent $q$-covering design) from the Fano plane together with some additonal information as in Theorem \ref{mainintro}, will be the subject of Section \ref{combinatorics}.

\begin{notation}
For the duration of this subsection let $\mathcal{O} = D_\gamma(D_\beta(D_\alpha(F)))$ and let $e_0, e_1, e_2 \in \im(\mathcal{O})$ denote the elements $i$ used in the first, second and third Dickson-doubling step respectively. Hence we have $e_0^2 = \alpha1$, $e_1^2 = \beta 1$ and $e_2^2 = \gamma 1$.  
\end{notation}
Altough the doubling process puts a clear `hierarchy' on the elements $e_0, e_1, e_2$, here we adopt a different perspective where the three elements $e_0, e_1, e_2$ are treated on equal footing, as \emph{generators} of $\mathcal{O}$; the remaining basis elements are then derived from them as products: $e_3 \coloneqq e_0e_1$, $e_4 \coloneqq e_1e_2$, $e_5 = e_0(e_1e_2)$ and $e_6 = e_0e_2$ respectively. 

What `allows' us to treat $e_0, e_1, e_2$ as of equal importance is the fact that both these elements and their product are imaginary: taking $x = e_0, i = e_2$ in (\ref{xi}) reads $e_0e_2 = -e_2e_0$ while taking $x = e_2, i = e_0$ reads $e_2e_0 = -e_0e_2$, which is clearly equivalent. The same holds for the pairs $(e_0, e_1)$ and $(e_1, e_2)$ and hence it follows that when presented with $\mathcal{O}$ there is no meaningful way to tell whether if it was created as $D_\gamma(D_\beta(D_\alpha(F)))$, as $D_\gamma(D_\alpha(D_\beta(F)))$, as $D_\beta(D_\gamma(D_\alpha(F)))$, as $D_\beta(D_\alpha(D_\gamma(F)))$, as $D_\alpha(D_\gamma(D_\beta(F)))$ or as $D_\alpha(D_\beta(D_\gamma(F)))$, even if the three elements $e_0, e_1, e_2$ used as $i$ in each doubling step are fixed and known.

\begin{lemma}\label{imbasis}
Let $e_3 = e_0e_1$, $e_4 = e_1e_2$, $e_6 = e_0e_2$ and $e_5 = -e_3e_2$ then each of $e_0 \ldots, e_6$ is imaginary and their squares are as given as in Table \ref{multtabgeneral}.
\end{lemma}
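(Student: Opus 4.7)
The plan is to leverage three structural tools, all already present in the excerpt: the relation (\ref{xi}) obtained at each Dickson doubling step, the anti-automorphism property (\ref{star}) combined with the characterization $\im(\mathcal{O}) = \{a : a^* = -a\}$ from (\ref{im2}), and the alternativity of $\mathcal{O}$ established via Theorem \ref{Albert}, which tells us that any subalgebra of $\mathcal{O}$ generated by two elements is associative.

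First I would establish pairwise anticommutation of the three generators $e_0, e_1, e_2$. Equation (\ref{xi}) applied at the second doubling step yields $x e_1 = e_1 x^*$ for all $x \in D_\alpha(F)$; specialising to the imaginary element $x = e_0$ gives $e_0 e_1 = -e_1 e_0$. Repeating this at the third doubling step with $x \in \{e_0, e_1\}$ gives $e_i e_2 = -e_2 e_i$ for $i = 0, 1$. (At the first doubling step the analogous statement is vacuous, as $\im(F) = 0$.)

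Next, for each of $e_3 = e_0 e_1$, $e_4 = e_1 e_2$, and $e_6 = e_0 e_2$, imaginarity is immediate from (\ref{star}) and the imaginarity of the two factors: for instance $e_3^* = e_1^* e_0^* = e_1 e_0 = -e_0 e_1 = -e_3$. Each of these three elements lies in a subalgebra of $\mathcal{O}$ generated by two elements, hence an associative subalgebra, so I can re-associate freely. A one-line computation of the shape $(e_a e_b)^2 = -(e_a e_b)(e_b e_a) = -e_a e_b^2 e_a = -e_a^2 e_b^2$ (using anticommutation for the first equality and associativity twice, noting that $e_b^2 \in F1$ is central) recovers the table values $e_3^2 = -\alpha\beta$, $e_4^2 = -\beta\gamma$ and $e_6^2 = -\alpha\gamma$.

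The one genuinely delicate case is $e_5 = -e_3 e_2$, since the triple $e_0, e_1, e_2$ generates all of $\mathcal{O}$, which is \emph{not} associative; any attempt to collapse three original generators at once would have to track the associator. The remedy is to stay inside the two-generator subalgebra $\langle e_3, e_2\rangle$, which is associative by alternativity. Once $e_3$ is known to be imaginary (from the previous step), (\ref{xi}) applied at the third doubling step with $x = e_3 \in \im(D_\beta(D_\alpha(F)))$ gives $e_3 e_2 = -e_2 e_3$; the anti-automorphism property then yields $e_5^* = -e_5$, and the same re-association trick as above computes $e_5^2 = (e_3 e_2)^2 = -\gamma\, e_3^2 = \alpha\beta\gamma$, substituting the previously computed value $e_3^2 = -\alpha\beta$. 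The only thing to watch throughout is precisely this point: re-association is legal inside any two-generator subalgebra, but not across three generators, so the whole scheme hinges on viewing $e_5$ as a product of the two imaginary elements $e_3$ and $e_2$ rather than of the three original generators.
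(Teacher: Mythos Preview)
Your proof is correct and follows essentially the same route as the paper: both establish imaginarity of the products via the anti-automorphism property (the paper invokes (\ref{xistar}) directly, you unfold it via (\ref{star}) plus anticommutation), and both compute the squares by the identical alternativity-plus-anticommutation trick $(e_ae_b)^2 = -e_a^2e_b^2$ inside a two-generator subalgebra. Your explicit emphasis on why $e_5$ must be handled through the pair $(e_3,e_2)$ rather than the triple $(e_0,e_1,e_2)$ is a nice clarification, but the underlying argument is the same.
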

\begin{proof}
For the proof we return to the viewpoint of $\mathcal{O}$ as $D_\gamma(D_\beta(D_\alpha(F)))$. $e_0, e_1, e_2$ are imaginary with squares $\alpha, \beta, \gamma$ respectively by definition. $e_3 = e_0e_1$ appears in the four-dimensional subalgebra $D_\beta(D_\alpha(F))$. Viewing $D_\alpha(F)$ as $A$ in equation (\ref{xistar}) and $e_0$ as $x \in A$, $e_1$ as $i$, we find that $e_3^* = -e_3$ which by (\ref{im2}) means that $e_3 \in \im(\mathcal{O})$. For the remaining three products we use the same reasoning but with $A = D_\beta(D_\alpha(F))$ and $i = e_2$ to obtain imaginarity of $e_4, e_5, e_6$.

To prove the claim about squares we note that since we established that for $i < j \in \{0, 1, 2, 3\}$ we have that both $e_i$, $e_j$ and $e_ie_j$ are imaginary we obtain by alternativity and (\ref{anticommutative}) that $(e_ie_j)^2 = e_i(e_je_i)e_j =e_i(-e_ie_j)e_j = -e_i^2e_j^2$. Note that the right hand side is just an ordinary product of scalars in $F$.
\end{proof}
From the lemma and (\ref{anticommutative}) it also follows that $e_ie_j = - e_je_i$ whenever $i \neq j$. The usefulness of Table \ref{multtabgeneral} hinges on the following:
\begin{lemma}
The set $\{1, e_0, \ldots, e_6\}$ is a basis of $\mathcal{O}$.
\end{lemma}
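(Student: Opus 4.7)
The plan is to unwind the iterated Dickson doubling that defines $\mathcal{O}$ in order to obtain an explicit basis, and then match its elements against $\{1, e_0, \ldots, e_6\}$ up to signs. Each application of the doubling operation (Definition \ref{DicksonDouble}) doubles the dimension of the underlying vector space, so $\dim \mathcal{O} = 8$; since the proposed set already has $8$ elements, it will suffice to show that it spans $\mathcal{O}$ (equivalently, that it is linearly independent).

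First I would apply Definition \ref{DicksonDouble} three times to produce a natural basis of $\mathcal{O}$. Starting from the basis $\{1\}$ of $F$, a basis of $D_\alpha(F) = F \oplus e_0 F$ is $\{1, e_0\}$; a basis of $D_\beta(D_\alpha(F)) = D_\alpha(F) \oplus e_1 D_\alpha(F)$ is then $\{1, e_0, e_1, e_1 e_0\}$; and finally a basis of $\mathcal{O} = D_\gamma(D_\beta(D_\alpha(F)))$ is obtained as the union of this set with its left multiplication by $e_2$, namely $\{1, e_0, e_1, e_1 e_0, e_2, e_2 e_0, e_2 e_1, e_2(e_1 e_0)\}$.

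Next I would identify each of these eight basis elements, up to sign, with an element of $\{1, e_0, \ldots, e_6\}$. The entries $1, e_0, e_1, e_2$ match verbatim. For the remaining four I would use the anticommutativity relation (\ref{anticommutative}) together with Lemma \ref{imbasis}, which guarantees that every $e_i$ (including the products $e_0 e_1, e_1 e_2, e_0 e_2$) lies in $\im(\mathcal{O})$, so any two of them anticommute. This gives $e_1 e_0 = -e_0 e_1 = -e_3$, $e_2 e_0 = -e_0 e_2 = -e_6$, and $e_2 e_1 = -e_1 e_2 = -e_4$. For the last entry, using that $e_1 e_0 = -e_3$ is imaginary and the definition $e_5 = -e_3 e_2$, one computes $e_2(e_1 e_0) = -(e_1 e_0) e_2 = e_3 e_2 = -e_5$. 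Since scaling basis vectors by nonzero constants preserves the property of being a basis, it follows that $\{1, e_0, \ldots, e_6\}$ spans $\mathcal{O}$ and hence is a basis.

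No step in this argument is substantial; the only thing requiring care is the bookkeeping of signs. The two ingredients actually used, imaginarity of the $e_i$ (Lemma \ref{imbasis}) and anticommutativity of imaginary elements (\ref{anticommutative}), are both already in hand, so no serious obstacle needs to be overcome.
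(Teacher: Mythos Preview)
Your proof is correct and follows essentially the same approach as the paper's: both exploit the iterated vector-space decomposition $D_\gamma(A) = A \oplus iA$ from Definition \ref{DicksonDouble} to reduce the question to the trivial base case $\{1, e_0\}$. The only cosmetic difference is that the paper phrases the argument as a top-down reduction of linear independence (via invertibility of $R_{e_2}$, then $R_{e_1}$), whereas you build the canonical doubling basis bottom-up and match it to $\{1, e_0, \ldots, e_6\}$ up to signs; the content is the same.
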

\begin{proof}
It suffices to show that the set is linearly indepedent and for that it suffices to show linear independence of the same set but with $e_5$ replaced by $-e_5$. Viewing $\mathcal{O}$ as $D_\gamma(D_\beta(D_\alpha(F)))$ and using the shorthand $A = D_\beta(D_\alpha(F))$ again, we obtain a vectorspace decomposition $\mathcal{O} = A \oplus Ae_2$ as in Definition \ref{DicksonDouble}. By construction $\{1, e_0, e_1, e_3\} \subset A$ and $\{e_2 = 1e_2, e_4 = e_1e_2, -e_5 = e_3e_2, e_6 = e_0e_2\} \subset Ae_2$. The decomposition $\mathcal{O} = A \oplus Ae_2$ reduces linear independence of the set $\{e_0, \ldots e_6\}$ to linear independence of the two subsets $\{1, e_0, e_1, e_3\}$ and $\{e_2, e_4, -e_5, e_6\}$ but we can say something more. Since the linear map $R_{e_2}$ (right multiplication by $e_2$) is invertible (its inverse being $\frac{1}{\gamma}R_{e_2}$) we have that the set $\{e_2 = 1e_2, e_6 = e_0e_2, e_4 = e_1e_2, -e_5 = e_3e_2\}$ is linearly independent if and only if the set $\{1, e_0, e_1, e_3\}$ is. Using the decomposition $A = D_\alpha(F) \oplus (D_\alpha(F))e_1$ we can then use similar reasoning to show that $\{1, e_0, \ldots, e_6\}$ is linear independent if and only the two element set $\{1, e_0\}$ is, and this is the case by definition.
\end{proof}

Having identified the vertex set of our Fano plane we move on to recognizing the lines.

\begin{lemma}\label{linescalars}
Let $a, b \in \im(\mathcal{O})$ be two imaginary elements satisfying $ab \in \im(\mathcal{O})$ and let $\ell = \{a, b, ab\}$. Then for every two distinct elements $p, q \in \ell$ there is scalar $\lambda_{p, q} \in F$ such that $pq = \lambda_{p,q} r$ and $qp = -\lambda_{p,q}r$ where $r$ is the third element in the set (`line') $\ell$.
\end{lemma}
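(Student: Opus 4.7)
The plan is to proceed by a short case analysis on ordered pairs $(p,q)$ of distinct elements drawn from $\ell = \{a, b, ab\}$. The key inputs are alternativity of $\mathcal{O}$ (which is automatic by Theorem \ref{Albert} since $\dim \mathcal{O} = 8$), the fact that $x^2 \in F1$ whenever $x \in \im(\mathcal{O})$ (equation (\ref{im})), and anticommutativity of imaginary elements (equation (\ref{anticommutative})). By anticommutativity, once $pq = \lambda_{p,q} r$ is established it is immediate that $qp = -\lambda_{p,q} r$, since $p, q, r$ are all imaginary. So it suffices to exhibit $\lambda_{p,q}$ for one ordering of each unordered pair, i.e.\ for the three pairs $(a,b)$, $(a, ab)$, $(b, ab)$.

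The pair $(a, b)$ is immediate: set $r = ab$ and $\lambda_{a,b} = 1$. For $(a, ab)$, I would apply the left alternative law (\ref{leftalt}) to get $a(ab) = a^2 b$, identify $a^2$ with a scalar $\alpha \in F$ via (\ref{im}), and take $r = b$, $\lambda_{a, ab} = \alpha$. For $(b, ab)$, I would first use (\ref{anticommutative}) to rewrite $b(ab) = -b(ba)$ and then apply the left alternative law again to obtain $-b^2 a$; identifying $b^2$ with $\beta \in F$, set $r = a$ and $\lambda_{b, ab} = -\beta$. In each case the sign check for the reverse ordering can equivalently be done by hand: e.g.\ $(ab)a = a(ba) = -a^2 b$ via flexibility (\ref{flexible}) and (\ref{anticommutative}), and $(ab)b = a b^2 = \beta a$ via the right alternative law (\ref{rightalt}), both consistent with the anticommutativity shortcut.

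There is no real obstacle; the entire argument is a three-line computation once the correct form of alternativity is invoked. The only things worth being a bit careful about are (i) silently identifying elements of $F1$ with elements of $F$ when stating the scalars $\lambda_{p,q}$, and (ii) that the lemma is only substantive when $a$ and $b$ are $F$-linearly independent and $ab \notin F1$, since otherwise $\ell$ does not contain three distinct elements and the claim is vacuous. Neither of these points affects the core computation.
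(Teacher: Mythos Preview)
Your proof is correct and follows essentially the same approach as the paper: both handle the $(a,b)$ case trivially, use the left alternative law for $a(ab) = a^2 b$, and rewrite $b(ab) = -b(ba) = -b^2 a$ via anticommutativity plus alternativity, with the reverse orderings dispatched by (\ref{anticommutative}). Your additional consistency checks via flexibility and right alternativity, and your remarks about degenerate cases, are fine but not needed.
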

\begin{proof}
The relation between $pq$ and $qp$ follows from (\ref{anticommutative}) so we focus on estabishing the $pq = \lambda r$ for some $\lambda \in F$. The case where $p = a$ and $q = b$ or vice versa is trivial, so we may assume that $q = ab$. For $p = a$ we can use alternativity to move the brackets and conclude $a(ab) = (aa)b = \lambda b$ with $\lambda = a^2 \in F$ as desired. For $p = b$ we have $b(ab) = -b(ba) = -(bb)a = \lambda a$ with $\lambda = -b^2 \in F$.
\end{proof}
The lemma gives a different way of thinking about the Fano-lines. When we view every tripple $\{a, b, c\} \subset \im(\mathcal{O})$ as a `line' whenever it satisfies $ab = \lambda c$ for some $\lambda \in F^\times$ and some labeling of the elements of the triple as $a$, $b$ and $c$, then the lemma states that this definition does in fact not depend on the choice of labeling and that for any pair $\{p, q\}$ of imaginary elements we have that all $r$ making $\{p, q, r\}$ into a line are scalar multiples of each other. (Where of course for some choices of $p, q$ no such $r$ exist.) 

Our goal is to establish that (1) every two basis elements $e_i, e_j$ lie on such a line, i.e. have a product that is imaginary, and (2) that the third point on that line (which by the last lemma is unique up to scalar multiplication) can be taken to be the basis element indicated by Table \ref{multtabgeneral}. For four of the seven lines this has already been achieved by the above.

For example: our definition of $e_3$ as the product $e_0e_1$ establishes that the Fano-line through $e_0$ and $e_1$ passes through $e_3$, which by the lemma means concretely that $e_0e_1 \in Fe_3$, $e_3e_0 \in Fe_1$ and $e_1e_3 \in Fe_0$. Similarly we obtain the `lines' $\{e_1, e_2, e_4\}$, $\{e_2, e_3, e_5\}$ and $\{e_0, e_2, e_6\}$. What remains to be done to obtain the full Fano structure is verifying that the `line' through $e_3$ and $e_4$ contains $e_6$ (that is: verifying that $e_3e_4 \in F^\times e_6$) and that the line through $e_0$ and $e_4$ as well as the line through $e_1$ and $e_6$ both pass through $e_5$. This can be done by explicit computation:
\begin{eqnarray*}
e_4e_6 = (e_1e_2)(e_0e_2) = (e_2e_1^*)(e_0e_2) = \gamma(e_1^*e_0)^* = \gamma e_0^*e_1 = -\gamma e_0e_1 = -\gamma e_3 \\
e_0e_4 = e_0(e_1e_2) = e_0(e_2e_1^*) = e_2(e_0^*e_1^*) = e_2(e_0e_1) = e_2e_3 = e_5 \\  
e_6e_1 = (e_0e_2)e_1 = (e_0e_1^*)e_2 = e_2(e_0e_1^*)^* = -e_2(e_0e_1)^* = -e_2e_3^* = e_2e_3 = e_5
\end{eqnarray*}
Here the first equation uses (\ref{xi}) and (\ref{pq3}), both with $e_2$ in the role of $i$, followed by (\ref{star}) and (\ref{im2}). The second equation uses (\ref{xi}) and (\ref{pq1}), both with $e_2$ in the role of $i$, followed by (\ref{im2}). The third equation uses (\ref{pq2}) and (\ref{xi}), both with $e_2$ in the role of $i$, followed by (\ref{im2}). Of course all applications of (\ref{im2}) rely on Lemma \ref{imbasis} above. 

We have reached the point that for each two elements from $\{e_0, \ldots, e_6\}$ we know the unique Fano line $\ell$ to which they belong, and for each line $\ell$ we have computed the scalar $\lambda_{a, b} \in F$ in the equality $ab = \lambda_{ab}c$ for at least one of the six ways to asign the labels $a, b, c$ to the elements of $\ell$. This means that we can compute the remaining multiplications of elements on $\ell$ by the rules $ba = -ab = -\lambda_{ab}c$, $ac = \lambda_{a,b}^{-1}a^2b$, $ca = -ac$, $cb = \lambda_{ab}^{-1}ab^2$ and $bc = -cb$. This completes the verification of Table \ref{multtabgeneral}.

\begin{table}\caption{Multiplication table of the general Cayley-Algebra $D_\gamma(D_\beta(D_\alpha(F)))$ with respect to the basis $e_0, \ldots, e_6$ described in Section \ref{OtoFano} \label{multtabgeneral}}
\begin{tabular}{l|rrrrrrrr}
      &  $1$ &          $e_0$ &        $e_1$ &         $e_2$ &              $e_3$ &              $e_4$ &               $e_5$ & $e_6$  \\
      \hline
  $1$ &  $1$ &          $e_0$ &        $e_1$ &         $e_2$ &              $e_3$ &              $e_4$ &               $e_5$ & $e_6$ \\
$e_0$ & $e_0$ &      $\alpha$ &        $e_3$ &         $e_6$ &       $\alpha e_1$ &              $e_5$ &        $\alpha e_4$ & $\alpha e_2$ \\
$e_1$ & $e_1$ &        $-e_3$ &      $\beta$ &         $e_4$ &       $-\beta e_0$ &        $\beta e_2$ &        $-\beta e_6$ & $-e_5$ \\
$e_2$ & $e_2$ &        $-e_6$ &       $-e_4$ &      $\gamma$ &              $e_5$ &      $-\gamma e_1$ &        $\gamma e_3$ & $-\gamma e_0$ \\
$e_3$ & $e_3$ & $-\alpha e_1$ &  $\beta e_0$ &        $-e_5$ &     $-\alpha\beta$ &       $-\beta e_6$ &   $\alpha\beta e_2$ & $\alpha e_4$ \\
$e_4$ & $e_4$ &        $-e_5$ & $-\beta e_2$ &  $\gamma e_1$ &        $\beta e_6$ &     $-\beta\gamma$ &   $\beta\gamma e_0$ & $-\gamma e_3$ \\
$e_5$ & $e_5$ & $-\alpha e_4$ &  $\beta e_6$ & $-\gamma e_3$ & $-\alpha\beta e_2$ & $-\beta\gamma e_0$ & $\alpha\beta\gamma$ & $\alpha\gamma e_1$ \\
$e_6$ & $e_6$ & $-\alpha e_2$ &        $e_5$ &  $\gamma e_0$ &      $-\alpha e_4$ &       $\gamma e_3$ & $-\alpha\gamma e_1$ & $-\alpha\gamma$

\end{tabular}

\end{table}

\subsection{Which fields allow Cayley division algebras?}\label{Serre}
We are interested in the question in the title of this subsection because it are the Cayley division algebra that give rise, in Section \ref{qfano}, to $q$-Fano planes. It was pointed out to me by Matthias Wendt \cite{WendtMO} that an answer in terms of Galois cohomology can be found in Serre's paper \cite{Serre}: combining Theorem 9 of that paper with Theorem 5.7 of Merkurjev-Suslin \cite{MerkurSuslin} as indicated in Section 8.2 of \cite{Serre} we find:

\begin{theorem}\label{Serre2}
The field $F$ of characteristic $\neq 2$ allows Cayley division algebras if and only if the Galois cohomology group $H^3(F, \ZZ/2\ZZ)$ is non-trivial.
\end{theorem}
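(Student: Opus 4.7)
The plan is to reduce the existence question for Cayley division algebras to the existence of an anisotropic $3$-fold Pfister form over $F$, and then invoke the deep machinery of Milnor $K$-theory and Galois cohomology (Arason, Merkurjev-Suslin, Rost) to identify these Pfister forms with the group $H^3(F,\ZZ/2\ZZ)$.

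First I would reduce to norm forms. By Lemma \ref{splitn0}, a Cayley algebra $\mathcal{O}$ is a division algebra iff its quadratic norm form $n$ is anisotropic. A direct calculation using (\ref{eyesleft}) and (\ref{n}) shows that for any $F$-algebra $A$ with strong involution the norm of the Dickson double satisfies $n_{D_\gamma(A)} = n_A \perp \langle -\gamma\rangle \cdot n_A$; iterating three times gives
$$n_{D_\gamma(D_\beta(D_\alpha(F)))} \;=\; \langle 1,-\alpha\rangle \otimes \langle 1,-\beta\rangle \otimes \langle 1,-\gamma\rangle \;=\; \langle\langle -\alpha,-\beta,-\gamma\rangle\rangle,$$
a $3$-fold Pfister form, and every such Pfister form arises this way. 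So Cayley division algebras over $F$ exist iff some $3$-fold Pfister form over $F$ is anisotropic.

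Next I would invoke the Arason-Pfister Hauptsatz: a nonzero anisotropic class in the fundamental ideal power $I^n(F)$ of the Witt ring $W(F)$ has dimension at least $2^n$. Since a $3$-fold Pfister form has dimension $8<16=2^4$, it lies in $I^4(F)$ iff it is hyperbolic. Hence anisotropic $3$-fold Pfister forms correspond exactly to the nonzero classes of the quotient $I^3(F)/I^4(F)$. Finally I would bring in the Arason invariant $e_3\colon I^3(F)\to H^3(F,\ZZ/2\ZZ)$ sending $\langle\langle a,b,c\rangle\rangle$ to the cup product $(a)\cup(b)\cup(c)$ of Kummer classes; it factors through $I^3/I^4$. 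Its surjectivity is the degree-$3$ case of the Merkurjev-Suslin theorem (Theorem 5.7 of \cite{MerkurSuslin}), and its injectivity on $I^3/I^4$ is due to Arason combined with the degree-$3$ Milnor conjecture (proved by Rost). Together they give an isomorphism $I^3(F)/I^4(F)\cong H^3(F,\ZZ/2\ZZ)$, so $H^3(F,\ZZ/2\ZZ)$ is nontrivial iff $I^3/I^4$ is, iff an anisotropic $3$-fold Pfister form exists, iff a Cayley division algebra over $F$ exists.

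The genuinely hard content lies entirely in the surjectivity statement of Merkurjev-Suslin (equivalently, surjectivity of the norm residue map $K_3^M(F)/2\to H^3(F,\ZZ/2\ZZ)$): this requires deep cohomological machinery which I would cite rather than reprove, as the paper itself does. The remaining ingredients -- the norm-form computation, the Arason-Pfister Hauptsatz and the definition of $e_3$ -- are standard in the algebraic theory of quadratic forms and amount to well-known bookkeeping once those two imported theorems are in hand.
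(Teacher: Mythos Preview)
Your sketch is correct. The paper itself does not prove this theorem: it simply records that the statement follows by ``combining Theorem 9 of [Serre] with Theorem 5.7 of Merkurjev--Suslin \cite{MerkurSuslin} as indicated in Section 8.2 of \cite{Serre}''. Your proposal is therefore considerably more detailed than the paper's own treatment.

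Your route through Pfister forms is the standard one and is essentially what underlies the cited results. The identification of the Cayley norm form with a $3$-fold Pfister form is immediate from the paper's own computation (\ref{generaln}); the equivalence between anisotropy of some $3$-fold Pfister form and nonvanishing of $I^3/I^4$ via the Arason--Pfister Hauptsatz is exactly right; and the isomorphism $I^3/I^4\cong H^3(F,\ZZ/2\ZZ)$ is indeed the degree-$3$ case of the Milnor conjecture, for which the Merkurjev--Suslin theorem you cite supplies the surjectivity ingredient. One phrasing worth tightening: ``anisotropic $3$-fold Pfister forms correspond exactly to the nonzero classes of $I^3/I^4$'' overstates slightly, since not every nonzero class in $I^3/I^4$ is represented by a single Pfister form. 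What your argument actually uses (and what is true) is that $I^3/I^4\neq 0$ iff some $3$-fold Pfister form is anisotropic, because $I^3$ is additively generated by such forms. Serre's own formulation proceeds via $H^1(F,G_2)$ and the Arason/Rost invariant of $G_2$-torsors rather than directly through the Witt ring, but the content is the same and your quadratic-form argument is equally valid.
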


The definition and other applications of the groups $H^i(F, \ZZ/2\ZZ)$ are beyond the scope of the current paper. Instead we will discuss a class of examples where we can understand quite concretely what is going on.

Since the elements $e_i$ in Table \ref{multtabgeneral} are imaginary we have for a generic element $x = \eta + \xi_0e_0 + \ldots + \xi_6 e_6 \in D_\gamma(D_\beta(D_\alpha(F)))$ that $x^* = \eta 1 - \xi_0e_0 - \ldots - \xi_6 e_6 \in D_\gamma(D_\beta(D_\alpha(F)))$ (equation (\ref{im2})) and hence, by (\ref{taun}) and Table \ref{multtabgeneral} we find that 

\begin{equation}\label{generaln}
n(x) = \eta^2 - \xi_0^2 \alpha - \xi_1^2 \beta - \xi_2^2 \gamma + \xi_3^2 \alpha \beta + \xi_4^2 \beta\gamma + \xi_6^2 \gamma\alpha - \xi_5^2 \alpha \beta \gamma.
\end{equation}

We are interested in finding fields $F$ and elements $\alpha, \beta, \gamma \in F$ such that $D_\gamma(D_\beta(D_\alpha(F)))$ is a division algebra. From (\ref{generaln}) and Lemma \ref{splitn0} we see (with some relabeling) that this is the case exactly when the equation
\begin{equation}\label{stroth1}
\lambda_0^2 - \lambda_1^2\alpha - \lambda_2^2\beta - \lambda_3^2\gamma + \lambda_4^2\alpha\beta + \lambda_5^2\beta\gamma + \lambda_6^2\gamma\alpha - \lambda_7^2\alpha\beta\gamma = 0
\end{equation}
has only the trivial solution $\lambda_0 = \ldots = \lambda_7 = 0$.

From this observation we recognize one class of examples: 

\begin{corollary}\label{Fxyz}
Suppose that $F$ contains a subfield $K$ and elements $\alpha, \beta, \gamma$ that are algebraically independent over $K$ such that $F$ is isomorphic to the function field $F(\alpha, \beta, \gamma)$. Then $D_\gamma(D_\beta(D_\alpha(F)))$ is a division algebra.
\end{corollary}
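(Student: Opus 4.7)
The plan is to reduce the corollary to showing that the norm form $n$ on $\mathcal{O} \coloneqq D_\gamma(D_\beta(D_\alpha(F)))$ displayed in \eqref{generaln} is \emph{anisotropic} over $F$, i.e.\ has no non-trivial zero, and then apply Lemma \ref{splitn0} (combined with Lemma \ref{division} and the alternativity of $\mathcal{O}$ supplied by Theorem \ref{Albert}) to conclude that $\mathcal{O}$ is a division algebra. Inspecting \eqref{generaln}, the eight-entry diagonalization appearing there is exactly the $3$-fold Pfister form
$$\langle 1,-\alpha\rangle\otimes\langle 1,-\beta\rangle\otimes\langle 1,-\gamma\rangle,$$
so the entire task is to prove that this Pfister form is anisotropic over $F\cong K(\alpha,\beta,\gamma)$.

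The technical heart of the argument is the following standard lemma, which I would state and prove separately.

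\textbf{Key Lemma.} Let $L$ be a field of characteristic $\neq 2$, let $q\colon L^N\to L$ be an anisotropic quadratic form, and let $t$ be an indeterminate over $L$. Then the $2N$-ary form $q(\vec x)-t\,q(\vec y)$ on $L(t)^{2N}$ is anisotropic.

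To prove the Key Lemma, suppose for contradiction that $q(\vec x)=t\,q(\vec y)$ for some $(\vec x,\vec y)\in L(t)^{2N}$ not both zero. After clearing denominators and dividing out the gcd of all $2N$ coordinates, I may assume $\vec x,\vec y\in L[t]^N$ and that the combined tuple is primitive (gcd $=1$). Evaluating at $t=0$ yields $q(\vec x(0))=0$; anisotropy of $q$ over $L$ forces $\vec x(0)=0$, so $t$ divides each coordinate of $\vec x$. Writing $\vec x=t\vec x'$, substituting, and cancelling a factor of $t$, I get $t\,q(\vec x')=q(\vec y)$; evaluating again at $t=0$ shows $\vec y(0)=0$, so $t$ divides every coordinate of $\vec y$ as well, contradicting primitivity.

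Given the Key Lemma, a routine induction on the number of transcendentals finishes the proof of the corollary. The form $\langle 1\rangle$ is anisotropic over $K$; applying the lemma with $(L,t)=(K,\alpha)$, $(K(\alpha),\beta)$, and $(K(\alpha,\beta),\gamma)$ in turn shows successively that $\langle\!\langle\alpha\rangle\!\rangle$ is anisotropic over $K(\alpha)$, $\langle\!\langle\alpha,\beta\rangle\!\rangle$ is anisotropic over $K(\alpha,\beta)$, and $\langle\!\langle\alpha,\beta,\gamma\rangle\!\rangle$ is anisotropic over $K(\alpha,\beta,\gamma)\cong F$. The algebraic independence hypothesis is exactly what is needed to supply a genuine transcendental at each step. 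The only real obstacle is the Key Lemma; once that is in hand, everything else is bookkeeping, and the conclusion that $\mathcal{O}$ is a division algebra follows immediately from the anisotropy of its norm form via Lemmas \ref{splitn0} and \ref{division}.
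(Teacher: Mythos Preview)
Your proof is correct. Both your argument and the paper's reduce to showing that the norm form \eqref{generaln} is anisotropic over $K(\alpha,\beta,\gamma)$ and then invoke Lemma~\ref{splitn0}; both establish anisotropy by an inductive descent over the three transcendentals. The difference is in packaging. You recognize the norm as the $3$-fold Pfister form $\langle\!\langle\alpha,\beta,\gamma\rangle\!\rangle$ and isolate a single reusable Key Lemma (if $q$ is anisotropic over $L$ then $q\perp(-t)q$ is anisotropic over $L(t)$), proved by clearing denominators in the PID $L[t]$ and evaluating at $t=0$. The paper instead works directly in the polynomial ring $K[\alpha,\beta,\gamma]$ and argues via parity of the \emph{highest} degree in each variable (Claims~1--3), rebuilding the inductive step by hand at each stage. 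Your ``lowest-degree'' evaluation argument and the paper's ``highest-degree'' parity argument are dual versions of the same idea; your formulation is the standard quadratic-form-theory statement and scales more transparently, while the paper's version is self-contained and avoids any Pfister-form terminology.
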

Our proof is adapted from the proof of Bemerkung 5.3.1 in Kristin Stroth's PhD thesis \cite{stroth}. Chapter 5 of that thesis further provides an interesting geometric perspective on Cayley algebras of this type which will not be discussed here.
\begin{proof}
Let $\lambda_0, \ldots, \lambda_7 \in K(\alpha, \beta, \gamma)$ be such that $(\ref{stroth1})$ holds.

Multiplying both sides of the equation with the lowest common multiple of the denominators of the $\lambda_i^2$ we may assume that each $\lambda_i$ is an element of the polynomial ring $K[\alpha, \beta, \gamma]$.

Our goal is to show that (\ref{stroth1}) only has the zero solution over $K[\alpha, \beta, \gamma]$. We'll first show that the same holds for the simpler equation $\nu_0^2 - \nu_1^2 \alpha = 0$.

\begin{claim1}
Let $\nu_0, \nu_1 \in K[\alpha, \beta, \gamma]$ be such that $\nu_0^2 - \nu_1^2 \alpha = 0$. Then $\nu_0 = \nu_1 = 0$.
\end{claim1} 
\begin{proof}
Rewriting the equation as $\nu_0^2 = \nu_1^2 \alpha$ and interpreting both sides as polynomials in $\alpha$ with coefficients in $K[\beta, \gamma]$ we see that the left hand side has even degree, while the right hand side has odd degree (adopting the convention that the zero polynomial has every degree). Hence both sides must be zero.
\end{proof}
We want to use the same reasoning to show that the `intermediate level' equation $\mu_0 - \mu_1 \alpha - \mu_2 \beta + \mu_3 \alpha \beta = 0$ also only has the zero solution, but we need an extra step:
\begin{claim2}
Let $\mu_0, \mu_1 \in K[\alpha, \beta, \gamma]$. Then viewed as a polynomial in $\beta$, $\xi \coloneqq \mu_0^2 - \mu_1^2\alpha$ has even degree. 
\end{claim2}
\begin{proof}
Let $m_0, m_1$ be the degrees of $\mu_0, \mu_1$ respectively when viewed as polynomials in $\beta$ and let $m = \max(m_0, m_1)$. When $m_0 \neq m_1$ we have that the degree of $\xi = \mu_0^2 - \mu_1^2 \alpha$ equals $2m$ and we are done. When $m_0 = m_1 = m$ define $\nu_0, \nu_1$ as the leading (i.e. degree $m$) terms of $\mu_0, \mu_1$ respectively when viewed as a polynomial in $\beta$. The degree $2m$ term in $\xi$ equals $\nu_1^2 - \nu_2^2 \alpha$. Since $\xi$ clearly contains no terms of degree higher than $2m$ we see that the only scenario in which the degree of $\xi$ does not equal $2m$ is when its degree $2m$ term $\nu_0^2 - \nu_1^2\alpha$ equals zero. But by Claim 1 this only happens when $\nu_0 = \nu_1 = 0$, which, as the $\nu_i$ were defined as the leading terms of the $\mu_i$ implies that the $\mu_i$ are zero. Hence we see that either the degree of $\xi$ is $2m$ or $\xi = 0$, in which case we consider the degree of $\xi$ to be even as well.
\end{proof}
With this result under our belt we can lift the proof of Claim 1 one level higher:
\begin{claim3}
Let $\mu_0, \mu_1, \mu_2, \mu_3 \in K[\alpha,  \beta, \gamma]$ be such that $\mu_0 - \mu_1 \alpha - \mu_2 \beta + \mu_3 \alpha \beta = 0$. Then $\mu_0 = \mu_1 = \mu_2 = \mu_3 = 0$.
\end{claim3}
\begin{proof}
We can rewrite the equation as 
$$(\mu_0^2 - \mu_1^2\alpha) = (\mu_2^2 - \mu_3^2\alpha)\beta.$$
From Claim 2 we see that the left hand side has even degree when viewed as a polynomial in $\beta$ while the right hand side has odd degree. It follows that both sides of the equations equal zero, so that $\mu_0^2 - \mu_1^2 \alpha = 0$ and $\mu_2^2 - \mu_3^2 \alpha = 0$. But Claim 1 then implies that $\mu_0 = \mu_1 = 0$ and that $\mu_2 = \mu_3 = 0$.
\end{proof}
From this point on it is clear how we will proceed. Similar to how we derived Claim 2 from Claim 1 we derive from Claim 3 that expressions of the form
$\mu_0^2 - \mu_1^2 \alpha - \mu_2^2 \beta + \mu_3^2 \alpha \beta$ (with $\mu_0, \mu_1, \mu_2, \mu_3 \in K[\alpha, \beta, \gamma]$) have even degree when viewed as polynomials in $\gamma$.

We then use this to conclude that the left and right hand sides of the following equivalent reformulation of (\ref{stroth1}) have different degrees as polynomials in $\gamma$ and hence are both zero:

\begin{equation}\label{stroth2}
(\lambda_0^2 - \lambda_1^2\alpha - \lambda_2^2\beta + \lambda_4^2 \alpha\beta) = (\lambda_3^2 - \lambda_5^2\beta - \lambda_6^2\alpha + \lambda_7^2\alpha\beta)\gamma.
\end{equation}

In summary we find that

\begin{eqnarray}
\lambda_0^2 - \lambda_1^2\alpha - \lambda_2^2\beta + \lambda_4^2 \alpha\beta &= 0 \label{stroth3} \\
\lambda_3^2 - \lambda_5^2\beta - \lambda_6^2\alpha + \lambda_7^2\alpha\beta &= 0 \label{stroth4}
\end{eqnarray}
and Claim 3 then tells us that $\lamba_0 = \lambda_1 = \lambda_2 = \lambda_4 = 0$ and that $\lambda_3 = \lambda_5 = \lambda_6 = \lambda_7 = 0$, as we wanted to show.
\end{proof}

\begin{remark}
The proof that over the fields $K(\alpha, \beta, \gamma)$ equation (\ref{stroth1}) has no non-zero solutions relies on the \emph{multiplicative} properties of squares, notably that the product of two squares is a square and the product of a square and a non-square is a non-square. At the other extreme we find the subfields of $\RR$ where we can get to the same conclusion by looking at the \emph{additive} properties of squares over such field. Since in a subfield of $\mathbb{R}$ a sum of squares is always non-negative, we find that over such field equation (\ref{stroth1}) has no non-zero solutions whenever $\alpha < 0, \beta < 0, \gamma < 0$. 
\end{remark}

It is tempting to see if we can construct further examples that use some mixture of additive and multiplicative properties e.g. by somewhat loosening the condition that $\alpha, \beta, \gamma$ are algebraically independent over $K$ in Corollary \ref{Fxyz} while being careful about what it means to be square in the resulting field. We won't pursue that direction here, however.

\subsection{Some structure theory}\label{nilp}
So far we have focussed on the similarities rather than differences between split and divsion Cayley-Dickson algebras. However, since the method explored in Section \ref{qfano} that constructs $q$-Fano planes from Cayley division algebras fails to do so in the split case (for reasons explored in depth in Section \ref{split}) there must also be considerable differences between the two cases. Informally speaking the most important difference (to us) is that, unlike their division algebra counterparts, the split Cayley-Dickson algebras contain \emph{subalgebras} that are quite different from the Cayley-Dickson algebras of either type. The prototypical example of such a subalgebra is the algebra $U_3$ of upper triangular 2-by-2-matrices sitting inside the split quaternion algebra $M_4$ of all two-by-two matrices. Being three-dimensional this algebra cannot be a Cayley-Dickson algebra, even if it is closed under the star operator defined in Example \ref{Mat2F} and hence is a quadratic algebra with a strong involution. In this subsection we recall some notions from the structure theory of alternative algebras that enable us to articulate the differences between this algebra and the Cayley-Dickson algebras.

\begin{definition}
A two-sided \emph{ideal} in an  alternative algebra $A$ is a linear subspace $I$ such that $ai \in I$ and $ia \in I$ for every $a \in A, i \in I$. An algebra $A$ is called \emph{simple} if its only two sided ideals are $\{0\}$ and $A$ itself.
\end{definition}
The definition of ideal ensures that when $I$ is an ideal in $A$ then the multiplication in $A$ descends to a well-defined multiplication on the quotient vector space $A/I$. When $A$ is alternative or even associative then so is the quotient algebra $A/I$. It is easy to see that all division algebras are simple. Perhaps more surprising is that the split 4- and 8-dimensional Cayley-Dickson algebras are simple as well. (We'll come back to that at the end of this section). The algebra $U_3$ of upper triangular matrices is \emph{not} simple as its subspace consisting of strict upper triangular matrices is an ideal. The elements of this ideal also serve as an example to the following concept, that will be important in the sequel:

\begin{definition}
An element $x$ of an alternative algebra $A$ is called \emph{nilpotent} if there exists a natural number $n$ such that $x^n = 0$.
\end{definition}

\begin{lemma}\label{nilpotent}
Let $x \neq 0$ be an element of an alternative Cayley-Dickson algebra $O$. Then the following are equivalent:
\begin{enumerate}
\item $x$ is nilpotent
\item $x^2 = 0$
\item $x$ is a zero-divisor and $x \in \im(O)$
\end{enumerate}
\end{lemma}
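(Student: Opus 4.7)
The plan is to work entirely inside the two-dimensional commutative associative subalgebra $F[x] = F1 + Fx$ that the quadratic property of $O$ (Definition \ref{quadraticdef}, together with Theorem \ref{Albert} giving that alternative Cayley--Dickson algebras are quadratic) affords us. The identity $x^2 = 2\tau(x)x - n(x)1$ from equation (\ref{taun}) will essentially reduce every assertion about $x$ to a statement about a concrete quadratic polynomial, and this is where I expect all the work to happen.

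For the direction (1) $\Longrightarrow$ (2), first observe that if $x \in F1$ and $x \neq 0$, no power of $x$ vanishes, so the hypothesis $x^n = 0$ forces $x \notin F1$. Then $\{1,x\}$ is a basis of $F[x]$ and the minimal polynomial of $x$ over $F$ must be exactly $p(X) = X^2 - 2\tau(x)X + n(x)$. Since $x^n = 0$ gives $p(X) \mid X^n$ in $F[X]$, and $p$ is monic of degree 2, the only possibility is $p(X) = X^2$, which means $\tau(x) = 0$, $n(x) = 0$, and $x^2 = 0$. The implication (2) $\Longrightarrow$ (1) is immediate.

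For (2) $\Longrightarrow$ (3), rewriting $0 = x^2 = 2\tau(x)x - n(x)1$ shows that if $\tau(x) \neq 0$ we would have $x = \frac{n(x)}{2\tau(x)}1 \in F1$, from which $x^2 = 0$ would force $x = 0$. So $\tau(x) = 0$, that is $x \in \im(O)$, and clearly $x \cdot x = 0$ exhibits $x$ as a zero-divisor. The converse (3) $\Longrightarrow$ (2) is the step where I expect to need the most care: for $x \in \im(O)$ we know $x^2 = -n(x)1 \in F1$ by the characterization of imaginaries in equation (\ref{im}), and I want to show $n(x) = 0$. Assuming the contrary, I would define $y \coloneqq -x/n(x) \in F[x]$; the fact that $F[x]$ is associative makes $xy = yx = 1$ a genuine two-sided inverse relation. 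The crucial point is that in an alternative algebra such an inverse rules out $x$ being a zero-divisor: if $xz = 0$ then the left alternative law (\ref{leftalt}) gives $z = 1 \cdot z = (yx)z = y(xz) \cdot \text{(via alternativity applied to }y \in Fx\text{)} = 0$, and similarly on the other side. This contradicts (3), so $n(x) = 0$ and hence $x^2 = 0$.

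The main obstacle is really just the (3) $\Longrightarrow$ (2) step above, where I have to be scrupulous about invoking alternativity correctly: $y$ lives in the line $Fx$, so the products $y(xz)$ and $(yx)z$ differ only by a scalar multiple of the left-alternative associator $(x,x,z)$, which vanishes by (\ref{leftalt}). Once that is spelled out, all three equivalences fall out mechanically from the quadratic identity and the norm formula, with no further appeal to the deeper structure theory of $O$.
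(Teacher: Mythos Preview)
Your proof is correct. The overall logical structure matches the paper's, but two of the three implications are argued differently.

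For $(1) \Rightarrow (2)$, the paper uses the multiplicativity of the norm (equation~(\ref{nmult})) to get $n(x)=0$, then multiplies (\ref{taun}) by $x^{k-2}$ to extract $\tau(x)=0$. Your minimal-polynomial argument inside the two-dimensional commutative associative algebra $F[x]$ is cleaner and avoids invoking norm multiplicativity altogether; it only needs that $O$ is quadratic. (A minor citation slip: quadraticity comes from Corollary~\ref{strong} plus Theorem~\ref{quadratic}, not from Theorem~\ref{Albert}.)

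For $(3) \Rightarrow (2)$, the paper is more direct: given nonzero $y$ with $xy=0$, alternativity gives $0 = x(xy) = (x^2)y$, and since $x^2 \in F1$ this forces $x^2 = 0$. You instead argue by contradiction, constructing an inverse $y = -x/n(x)$ and showing via the alternative laws that an invertible element cannot be a zero-divisor. Both are valid; the paper's route is one line shorter, while yours makes explicit the mechanism behind Lemma~\ref{division} (whose proof the paper left as an exercise).
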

\begin{proof}
$(1) \Rightarrow (2).$ Let $x$ be nilpotent and let $k$ be the smallest number such that $x^k = 0$. Since clearly $0$ is the only element for which $k = 1$, and clearly $0^2 = 0$ we will assume in the sequel that $k \geq 2$. From multiplicativity of the function $n$ (equation \ref{nmult}) we find that $n(x)^k = 0 \in F$ and hence $n(x) = 0$. Multiplying both sides of (\ref{taun}) by $x^{k-2}$ we find that $2\tau(x)x^{k-1} = 0$. By minimality of $k$ this implies that $\tau(x) = 0$. Plugging this back into the original equation (\ref{taun}) we find that $x^2 = 0$.

$(2) \Rightarrow (3).$ Assume $x^2 = 0$. It is clear that $x$ is a divisor of zero. The fact that $x \in \im(O)$ follows from (\ref{im}).

$(3) \Rightarrow (1)$ Let $x$ be an imaginary divisor of zero. By the latter property there exist a $y$ such that $xy = 0$ or $yx = 0$. In the latter case we have that $0^* = (yx)^* = x^*y^* = -xy^* = x(-y^*)$, so after replacing $y$ with $-y^*$ if needed we see that there exist non-zero $y$ such that $xy = 0$. Now by alternativity we can move the brackets in the following expression to obtain $0 = x(xy) = (x^2)y$. By $(\ref{im})$, $x^2 \in F1$ so the right hand side is a scalar multiple of the non-zero element $y$ and hence we find that $x^2 = 0$ proving (2) and hence (1).
\end{proof}

\begin{definition}\label{Jacobson}
An element $x$ in an alternative algebra $A$ is called \emph{strongly nilpotent} if $xy$ is nilpotent for all $y \in A$ or, equivalently, if $yx$ is nilpotent for all $y \in A$. The set of all strongly nilpotent elements is called the \emph{Jacobson radical} of $A$ and will be denoted $J(A)$. 
\end{definition}

When $A$ is associative, it is clear that $J(A)$ is a two-sided ideal in $A$. In fact this is already true, though highly non-obviously so, when $A$ is merely alternative. A proof of this fact (originally due to Zorn \cite{Zorn}) can be found in Chapter 3 of Schafer's book \cite{Schafer} (though unfortunately only in the 1966 edition and not in the 1961 edition that is freely available online), as can the proof of the following two theorems:

\begin{theorem}\label{semisimple}
Let $A$ be an alternative algebra. The following are equivalent:
\begin{enumerate}
\item $J(A) = \{0\}$
\item $A$ is isomorphic as an algebra to a direct sum of simple alternative algebras.
\end{enumerate}
\end{theorem}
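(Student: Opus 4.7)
The plan is to handle the two implications separately, since $(2) \Rightarrow (1)$ is essentially formal while $(1) \Rightarrow (2)$ carries all the substance, following the pattern of the classical Wedderburn decomposition but adapted with care to the non-associative setting.

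For $(2) \Rightarrow (1)$, I would first show that a direct sum decomposition $A = A_1 \oplus \cdots \oplus A_k$ of alternative algebras gives a componentwise decomposition of the Jacobson radical, $J(A) = J(A_1) \oplus \cdots \oplus J(A_k)$. The key observation is that products across distinct summands vanish, so an element $x = (x_1,\ldots,x_k)$ is nilpotent iff each $x_i$ is nilpotent, and the same then holds for strong nilpotency. Next, each $J(A_i)$ is a two-sided ideal of $A_i$ by the Zorn result recalled just above the theorem statement. Since $A_i$ is simple and its identity element is clearly not strongly nilpotent, $J(A_i) \neq A_i$, so simplicity forces $J(A_i) = \{0\}$, yielding $J(A) = \{0\}$.

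The hard direction, $(1) \Rightarrow (2)$, I would attack by induction on $\dim A$. Assuming $J(A) = 0$, I would select a minimal nonzero two-sided ideal $I \subseteq A$. The central structural claim is that $I$ contains an idempotent $e$ that is central in $A$: this reduces $A$ to a direct sum $A = I \oplus A'$ of two-sided ideals with $A' = (1-e)A$, and a straightforward check shows that $I$ is then itself simple (any proper ideal of $I$ would, by the centrality of $e$, be a proper ideal of $A$ strictly inside $I$, contradicting minimality). Once this is in place, one verifies that $J(A') = 0$ — any strongly nilpotent element of $A'$ is strongly nilpotent in $A$ since $IA' = A'I = 0$ — and the induction hypothesis decomposes $A'$, completing the argument.

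The principal obstacle is producing the central idempotent $e$ inside the minimal ideal $I$ under the hypothesis $J(A) = 0$. In the associative Wedderburn argument one exploits the interplay between left and right ideals, together with full associativity, to lift a quasi-inverse or construct a primitive idempotent; none of this transfers verbatim to the alternative setting. Overcoming it requires two alternative-algebra inputs: Zorn's theorem that $J(A)$ coincides with the (unique) maximal nilpotent two-sided ideal of $A$, which tells us that the minimal ideal $I$ cannot itself be nilpotent (lest $I \subseteq J(A) = 0$) and therefore must contain a non-nilpotent element whose powers stabilize to an idempotent by a Fitting-style argument; and the Moufang identities, which substitute for associativity when manipulating these idempotents and verifying that the resulting $e$ is central. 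These are precisely the technical ingredients developed in Chapter 3 of Schafer's book, which is why I would in practice appeal to that reference rather than reproduce Zorn's construction here.
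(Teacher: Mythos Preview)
The paper does not actually prove this theorem: it is stated as background and attributed to Zorn, with the proof deferred entirely to Chapter~3 of Schafer's book~\cite{Schafer}. Your proposal therefore goes further than the paper by supplying an outline of the standard argument --- the formal direction via componentwise radicals, and the substantive direction via a minimal two-sided ideal and a central idempotent --- before ultimately deferring to the same reference for the technical core (Zorn's characterization of the radical and the Moufang-identity manipulations needed to produce the idempotent). In that sense you and the paper end in the same place, and your sketch is a correct summary of how the Schafer proof proceeds; there is nothing to compare beyond noting that the paper itself offers no proof at all.
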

The algebra direct sum $A_1 \oplus \ldots \oplus A_n$ of algebras $A_i$ is the vector space direct sum $A_1 \oplus \ldots \oplus A_n$ equiped with the \emph{pointwise} multiplication. In other words: multiplication of two elements in the same summand $A_i$ is just the ordinary multiplication in the algebra $A_i$ while the multiplication of elements from different summands equals zero. This multiplication defined on elements of the summands is then extended linearly to all of $A$. It is clear that this construction preserves alternativity and associativity.

\begin{definition}
An algebra satisfying the equivalent conditions of Theorem \ref{semisimple} is called \emph{semi-simple}.
\end{definition}

An example of a semi-simple algebra which is not simple is the two-dimensional split Cayley Dickson algebra which we met in Proposition \ref{CD2}. This algebra is isomorphic to the algebra $D_2$ of 2-by-2 diagonal matrices which sits as a subalgebra inside the algebra $U_3$ introduced at the beginning of this section. Since it is not hard to show that $J(U_3)$ equals the ideal of strict upper triangular matrices we find that $U_3$ decomposes (as a vector space, not as an algebra) as a sum of subalgebras $U_3 = D_2 \oplus J(U_3)$. It turns out that this is part of a more general pattern:

\begin{theorem}\label{SJ} %Deze blijkt niet in mijn masterscriptie te staan en ook niet in 1961-editie van Schafer, dus moeten even een 1966-schafer zien te bemachtigen om te zien of het daar in staat en anders weglaten...
Let $A$ be an alternative algebra with Jacobson radical $J(A)$. Then $A$ contains a semi-simple subalgebra $S$ such that as a vectorspace $A = S \oplus J(A)$ and such that the resulting vectorspace isomorphism $S \isom A/J(A)$ is in fact an isomorphism of algebras.
\end{theorem}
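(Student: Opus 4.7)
The plan is to prove this by induction on the nilpotency index $n$ of the Jacobson radical, i.e.\ the smallest positive integer with $J(A)^n = 0$. (That such an $n$ exists follows from finite-dimensionality: the descending chain $J(A) \supseteq J(A)^2 \supseteq \ldots$ of two-sided ideals must stabilize, and standard arguments together with the fact that every element of $J(A)$ is nilpotent in the sense of Lemma \ref{nilpotent} show the stable value must be zero.) The base case $n=1$ is trivial: then $J(A) = 0$, $A$ is already semi-simple by Theorem \ref{semisimple}, and we may take $S = A$.

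For the inductive step, assume the result for all alternative algebras whose radical has nilpotency index less than $n$, and let $A$ have radical of index $n \geq 2$. The ideal $I \coloneqq J(A)^{n-1}$ is contained in $J(A)$ and satisfies $I^2 = 0$. Consider the quotient $A' \coloneqq A/I$; its Jacobson radical is $J(A)/I$, of index $n-1$, so by induction there exists a semi-simple subalgebra $S' \subseteq A'$ with $A' = S' \oplus J(A)/I$ as vector spaces and $S' \isom A'/(J(A)/I) \isom A/J(A)$. What remains is to lift $S'$ back to a subalgebra $S \subseteq A$ whose image under $A \to A'$ is exactly $S'$: given such an $S$, one verifies that $A = S \oplus J(A)$ as vector spaces (from $A' = S' \oplus J(A)/I$ together with $I \subseteq J(A)$) and that $S \to A/J(A)$ is an algebra isomorphism.

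The lifting is the heart of the matter, and I expect it to be the main obstacle. One approach is the following. Pick an arbitrary linear section $\sigma \colon S' \to A$ of the quotient map, so $\sigma(S')$ is a linear complement of $J(A)/I$-preimages lying in $A$. Its failure to be multiplicative is measured by the map
\begin{equation*}
f \colon S' \times S' \to I, \qquad f(x,y) = \sigma(x)\sigma(y) - \sigma(xy),
\end{equation*}
which, since $I^2 = 0$ and $I$ is an $A$-bimodule, descends to a $2$-cocycle on $S'$ with values in the $S'$-bimodule $I$ in the sense of alternative (Hochschild-type) cohomology. Adjusting $\sigma$ by a linear map $S' \to I$ corresponds to changing $f$ by a coboundary, so we will be done if every such $2$-cocycle is a coboundary, i.e.\ if $H^2(S', I) = 0$ for every $S'$-bimodule $I$.

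The vanishing of this second cohomology group for semi-simple alternative algebras over a field is precisely the content of the alternative version of the Whitehead lemmas, due (in the associative case) to Wedderburn and Malcev and (in the alternative case) to Schafer. Rather than develop that cohomology theory here, I would simply invoke it; the underlying fact is that a semi-simple alternative algebra over a field of characteristic zero (and, with more care, in positive characteristic away from small primes) is \emph{separable}, meaning precisely that all such extensions split. Given that vanishing, choose $\tau \colon S' \to I$ with $\sigma'(x) \coloneqq \sigma(x) + \tau(x)$ satisfying $\sigma'(x)\sigma'(y) = \sigma'(xy)$, and set $S \coloneqq \sigma'(S')$. This $S$ is a subalgebra of $A$ mapping isomorphically to $S'$ and therefore to $A/J(A)$, completing the inductive step. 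The one subtlety I would need to double-check is that the ambient non-associativity of $A$ does not obstruct interpreting $f$ as an alternative $2$-cocycle; this is where Artin's theorem (Thm.~\ref{associator}) and the fact that $\sigma(S')$ may be chosen inside an associative or at least alternative-compatible complement come into play.
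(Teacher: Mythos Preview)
The paper does not actually prove Theorem~\ref{SJ}; it is stated there as a background result, with the proof deferred to Chapter~3 of Schafer's book \cite{Schafer} (see the sentence immediately preceding Theorem~\ref{semisimple}). So there is no ``paper's own proof'' to compare against.

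That said, your outline is essentially the classical Wedderburn--Malcev argument, and it is the approach Schafer takes. A couple of remarks. First, your hedge about characteristic is well-placed: the cohomological vanishing you invoke is really a \emph{separability} statement about the semi-simple quotient, and in positive characteristic one must check that $A/J(A)$ is separable (over a perfect field such as a finite field this is automatic; over a general field of characteristic $p$ it can fail, and the Wedderburn principal theorem is not unconditionally true). The paper only needs the result in situations where this is not an issue, so the hedge is harmless here, but you should be aware that the statement as written in the paper is slightly stronger than what holds in full generality. Second, your closing worry about whether non-associativity of $A$ obstructs the cocycle interpretation is exactly the technical content that makes the alternative case harder than the associative one; Schafer handles it by developing the right bimodule and cohomology notions for alternative algebras, and it is not something you can dispatch with a one-line appeal to Artin's theorem. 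So as a proof \emph{sketch} your proposal is on target, but turning it into a complete argument in the alternative setting is real work---which is precisely why the paper outsources it.
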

The fact $A/J(A)$ even \emph{is} an algebra follows from Zorn's result quoted above. The fact that the resulting quotient algebra is semi-simple is then a straightforward consequence of the definitions. The really surprising part is that $A$ already had this semi-simple quotient within it all along.
\begin{notation}
The subalgebra $S$ of Theorem \ref{SJ} will be called the \emph{semi-simple part} of $A$.
\end{notation}

In the cases we are interested in we can give an alternative characterisation of the Jacobson radical:

\begin{lemma}\label{stronglyimaginary}
Let $A$ be an alternative algebra with a strong involution (e. g. any subalgebra of a Cayley algebra) and let $x \in A$. Then $x$ is strongly nilpotent if and only if it is \emph{strongly imaginary}, that is: if and only if $xy \in \im(A)$ for every $y \in A$. (And, equivalently, $yx \in \im(A)$ for every $y \in A$.)
\end{lemma}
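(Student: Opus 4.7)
The plan is to reduce both implications to two tools from Section~\ref{nice}: the quadratic identity~(\ref{taun}) and the multiplicativity of the norm~(\ref{nmult}), together with the observation that $F1 \cap \im(A) = \{0\}$. The latter holds because any $\lambda 1 \in \im(A)$ satisfies $\lambda 1 = (\lambda 1)^* = -\lambda 1$ by~(\ref{im2}), forcing $\lambda = 0$ since $\chr(F) \neq 2$.

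First I would dispatch the parenthetical claim that the left- and right-multiplication versions of strong imaginarity are equivalent. The key intermediate fact is that $zw + wz \in \im(A)$ whenever $z \in \im(A)$ and $w \in A$: decomposing $w = \tau(w)1 + \im(w)$, the cross-term $z\,\im(w) + \im(w)\,z$ vanishes by~(\ref{anticommutative}), leaving $2\tau(w) z \in \im(A)$. Assuming $xy \in \im(A)$ for every $y \in A$, setting $y = 1$ shows $x \in \im(A)$; applying the identity just observed with $z = x$ then gives $yx = (xy + yx) - xy \in \im(A)$. The other direction is symmetric.

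For the forward implication (strongly imaginary $\Rightarrow$ strongly nilpotent), take $y = 1$ to get $x \in \im(A)$, so~(\ref{taun}) reduces to $x^2 = -n(x)$, while taking $y = x$ gives $x^2 \in \im(A)$ by hypothesis. Since $F1 \cap \im(A) = \{0\}$, this forces $n(x) = 0$. Then for any $y$, multiplicativity yields $n(xy) = n(x) n(y) = 0$, and since $xy \in \im(A)$,~(\ref{taun}) gives $(xy)^2 = -n(xy) = 0$. Hence $xy$ is square-zero, in particular nilpotent.

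For the converse, fix $y$ and suppose $xy$ is nilpotent. If $xy = 0$ there is nothing to prove; otherwise let $k \geq 2$ be minimal with $(xy)^k = 0$. Applying the multiplicative map $n$ yields $n(xy)^k = n(0) = 0$, whence $n(xy) = 0$, and so~(\ref{taun}) collapses to $(xy)^2 = 2\tau(xy)(xy)$. Power-associativity of the alternative algebra $A$ then propagates this to the inductive formula $(xy)^m = (2\tau(xy))^{m-1}(xy)$ for all $m \geq 1$. Evaluating at $m = k$ and using $xy \neq 0$ forces $(2\tau(xy))^{k-1} = 0$ in $F$, hence $\tau(xy) = 0$, i.e.\ $xy \in \im(A)$. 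The only conceptually delicate step is the appeal to power-associativity for $xy$, which is a standard consequence of the alternative law; the rest is bookkeeping with~(\ref{taun}) and~(\ref{nmult}).
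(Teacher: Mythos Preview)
Your arguments for the two main implications are correct and close in spirit to the paper's, with minor variations: you reach $(xy)^2=0$ via $n(xy)=n(x)n(y)=0$, whereas the paper computes $(xy)^2=y^*x^2y=0$ directly inside the associative subalgebra $\langle x,y\rangle$; and for the converse you reprove the needed implication of Lemma~\ref{nilpotent} inline rather than citing it, which is arguably cleaner since that lemma is stated only for Cayley--Dickson algebras while the present lemma is stated for arbitrary alternative algebras with strong involution.

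There is, however, a gap in your handling of the parenthetical equivalence. Your ``key intermediate fact'' that $zw+wz\in\im(A)$ for every $z\in\im(A)$ and $w\in A$ is false: take $w=z$ with $z^2\neq 0$ to get $2z^2\in F1\setminus\{0\}$. The identity~(\ref{anticommutative}) you invoke is itself misstated in the paper; what actually follows from (\ref{im2}) and (\ref{star}) is only $uv+vu\in F1$ for $u,v\in\im(A)$, not $uv+vu=0$, and that weaker statement does not support your conclusion. Your specific application (with $z=x$ strongly imaginary) \emph{can} be repaired: since $x\,\im(w)\in\im(A)$ by hypothesis, one has $\im(w)\,x=(x\,\im(w))^*=-x\,\im(w)$, so the cross-term does vanish in this case --- but this extra step uses the strong-imaginarity hypothesis on $x$, not a general property of $\im(A)$. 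The paper avoids the detour entirely by computing $(yx)^*=x^*y^*=-xy^*=(xy^*)^*=yx^*=-yx$ directly from the facts that $x$ and $xy^*$ lie in $\im(A)$.
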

\begin{proof}
The `only if' direction is immediate from implication $1 \Rightarrow 3$ of Lemma \ref{nilpotent}. For the `if' direction let $x$ be strongly imaginary. Let $y \in A$. We first prove the parenthetical statement that $yx \in \im(A)$. Since $x1 \in \im(A)$, we see that strongly imaginary elements are imaginary themselves. Applying (\ref{im2}) to the imaginary elements $x$ and $xy^*$ we find that $(yx)^* = -yx$: $(yx)^* = x^*y^* = -xy^*$ $= (xy^*)^* = yx^* = -yx$. This implies that $yx$ is imaginary by (\ref{im2}).

Next we establish that $x$ itself is nilpotent. Since $x$ is strongly imaginary, $x^2 \in \im(A)$. But since $x$ is `ordinarily' imaginary, $x^2 \in F1$. It follows that $x^2 \in F1 \cap \im(A) = \{0\}$ and hence $x^2 = 0$.

Finally to see that $xy$ is nilpotent as well, we compute $(xy)^2 = -(xy)^*(xy) = -y^*x^*xy = y^*xxy = y^*0y = 0$.
\end{proof}

Using this lemma it is easy to show inductively:
\begin{theorem}\label{CDsemisimple}
Let $A$ be a Cayley-Dickson algebra of dimension 1, 2, 4 or 8, then $A$ is semi-simple.
\end{theorem}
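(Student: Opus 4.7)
The plan is to translate semi-simplicity into non-degeneracy of the norm form via Lemma \ref{stronglyimaginary}, and then prove the latter by a short induction on $\dim A$ using the multiplication rule (\ref{eyesleft}).

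By Theorem \ref{semisimple}, it suffices to show that $J(A) = \{0\}$. Since $\dim A \leq 8$, Theorem \ref{Albert} guarantees that $A$ is alternative, so Lemma \ref{stronglyimaginary} identifies $J(A)$ with the set of strongly imaginary elements: those $x \in A$ with $\tau(xy) = 0$ for every $y \in A$. Now the polarization of the norm satisfies
\begin{equation*}
b_n(u, v) := n(u+v) - n(u) - n(v) = uv^* + vu^* = 2\tau(uv^*),
\end{equation*}
and since $*$ is a bijection on $A$, the vanishing of $\tau(xy)$ for all $y$ is equivalent to $b_n(x, y) = 0$ for all $y$. Hence $J(A)$ coincides with the radical of $b_n$, and the theorem reduces to showing that $n$ is non-degenerate on every Cayley-Dickson algebra of dimension $\leq 8$.

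This I would prove by induction on $\dim A$. The base case $A = F$ is immediate: $n(\alpha) = \alpha^2$ polarizes to $b_n(\alpha, \beta) = 2\alpha\beta$, which is non-degenerate because $\chr(F) \neq 2$. For the inductive step, write $A = D_\gamma(B)$; using $(iq)^* = -iq$ together with (\ref{eyesleft}), one computes
\begin{equation*}
n(p + iq) = n_B(p) - \gamma\, n_B(q) \qquad (p, q \in B),
\end{equation*}
so with respect to the splitting $A = B \oplus iB$, the form $n$ is the orthogonal sum $n_B \perp (-\gamma\, n_B)$. Since $n_B$ is non-degenerate by the inductive hypothesis and $\gamma \in F^{\times}$, the form $n$ on $A$ is non-degenerate as well.

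I do not anticipate a serious obstacle. The main conceptual step is recognizing, via Lemma \ref{stronglyimaginary}, that $J(A)$ coincides with the radical of the norm form's polarization; once this is in place, the inductive formula for $n(p + iq)$ is a short calculation from (\ref{eyesleft}), and the non-degeneracy statement, hence the theorem, follows at once.
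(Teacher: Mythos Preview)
Your proof is correct and follows the same strategy the paper indicates: the paper's entire proof is the sentence ``Using this lemma it is easy to show inductively'', referring to Lemma~\ref{stronglyimaginary}, so both the paper and you reduce to showing there are no nonzero strongly imaginary elements and proceed by induction through the doubling $A = D_\gamma(B)$. Your recasting of the strongly-imaginary condition as the radical of the polarized norm form, together with the orthogonal splitting $n = n_B \perp (-\gamma\, n_B)$, is a clean way to carry out the induction the paper leaves implicit; a more bare-hands version would take $x = p + iq$ strongly imaginary, test against $y \in B$ to force $p$ strongly imaginary in $B$, then against $y \in iB$ to force $q$ strongly imaginary in $B$, but this is the same computation in different clothing.
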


We already saw that the same is not true for all \emph{subalgebras} of the split quaternion and Cayley algebras.

We conclude this section by proving of the following variant of Theorem \ref{CDsemisimple}:

\begin{theorem}\label{CDsimple}
Let $A$ be a Cayley-Dickson algebra of dimension 1, 4 or 8, then $A$ is simple.
\end{theorem}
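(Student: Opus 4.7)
The plan is to leverage the semisimplicity already established in Theorem \ref{CDsemisimple} together with Theorem \ref{semisimple} to reduce simplicity to the nonexistence of nontrivial central idempotents, and then rule those out using the structure of $\im(A)$ and the strong involution.

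For $\dim A = 1$ the theorem is trivial since $A = F$. For $\dim A \in \{4, 8\}$ I would argue by contradiction. Assume $A$ is not simple. By Theorem \ref{CDsemisimple} and Theorem \ref{semisimple}, $A$ decomposes as a nontrivial algebra direct sum $A = A_1 \oplus \cdots \oplus A_k$ with $k \geq 2$. Let $e$ be the unit of $A_1$, viewed inside $A$; then $e$ is idempotent, $e \neq 0, 1$, and by the pointwise nature of the direct sum, $e$ commutes with every element of $A$.

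Next I would extract the structure of $e$ from the quadratic relation (\ref{taun}). From $e^2 = e = 2\tau(e)e - n(e)\cdot 1$ we get $(1 - 2\tau(e))e = -n(e)\cdot 1$; if $2\tau(e) \neq 1$, this forces $e \in F1$, whose only idempotents are $0$ and $1$, contradicting nontriviality. Hence $\tau(e) = \tfrac{1}{2}$ and $n(e) = 0$. Setting $u = e - \tfrac{1}{2}\cdot 1$, I obtain an element of $\im(A)$ that commutes with all of $A$, and $u^2 = e^2 - e + \tfrac{1}{4} = \tfrac{1}{4}$. In particular $u$ is invertible with $u^{-1} = 4u$.

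The heart of the argument is using the strong involution to bound $\dim A$. For any $v \in \im(A)$, we have $u^* = -u$ and $v^* = -v$, so $(uv)^* = v^*u^* = vu$; combined with $uv = vu$ (centrality of $u$), this gives $(uv)^* = uv$, and by strongness of $*$ (Definition \ref{stronginv}) we conclude $uv \in F1$. Hence left multiplication by $u$ restricts to a linear map $L_u \colon \im(A) \to F1$. By the left alternative law (\ref{leftalt}), $u(uv) = u^2 v = \tfrac{1}{4}v$, so $L_u|_{\im(A)}$ is injective. Therefore $\dim \im(A) \leq 1$, i.e.\ $\dim A \leq 2$, contradicting $\dim A \in \{4, 8\}$. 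Thus $k = 1$ and $A$ is simple.

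I expect the main subtlety to be pinning down why the argument genuinely fails in dimension $2$ (so that the theorem's exclusion of that case is not an oversight): there $\im(A)$ is itself one-dimensional, so the injection $L_u|_{\im(A)} \hookrightarrow F1$ produces no contradiction, which is consistent with the split two-dimensional Cayley-Dickson algebra $F \oplus F$ being semisimple but not simple.
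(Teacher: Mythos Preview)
Your proof is correct and takes a genuinely different route from the paper's. The paper proves the stronger statement that every Cayley--Dickson algebra of dimension $\neq 2$ is \emph{central} (center $= F1$), by writing a hypothetical central element as $c = a + ib$ in the Dickson-double coordinates $D_\gamma(A) = A \oplus iA$ and unwinding the relations (\ref{pq1})--(\ref{xi}) to force $a, b \in F1$ and then $b = 0$. Simplicity then follows from centrality plus Theorem~\ref{CDsemisimple}.

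Your argument bypasses the Dickson-double coordinates entirely: from the semisimple decomposition you extract a central idempotent, shift it to a central imaginary element $u$ with $u^2 = \tfrac14$, and then use the strong involution plus left alternativity to show $L_u$ embeds $\im(A)$ into $F1$. This is more intrinsic---it never picks a realization of $A$ as a double---and the injectivity trick via $u(uv) = \tfrac14 v$ is clean. The trade-off is that your argument uses alternativity (for that very step), so it does not extend to Cayley--Dickson algebras of dimension $\geq 16$, whereas the paper's centrality proposition does. For the theorem as stated this costs nothing, and your closing remark about why dimension $2$ escapes is a nice touch that the paper leaves implicit.
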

\begin{definition}
The \emph{center} of an algebra $A$ is the vector space of all elements $c \in A$ that commute and associate with every element of $A$. Clearly $F1$ is contained in the center of any algebra $A$. When $F1$ equals the entire center of $A$ then $A$ is called \emph{central}.
\end{definition}
Since the center of a direct sum of algebras contains the direct sum of the centers of the summands, it follows that a semi-simple algebra can only be central when it consists of a single summand and thus is simple. We conclude that Theorem \ref{CDsimple} follows from Theorem \ref{CDsemisimple} together with the following proposition:
\begin{proposition}
All Cayley-Dickson algebras (not necessarily alternative) of dimension $\neq 2$ are central.
\end{proposition}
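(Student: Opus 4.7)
The plan is to exploit the Dickson-double decomposition $A = B \oplus iB$ and to prove the much stronger statement that any element of $A$ which merely commutes with $i$ and with a single non-zero imaginary element of $B$ already lies in $F1$. Since associativity is never used, this argument will cover all Cayley-Dickson algebras of dimension $\geq 4$ regardless of whether they are alternative. The dimension-$1$ case is trivial since there $A = F1$. Dimension $2$ is genuinely excluded because by Theorem \ref{Albert} every two-dimensional Cayley-Dickson algebra is both commutative and associative, so its centre is the entire algebra.

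For the main step, write a hypothetical central element $c \in A$ as $c = p + iq$ with $p, q \in B$, and use the doubling formula (\ref{eyesleft}) directly to compute
$$ic = \gamma q + ip, \qquad ci = \gamma q^{*} + ip^{*}.$$
Since $B \cap iB = 0$ in the vector space $A$, commutativity of $c$ with $i$ forces $p = p^{*}$ and $q = q^{*}$. The involution on $B$ is strong (Corollary \ref{strong} applied at the previous doubling step, with the base case settled by the convention that the involution on $F$ is the identity), so $p, q \in F1$ and therefore $c = \lambda + \mu i$ for some $\lambda, \mu \in F$.

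To eliminate $\mu$, pick any non-zero $b \in \im(B)$; this exists because $\dim B = \tfrac{1}{2}\dim A \geq 2$ forces $\dim \im(B) \geq 1$ by Lemma \ref{codim1}. Using (\ref{eyesleft}) together with (\ref{xi}) one gets
$$cb = \lambda b + \mu(ib), \qquad bc = \lambda b + \mu(ib^{*}) = \lambda b - \mu(ib),$$
where the last equality uses $b^{*} = -b$ and the linearity of the map $x \mapsto ix$. Commutativity of $c$ with $b$ therefore forces $2\mu(ib) = 0$. Since $\chr(F) \neq 2$, $b \neq 0$, and $x \mapsto ix$ is by construction an injective map $B \to iB$, this yields $\mu = 0$, whence $c = \lambda \in F1$.

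The potential obstacle is largely cosmetic: one must verify that the doubling formula indeed produces the claimed expressions for $ic$, $ci$, $cb$, $bc$ without invoking any form of associativity or alternativity, and one must check that the strongness of the involution on $B$ is in place at every doubling level. Both are transparent from Definition \ref{DicksonDouble} and equations (\ref{xi})--(\ref{xistar}). Crucially, no appeal to Artin's theorem, to the semi-simplicity results of Section \ref{nilp}, or to any alternative-algebra machinery is made, which is precisely what allows the statement to cover the non-alternative Cayley-Dickson algebras in dimensions $16, 32, \ldots$ as well.
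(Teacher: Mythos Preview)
Your proof is correct and follows essentially the same approach as the paper's. Both arguments write $c = p + iq$ in the decomposition $D_\gamma(B) = B \oplus iB$, use $ci = ic$ together with the doubling formula and strongness of the involution on $B$ to conclude $p, q \in F1$, and then exploit the existence of a non-zero $b \in \im(B)$ (available precisely when $\dim B \geq 2$) to kill the $i$-component. The only organizational difference is in this last step: the paper argues that $q \neq 0$ would force $i$ itself to be central, whence (\ref{xi}) would give $x = x^*$ for all $x \in B$ and so $B = F1$, a contradiction; you instead test commutativity of $c$ with a single chosen $b \in \im(B)$ directly and read off $2\mu(ib) = 0$. These are two phrasings of the same observation.
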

\begin{proof}
For the Cayley-Dickson algebra $F$ the statement is clearly true so we can restict our attention to algebras of the form $D_\gamma(A)$, the Dickson double of a Cayley-Dickson algebra $A$. Let $c$ be in the center of $D_\gamma(A)$. We can write $c = a + ib$ with $a, b \in A$ and $i$ as in $(\ref{pq1} - \ref{xistar})$. Expanding both sides of the equality $ci = ic$ into the standard form $r + is$ using $(\ref{pq1} - \ref{pq3})$ and comparing terms along the vectorspace decomposition $D_\gamma(A) = A \oplus iA$, we find that $a = a^*$ and $b = b^*$. It follows by Corollary \ref{strong} that $a \in F1$ and $b \in F1$. 

Since $c$ and $a$ are both in the center of $D_\gamma(A)$ so is $c - a = ib$. We distinguish two cases: $b = 0$ and $b \neq 0$. First suppose that $b \neq 0$. Since $b \in F1$ we conclude that $b$ is invertible and that, since $ib$ is in the center of $D_\gamma(A)$, so is $i$. This means in particular that for any $x \in A$ we have that $x = x^*$ by (\ref{xi}). By Corollary \ref{strong} this implies that $A = F1$ and hence that $D_\gamma(A)$ is two-dimensional. 

Conversely when $D_\gamma(A)$ is not two-dimensional we conclude that $b = 0$. But this implies that $c = a \in F1$. Since $c$ was an arbitrary element of the center of $D_\gamma(A)$, we conclude that $D_\gamma(A)$ is central.
\end{proof}

\begin{remark}
It is known that the dimension of any central simple \emph{associative} algebra is a square. Hence the smallest examples beyond $F$ are 4-dimensional and in the literature on associative algebras these are called \emph{quaternion algebras}. We just proved that a quaternion algebra in our sense, i.e. a 4-dimensional Cayley-Dickson algebra, is indeed central and simple. The converse also holds: any 4-dimensional central simple algebra $Q$ contains, as a subalgebra, a quadratic field extension $C$ of $F$ such that $Q$ is isomorphic to a Dickson double of $C$. (The proof of this latter fact can be found in any text on associative algebras, e.g. \cite{Pierce}, \cite{Vigneras}.) Thus the two notions of quaternion algebra are equivalent. 
\end{remark}

%\section{$q$-Fano planes from Cayley Division algebras}\label{qfano}
\section{$q$-Fano planes from Cayley Division algebras}\label{qfano}
In this section we will deduce the existence of a $2$-$(7, 3, 1)_F$-subspace design from the existence of a Cayley division algebra over $F$. As seen in Section \ref{CayleyDickson} this latter existence holds only for certain infinite non-algebraically closed fields. The first three results of this section (Lemmas \ref{H} and \ref{3of4} and Proposition \ref{main}) do hold over general fields of characteristic unequal to 2.

\begin{lemma}\label{H}
Let $H$ be a unital subalgebra of a Cayley algebra $O$ over $F$. Then $H$ is closed under $*$ and $\im(H) = H \cap \im(O)$.
\end{lemma}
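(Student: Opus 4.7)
The plan is to leverage the fact that the quadratic structure of $O$ descends to any unital subalgebra. For any $a \in H$, equation (\ref{taun}) gives $a^2 = 2\tau(a)a - n(a)1$ with $\tau(a), n(a) \in F$. Since $H$ is unital we have $F1 \subset H$, so this identity exhibits $\{1, a, a^2\}$ as linearly dependent in $H$, showing $H$ is itself a quadratic algebra (Def. \ref{quadraticdef}), with trace and norm maps that are just the restrictions of $\tau$ and $n$ from $O$.

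To prove $H$ is closed under $*$, I would rearrange equation (\ref{taustar}) into $a^* = 2\tau(a)\cdot 1 - a$. For $a \in H$, the right-hand side lies in $H$: $1 \in H$ by unitality, $\tau(a) \in F$ so $\tau(a)\cdot 1 \in F1 \subset H$, and $-a \in H$. Hence $a^* \in H$, establishing the first claim. Once this is known, the restriction of $*$ to $H$ is still an involution; it remains strong because if $a \in H$ satisfies $a^* = a$ then this equation also holds in $O$ and the strongness of $*$ on $O$ forces $a \in F1$.

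For the identity $\im(H) = H \cap \im(O)$, I would use characterization (\ref{im2}), which says that in any algebra with a strong involution the imaginary elements are precisely the $(-1)$-eigenspace of $*$. Since $*|_H$ is the restriction of $*$ on $O$, we get immediately
\[
\im(H) = \{a \in H : a^* = -a\} = H \cap \{a \in O : a^* = -a\} = H \cap \im(O).
\]
(Equivalently, one could invoke the algebra-intrinsic characterization (\ref{im}) in terms of $a^2 \in F1$, which also restricts cleanly from $O$ to $H$.) There is no real obstacle here; the only point requiring care is to notice that $\im(H)$ is even defined, which is why the inheritance of quadraticity and strongness from $O$ to $H$ needs to be stated first.
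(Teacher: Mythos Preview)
Your proof is correct and follows essentially the same approach as the paper. The paper also observes that $H$ inherits quadraticity from $O$ and uses the formula $x^* = \tau(x) - \im(x)$ (equivalent to your $a^* = 2\tau(a)\cdot 1 - a$) to get closure under $*$; the only cosmetic difference is that the paper proves $\im(H) = H \cap \im(O)$ first via characterization (\ref{im}) and then closure under $*$, whereas you prove closure under $*$ first and then invoke (\ref{im2}).
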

\begin{proof}
$O$ is a quadratic algebra by Corollary \ref{strong} and Theorem \ref{quadratic}. It is immediate from the definition of quadratic algebras that subalgebras of quadratic algebras are quadratic again, so that $\im(H)$ is well defined. The statement that $\im(H) = H \cap \im(O)$ is immediate from characterization (\ref{im}) of the former space. Let $x \in H$. As in Section \ref{nice}, we write $\tau$ and $\im$ respectively for the projection operators along the decomposition $O = F1 \oplus \im(O)$. In particular for $x \in H$ there exist $\tau(x) \in F1, \im(x) \in \im(O)$ such that $x = \tau(x) + \im(x)$. Since $1 \in H$ by assumption and $x \in H$ by definition we have that $\im(x) = x - \tau(x) \in H$ and hence that $x^* = \tau(x) - \im(x) \in H$.
\end{proof}

\begin{lemma}\label{3of4}
Let $u, v$ be linearly independent imaginary elements of a Cayley algebra $O$ over $F$. Then the algebra $\langle u, v \rangle$ generated by $u$ and $v$ is equal as a vector space to the space  $\spam (\{1, u, v, uv\})$ and hence has dimension either 3 or 4.
\end{lemma}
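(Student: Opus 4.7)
The plan is to set $A := \spam(\{1, u, v, uv\})$ and to prove the lemma by showing that $A$ is itself a subalgebra of $O$. Once that is established, the result follows at once: since $A$ is a subalgebra containing $\{u, v\}$, we have $\langle u, v\rangle \subseteq A$; conversely $1, u, v, uv$ all lie in $\langle u, v\rangle$ (the element $1$ by our unital convention, and $uv$ by closure under multiplication), giving the reverse inclusion. The dimension claim then follows because $\{1, u, v\}$ is already linearly independent (as $u, v$ are linearly independent imaginary elements and $1 \notin \im(O)$), so $\dim A \in \{3, 4\}$ depending on whether $uv$ belongs to $\spam(1, u, v)$.

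To verify closure of $A$ under multiplication, I would first collect the quadratic relations satisfied by $u$ and $v$. Imaginarity of $u, v$ together with characterization (\ref{im}) yields $u^2, v^2 \in F1$; set $\alpha := u^2$ and $\beta := v^2$. Expanding $(u+v)^2 \in F1$ and subtracting $u^2, v^2$ moreover shows that $uv + vu \in F1$ (exactly the calculation that appears inside the proof of Theorem \ref{quadratic}); set $\gamma := uv + vu$. Already this produces $vu = \gamma - uv \in A$, handling one of the nontrivial products.

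The remaining pair products mix $uv$ with $u$ or $v$. Here I would appeal to alternativity of $O$ (Theorem \ref{Albert}) together with Artin's theorem (Theorem \ref{associator}), which asserts that the associator alternates, and in particular vanishes whenever two of its arguments coincide. The identities $(u,u,v) = (v,v,u) = (u,v,u) = (v,u,v) = 0$ let me reassociate each of $u(uv)$, $(uv)v$, $(uv)u = u(vu)$, and $v(uv) = (vu)v$, and each such expression then collapses via the quadratic relations above into $\spam(u, v) \subseteq A$. The sole remaining pair is $(uv)(uv)$: since the subalgebra $\langle u, v\rangle \subseteq O$ generated by two elements is associative by alternativity, I may freely shift brackets to obtain $(uv)(uv) = ((uv)u)v$ and then apply the previous calculation to land back in $A$.

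I do not expect a serious obstacle: the argument is essentially a finite bookkeeping exercise in which Artin's theorem and the quadratic nature of imaginary elements carry all the weight, and no subtle cancellation or compatibility across the different products needs to be checked. The only mildly delicate point is noticing that the scalar $\gamma$ controlling the anticommutator of $u$ and $v$ is what makes $vu$, $(uv)u$, and $v(uv)$ land in $A$ rather than merely in $\langle u, v\rangle$.
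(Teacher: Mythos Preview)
Your proposal is correct and follows essentially the same route as the paper: show that $\spam(1,u,v,uv)$ is closed under multiplication using $u^2, v^2, uv+vu \in F1$ together with alternativity to handle the mixed products. The only cosmetic differences are that the paper invokes the strong involution to obtain $uv+vu\in F1$ (rather than expanding $(u+v)^2$) and cites the quadratic-algebra property for $(uv)^2$ (rather than reassociating it as $((uv)u)v$), but these are interchangeable one-line observations.
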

\begin{proof}
It is clear that $\spam(1, u, v, uv)$ is contained in $\langle u, v \rangle$, which in turn is associative by alternativity of $O$. To get the converse inclusion it suffices to show that the linear space $\spam(1, u, v, uv)$ is closed under the multiplication. By associativity and the fact that $u^2, v^2 \in F1$ by (\ref{im}), we don't have to worry about the products $u^2, v^2, u^2v$ and $uv^2$ and the definition of quadratic algebra tells us that $(uv)^2 \in \spam(1, uv)$. Thus it only remains to show that the products $vu$, $uvu$ and $vuv$ are all in $\spam(1, u, v, uv)$.

By (\ref{star}) and (\ref{im2}) we find that $(uv)^* = v^*u^* = vu$ and hence $(uv + vu)^* = (uv + vu)$. The fact that $*$ is a strong involution by Corollary \ref{strong} then implies that $uv + vu \in F1$ and hence that $vu \in \spam(1, uv) \subseteq \spam(1, u, v, uv)$. It follows that $uvu \in \spam(u, u^2v) = \spam(u, v) \subset \spam(1, u, v, uv)$ and $vuv \in \spam(v, uv^2) = \spam(v, u) \subset \spam(1, u, v, uv)$ and we conclude that the latter space is closed under multiplication.

This proves that $\langle u, v \rangle = \spam(1, u, v, uv)$.
\end{proof}

\begin{proposition}\label{main}
Let $O$ be a Cayley algebra over $F$ and let $U \subseteq \im(O)$ be a 2-dimensional subspace not containing any nilpotent elements. Then $\langle U \rangle$ is a four-dimensional associative subalgebra of $O$.
\end{proposition}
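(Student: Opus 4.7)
My plan is to reduce the proposition to ruling out a specific dimensional scenario, and then derive a nilpotent element to contradict the hypothesis on $U$. Here is the outline.

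First, pick a basis $u, v$ of $U$. By Lemma \ref{generated}, $\langle U \rangle = \langle u, v \rangle$. By Lemma \ref{3of4}, $\langle u, v \rangle = \spam(1, u, v, uv)$ has dimension $3$ or $4$. Moreover, since $O$ is alternative, any subalgebra generated by two elements is associative, so associativity is free. Hence the entire content of the proposition reduces to showing that the four vectors $1, u, v, uv$ are linearly independent.

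Suppose for contradiction that $\dim \langle u, v \rangle = 3$. Then $uv \in \spam(1, u, v)$, so we can write $uv = \alpha + \beta u + \gamma v$ for unique $\alpha, \beta, \gamma \in F$. The key step is to extract enough information from this relation by applying the left alternative law (Theorem \ref{associator}, equation (\ref{leftalt})): since $u^2 \in F1$ (as $u \in \im(O)$), call $\lambda = u^2$ and expand both sides of $u(uv) = u^2 v = \lambda v$. The left side unfolds to
\[
u(\alpha + \beta u + \gamma v) = \beta\lambda + \gamma\alpha + (\alpha + \beta\gamma) u + \gamma^2 v,
\]
and comparing coefficients in the linearly independent set $\{1, u, v\}$ yields $\lambda = \gamma^2$ and $\alpha = -\beta\gamma$. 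The symmetric computation using $(uv)v = uv^2$ gives $v^2 = \beta^2$. So we have extracted $u^2 = \gamma^2$, $v^2 = \beta^2$, and $\alpha + \beta\gamma = 0$ from the assumed dependency.

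Now I produce the contradicting nilpotent. Since $u, v \in \im(O)$, equation (\ref{star}) gives $(uv)^* = vu$, so $uv + vu = 2\tau(uv) = 2\alpha$. Consider $w = \beta u + \gamma v \in U$ and compute
\[
w^2 = \beta^2 u^2 + \beta\gamma(uv + vu) + \gamma^2 v^2 = \beta^2\gamma^2 + 2\alpha\beta\gamma + \gamma^2\beta^2 = 2\beta^2\gamma^2 - 2\beta^2\gamma^2 = 0.
\]
If $w \neq 0$, then $w$ is a nonzero nilpotent lying in $U$, contradicting the hypothesis. If $w = 0$, then by linear independence of $u, v$ we must have $\beta = \gamma = 0$; combined with $\alpha = -\beta\gamma = 0$ this forces $uv = 0$ and $u^2 = \gamma^2 = 0$, so $u \in U$ itself is a nonzero nilpotent, again a contradiction.

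The main obstacle is recognizing that a mere dimensional degeneracy ($\dim\langle u,v\rangle = 3$) forces both $u^2$ and $v^2$ to become perfect squares tied to the dependency coefficients, and that the specific combination $\beta u + \gamma v$ dispatches all cases in one shot via the relation $\alpha = -\beta\gamma$. Once that bookkeeping is in hand, everything else is mechanical use of alternativity and the strong involution properties from Section \ref{nice}.
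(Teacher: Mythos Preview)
Your proof is correct and takes a genuinely different route from the paper's. The paper proceeds constructively: given $u$ non-nilpotent it builds $i = v + \frac{\tau(uv)}{n(u)}u$, checks that $i$ satisfies (\ref{xi}) relative to $\langle u\rangle$, and concludes via Lemma \ref{allfromxi2} that there is a nonzero homomorphism $D_\gamma(\langle u\rangle)\to\langle u,v\rangle$; simplicity of the Dickson double then forces injectivity and hence dimension $4$. Your argument instead assumes the degenerate case $\dim\langle u,v\rangle=3$, writes $uv=\alpha+\beta u+\gamma v$, and uses the left and right alternative laws to extract $u^2=\gamma^2$, $v^2=\beta^2$, $\alpha=-\beta\gamma$, from which the element $\beta u+\gamma v\in U$ (or $u$ itself, if that vanishes) is nilpotent. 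This is more elementary and entirely self-contained, avoiding any appeal to the Dickson double machinery or simplicity results. What the paper's approach buys is the extra structural information that $\langle U\rangle\cong D_\gamma(\langle u\rangle)$, which it explicitly flags as the proof announced in Remark \ref{special}; your approach gives no such identification, but for the bare statement of the proposition it is cleaner. It is also worth noting that your computation is close in spirit to the analysis the paper carries out later in Proposition \ref{3dim}, where the three-dimensional case is dissected under the additional assumption that a nilpotent basis vector is already in hand.
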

\begin{proof}
Let $\{u, v\}$ be a basis of $U$. From Lemma \ref{generated} we know that $\langle U \rangle = \langle u, v \rangle$. The latter subalgebra is associative by alternativity of $O$ and from Lemma \ref{3of4} we know that it is either 3 or 4-dimensional. What remains to be shown is that this space is 4- rather than 3-dimensional. We will do this by showing that $\langle u, v \rangle$ is isomorphic to the Dickson-Double of the algebra $\langle u \rangle$, which itself is two-dimensional since $O$ is quadratic (Definition \ref{quadraticdef}). This is will be the proof announced in Remark \ref{special}.

Since $u$ is not nilpotent, we see that $n(u)$, which equals $-u^2$ by (\ref{taun}), is non-zero.

Let $i = v + \frac{\tau(uv)}{n(u)}u$. Since $\frac{\tau(uv)}{n(u)}$ is just a scalar in $F$ this is an element of the linear space $\spam(u, v) \subseteq \langle u, v \rangle \cap \im(O)$. In particular we have by (\ref{im2}) that

\begin{equation}\label{mini}
i^* = -i
\end{equation} 

By construction we also have that $\tau(ui) = 0$, where we exploited that, since $u \in \im(O)$, we have that $u^2 = -n(u)1$ by (\ref{taun}). It follows that $ui \in \im(O)$ and by bilinearity of the product we find that $xi \in \im(O)$ for every $x \in \langle u \rangle = \spam(1, u)$. We conclude from (\ref{im2}), (\ref{star}) and (\ref{mini}) respectively that 
\begin{equation}
xi = -(xi)^* = -(i^*)x^* = ix^*
\end{equation}
from which it follows that the pair $A = \langle u \rangle, i = v + \frac{\tau(uv)}{n(u)}u$ satisfies equation (\ref{xi}). Remark \ref{allfromxi} then implies that it satisfies (\ref{pq1}, \ref{pq2}, \ref{pq3}).

Define $\gamma \in F$ by $i^2 = \gamma1$. Since, by the assumptions of the proposition, $i$ is not nilpotent, $\gamma \neq 0$. Now the fact that the pair $A = \langle u \rangle, i = v + \frac{\tau(uv)}{n(u)}u$ satisfies (\ref{pq1}, \ref{pq2}, \ref{pq3}) implies by Lemma \ref{multiplication} that there is a non-zero homomorphism from the `abstract' four-dimensional Cayley-Dickson algebra $D_{\gamma}(\langle u \rangle)$ to $B \coloneqq \langle u, v \rangle$. 

Since the former of these algebras is simple by Theorem \ref{CDsimple} and the kernel of any homomorphism is a two-sided ideal, this homomorphism is injective and hence $\dim(B) \geq 4$. Since we already established that $\dim(\langle u, v \rangle) \leq 4$ in Lemma \ref{3of4} we find that in fact $\dim(\langle u, v \rangle) = 4$ and $\langle u, v \rangle \isom D_{\gamma}(\langle u \rangle)$.
\end{proof}

\begin{theorem}[Existence of $q$-Fano plane]\label{quantumfano}
Let $F$ be a field of characteristic unequal to 2 over which there exist at least one Cayley division algebra $O$. Then there exists a $2$-$(7, 3, 1)_F$-subspace design over $F$. More precisely: let $O$ be a Cayley division algebra over $F$, let $V = \im(O)$ and let 
$$\mathfrak{B} = \{\im(H) \colon H \textnormal{ is a 4-dimensional associative subalgebra of } O\}.$$ 
Then $\dim V = 7$, $\dim(B) = 3$ for every $B \in \mathfrak{B}$ and every two-dimensional subspace $U \subseteq V$ is contained in a unique element $B$ of $\mathfrak{B}$.
\end{theorem}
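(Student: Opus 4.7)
The plan is to derive the theorem rather directly from the preliminaries, with Proposition \ref{main} doing the heavy lifting. I would organize the argument into three steps: dimension counts, existence of a containing block, and uniqueness.

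First I would dispose of the dimension claims. Since $O$ is an $8$-dimensional Cayley algebra carrying the strong involution from Corollary \ref{strong}, Lemma \ref{codim1} gives $\dim V = \dim \im(O) = 7$. For any $H \in \mathcal{H}$, note that $H$ is a subalgebra of the quadratic algebra $O$ and, by Lemma \ref{H}, closed under ${}^*$; hence the restriction of ${}^*$ to $H$ is a strong involution on the $4$-dimensional algebra $H$, so another application of Lemma \ref{codim1} gives $\dim \im(H) = 3$, and by Lemma \ref{H} again $\im(H) = H \cap V \subseteq V$.

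For existence, let $U \subseteq V$ be an arbitrary two-dimensional subspace. The key observation is that $O$ being a division algebra rules out nilpotent elements in $U$: if $0 \neq u \in U$ satisfied $u^2 = 0$, then $u$ would be a zero-divisor, contradicting Lemma \ref{division}. Hence $U$ satisfies the hypothesis of Proposition \ref{main}, which produces a four-dimensional associative subalgebra $H \coloneqq \langle U \rangle$ of $O$. Since $U \subseteq V \cap H = \im(H)$, the block $B \coloneqq \im(H) \in \mathfrak{B}$ contains $U$.

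Uniqueness is then almost immediate. Suppose $U \subseteq \im(H_1)$ and $U \subseteq \im(H_2)$ for two four-dimensional associative subalgebras $H_1, H_2 \in \mathcal{H}$. Then $U \subseteq H_i$ and, since each $H_i$ is a subalgebra, $\langle U \rangle \subseteq H_i$ for $i = 1, 2$. By Proposition \ref{main}, $\langle U \rangle$ is itself four-dimensional, and $H_1, H_2$ are four-dimensional, so $H_1 = \langle U \rangle = H_2$; consequently $\im(H_1) = \im(H_2)$.

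The main obstacle has already been surmounted: it is Proposition \ref{main}, whose nontrivial content is that $\langle U \rangle$ attains the full dimension $4$ rather than dropping to $3$, via the Dickson-double construction and simplicity of $D_\gamma(\langle u \rangle)$ (Theorem \ref{CDsimple}). Once that is available, the only mildly subtle points in the present theorem are noticing that $H \in \mathcal{H}$ automatically inherits a strong involution (for the dimension of $\im(H)$) and that division algebras contain no nilpotent imaginary elements (for applicability of Proposition \ref{main}).
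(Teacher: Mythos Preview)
Your proof is correct and follows essentially the same approach as the paper: invoke Lemma \ref{codim1} for the dimension counts, use the absence of zero-divisors to apply Proposition \ref{main}, obtain existence via $\im(\langle U\rangle)$ using Lemma \ref{H}, and deduce uniqueness from the dimension equality $\dim H_i = \dim\langle U\rangle = 4$. Your version is slightly more explicit in justifying that $H$ inherits a strong involution, but the argument is otherwise identical to the paper's.
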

\begin{proof}
Let $O, V, \mathfrak{B}$ be as in the statement of the theorem. The given dimensions of $V$ and $B$ for $B \in \mathfrak{B}$ follow from Lemma \ref{codim1}. Let $U \subseteq V$ be 2-dimensional. Write $H = \langle U \rangle$. Since $O$ contains no divisors of zero, it certainly contains no nilpotent elements. By Proposition \ref{main} then, $H$ is a 4-dimensional associative subalgebra of $O$ containing $U$.

\emph{Existence of $B \in \mathfrak{B}$ containing $U$.} 
By construction, $H$ contains $U$. Also by definition $U \subseteq V$ so it follows that $U \subseteq H \cap V$. But by Proposition \ref{H} this latter space equals $\im(H) \in \mathfrak{B}$. 

\emph{Uniqueness of $B \in \mathfrak{B}$ containing $U$.} Suppose there are $B_1, B_2 \in \mathfrak{B}$ such that $U \subseteq B_1 \cap B_2$. By definition there exist four-dimensional associative algebras $H_1, H_2 \subseteq O$ such that $\im(H_1) = B_1$, $\im(H_2) = B_2$. Since both $H_1$ and $H_2$ contain $U$ and both are subalgebras, they both contain $\langle U \rangle$. But since $\dim H_1 = \dim \langle U \rangle$ by Proposition \ref{main} we have that $H_1 = \langle U \rangle$ and similarly $H_2 = \langle U \rangle$. It follows that $H_1 = H_2$ and hence $B_1 = \im(H_1) = \im(H_2) = B_2$. 
\end{proof}

%\begin{proposition}\label{main}
%Dummy placeholder theorem
%\end{proposition}
%
%\begin{theorem}\label{quantumfano}
%Dummy placeholder theorem
%\end{theorem}

%\section{The role of zero divisors}\label{split}

\section{The subalgebras generated by two-dimensional subspaces of $\im(O)$}\label{split}
Proposition \ref{main} showed that if the two-dimensional subspace $U \subset \im(O)$ contains no nilpotent elements, then $\langle U \rangle$ is a four-dimensional simple associative non-commutative algebra. In a sense, the demand that $U$ contains no nilpotents `feels' a bit too strong (though weak enough to cover all cases in case $O$ is a division algebra): the only way it is used in the proof is to guarantee that we can pick a non-nilpotent basis element $u$ of $U$ and that the element $i$ constructed from $u$ and the other, arbitrary, basis element $v$ is not a nilpotent. In the current section we will see that in the case that $O$ is split (and nilpotent elements hence do exist) the class of two-dimensional subspaces generating a four-dimenional simple associative non-commutative subalgebra is indeed strictly larger than the class of two-dimensional subspaces without nilpotents, and that moreover some other two-dimensional subspaces of $\im(O)$ (containing nilpotents) generate non-simple associative algebras that are nevertheless non-commutative and four-dimensional. For the purpose of obtaining a $q$-Fano plane by analogy to Theorem \ref{quantumfano} these non-simple algebras are clearly `good enough'. What we really want to know is whether every algebra $\langle U \rangle$ is necessarily four-dimensional. Unfortunately, when $O$ is split, the answer is no. Before giving an example of a space $U$ generating a three-dimensional subalgebra we discuss some other properties of these subspaces so that we know where to look for them.

\subsection{Subspaces generating three-dimensional subalgebras}

We begin with a trivial but crucial observation:

\begin{lemma}\label{uvinU}
Let $u, v \in \im(O)$. Then $\dim \langle u, v \rangle = 3$ if and only if $uv \in \spam(1, u, v)$. In terms of vector spaces rather than elements: the algebra $\langle U \rangle$ generated by a two-dimensional subspace $U \subset \im(O)$ is 3-dimensional if and only if $\im(\langle U \rangle) = U$ and $\langle U \rangle = F1 \oplus U$ as a vectorspace. In other words $\dim \langle U \rangle = 3$ if and only if the vectorspace $F1 \oplus U$ is closed under multiplication.
\end{lemma}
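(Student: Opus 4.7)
The plan is to read everything off Lemma~\ref{3of4} and Lemma~\ref{generated}. Since $U$ is two-dimensional, any basis $\{u, v\}$ of $U$ consists of linearly independent imaginary elements, so Lemma~\ref{3of4} gives $\langle u, v \rangle = \spam(1, u, v, uv)$, and Lemma~\ref{generated} identifies this with $\langle U \rangle$. The vectors $1, u, v$ are themselves linearly independent, because $1 \notin \im(O)$ (as $\tau(1) = 1 \neq 0$) while $u, v \in \im(O)$ are linearly independent. Hence $\spam(1, u, v)$ is exactly three-dimensional, and the dimension of $\langle u, v \rangle$ drops from $4$ to $3$ precisely when $uv \in \spam(1, u, v)$; this is the first stated equivalence.

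For the vector-space reformulation, suppose $\dim \langle U \rangle = 3$. By the previous paragraph $\langle U \rangle = \spam(1, u, v) = F1 + U$, and the sum is direct because $F1 \cap \im(O) = \{0\}$ and $U \subseteq \im(O)$. Thus $\langle U \rangle = F1 \oplus U$. The identification $\im(\langle U \rangle) = U$ then falls out of Lemma~\ref{H}:
\[
\im(\langle U \rangle) = \langle U \rangle \cap \im(O) = (F1 \oplus U) \cap \im(O) = U.
\]

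For the final (multiplicative-closure) formulation both directions are immediate. If $F1 \oplus U$ is closed under the product of $O$ then it is a unital subalgebra (it contains $1$) containing the generating set $U$, so $\langle U \rangle \subseteq F1 \oplus U$; the reverse inclusion is automatic since $\langle U \rangle$ contains $1$ and $U$. This forces $\langle U \rangle = F1 \oplus U$, which is three-dimensional. Conversely, if $\dim \langle U \rangle = 3$ then the previous paragraph already realizes $F1 \oplus U$ as the subalgebra $\langle U \rangle$, which is trivially closed under multiplication.

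There is essentially no obstacle here: the substantive content is already packaged inside Lemma~\ref{3of4} (whose proof draws on alternativity and the strong involution via Corollary~\ref{strong}), and what remains is routine bookkeeping with Lemma~\ref{generated}, Lemma~\ref{H}, and the fact that $\im(O)$ has codimension one in $O$.
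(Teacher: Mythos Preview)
Your proof is correct. The paper does not actually supply a proof for this lemma: it introduces it as ``a trivial but crucial observation'' and moves on. Your write-up is a faithful expansion of that triviality, leaning on exactly the tools the paper has set up for this purpose (Lemma~\ref{3of4} for $\langle u,v\rangle=\spam(1,u,v,uv)$, Lemma~\ref{generated} for $\langle u,v\rangle=\langle U\rangle$, and Lemma~\ref{H} for $\im(\langle U\rangle)=\langle U\rangle\cap\im(O)$), so there is nothing to compare.

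One small remark: the first sentence of the lemma is stated for arbitrary $u,v\in\im(O)$, not explicitly a basis of a two-dimensional $U$, so strictly speaking you are assuming linear independence of $u,v$ from the outset. This is harmless in context (the rest of the lemma and every use of it in the paper concern a two-dimensional $U$), but if you want to be fully pedantic you could note that when $u,v$ are linearly dependent $\dim\langle u,v\rangle\le 2$, so the equivalence is vacuous in that case anyway.
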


This gives us \emph{some} algortithm of probing $O$ for such subspaces: just pick random pairs of elements and see where their product lands. However we will try and be more efficient by understanding in more detail the structure of such subspaces and the algebras they generate.

Two-dimensional subspaces of $\im(O)$ that generate only a three-dimensional subalgebra come in two flavours which we will call \emph{type Z} and \emph{type U}:

\begin{proposition}\label{3dim}
Let $U \subset \im(O)$ be such that $\dim U = 2$, $\dim \langle U \rangle = 3$. Then either:
\begin{itemize}
\item $uv = 0$ for every $u, v \in U$ -- we will call such spaces \emph{type Z} spaces because the multiplication on the space $U$ acts as the zero map,
\end{itemize} 
or: 
\begin{itemize}
\item $U$ contains a single line $Fu$ of nilpotent elements and for every $v \in U$ not on that line we have that $v^2 \neq 0$ but $uv \in Fu$ (and hence $vu = -uv \in Fu)$. We will call spaces of the latter type to be of \emph{type U} for reasons explained later.
\end{itemize}

Conversely it is easy to see that when $U$ is of type either $U$ or $Z$ then $F1 \oplus U$ is closed under multiplication and hence equal to the algebra $\langle U \rangle$.
\end{proposition}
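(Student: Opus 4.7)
The plan is to leverage Lemma \ref{uvinU} to get a concrete parameterization of the multiplication on $U$ and then do a case split on whether the antisymmetric bilinear map $\pi\colon U\times U \to U$, $(u,v)\mapsto \im(uv)$, vanishes identically.

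Fix a basis $\{u,v\}$ of $U$. By Lemma \ref{uvinU}, $\{1,u,v\}$ is a basis of $\langle U \rangle$, and since $u,v\in\im(O)$ we have $u^2,v^2\in F1$, so I would write
\[
u^2 = \alpha,\qquad v^2 = \gamma,\qquad uv = \beta + \lambda u + \mu v
\]
with $\alpha,\beta,\gamma,\lambda,\mu\in F$. The identity $uv+vu\in F1$ (established in the proof of Lemma \ref{3of4}) yields $vu = \beta - \lambda u - \mu v$. Since $O$ is alternative (Theorem \ref{Albert}), $\langle u,v\rangle$ is associative, so I can expand $u(uv)=u^2v=\alpha v$ and $(uv)v = uv^2=\gamma u$ in two ways and match coefficients in the basis $\{1,u,v\}$. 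This yields the key relations
\[
\mu^2=\alpha,\qquad \lambda^2=\gamma,\qquad \beta = -\lambda\mu.
\]
Antisymmetry of $\pi$ (from $uv+vu\in F1$) together with bilinearity gives $\pi(au+bv,cu+dv) = (ad-bc)\pi(u,v)$, so $\pi$ vanishes identically iff it vanishes on some basis.

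If $\pi\equiv 0$, i.e. $\lambda=\mu=0$, then the three relations force $\alpha=\gamma=\beta=0$, whence $u^2=v^2=uv=vu=0$; by bilinearity every product in $U$ vanishes and we are in type Z. Otherwise some basis has $(\lambda,\mu)\neq(0,0)$, and I would set $w := \lambda u + \mu v = \pi(u,v)\in U$. A direct computation using the three key relations gives
\[
w^2 = \lambda^2 u^2 + \lambda\mu(uv+vu) + \mu^2 v^2 = \lambda^2\mu^2 + 2\lambda\mu\beta + \mu^2\lambda^2 = 0,
\]
so $w$ is nilpotent, and similarly $wu = -\mu w$ and $wv = \lambda w$, whence $wU\subseteq Fw$. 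To see that $Fw$ is the unique nilpotent line, suppose $v'\in U\setminus Fw$ were also nilpotent. Then $\{w,v'\}$ is another basis of $U$, and applying the key relations in this new basis (with $w^2=v'^2=0$) forces the corresponding ``$\lambda$'' and ``$\mu$'' to vanish, so $\pi(w,v')=0$. By the bilinearity formula above this forces $\pi\equiv 0$, contradicting the current case. Therefore $Fw$ is the unique nilpotent line in $U$, and every $v'\in U\setminus Fw$ satisfies $(v')^2\neq 0$; combined with $wv'\in Fw$ this is precisely type U. The converse closure of $F1\oplus U$ under multiplication in types Z and U is then immediate by inspecting the multiplication rules just derived.

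The main obstacle I anticipate is keeping the bookkeeping of the three key relations straight across basis changes in the uniqueness step: the underlying structural picture (a rank-one antisymmetric bilinear map $\pi$ whose nonzero image is forced to be a nilpotent line that absorbs $U$ on the left) is clean, but the verification of type U hinges on reapplying the associativity relations $\mu^2=\alpha$, $\lambda^2=\gamma$, $\beta=-\lambda\mu$ in several different bases, which is where a small arithmetic slip is most likely.
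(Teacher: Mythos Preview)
Your argument is correct and in fact slightly more self-contained than the paper's. The paper begins by invoking Proposition~\ref{main} to guarantee that $U$ contains a nilpotent element $u$, chooses a basis $\{u,v\}$ with $u^2=0$, and then runs the associativity identities $u(uv)=(uu)v=0$ and $(uv)v=u(vv)$ to force $uv=\gamma u$ with $\gamma^2=v^2$; the dichotomy $\gamma=0$ versus $\gamma\neq 0$ then gives types Z and U directly. You instead take an arbitrary basis, extract the three relations $\mu^2=\alpha$, $\lambda^2=\gamma$, $\beta=-\lambda\mu$ from the same associativity identities, and then \emph{discover} the nilpotent element as $w=\pi(u,v)=\lambda u+\mu v$. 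This sidesteps the appeal to Proposition~\ref{main} entirely: the nilpotent line falls out of the algebra rather than being fed in as a hypothesis. The uniqueness step via the bilinearity formula $\pi(au+bv,cu+dv)=(ad-bc)\pi(u,v)$ is also cleaner than re-running the computation for a generic element, as the paper does. The price you pay is one extra basis change and slightly heavier bookkeeping, which you already flagged; the computations you wrote out ($w^2=0$, $wu=-\mu w$, $wv=\lambda w$) are all correct.
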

\begin{proof}
Let $U$ be such that $\langle U \rangle  = F1 \oplus U$ and let $\{u, v\}$ be a basis of $U$. By Proposition \ref{main} we may assume without loss of generality that $u$ is nilpotent and hence (by Lemma \ref{nilpotent}) that $u^2 = 0$. We know that $v^2 = \alpha 1$ and $uv = \beta 1 + \gamma u + \delta v$ for some $\alpha, \beta, \gamma, \delta \in F$. From $u(uv) = (uu)v$ we find that 
$\beta u + 0 + \delta\beta 1 + \delta \gamma u + \delta^2 v = 0$ so that  
$\beta = \delta  = 0$ and hence that $uv = \gamma u$. 
From $(uv)v = u(v^2)$ we then find that
$\gamma^2 u = \alpha u$ so that 
$\gamma^2 = \alpha$.

Now we distinguish two cases: $\gamma = 0$ and $\gamma \neq 0$. When $\gamma = 0$ we have that $v^2 = \alpha = \gamma^2 = 0$ and from $\beta = \gamma = \delta = 0$ we have that $uv = 0$. We recall that moreover we had $u^2 = 0$ from the start. It follows easily that the product of any two elements in $U = \spam(u, v)$ is zero and hence that $U$ is of type Z.

When $\gamma \neq 0$ we find from $\beta = \delta = 0$ that $uv = \gamma u \in Fu$ and hence in particular $uv \in \im(O)$ so that $vu = v^*u^* = (uv)^* = -uv = -\gamma u \in Fu$ as well. Moreover, from $\alpha = \gamma^2$ we see that $v^2 \neq 0$. In particular if $x = \epsilon u + \zeta v$ with $\zeta \neq 0$ is a generic element of $U$ not on $Fu$ we find that $ux = \zeta \gamma u \in Fu$  but $x^2 = \zeta^2\gamma^2 \neq 0 \in F$ so that $U$ is of type U.
\end{proof}

The algebra structure of $\langle U \rangle$ in case $U$ is of type $Z$ is very easy to understand. Every element $\langle U \rangle$ is of the form $\alpha 1 + u$ with $\alpha \in F, u \in U$ and multiplication is given by $(\alpha 1 + u)(\beta 1 + v) = \alpha \beta 1 + (\beta u + \alpha v)$. 

In case of spaces $U$ of type U we have the following result, explaining the name:

\begin{proposition}\label{typeU}
Let $U$ be a two-dimensional type U space. Then the three-dimensional algebra $\langle U \rangle$ is isomorphic to the algebra of upper triangular 2-by-2 matrices. More strongly: for every basis $u, v$ of $U$ with $u^2 = 0, v^2 = \gamma^2 1$ as in the proof of Proposition \ref{3dim} there is a unique isomorphism 
$\phi_{u, v} \colon \langle U \rangle \to 
\left\{ 
\begin{pmatrix}
\zeta & \eta \\ 0 & \theta 
\end{pmatrix} 
\colon \zeta, \eta, \theta \in F 
\right\}
\subset \Mat(2, F)$ 
given by 
$\phi(1) = \begin{pmatrix}
1 & 0 \\ 0 & 1 \end{pmatrix}$, 
$\phi(u) =  \begin{pmatrix}
0 & 1 \\ 0 & 0 \end{pmatrix}$, 
$\phi(v) =  \begin{pmatrix}
\gamma & 0 \\ 0 & -\gamma \end{pmatrix}$. 

Moreover if we define the involution $*$ on $\Mat(2, F)$ as in Example \ref{Mat2F} then each of the maps $\phi_{u, v}$ preserves the $*$-structure. In particular the space $U = \im(\langle U \rangle)$ is mapped to the space of traceless matrices, the Jacobson radical $J(\langle U \rangle) = Fu$ is mapped to the space of strictly upper triangular matrices and the subalgebra $\spam(1, v)$, which is isomorphic to the (necessarily split) Cayley-Dickson algebra $D_{\gamma^2}(F)$ is mapped to the subalgebra of diagonal matrices in $\Mat(2, F)$.
\end{proposition}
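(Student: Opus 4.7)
The plan is to exploit the multiplication table already extracted in the proof of Proposition \ref{3dim}. For any basis $\{u,v\}$ of a type-U subspace with $u$ nilpotent, that proof yields $u^2 = 0$, $v^2 = \gamma^2 \cdot 1$ with $\gamma \neq 0$ (the nonvanishing of $\gamma$ being precisely what distinguishes type U from type Z), and $uv = \gamma u$; combining this with $(uv)^* = v^*u^*$ and $uv \in Fu \subset \im(O)$ gives $vu = -\gamma u$. Together with the unit axiom these relations determine the multiplication on the three-dimensional vector space $\langle U\rangle = F1 \oplus Fu \oplus Fv$ completely.

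I would first define $\phi = \phi_{u,v}$ on the basis $\{1,u,v\}$ by the prescribed matrices and extend linearly; well-definedness is then immediate. Since $\gamma \neq 0$ the three target matrices $I$, $E_{12}$, $\mathrm{diag}(\gamma,-\gamma)$ are linearly independent and visibly span the upper triangular subalgebra of $\Mat(2,F)$, so $\phi$ is a linear bijection onto its claimed image. Multiplicativity reduces to a finite check on the six ordered products of basis vectors, each a one-line matrix computation; the only subtlety I anticipate here is a sign convention, since $v^2 = \gamma^2 \cdot 1$ determines $\gamma$ only up to sign and exactly one of the two choices makes all six products line up. To show $\phi$ intertwines the two $*$-involutions it then suffices, by linearity, to verify the equation on basis elements: $1^* = 1$ and $I^* = I$, while for $u, v \in \im\langle U\rangle$ equation (\ref{im2}) gives $x^* = -x$, and the formula in Proposition \ref{Mat2F} shows that $E_{12}$ and $\mathrm{diag}(\gamma,-\gamma)$, being traceless, satisfy $M^* = -M$.

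The remaining consequences then drop out: $U = \im\langle U\rangle$ maps onto the traceless upper triangular matrices, and $\spam(1,v)$ maps to the diagonal subalgebra, which, since $v^2 = \gamma^2 \cdot 1$, realizes the Dickson double $D_{\gamma^2}(F)$, necessarily split by Lemma \ref{2dim}. The one piece that requires more than bookkeeping is the identification $J(\langle U\rangle) = Fu$, which is asserted in the statement but was not proven in Proposition \ref{3dim}. I would establish this by applying Lemma \ref{stronglyimaginary} directly to $\langle U\rangle$; the restriction of $*$ remains strong since its $+1$-eigenspace is still $F1$, so an element $\alpha + \beta u + \delta v \in \langle U\rangle$ is strongly nilpotent iff its products with $1$ and with $v$ lie in $U$, which forces $\alpha = 0$ and $\delta \gamma^2 = 0$ respectively, hence $\delta = 0$ as $\gamma \neq 0$. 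Alternatively one can bypass this computation entirely by invoking the isomorphism with $U_3$ already established and citing the classical description of its Jacobson radical.
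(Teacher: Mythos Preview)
Your proof follows the same route as the paper's: define $\phi$ on the basis $\{1,u,v\}$ and verify multiplicativity and $*$-compatibility using the relations extracted in Proposition~\ref{3dim}; the paper compresses this into a one-line appeal to those relations, whereas you spell out the checks and additionally supply the argument for $J(\langle U\rangle)=Fu$ via Lemma~\ref{stronglyimaginary}, which the paper leaves implicit. Your caution about the sign of $\gamma$ is warranted and in fact catches a slip in the statement: with $\gamma$ pinned down by $uv=\gamma u$ as in the proof of Proposition~\ref{3dim}, one computes $E_{12}\,\mathrm{diag}(\gamma,-\gamma)=-\gamma E_{12}\neq\phi(uv)$, so the homomorphism requires $\phi(v)=\mathrm{diag}(-\gamma,\gamma)$ (which is also what Example~\ref{typeUex} actually uses).
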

\begin{proof}
Clearly each of the $\phi_{u, v}$ is a linear isomorphism. It suffices to show that it preserves the multiplication and the involution, but this is clear from the properties of the basis $u, v$ derived in the proof of Proposition \ref{3dim}.
\end{proof}

\subsection{Multiplication table of the split Cayley-Algebra}

Now that we understand the structure of the two-dimensional subspaces of $\im(O)$ generating three-dimensional subalgebras and that of the algebras they generate, we return to the question of their existence. By Proposition \ref{main} such subspaces can only exist when $O$ contains zero-divisors, which by Theorem \ref{splitunique} means that it is unique up to isomorphism. This has the advantage that we can give a very explicit description of such Cayley algebras $O$. We will do so using Table \ref{multtab} taken from \cite{CRE}. The authors' remarkable choice of not using 1 as a basis element has the advantage that it highlights various unexpected symmetries among the generators.

\begin{lemma}\label{tab}
The split Cayley algebra over $F$ has a basis consisting of two idempotent elements $p_1$, $p_2$ satisfying $p_1 + p_2 = 1$ and six nilpotent elements $q_1, q_2, q_3, r_1, r_2, r_3$  with multiplication as given by Table \ref{multtab}. The involution acts by $p_1^* = p_2$, $p_2^* = p_1$ and $q_i^* = -q_i$, $r_i^* = -r_i$ for $i = 1, 2, 3$.
\end{lemma}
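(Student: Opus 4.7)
The plan is to exploit the uniqueness of the split Cayley algebra (Theorem \ref{splitunique}): it suffices to exhibit \emph{one} concrete split Cayley algebra carrying a basis with the stated multiplication and involution, and the rest follows by transport along any isomorphism. A convenient concrete model is obtained by iterated Dickson doubling, starting from a split two-dimensional Cayley-Dickson algebra; equivalently one may cite the explicit construction in \cite{CRE}.

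The first step is to produce the idempotent pair. By Corollary \ref{sqrt1}, the split Cayley algebra $O$ contains an imaginary element $e$ with $e^2 = 1$; such an $e$ satisfies $e^* = -e$ by (\ref{im2}). Setting $p_1 \coloneqq \tfrac{1}{2}(1 + e)$ and $p_2 \coloneqq \tfrac{1}{2}(1 - e)$, the identities $p_1 + p_2 = 1$, $p_i^2 = p_i$, $p_1 p_2 = p_2 p_1 = \tfrac{1}{4}(1 - e^2) = 0$, and $p_1^* = p_2$ all follow immediately from $e^2 = 1$ and $e^* = -e$ (plus the fact that elements of $F1$ commute and associate with everything, so the scalar manipulations are unproblematic even in the non-associative setting).

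The second step is to produce the six nilpotents and to verify that they, together with $p_1, p_2$, form a basis of $O$. Here one invokes the Peirce decomposition of $O$ relative to the orthogonal idempotents $p_1, p_2$, which is valid in the alternative setting (cf. \cite{Schafer}, Chapter 3): one obtains $O = A_{11} \oplus A_{12} \oplus A_{21} \oplus A_{22}$ where $A_{ii} = \{x \in O : p_i x = x p_i = x\}$ and $A_{ij} = \{x \in O : p_i x = x = x p_j\}$ for $i \neq j$. Since $O$ is eight-dimensional and simple (Theorem \ref{CDsimple}), one checks that $A_{11} = F p_1$ and $A_{22} = F p_2$ are one-dimensional while $A_{12}$ and $A_{21}$ are three-dimensional. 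Picking any basis $q_1, q_2, q_3$ of $A_{12}$ (suitably normalized by reference to $i_1, i_2, i_1 i_2$ from the doubling construction, so that the multiplication constants in Table \ref{multtab} come out right) and setting $r_j \coloneqq -q_j^*$ for the basis of $A_{21}$, one verifies that each $q_j, r_j$ is nilpotent: a direct computation using alternativity shows that any element of $A_{12}$ satisfies $x^2 \in A_{12} A_{12} \subseteq A_{21}$, while at the same time $x \in \im(O)$ forces $x^2 \in F1$, so $x^2 \in F1 \cap A_{21} = \{0\}$. The involutions $q_j^* = -q_j$, $r_j^* = -r_j$ then follow from $q_j$, $r_j$ being imaginary (Lemma \ref{nilpotent}).

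The third step is to verify the multiplication table itself. The products $p_i p_j$, $p_i q_j$, $q_j p_i$, $p_i r_j$, $r_j p_i$ are forced by the Peirce decomposition. The products $q_i q_j$ lie in $A_{12} A_{12} \subseteq A_{21}$ and the products $r_i r_j$ lie in $A_{12}$ by the mirror argument; in each case the exact coefficients are fixed by the normalization of the basis. Finally, the products $q_i r_j$ lie in $A_{11} = F p_1$ and $r_j q_i$ in $A_{22} = F p_2$, and one checks that they produce $\delta_{ij} p_1$ and $\delta_{ij} p_2$ respectively (possibly up to sign) using $r_j = -q_j^*$ and the multiplicativity of $n$. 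The main (only) obstacle is bookkeeping: the number of entries is large, but the $\mathbb{Z}/3\mathbb{Z}$-symmetry permuting $(q_1, q_2, q_3)$ (inherited from the cyclic permutation symmetry of $\{i_1, i_2, i_1 i_2\}$ exhibited in Section \ref{OtoFano}) and the $q \leftrightarrow r$ symmetry (which is $*$-conjugation swapping $p_1 \leftrightarrow p_2$) reduce the verification to just a handful of genuinely distinct cases.
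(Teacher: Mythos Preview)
Your Peirce-decomposition approach is plausible in outline, but there is a concrete error that breaks it. You propose to obtain the basis of $A_{21}$ by setting $r_j \coloneqq -q_j^*$. However, the involution $*$ does \emph{not} exchange $A_{12}$ and $A_{21}$: if $x \in A_{12}$, i.e.\ $p_1 x = x$ and $x p_2 = x$, then applying $*$ and using $p_1^* = p_2$ gives $x^* p_2 = x^*$ and $p_1 x^* = x^*$, so $x^* \in A_{12}$ as well. In fact your own argument shows $A_{12} \subset \im(O)$, whence $q_j^* = -q_j$ by (\ref{im2}) and hence $-q_j^* = q_j$. Your $r_j$ are therefore just the $q_j$ again, and you have not produced a basis of $A_{21}$ at all. (The lemma's assertion $q_i^* = -q_i$, $r_i^* = -r_i$ is of course consistent with this: $*$ acts as $-1$ on both $A_{12}$ and $A_{21}$ separately.) To repair the argument you would need a genuinely different pairing between $A_{12}$ and $A_{21}$, e.g.\ the one coming from the norm form, and then a careful normalization argument; this is doable but is considerably more work than you indicate.

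The paper's proof sidesteps all of this by choosing a concrete model up front. By Theorem~\ref{splitunique} and Proposition~\ref{Mat2F} the split Cayley algebra can be realized as $D_{-1}(\Mat(2,F))$, and the paper simply writes down explicit $2\times 2$ matrices for $p_1, p_2, q_1, r_1$ in the $\Mat(2,F)$ summand and explicit elements $q_2 = ip_2$, $r_2 = ip_1$, $q_3 = -ir_1$, $r_3 = -iq_1$ in the $i\Mat(2,F)$ summand; the entries of Table~\ref{multtab} then follow mechanically from ordinary matrix multiplication together with the doubling rules (\ref{pq1})--(\ref{pq3}). This is both shorter and less error-prone than attempting to reconstruct the basis abstractly from the Peirce decomposition.
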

\begin{proof}
By Thm. \ref{splitunique} and Prop. \ref{Mat2F} we can realize the split Cayley algebra over $F$ as $D_{-1}(\Mat(2, F))$, which in particular means that as a vector space it decomposes as $\Mat(2, F) \oplus i\Mat(2, F)$ as in Definition \ref{DicksonDouble}. We pick $p_1, p_2, q_1, r_1$ in the first summand as follows:
$$p_1 = 
\begin{pmatrix}
1 & 0 \\
0 & 0
\end{pmatrix}; \quad
p_2 = 
\begin{pmatrix}
0 & 0 \\
0 & 1
\end{pmatrix}; \quad
q_1 = 
\begin{pmatrix}
0 & 1 \\
0 & 0
\end{pmatrix}; \quad
r_1 = 
\begin{pmatrix}
0 & 0 \\
1 & 0
\end{pmatrix}.
$$
and then define $q_2, r_2, q_3, r_3$ in the second summand by
$$q_2 = i p_2; \quad r_2 = i p_1; \quad q_3 = -ir_1; \quad r_3 = -iq_1.$$
Then the correctness of Table \ref{multtab} follows from (\ref{pq1}, \ref{pq2}, \ref{pq3}). 
\end{proof}
\begin{table}
\caption{Multiplication table of the split Cayley algebra in the basis described by Lemma \ref{tab}}\label{multtab}
\begin{tabular}{r|cc|ccc|ccc|}
\mbox{} & $p_1$ & $p_2$ & $q_1$  & $q_2$  & $q_3$  & $r_1$  & $r_2$  & $r_3$ \\ \hline
$p_1$   & $p_1$ & $0$   & $q_1$  & $q_2$  & $q_3$  & $0$    & $0$    & $0$ \\
$p_2$   & $0$   & $p_2$ & $0$    & $0$    & $0$    & $r_1$  & $r_2$  & $r_3$ \\ \hline
$q_1$   & $0$   & $q_1$ & $0$    & $r_3$  & $-r_2$ & $-p_1$ & $0$    & $0$ \\
$q_2$   & $0$   & $q_2$ & $-r_3$ & $0$    & $r_1$  & $0$    & $-p_1$ &  $0$ \\
$q_3$   & $0$   & $q_3$ & $r_2$  & $-r_1$ & $0$    & $0$    & $0$    & $-p_1$ \\ \hline
$r_1$   & $r_1$ & $0$   & $-p_2$ & $0$    & $0$    & $0$    & $q_3$  & $-q_2$ \\
$r_2$   & $r_2$ & $0$   & $0$    & $-p_2$ & $0$    & $-q_3$ & $0$    & $q_1$ \\
$r_3$   & $r_3$ & $0$   & $0$    & $0$    & $-p_2$ & $q_2$  & $-q_1$ & $0$ \\ \hline
\end{tabular}
\end{table}

\begin{example}\label{typeUex}
The space spanned by $u = q_1$, $v = (p_1 - p_2)$ is of type $U$ and the three-dimensional algebra it generates is spanned as a vector space by $p_1, p_2, q_1$ with $\phi_{u, v}(p_1) = \begin{pmatrix}
1 & 0 \\ 0 & 0
\end{pmatrix}$, $\phi_{u, v}(p_2) = \begin{pmatrix}
0 & 0 \\ 0 & 1
\end{pmatrix}$, $\phi_{u, v}(q_1) = \begin{pmatrix}
0 & 1 \\ 0 & 0
\end{pmatrix}$  
\end{example}

\begin{example}\label{typeZex}
For $i, j \in \{1, 2, 3\}$, $i \neq j$, the space spanned by $q_i$, $r_j$ is of type Z.  
\end{example}

\subsection{A classification of two-dimensional subspaces of $\im(O)$}

Now that we established the existence of two-dimensional subspaces of $\im(O)$ that generate subalgebras that are only 3-dimensional in case that $O$ is split, it is clear that the proof of Theorem \ref{quantumfano} does not immediately extend to fields over which every Cayley algebra is split, such as $\mathbb{C}$ and $\mathbb{F}_q$ (cf Section \ref{CayleyDickson}). On the other hand it is also not clear that some small variation of the proof would not still work. For example while the algebra $\langle p_1 - p_2, q_1 \rangle$ of Ex. \ref{typeUex} is not a simple four-dimensional subalgebra itself, it is certainly contained in one: the algebra with vector space basis $p_1, p_2, q_1, r_1$. We even have that the map $\phi_{u, v}$ of Ex. \ref{typeUex} extend to an isomorphism of this algebra to $\Mat(2, F)$ by setting $\phi_{u, v}(r_1) = \begin{pmatrix}
0 & 0 \\ 1 & 0
\end{pmatrix}$. All in all it is not yet clear that, even when $O$ is split, the set $\mathcal{B}$ of Theorem \ref{quantumfano} will fail to yield a $q$-Fano plane. In order to trample any optimism stemming from this observation we classify the remaining two-dimensional subspaces of $\im(O)$ and the (four-dimensional) algebras they generate. Inspired by Propositions \ref{main} and \ref{3dim}, we base our classification of two-dimensional subspaces of $\im(O)$ on the number of nilpotent lines they contain and in accordance with the famous cliche about the way mathematicians count, we restict the possibilities for this number to `zero', `one', `two' and `all of them'.

\begin{definition}\label{types}
Let $U$ be a two-dimensional subspace of $\im(O)$. Then:
\begin{itemize}
\item If $U$ contains no nilpotent lines, we say that $U$ is of type Q.
\item If $U$ contains exactly one nilpotent line $Fu$ then:
	\begin{itemize}
	\item If $uv \in Fu$ for every $u \in Fu, v \in U$ then we say that $U$ is of type U
	\item If not, then we say that $U$ is of type D.
	\end{itemize}
\item If $U$ contains exactly 2 nilpotent lines, we say that $U$ is of type M.
\item If every element in $U$ is nilpotent then:
	\begin{itemize}
	\item If the product of any two elements of $U$ is zero we say that $U$ is of type Z
	\item If not we say that $U$ is of type J.
	\end{itemize}
\end{itemize}
\end{definition}
In case $O$ is a division algeba, it is clear that only type Q spaces appear and hence that our classification covers all possible cases. This latter fact is not yet clear in case $O$ is split, but will be established in Lemma \ref{complete} below. The names of the types of subspaces are based on the type of subalgebras of $O$ they generate. The main focus of this section is to show:

\begin{theorem}\label{classification}
Let $U$ be a two-dimensional subspace of $\im(O)$. Then:
\begin{itemize}
\item If $U$ is of type $Q$ then $\langle U \rangle$ is a four-dimensional quaternion subalgebra of $O$. (Prop. \ref{main} above)
\item If $U$ is of type U then $\langle U \rangle$ is a three-dimensional subalgebra of $O$ isomorphic to the upper triangular 2-by-2-matrices (Prop. \ref{typeU} above)
\item If $U$ is of type D then $\langle U \rangle$ is a four-dimensional subalgebra of $O$ with vectorspace decomposition $\langle U \rangle  = D \oplus Z$ where $D$ is a two-dimensional Cayley-Dickson subalgebra of $O$, $Z$ is the two-dimensional Jacobson-radical of $\langle U \rangle$ (so every $z \in Z$ is nilpotent and $uz \in Z$ for every $z \in Z, u \in \langle U \rangle$) and $Z$ happens to be a type Z subspace of $\im(O)$. (Prop. \ref{typeD} below.)
\item If $U$ is of type $M$ then $\langle U \rangle$ is a four-dimensional split quaternion subalgebra of $O$, so in particular isomorphic to $\Mat(2, F)$. (Prop. \ref{typeM} below.)
\item If $U$ is of type $Z$ then $\langle U \rangle$ is three-dimensional, with two-dimensional Jacobson radical $J(\langle U \rangle) = \im(\langle U \rangle) = U$ and multiplication as in the text below Proposition \ref{3dim}. (Prop. \ref{3dim} above.)
\item If $U$ is of type $J$ then $\langle U \rangle$ is a four-dimensional algebra $F1 \oplus J$ with three-dimensional Jacobson radical $J = J(\langle U \rangle) = \im(\langle U \rangle) \supsetneq U$. (Prop \ref{typeJ} below.)
\end{itemize}
\end{theorem}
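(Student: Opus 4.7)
Propositions \ref{main}, \ref{typeU} and \ref{3dim} already cover types Q, U and Z, so the substantive work is to establish the assertions for types M, D and J. In each of these three cases the plan is first to show $\dim \langle U \rangle = 4$ and only then to pin down the internal structure.

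For the dimensional step I would argue uniformly: pick a basis $\{u, v\}$ of $U$ with $u$ nilpotent (available in all three cases), so $u^2 = 0$, and suppose for contradiction that $uv \in \spam(1, u, v)$, writing $uv = \alpha + \beta u + \gamma v$. Since $\langle u, v \rangle$ is associative by alternativity, expanding $u(uv) = u^2 v = 0$ and comparing coefficients forces $\gamma = \alpha = 0$, i.e.\ $uv \in Fu$. In type D this already contradicts the defining hypothesis. In types M and J the element $v$ is also nilpotent, and multiplying $uv = \beta u$ on the right by $v$ gives $0 = uv^2 = \beta^2 u$, hence $uv = 0$; this contradicts $\tau(u_1 u_2) \neq 0$ (which is needed in type M so that two distinct nilpotent lines exist) or $u_1 u_2 \neq 0$ (the very definition of type J). Therefore $\langle U \rangle = \spam(1, u, v, uv)$ has dimension four in all three cases.

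For type M, knowing $\tau(u_1 u_2) =: \mu \neq 0$, a short associativity calculation yields $(u_1 u_2)^2 = 2\mu\, u_1 u_2$, so $e := (2\mu)^{-1} u_1 u_2$ is a nontrivial idempotent. Its Peirce decomposition places $u_1$ and $u_2$ on opposite off-diagonal components and yields an explicit isomorphism $\langle U \rangle \cong \Mat(2, F)$ sending $e, 1-e, u_1, (2\mu)^{-1} u_2$ to $E_{11}, E_{22}, E_{12}, E_{21}$. The same conclusion follows more abstractly from Lemma \ref{allfromxi2}, Theorem \ref{splitunique} and Proposition \ref{Mat2F}, since $\langle U \rangle$ is a four-dimensional associative subalgebra closed under ${}^*$ that contains the zero divisor $u_1$.

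For type D, the hypothesis that $U$ contains only one nilpotent line forces $\tau(uv) = 0$, otherwise the identity $(\alpha u + \beta v)^2 = 2\alpha\beta \tau(uv) + \beta^2 v^2$ would exhibit a second nilpotent line $F(n(v) u + 2\tau(uv) v)$ in $U$. Given $uv + vu = 0$, associativity yields $(uv)^2 = -u(v^2)u = v^2 u^2 = 0$, so $uv$ is imaginary nilpotent. Setting $D := \spam(1, v)$ and $Z := \spam(u, uv)$, I would write down the multiplication table on $\{1, u, v, uv\}$ to check that $D$ is a two-dimensional Cayley-Dickson subalgebra, $Z$ is a two-sided ideal on which all products vanish (hence a type-Z subspace), and $\langle U \rangle = D \oplus Z$; Lemma \ref{stronglyimaginary} combined with $v \notin J(\langle U \rangle)$ (since $v \cdot v = v^2 \notin \im\langle U\rangle$) then identifies $Z$ with the full Jacobson radical. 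For type J, every element of $U$ being nilpotent forces $\tau(u_1 u_2) = 0$, and the same associativity computation as in type D gives $(u_1 u_2)^2 = 0$. Hence each of $u_1, u_2, u_1 u_2$ is imaginary nilpotent; the span of these three elements equals $\im \langle U \rangle$ by dimension count. Lemma \ref{stronglyimaginary}, together with the routine check that left multiplication by any of $u_1, u_2, u_1 u_2$ sends the basis of $\langle U \rangle$ into $\im \langle U \rangle$, then shows $J(\langle U \rangle) \supseteq \im \langle U \rangle$; the reverse inclusion is automatic from $1 \notin J$. The main obstacle I anticipate is the type D structural analysis, where keeping track of anticommutation signs in the products involving $uv$ is essential for seeing both the Cayley-Dickson subalgebra $D$ and the type-Z ideal $Z$ cleanly inside the four-dimensional algebra.
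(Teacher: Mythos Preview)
Your proposal is correct and tracks the paper's own arguments closely. The paper proves the three new cases in Propositions~\ref{typeM}, \ref{typeD}, \ref{typeJ} using the same ingredients you describe: for type~M it normalises to $2\tau(qr)=1$ (Lemma~\ref{typeMbasis}) and builds the idempotents $qr$, $rq$ to exhibit $\Mat(2,F)$ explicitly; for type~D it uses Corollary~\ref{curious} to obtain $uv\in\im(O)\setminus U$, sets $D=\langle v\rangle$, $Z=\spam(u,uv)$ and identifies $Z$ with the Jacobson radical via Lemma~\ref{stronglyimaginary}; for type~J it first records $\tau(uv)=0$ (Corollary~\ref{typeJbasis}) and then computes the multiplication table on $\{1,u,v,uv\}$. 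Your unified treatment of the $\dim\langle U\rangle=4$ step is a mild streamlining of what the paper does piecemeal inside each proposition, and your direct derivations of $\tau(uv)=0$ in types~D and~J replace the paper's route through Lemma~\ref{complete} and its corollaries, but the underlying computations are identical.

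One small caution: your ``more abstract'' alternative for type~M (via Lemma~\ref{allfromxi2}, Theorem~\ref{splitunique}, Proposition~\ref{Mat2F}) does not go through as stated. Being a four-dimensional $*$-closed associative subalgebra with a zero-divisor does not by itself force the algebra to be Cayley--Dickson: the type~S2J2 algebras of Section~\ref{counting} have all of these properties yet are not isomorphic to $\Mat(2,F)$. To invoke Lemma~\ref{allfromxi2} you would need a two-dimensional Cayley--Dickson subalgebra $A$ and an $i\in\im\langle U\rangle\setminus A$ with $i^2\neq 0$ satisfying~(\ref{xi}), and producing such data is essentially the idempotent/Peirce argument you already gave. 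Since your primary argument is complete, this is harmless, but drop the abstract aside or flesh it out.
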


We note that in order to show that our classification covers all possible cases, it suffices to show that any two-dimensional subspace $U$ containing both a non-zero nilpotent $u$ and a non-nilpotent element $v$, is of one of the three types U, D or M. We will do so by introducing a slightly different trichotomy for spaces of this type which clearly covers all cases and then showing that the two trichotomies coincide.

\begin{lemma}[Completeness of the classification of Def. \ref{classification}]\label{complete}
Let $U = \spam(u, v) \subset \im(O)$ with $u^2 = 0, v^2 = \alpha 1$ for some $\alpha  \neq 0 \in F$. We have:
\begin{enumerate}
\item If $uv \in U$ then $U$ is of type U.
\item If $uv \nin U$ but still $uv \in \im(O)$ then $U$ is of type D.
\item If $uv \nin \im(O)$ then $U$ is of type M.
\end{enumerate}
\end{lemma}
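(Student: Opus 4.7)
The plan is to translate the three cases, which are phrased in terms of where $uv$ sits inside $O$, into a single scalar datum $\tau(uv) \in F$, and then use that datum to count nilpotent lines inside $U$.

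First I would observe that since $u,v \in \im(O)$, equations (\ref{star}) and (\ref{im2}) give $(uv)^* = v^*u^* = vu$, and hence $uv + vu = uv + (uv)^* = 2\tau(uv)\cdot 1 \in F1$ by (\ref{taustar}). Combining this with $u^2 = 0$ and $v^2 = \alpha 1$, a direct expansion gives
\[
(\beta u + \gamma v)^2 \;=\; 2\beta\gamma\,\tau(uv) + \gamma^2\alpha \;\in\; F1.
\]
Since every element of $U$ is imaginary, Lemma \ref{nilpotent} says that $\beta u + \gamma v$ is nilpotent iff this expression vanishes, i.e.\ iff $\gamma\bigl(2\beta\,\tau(uv) + \gamma\alpha\bigr) = 0$. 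As $\alpha \neq 0$, this equation cuts out the single line $Fu$ when $\tau(uv) = 0$, and the two distinct lines $Fu$ and $F(\alpha u - 2\tau(uv)\,v)$ when $\tau(uv) \neq 0$.

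Next I would match this with Definition \ref{types}. In cases (1) and (2) the hypothesis $uv \in \im(O)$ forces $\tau(uv) = 0$, so $U$ contains exactly the one nilpotent line $Fu$. To separate type U from type D I would invoke alternativity: by Lemma \ref{3of4} the subalgebra $\langle u,v\rangle$ is associative, so $u(uv) = u^2 v = 0$. In case (1), writing $uv = \beta u + \gamma v \in U$ and applying $u$ on the left then yields $\gamma^2\alpha\cdot 1 + (\text{term in }u) = 0$ (using the square computation above applied to $uv$ itself), which forces $\gamma = 0$; hence $uv \in Fu$ and $U$ is of type U. In case (2) the hypothesis $uv \nin U$ implies in particular $uv \nin Fu$, so the type-U criterion of Definition \ref{types} fails and $U$ is of type D. Finally in case (3), $uv \nin \im(O)$ means $\tau(uv) \neq 0$, so by the count above $U$ contains exactly two nilpotent lines and is of type M.

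The main subtlety, if it deserves that name, is bookkeeping: one must check that the two nilpotent lines in case (3) are really the \emph{only} ones, which is immediate because $\gamma\bigl(2\beta\tau(uv) + \gamma\alpha\bigr)$ factors as a product of two coprime linear forms in $(\beta,\gamma)$ when $\tau(uv)\neq 0$; and in case (1) one must justify the move $u(uv) = u^2 v$, which is exactly what alternativity of $O$ (via Lemma \ref{3of4}) supplies. Beyond Lemma \ref{nilpotent}, Lemma \ref{3of4} and the identities of Section \ref{nice}, no further machinery is needed.
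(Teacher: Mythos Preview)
Your proof is correct and follows essentially the same route as the paper: compute $uv+vu=2\tau(uv)\,1$, square a generic element of $U$, and read off the nilpotent lines according to whether $\tau(uv)$ vanishes. The only wrinkle is your handling of case~(1): the sentence ``applying $u$ on the left then yields $\gamma^2\alpha\cdot 1 + (\text{term in }u) = 0$ (using the square computation above applied to $uv$ itself)'' conflates two different computations --- left-multiplying $uv=\beta u+\gamma v$ by $u$ gives $\gamma\beta u+\gamma^2 v=0$, while squaring gives $(uv)^2=\gamma^2\alpha$ on one hand and $(uv)^2=u(vu)v=-u^2v^2=0$ on the other. Either one forces $\gamma=0$, so the conclusion stands, but you should pick one and write it cleanly; the paper's version is slicker still: it observes $(uv)^2=0$, so $uv$ is nilpotent, and since $uv\in U$ it must lie on the unique nilpotent line $Fu$.
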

\begin{proof}
Since $u, v \in \im(O)$ we have that $vu = v^*u^* = (uv)^*$ so that $uv + vu = 2\tau(uv)$ by (\ref{taustar}). This means that $uv \in \im(O)$ if and only if $uv + vu = 0$.

When indeed $uv \in \im(O)$ it follows that $(\lambda u + \mu v)^2 = \mu^2 \alpha$ so that the generic element $(\lamba u + \mu v)$ of $U$ is nilpotent if and only if $\mu = 0$. If follows that the only nilpotent elements of $U$ lie on the line $Fu$. However, since $vu = -uv$ in that case, we find that $(uv)^2 = u(vu)v = -u^2v^2 = 0$ so that the product $uv$ is nilpotent. Statements 1 and 2 of the lemma then follow from the definitions of types U and D.

When $uv \nin \im(O)$, we have that $uv + vu = \beta 1$ for some $\beta \neq 0 \in F$. Let $u' = \frac{-\alpha}{\beta}u$ Then $(u' + v)^2 = 0 + \frac{-\alpha}{\beta}\beta + \alpha  = 0$. It follows that $U$ contains at least two distinct nilpotent lines that together span $U$: $Fu = Fu'$ and $F(u' + v)$. What remains to be shown is that these are the only nilpotent lines; that is that $(\lambda u' + \mu(u' + v))^2 = 0$ only if $\lambda = 0$ or $\mu = 0$. To verify this we compute that $(\lambda u' + \mu(u' + v))^2 = \lambda \mu (u'v + vu') = \lambda\mu (\frac{-\alpha}{\beta}(uv + vu)) = -\lambda \mu \alpha$. Since $\alpha \neq 0$, the claim follows.
\end{proof}

In a curious turn of events the lemma proves its own converse:

\begin{corollary}\label{curious} \mbox{}

\begin{enumerate}
\item Let $u, v \in \im(O)$ be elements of a type U space $U$ with $u^2 = 0$, $v^2 \neq 0$. Then $uv \in U$.
\item Let $u, v \in \im(O)$ be elements of a type D space $U$ with $u^2 = 0$, $v^2 \neq 0$. Then $uv \nin U$, but still $uv \in \im(O)$.
\item Let $u, v \in \im(O)$ be elements of a type M space $U$ with $u^2 = 0$, $v^2 \neq 0$. Then $uv \nin \im(O)$.
\end{enumerate}
\end{corollary}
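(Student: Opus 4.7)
The plan is to read Corollary \ref{curious} as essentially the contrapositive of Lemma \ref{complete}, exploiting that the three alternatives for $uv$ appearing in that lemma (``$uv \in U$''; ``$uv \in \im(O)\setminus U$''; ``$uv \nin \im(O)$'') are mutually exclusive and exhaustive, while on the classification side the types U, D and M from Definition \ref{types} are also pairwise disjoint. Indeed, U and D are both defined under the clause ``$U$ contains exactly one nilpotent line'' whereas M is defined under ``$U$ contains exactly two nilpotent lines'', so the cases U/D and M are disjoint; within the one-nilpotent-line clause, U and D are further distinguished by whether or not $uv \in Fu$ for every $u$ on the nilpotent line and every $v \in U$.

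First I would verify that the hypothesis of Lemma \ref{complete} is met in each case. Since $v^2 \neq 0$ but $u^2 = 0$, the element $v$ is not a scalar multiple of $u$; assuming $u \neq 0$, the pair $\{u,v\}$ is then linearly independent and hence a basis of the two-dimensional space $U$. Moreover, since $v \in \im(O)$, equation (\ref{im}) yields $v^2 \in F1$, and combined with $v^2 \neq 0$ this gives $v^2 = \alpha 1$ with $\alpha \neq 0 \in F$, matching the hypothesis of Lemma \ref{complete}.

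Now for statement (1), suppose $U$ is of type U. If $uv \nin U$ then by Lemma \ref{complete} $U$ would be of type D or of type M; the mutual exclusion noted above rules out both, so we must have $uv \in U$. Statements (2) and (3) follow symmetrically. If $U$ is of type D, then $U$ is neither of type U nor of type M, so by parts (1) and (3) of Lemma \ref{complete} we obtain $uv \nin U$ (otherwise type U) and $uv \in \im(O)$ (otherwise type M). If $U$ is of type M, then $U$ is neither of type U nor of type D, and parts (1) and (2) of Lemma \ref{complete} jointly force $uv \nin \im(O)$.

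There is no real obstacle to this argument; the only bookkeeping needed is the mutual exclusion of the types, which is immediate from Definition \ref{types}, and no further computation beyond what was already done inside the proof of Lemma \ref{complete} is required. In this sense the corollary really is, as advertised in the text, a self-converse obtained `for free' from the trichotomy of the lemma.
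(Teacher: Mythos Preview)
Your proof is correct and follows exactly the same approach as the paper's own proof: both argue by contraposition using the trichotomy of Lemma~\ref{complete} together with the mutual exclusivity of the types U, D, M from Definition~\ref{types}. Your version is simply a more carefully spelled-out rendering of the paper's three one-line arguments.
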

\begin{proof} \mbox{}

\begin{enumerate}
\item If $uv$ landed anywhere else, the space $U$ would be of type $D$ or $M$ by Lemma \ref{complete}.
\item If $uv$ landed anywhere else, the space $U$ would be of type $U$ or $M$ by Lemma \ref{complete}.
\item If $uv$ landed anywhere else (that is: in $\im(O)$), the space $U$ would be of type $U$ or $D$ by Lemma \ref{complete}.
\end{enumerate}
\end{proof}
This converse is useful as it gives us a description of spaces $U$ of type D and M that already anticipates the determination of the algebras $\langle U \rangle$. After all: if $uv \nin F1 \oplus U$ (as is clearly the case for type D and will later be verified for type M) then Lemma \ref{3of4} tells us that $\{1, u, v, uv\}$ is a basis the space $\langle U \rangle$. 

We'll put this principle into action right away.

\subsection{The algebras generated by subspaces of type D}

\begin{proposition}\label{typeD}
Let $U \subseteq \im(O)$ be two-dimensional of type D, so with basis $u, v$ satisfying $u^2 = 0$, $v^2 = \alpha \in F \backslash \{0\}$ and $w \coloneqq uv \in \im(O) \backslash U$. Then $\langle U \rangle$ is four-dimensional, with two-dimensional Jacobson radical $Z = \spam(u, w)$ and semi-simple quotient $\langle U \rangle / Z \isom D_\alpha(F)$. The space $Z$ is of type $Z$, all other two-dimensional subspaces $U'$ of $\langle U \rangle$ are either of type $D$ or type $U$ with the unique nilpotent line in $U'$ being the intersection $U' \cap Z$ with the space $Z$.
\end{proposition}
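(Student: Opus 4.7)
The plan is to exploit that $\langle U\rangle=\langle u,v\rangle$ is associative (by alternativity of $O$ together with Theorem \ref{associator}), reducing everything to explicit bookkeeping in a four-dimensional associative algebra with basis $\{1,u,v,w\}$. First I would confirm $\dim\langle U\rangle=4$. Linear independence of $\{1,u,v,w\}$ is immediate: $u,v$ are independent in $\im(O)$, $w\in\im(O)\setminus U$ by hypothesis, and $1\notin\im(O)$; so Lemma \ref{3of4} forces $\langle U\rangle=\spam(1,u,v,w)$. The hypothesis $w\in\im(O)$ translates via (\ref{taustar}) into $uv+vu=2\tau(uv)=0$, so $vu=-w$. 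Associativity of $\langle u,v\rangle$ then produces the full multiplication table on the generators: $uw=u^2v=0$, $wu=u(vu)=0$, $w^2=u(vu)v=-u^2v^2=0$, $vw=(vu)v=-\alpha u$, and $wv=uv^2=\alpha u$.

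Next I would identify $Z=\spam(u,w)$ with $J(\langle U\rangle)$. The table just computed shows $Z\cdot Z=0$, so $Z$ is of type $Z$ by Definition \ref{types}, and $Z$ is closed under both left and right multiplication by each of $1,u,v,w$, hence is a two-sided ideal. For any $z\in Z$ and $y\in\langle U\rangle$ we have $zy\in Z$ and $(zy)^2\in Z^2=0$, so every element of $Z$ is strongly nilpotent and $Z\subseteq J(\langle U\rangle)$. The quotient $\langle U\rangle/Z$ is two-dimensional, spanned by $\bar1,\bar v$ with $\bar v^2=\alpha\bar1$, hence isomorphic to the Cayley-Dickson algebra $D_\alpha(F)$, which is semi-simple by Theorem \ref{CDsemisimple}. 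Since a surjective algebra homomorphism carries strongly nilpotent elements to strongly nilpotent elements (a one-line check from Definition \ref{Jacobson}), $J(\langle U\rangle)$ maps into $J(\langle U\rangle/Z)=0$, yielding the reverse inclusion $J(\langle U\rangle)\subseteq Z$.

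It remains to classify the two-dimensional subspaces $U'\subseteq\im(\langle U\rangle)=\spam(u,v,w)$. The key is to locate the nilpotent locus there: a generic element $n=\lambda u+\mu v+\nu w$ squares, inside the associative algebra $\langle U\rangle$, to $\mu^2\alpha$, because $u^2=w^2=0$ and all three anti-commutator pairs $uv+vu$, $uw+wu$, $vw+wv$ vanish by the table above. Since $\alpha\neq0$, $n$ is nilpotent precisely when $\mu=0$, that is, precisely when $n\in Z$. Consequently, if $U'\neq Z$, then $U'\cap Z$ has dimension exactly $1$ by a dimension count in the three-dimensional ambient space $\im(\langle U\rangle)$, and this line is the unique nilpotent line in $U'$; by Definition \ref{types}, $U'$ is then of type $U$ or type $D$.

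The main (mild) obstacle is the reverse inclusion $J(\langle U\rangle)\subseteq Z$, since the cited structure theory (Theorem \ref{SJ}) is invoked only obliquely here; one must argue directly from the definition that strong nilpotence descends to quotients. Everything else is a direct calculation, all justified by the single observation that $\langle U\rangle$ is associative.
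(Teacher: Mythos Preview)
Your proof is correct and follows essentially the same line as the paper's: compute the multiplication table in the associative algebra $\langle u,v\rangle$, identify $Z=\spam(u,w)$ as the Jacobson radical, and classify the remaining two-dimensional subspaces by locating the nilpotent locus. The tactical choices differ slightly---the paper pins down $J(\langle U\rangle)=Z$ via the ``strongly imaginary'' characterization of Lemma~\ref{stronglyimaginary} (testing against $y=1$ and $y=v$), whereas you use the cleaner ideal-plus-semisimple-quotient argument; and the paper shows all nilpotents lie in $Z$ by noting $\langle v\rangle\cong D_\alpha(F)$ has none, whereas your direct computation $n^2=\mu^2\alpha$ is more explicit---but these are interchangeable details, not a different route.
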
 
\begin{proof}
Since $u, v, w$ are linearly independent and imaginary it is clear that $\im(\langle U \rangle)$ is at least three-dimensional, and by Lemma \ref{3of4} it is exactly three-dimensional with basis $\{u, v, w\}$. Secondly, we note that $\langle v \rangle = F1 \oplus Fv \isom D_\alpha(F)$ so that we obtain a vectorspace decomposition $\langle U \rangle = D_\alpha(F) \oplus Z$ so that, as in text below Theorem \ref{SJ}, the claim that $\langle U \rangle / Z \isom D_\alpha(F)$ follows as soon as we establish that $Z$ is indeed the Jacobson radical. This is easiest from the characterisation of the Jacobson radical in Lemma \ref{stronglyimaginary} as the space of all $x \in \langle U \rangle$ satisfying $xy \in \im(\langle U \rangle)$ for all $y \in \langle U \rangle$. Indeed: let $\beta1 + \gamma u + \delta v + \epsilon w$ be an element of the Jacobson radical, then, setting $y = 1$ we find that $\beta = 0$ and setting $y = v$ we find that $\delta = 0$. This shows that $y \in Z$ so that $J(\langle U \rangle) \subset Z$. Conversely, since $u1 = u$, $u^2 = 0$, $uv = w$ and $uw = u^2v = 0$ are all imaginary we have that $u \in J(\langle U \rangle)$ and similarly imaginarity of $w1 = w, wu = -(vu)u = -vu^2 = 0$, $wv = uv^2 = \alpha u$ and $w^2 = -(vu)(uv) = vu^2v = 0$ implies that $w \in J(\langle U \rangle)$. Together these inclusions show that $Z \subset J(\langle U \rangle)$. The fact that $Z$ is of type Z follows from the equations $u^2 = uw = w^2 = 0$ established above.

Let $x = x_v + x_z$ be the decomposition of $x \in \langle U \rangle$ along the vectorspace decomposition $\langle U \rangle = \langle v \rangle \oplus Z$. Since $\langle v \rangle$ is a subalgebra and $Z$ is an ideal we find that $x^2 = 0$ can only hold if $x_v^2 = 0$. However since $\langle v \rangle \isom D_\alpha(F)$ which is either a division algebra or isomorphic to the algebra direct sum $F \oplus F$ (Proposition \ref{CD2}) contains no nilpotent elements we find that $x^2 = 0$ implies that $x_v = 0$ and hence $x \in Z$. By Lemma \ref{nilpotent} this means that $Z$ is not only the space of all strongly nilpotent elements but also the space of \emph{all} nilpotent elements in $\langle U \rangle$.  

With this observation in mind, the claim that all other two-dimensional subspaces of $\im(\langle U \rangle)$ are of either type U or type D is trivial from dimension considerations: since $\im(\langle U \rangle)$ is three-dimensional each two-dimensional subspace $U'$ other than $Z$ has one-dimensional intersection with $Z$ and this intersection consists of all nilpotent elements in the subspace. Hence $U'$ contains exactly one nilpotent line and thus is of type U or D by Definition \ref{types}. We will see in Chapter \ref{counting} (Corollary \ref{TXF2J2}) that type U occurs if and only if $D_\alpha(F)$ is split.
\end{proof}

\begin{example}\label{typeDex}
Let $\alpha \in F \backslash \{0\}$ and $i \neq j \in \{1, 2, 3\}$. With notation as in Lemma \ref{tab} and Table \ref{multtab}, the space spanned by $u = q_i$ and $v = q_j - \alpha r_j$ is of type D by Lemma \ref{complete}. In particular, since $v^2 = \alpha 1$, we see from the proof of Proposition \ref{typeD} that every isomorphism class of two-dimensional Cayley-Dickson algebras $D_\alpha(F)$ appears as the semi-simple part of some subalgebra $\langle U \rangle \subset O$ with $U$ of type D.
\end{example} 

\subsection{The algebras generated by subspaces of type M}

For the type M case we work with a slightly different basis (which we can think of as the pair $(u', u' + v)$ of the proof of Lemma \ref{complete}):

\begin{lemma}\label{typeMbasis}
Let $U \subset \im(O)$ be a two-dimensional subspace of type M. Then $U$ has a basis $q, r$ satisfying $q^2 = r^2 = 0$, $2\tau(qr) = qr + rq = 1$.
\end{lemma}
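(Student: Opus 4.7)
The plan is to build the basis in three stages: first pick one representative from each of the two nilpotent lines that a type M subspace contains by definition, then compute the symmetric product of these representatives to see it must be a non-zero scalar, and finally rescale one of them to make that scalar equal to $1$.

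First I would unpack the definition of type M (Def.~\ref{types}): $U$ contains exactly two nilpotent lines, so I can choose any non-zero $q$ on one line and any non-zero $r$ on the other. These are linearly independent (distinct lines through $0$) and therefore form a basis of the two-dimensional space $U$. By construction $q^2 = r^2 = 0$.

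Next I would compute $qr + rq$. Since $q, r \in \im(O)$, we have $q^* = -q$ and $r^* = -r$ by (\ref{im2}), so $(qr)^* = r^*q^* = rq$. Combined with (\ref{taustar}) this gives $qr + rq = qr + (qr)^* = 2\tau(qr) \cdot 1$, a scalar multiple of the identity. Call this scalar $\beta \in F$. The key step is to show $\beta \neq 0$: if $\beta = 0$ then for arbitrary $\lambda, \mu \in F$ we would have
\[
(\lambda q + \mu r)^2 = \lambda^2 q^2 + \lambda\mu(qr + rq) + \mu^2 r^2 = 0,
\]
so every element of $U$ would be nilpotent. By Definition \ref{types} this forces $U$ to be of type $Z$ or type $J$, contradicting the assumption that $U$ is of type $M$.

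Finally, having $\beta \neq 0$, I replace $r$ by $r' \coloneqq \beta^{-1} r$. Then $(r')^2 = \beta^{-2} r^2 = 0$ and $qr' + r'q = \beta^{-1}(qr + rq) = 1$, while $q$ and $r'$ still span $U$ since rescaling preserves linear independence. This yields the required basis, and the equality $qr + rq = 2\tau(qr) \cdot 1 = 1$ simultaneously verifies the $\tau$ statement. The main (and only) obstacle is the non-vanishing of $\beta$, which is why the type M hypothesis (as opposed to merely containing two nilpotent lines among possibly more) is really used: it is exactly the exclusion of the all-nilpotent scenario.
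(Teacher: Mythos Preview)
Your proof is correct and follows essentially the same route as the paper's: pick representatives on the two nilpotent lines, observe that $qr+rq\in F1$, argue it is non-zero because otherwise every element of $U$ would be nilpotent (the paper phrases this as ``$q+r'$ cannot be nilpotent'', which amounts to the same computation since $(q+r')^2 = qr'+r'q$), and then rescale one vector to normalize the scalar to $1$.
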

\begin{proof}
Pick non-zero elements $q, r'$ on the two nilpotent lines. $q$ and $r'$ span $U$ and $q + r'$ can not be a nilpotent which in particular means that $(q + r')^2 \neq 0$. On the other hand we know (from $(q + r') \in \im(O)$) that $(q + r')^2 \in F1$. Define $\alpha \in F$ by $(q + r')^2 = \alpha 1$ and define $r = r'/\alpha$. Then expanding $(q + \alpha r)^2 = \alpha 1$ yields $qr + rq = 1$. The fact that $qr + rq = 2\tau(qr)$ whenever $q, r \in \im(O)$ has been derived many times above.
\end{proof}

\begin{proposition}\label{typeM}
Let $U \subseteq \im(O)$ be a two-dimensional subspace with basis elements $q, r$ satisfying $q^2 = r^2 = 0$, $2\tau(qr) = 1$. Then $\langle U \rangle$ is four-dimensional and isomorphic to $\Mat(2, F)$ with isomorphism given by $$
\begin{pmatrix}
1 & 0 \\
0 & 0
\end{pmatrix} \mapsto qr; \quad
\begin{pmatrix}
0 & 1 \\
0 & 0 
\end{pmatrix} 
\mapsto q; \quad
\begin{pmatrix}
0 & 0 \\
1 & 0 
\end{pmatrix} 
\mapsto r; \quad
\begin{pmatrix}
0 & 0 \\
0 & 1 
\end{pmatrix} 
\mapsto rq.
$$ 
\end{proposition}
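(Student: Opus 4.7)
The plan is to directly exhibit an isomorphism with $\Mat(2, F)$ and use simplicity of the matrix algebra to bypass the question of whether $\langle U \rangle$ might collapse to three dimensions. The key preliminary observation is that the subalgebra $\langle q, r \rangle$ is associative by Artin's theorem (Theorem \ref{associator}), since $O$ is alternative; this lets me freely rearrange parentheses in products of $q$'s and $r$'s. The relations at my disposal are $q^2 = r^2 = 0$ together with $qr + rq = 1$ (from the trace hypothesis), which I will rewrite as $rq = 1 - qr$.

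Setting $P_1 = qr$ and $P_2 = rq$, I would verify by a short case check that the quadruple $(P_1, q, r, P_2)$ satisfies the matrix-unit multiplication table $E_{ij}E_{kl} = \delta_{jk} E_{il}$ (with $P_1 \leftrightarrow E_{11}$, $q \leftrightarrow E_{12}$, $r \leftrightarrow E_{21}$, $P_2 \leftrightarrow E_{22}$). The cases in which both factors are $q$'s, both are $r$'s, or one factor is $q$ and the other is a $P_i$ which has $q$ on the same side, reduce immediately to $q^2 = 0$ or $r^2 = 0$ by associativity. The only slightly less immediate identities are the ones expressing that $P_1, P_2$ are orthogonal idempotents with $P_1 + P_2 = 1$; for instance $P_1^2 = (qr)(qr) = q(rq)r = q(1-qr)r = qr - q^2r^2 = qr = P_1$, and the remaining such equations are equally short combinations of associativity with the two hypotheses.

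These relations imply that the linear map $\phi \colon \Mat(2, F) \to O$ sending $E_{11}, E_{12}, E_{21}, E_{22}$ to $qr, q, r, rq$ respectively is an algebra homomorphism, and it is non-zero since $\phi(I) = qr + rq = 1$. By Proposition \ref{Mat2F} and Theorem \ref{CDsimple}, $\Mat(2, F)$ is simple as a four-dimensional Cayley-Dickson algebra, so $\phi$ must be injective. Hence $\phi(\Mat(2, F))$ is a four-dimensional subalgebra of $O$ containing $q$ and $r$, and therefore containing $\langle U \rangle$. On the other hand Lemma \ref{3of4} bounds $\dim \langle U \rangle$ from above by 4, so $\phi(\Mat(2, F))$ coincides with $\langle U \rangle$ and $\phi$ inverts the isomorphism stated in the proposition. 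The main obstacle is purely organizational: carrying out the sixteen matrix-unit verifications cleanly, since the trace relation intertwines the two idempotents $P_1$ and $P_2$ and some care is needed to avoid sign and bracketing mistakes.
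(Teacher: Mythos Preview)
Your proof is correct and follows essentially the same line as the paper's: verify that $qr, q, r, rq$ satisfy the $2\times 2$ matrix-unit relations inside the associative algebra $\langle q, r\rangle$, and then conclude that this algebra is four-dimensional and isomorphic to $\Mat(2,F)$. The one difference is in how four-dimensionality is established: the paper proves directly, via a short contradiction argument, that $qr - rq \notin \spam(q,r)$ and hence that $\{qr, rq, q, r\}$ is linearly independent, whereas you bypass this by invoking simplicity of $\Mat(2,F)$ (Theorem~\ref{CDsimple}) to force the homomorphism $\phi$ to be injective---exactly the trick the paper itself uses in the proof of Proposition~\ref{main}. Your route is marginally cleaner here; the paper's direct argument has the small advantage of not depending on the structure theory of Section~\ref{nilp}. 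Your appeal to Lemma~\ref{3of4} is harmless but redundant, since $\phi(\Mat(2,F))$ is visibly contained in $\langle q, r\rangle$ and contains $q,r$, giving equality without a dimension bound.
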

The suggestive notation is chosen partially in order to make it easier to write down examples of type $M$ subspaces in terms of the basis of Table \ref{multtab}.
\begin{proof}
Set $e_1 = qr$, $e_2 = rq$. We first verify that $e_1$ and $e_2$ are commuting idempotents summing to 1. By (\ref{n}) we have that $n(qr) = (qr)^*qr = (rq)(qr) = r(qq)r = 0$. Hence by $(qr)^2 = 2\tau(qr)qr - n(qr) = qr$ (\ref{taun}) we find idempotence of $qr$. The calculation for $rq$ is identical. We also notice that $e_1e_2 = qrrq = 0$ and $e_2e_1 = rqqr = 0$. The identity $e_1 + e_2 = 1$ is just the identity $x + x^* = 2\tau(x)$ applied to $x = qr$. 

We see that $\spam(e_1, e_2)$ is a two-dimensional algebra and hence $\spam(e_1, e_2) \cap \im(O) = \im(\spam(e_1, e_2))$ is one-dimensional by Lemma \ref{H}. From $e_1^* = e_2$ it then follows that $\spam(e_1, e_2) \cap \im(O) = F(e_1 - e_2)$. In order to establish linear independence of $e_1, e_2, q, r$ it hence suffices to show that $(e_1 - e_2) \nin U$. Suppose, by contradiction, that $qr - rq = \lambda q + \mu r$ then, from multiplying on the left by $q$, we would have that $qrq = -\mu qr$. However, from $(qrq)^* = q^*r^*q^* = (-1)^3qrq$ we see that $qrq \in \im(O)$ so that $\tau(qrq) = 0$. On the other hand $\tau(\mu qr) = \mu/2$ so that $qrq = -\mu qr$ implies that $\mu = 0$. Similarly we arrive at $\lambda = 0$ by multiplying our assumption $qr - rq = \lambda q + \mu r$ with $r$. But this implies that $qr = rq$, which by $qr + rq = 1$ would mean that $qr = 1/2$ and hence $n(qr) = 1/4$, contradicting the already established relation $n(qr) = 0$.

We proceed to verify the algebraic relations between the basis elements $e_1, e_2, q, r$ implied by the supposed isomophism to $\Mat(2, F)$. Relations $e_1r = e_2q = qe_1 = e_2r = 0$ are obvious from associativity once writing out the definition of $e_1$ and $e_2$. Since $e_1q = qe_2 = qrq$ and $e_1r = re_2 = rqr$ it remains to show that $qrq = q$ and $rqr = r$. From $qr + rq = 1$ we see that $qrq + rq^2 = q$ but since $rq^2 = 0$ this reduces to $qrq = q$. Similarly $qr + rq = 1$ gives $rqr + r^2q = r$.  
\end{proof}

\subsection{The algebras generated by subspaces of type J}

As a byproduct of Proposition \ref{typeM} we find that the converse of Lemma \ref{typeMbasis} also holds:
\begin{lemma}
Let $q, r$ be linearly independent nilpotent elements in $O$ such that $2\tau(qr) = 1$ then $U \coloneqq \spam\{q, r\}$ is of type $M$.
\end{lemma}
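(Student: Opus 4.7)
The plan is to verify directly the two features defining a type M subspace: first, that $U \subseteq \im(O)$, and second, that $U$ contains exactly two distinct nilpotent lines.

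For the first claim, since $q$ and $r$ are nilpotent and $O$ is alternative, Lemma \ref{nilpotent} gives that $q, r \in \im(O)$ (and in fact $q^2 = r^2 = 0$). As $\im(O)$ is a linear subspace, $U = \spam\{q, r\} \subseteq \im(O)$.

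For the second claim, I would take an arbitrary element $x = \lambda q + \mu r \in U$ and compute its square. Using $q^2 = r^2 = 0$ together with the identity $qr + rq = 2\tau(qr)\cdot 1 = 1$ (which holds because $q, r \in \im(O)$, an identity derived several times in the preceding sections), one gets
\[
x^2 = \lambda^2 q^2 + \lambda\mu(qr + rq) + \mu^2 r^2 = \lambda\mu \cdot 1.
\]
Since $x \in \im(O)$, Lemma \ref{nilpotent} says $x$ is nilpotent iff $x^2 = 0$. Hence the nilpotent elements of $U$ are precisely those with $\lambda\mu = 0$, i.e.\ the union $Fq \cup Fr$. The linear independence of $q$ and $r$ ensures that $Fq$ and $Fr$ are two \emph{distinct} lines, so $U$ contains exactly two nilpotent lines. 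By Definition \ref{types}, $U$ is of type M.

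There is essentially no obstacle here; the statement is a one-line verification once the hypothesis $2\tau(qr) = 1$ is translated via $qr + rq = 2\tau(qr)$ into a clean expression for $(\lambda q + \mu r)^2$. In spirit the proof simply reverses the construction in Lemma \ref{typeMbasis}, where a type M subspace was shown to admit such a basis.
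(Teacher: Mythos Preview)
Your proof is correct. The computation $(\lambda q + \mu r)^2 = \lambda\mu\cdot 1$ from $q^2=r^2=0$ and $qr+rq=2\tau(qr)\cdot 1 = 1$ is valid, and the conclusion that the nilpotent locus in $U$ is exactly $Fq \cup Fr$ follows immediately.

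The paper's own proof takes a different route: it invokes Proposition~\ref{typeM} to identify $\langle q,r\rangle$ with $\Mat(2,F)$, under which $U$ corresponds to the antidiagonal matrices $\left\{\begin{pmatrix}0&\beta\\ \alpha&0\end{pmatrix}\right\}$, and then counts nilpotent lines there. Your argument is more self-contained and elementary: it bypasses the matrix model entirely and is essentially the same computation that appears in case~3 of Lemma~\ref{complete}, now run in the clean basis $\{q,r\}$ rather than the mixed basis $\{u', u'+v\}$. The paper's detour through $\Mat(2,F)$ buys nothing extra here, since Proposition~\ref{typeM} was already established independently of the type of $U$; your direct verification is the natural thing to do.
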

\begin{proof} 
By Proposition \ref{typeM}, $U$ generates a subalgebra of $O$ isomorphic to $\Mat(2, F)$ in which the space $U$ is mapped isomorphically to the subspace $\left\{ \begin{pmatrix}
0 & \beta \\ \alpha & 0 \end{pmatrix} \colon \alpha, \beta \in F \right\}$. Using our knowledge of linear algebra to count the number of nilpotent lines in the latter space, we find that this number equals 2.
\end{proof}
\begin{corollary}\label{typeJbasis}
Let $U \subset \im(O)$ be a two-dimensional subspace of type J. Then $2\tau(uv) = 0$ for every $u, v \in U$.
\end{corollary}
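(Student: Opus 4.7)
The plan is to prove the contrapositive via the preceding lemma. Suppose for contradiction that there exist $u, v \in U$ with $2\tau(uv) \neq 0$. First note that $u$ and $v$ must then be linearly independent: if $v = \lambda u$ for some $\lambda \in F$, then $uv = \lambda u^2 = 0$ (using that $u$ is nilpotent and hence $u^2 = 0$ by Lemma \ref{nilpotent}), giving $\tau(uv) = 0$, contrary to assumption.

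Next I would rescale to fit the hypothesis of the lemma just preceding the corollary. Set $\alpha \coloneqq 2\tau(uv) \in F^{\times}$ and $r \coloneqq v/\alpha$. Then $r \in U$ is still nilpotent (scalar multiples of nilpotent elements are nilpotent), $\{u, r\}$ is still linearly independent, and by bilinearity of $\tau$ and the product we have $2\tau(ur) = \frac{1}{\alpha}\cdot 2\tau(uv) = 1$. The lemma then forces $U = \spam(u, r)$ to be of type M.

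But by Definition \ref{types} a type M subspace contains exactly two nilpotent lines, whereas $U$ was assumed to be of type J, meaning every element of $U$ is nilpotent. Since the types in Definition \ref{types} are mutually exclusive, this is a contradiction. Hence $2\tau(uv) = 0$ for all $u, v \in U$.

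The only subtlety I anticipate is making sure the rescaling argument is set up so that the preceding lemma applies verbatim; since that lemma only requires two linearly independent nilpotent elements with $2\tau(qr) = 1$, and our rescaling produces exactly such a pair inside $U$, no further work is needed. No case split on $\chr(F)$ is necessary because we assumed $\chr(F) \neq 2$ throughout, so dividing by $\alpha = 2\tau(uv)$ is legitimate once $\tau(uv) \neq 0$.
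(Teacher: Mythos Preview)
Your proof is correct and follows essentially the same route as the paper's: handle the linearly dependent case by observing $uv=0$ (since all elements of a type J space are nilpotent, hence square to zero), then rescale to apply the preceding lemma and obtain the contradiction that $U$ would be of type M. The only cosmetic difference is that the paper cites Lemma \ref{nilpotent} directly for the dependent case rather than writing out $v=\lambda u \Rightarrow uv=\lambda u^2=0$.
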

\begin{proof}
We have that every element of $U$ is nilpotent by definition of type $J$ so if $u, v$ are linearly depedent we find that $uv = 0$ by Lemma \ref{nilpotent}. It follows that any hypothetical pair $\{u, v\}$ with $2\tau(uv) \neq 0$ would span $U$, as would the pair $\{u, v/(2\tau(uv)\}$. But the latter pair satisfies the conditions of the last lemma, showing that in a type J space no such pair can exist.
\end{proof}
This corollary will help us finish the proof of Theorem \ref{classification} by identifying the algebras generated by subspaces of type $J$. We will first however give an example of such a space.
\begin{example}\label{typeJex}
In the notation of Table \ref{multtab}, every two-dimensional subspace $U$ of $\spam\{q_1, q_2, q_3\}$ is of type $J$. Note that for any two linearly independent elements $u, v$ in such a space $U$ the product $uv$ does not only lie outside $U$ (as is necessary for $\langle U \rangle$ to be four-dimensional) but even outside all of $\spam\{q_1, q_2, q_3\}$. On the other hand we notice that $uv$ still lies in $\im(O)$ as predicted by Corollary \ref{typeJbasis}. By symmetry also all two-dimensional subspaces of $\spam\{r_1, r_2, r_3\}$ are of type $J$. 
\end{example}
\begin{proof}
It suffices to show that every element of $\spam\{q_1, q_2, q_3\}$ is nilpotent. For this it is suffices to see that $q_1^2 = q_2^2 = q_3^2 = 0$ and $q_iq_j + q_jq_i = 0$ for $i \neq j$. The first of these is immediate from Table \ref{multtab}, the second can be re-expressed as $2\tau(q_iq_j) = 0$ which is equivalent to $q_iq_j \in \im(O)$ for all $i, j$. This latter fact is again immediate from Table \ref{multtab}.
\end{proof}
Note that we do not claim that Example \ref{typeJex} covers all possible type J spaces, just as Example \ref{typeUex} did not cover all type U spaces. We will return to the question of finding (or at least counting) all spaces of a given type in Section \ref{counting}.

The following proposition completes the proof of Theorem \ref{classification}.
\begin{proposition}\label{typeJ}
Let $U = \spam(u, v) \subseteq \im(O)$ be of type J. (So $u^2 = 0$, $v^2 = 0$, $uv \neq 0$ and, by Corollary \ref{typeJbasis}, $\tau(uv) = 0$.)
Then $\langle U \rangle$ is four-dimensional with basis $1, u, v, uv$ and multiplication given by $u^2 = v^2 = (uv)^2 = 0$; $uv = -vu$; $(uv)x = x(uv) = 0$ for all $x \in \im(\langle U \rangle)$. In particular every element of $\im(\langle U \rangle)$ is nilpotent and $\im(\langle U \rangle)$ is the three-dimensional Jacobson radical of $\langle U \rangle$.
\end{proposition}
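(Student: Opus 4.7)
The plan is to first pin down the dimension of $\langle U \rangle$, then derive the multiplication relations on the basis $\{1, u, v, uv\}$, and finally use these relations to identify the Jacobson radical.

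For the dimension, Lemma \ref{3of4} gives $\langle U \rangle = \spam(1, u, v, uv)$, so $\dim \langle U \rangle \in \{3, 4\}$. To rule out dimension 3, I would appeal to Proposition \ref{3dim}, which classifies any two-dimensional $U \subset \im(O)$ generating a three-dimensional subalgebra as type Z or type U. A type J space has $uv \neq 0$ (excluding type Z) and consists entirely of nilpotent elements (excluding type U, whose definition requires a line of non-nilpotent elements). Hence $\dim \langle U \rangle = 4$.

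Next I would establish the multiplication relations. The identities $u^2 = v^2 = 0$ are part of the definition of type J, and $uv = -vu$ is just equation (\ref{anticommutative}). For $(uv)^2$: since $\tau(uv) = 0$ by Corollary \ref{typeJbasis}, the product $uv$ is imaginary, so $(uv)^2 = -n(uv)$ by (\ref{taun}); multiplicativity of the norm (\ref{nmult}) then gives $n(uv) = n(u)n(v) = 0$. The four remaining cross-products $u(uv), (uv)u, v(uv), (uv)v$ are all killed by alternativity via Artin's theorem: left alternativity (\ref{leftalt}) gives $u(uv) = u^2 v = 0$, flexibility (\ref{flexible}) reduces $(uv)u$ to $u(vu) = -u(uv) = 0$, right alternativity (\ref{rightalt}) gives $(uv)v = u(v^2) = 0$, and flexibility similarly yields $v(uv) = (vu)v = -(uv)v = 0$.

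Finally, to identify $J(\langle U \rangle)$ with $\im(\langle U \rangle)$, I would invoke Lemma \ref{stronglyimaginary}: $x \in J(\langle U \rangle)$ iff $xy \in \im(\langle U \rangle)$ for every $y \in \langle U \rangle$. Every strongly nilpotent element is nilpotent (take $y=1$), hence imaginary by Lemma \ref{nilpotent}, so $J(\langle U \rangle) \subseteq \im(\langle U \rangle)$. For the converse, the relations above show that the product of any two elements of $\{u, v, uv\}$ lies in $F \cdot uv \cup \{0\} \subset \im(\langle U \rangle)$; by bilinearity, $xy' \in \im(\langle U \rangle)$ for all $x, y' \in \im(\langle U \rangle)$. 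Writing a general $y = \delta 1 + y'$ with $y' \in \im(\langle U \rangle)$ gives $xy = \delta x + xy' \in \im(\langle U \rangle)$ whenever $x \in \im(\langle U \rangle)$, so $\im(\langle U \rangle) \subseteq J(\langle U \rangle)$. Nilpotency of every element of $\im(\langle U \rangle)$ is then immediate by taking $y = 1$ in the strong-nilpotence characterization. The main subtlety in the whole argument is the identity $(uv)^2 = 0$, which superficially looks as though it would require shifting parentheses in a non-associative product; multiplicativity of the norm sidesteps this issue completely, and everything else is bookkeeping handled cleanly by Artin's theorem.
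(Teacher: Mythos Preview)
Your proof is correct and follows essentially the same route as the paper's: Lemma~\ref{3of4} for the span, multiplicativity of $n$ for $(uv)^2=0$, and alternativity for the vanishing cross-products. The one genuine difference is in ruling out dimension~3: the paper argues directly by writing a hypothetical relation $uv = \lambda u + \mu v$ and multiplying on the right by $v$ (respectively on the left by $u$) to force $\lambda = \mu = 0$, whereas you invoke the already-proved classification in Proposition~\ref{3dim}. Both are fine; your route is a clean shortcut given what has been established, while the paper's is self-contained. Your explicit verification of the Jacobson radical via Lemma~\ref{stronglyimaginary} is also a bit more detailed than the paper, which leaves that claim implicit after the multiplication table is established.
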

\begin{proof}
Since $\tau(uv) = 0$ we have that $uv \in \im(O)$. By Lemma \ref{H} this means that $uv \in \im(\langle U \rangle)$ and hence that in order to (counterfactually) have $\dim(\langle U \rangle) = 3$, one must have that $uv \in U$. But this would imply the existence of $\lambda, \mu \in F$ such that $uv = \lambda u + \mu v$. Multiplying both sides with $v$ from the right yields $\lambda uv = 0$ and hence $\lambda = 0$. Similarly, multiplying from the left by $u$ yields $\mu = 0$, a contradiction. It follows that indeed $\langle U \rangle$ is four-dimensional. 

Since $n(uv) = n(u)n(v) = 0$ we see from (\ref{taun}) that $(uv)^2 = 0$. From $u, v, uv \in \im(O)$ we have that $-uv = (uv)^* = v^*u^* = (-v)(-u) = vu$ and from this it follows that $vuv = -uv^2 = 0, uvu = -u^2v = 0$ verifying the claims about multiplication.
\end{proof}

\begin{corollary}\label{F1J3subspaces}
Let $U, u, v$ be as in Proposition \ref{typeJ}. An element $w \in \langle U \rangle$ satisfies $wx = xw = 0$ for all $x \in \im(\langle u, v \rangle)$ if and only if $w$ lies on the line $F(uv)$ and consequently every two-dimensional subspace of $\im(\langle u, v \rangle)$ containing that line is of type Z. Conversely, every two-dimensional subspace $U' \subseteq \im(\langle u, v \rangle)$ \emph{not} containing the line $F(uv)$ is of type $J$ and satisfies $\langle U' \rangle = \langle u, v \rangle$. 
\end{corollary}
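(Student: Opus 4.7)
My plan is to use the explicit basis $\{1, u, v, uv\}$ of $\langle u, v\rangle$ and the multiplication rules from Proposition \ref{typeJ} to reduce everything to a short linear-algebra computation. Throughout, I will use that $\im(\langle u, v\rangle) = \spam(u, v, uv)$ by Proposition \ref{typeJ} (and Lemma \ref{codim1}), and that every element of $\im(\langle u, v\rangle)$ is nilpotent.

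For the first claim, write a generic $w \in \langle u, v\rangle$ as $w = \alpha 1 + \beta u + \gamma v + \delta(uv)$. The relations of Proposition \ref{typeJ} (namely $u^2 = v^2 = (uv)^2 = 0$, $vu = -uv$, and $(uv)x = x(uv) = 0$ for $x \in \im(\langle u, v\rangle)$) make the products $wu$, $wv$ routine: evaluating $w(uv) = 0$ forces $\alpha = 0$, then $wu = -\gamma(uv)$ and $wv = \beta(uv)$ force $\gamma = \beta = 0$, leaving $w \in F(uv)$. The converse is immediate from Proposition \ref{typeJ}. The type-Z statement is then automatic: any two-dimensional subspace containing $F(uv)$ has a basis of the form $\{uv, w\}$ with $w \in \im(\langle u,v\rangle)$; since $w$ is nilpotent ($w^2 = 0$) and $uv$ annihilates every imaginary element, every product of two basis elements vanishes, so the subspace is of type Z by Definition \ref{types}.

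For the converse direction, let $U'$ be a two-dimensional subspace of $\im(\langle u,v\rangle)$ with $F(uv) \not\subset U'$. Since $\dim\im(\langle u,v\rangle) = 3$ and $U'$ has codimension one inside it, the projection $\pi \colon \im(\langle u, v\rangle) \to \spam(u,v)$ with kernel $F(uv)$ restricts to an isomorphism on $U'$ (indeed $U' \cap F(uv) = \{0\}$). Pick a basis $u' = au + bv + c(uv)$, $v' = du + ev + f(uv)$ of $U'$; the above observation says the $2 \times 2$ matrix $\begin{pmatrix}a & b \\ d & e\end{pmatrix}$ is invertible, i.e.\ $ae - bd \neq 0$. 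Now expanding $u'v'$ using the vanishing relations from Proposition \ref{typeJ} and $vu = -uv$ collapses all nine terms except two, giving $u'v' = (ae - bd)(uv) \neq 0$. In particular $u'v' \notin U'$ (since $F(uv) \not\subset U'$), so $U'$ consists of nilpotent elements with non-zero product and is therefore of type J (Definition \ref{types}).

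Finally, applying Proposition \ref{typeJ} to $U'$, the algebra $\langle U'\rangle$ is four-dimensional with basis $\{1, u', v', u'v'\}$. Since $U' \subset \im(\langle u,v\rangle) \subset \langle u,v\rangle$, we have $\langle U'\rangle \subset \langle u,v\rangle$; both are four-dimensional, so they coincide. The only step that required any cleverness was noticing that $u'v'$ depends only on the $\spam(u,v)$-components of $u'$ and $v'$, which is what makes the link between "$U'$ missing $F(uv)$" and "$u'v' \neq 0$" completely transparent; everything else is bookkeeping with the multiplication rules of Proposition \ref{typeJ}.
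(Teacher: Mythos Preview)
Your proof is correct and follows essentially the same approach as the paper: both compute products of generic basis elements using the relations of Proposition~\ref{typeJ} and extract the $2\times 2$ determinant condition $ae-bd$ governing whether $u'v'$ vanishes. The only cosmetic difference is in the final step: you conclude $\langle U'\rangle = \langle u,v\rangle$ by applying Proposition~\ref{typeJ} to $U'$ and comparing dimensions, whereas the paper instead shows directly that $u,v \in \langle U'\rangle$ by subtracting the $uv$-components from $u',v'$ and using the same nonvanishing determinant.
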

\begin{proof}
The statements in the first sentence follow immediately from the description of the multiplication in Proposition \ref{typeJ}. For the second sentence, let $x = \alpha u + \gamma v + \epsilon uv, y = \beta u + \delta v + \zeta uv$ be a basis of a subspace $U' \subseteq \im(\langle U \rangle$). We assume that $xy = 0$ and derive that $U'$ contains $Fuv$.

From Proposition \ref{typeJ} we compute that $xy = (\alpha\delta - \beta\gamma)uv$. Here we recognize in the coefficient of $uv$ the determinant of the two-by-two matrix $\begin{pmatrix}
\alpha & \beta \\
\gamma & \delta
\end{pmatrix}$.
We know from linear algebra that this number equals $0$ only if the vectors $\begin{pmatrix} \alpha \\ \gamma \end{pmatrix}, \begin{pmatrix} \beta \\ \delta \end{pmatrix}$
are linearly dependent. In other words: when $xy = 0$ we have that there exist $\lamba, \mu \in F$, not both zero, such that $\lambda \begin{pmatrix} \alpha \\ \gamma \end{pmatrix} + \mu \begin{pmatrix} \beta \\ \delta \end{pmatrix} = 0$. This implies that $\lambda x + \mu y \in Fuv$. But since $x, y$ are linearly independent we also have that $\lambda x + \mu y \neq 0$. So $U'$ contains a non-zero element of $Fuv$ and hence the entire line $Fuv$.

Conversely this means that if $U'$ does not contain $Fuv$ we have that $xy \neq 0$, implying that $U'$ is of type J as, by Proposition \ref{typeJ}, every element in $\im(\langle u, v \rangle)$ and so in particular every element in $U'$ is nilpotent. It remains to verify that in this case $\langle x, y \rangle = \langle u, v \rangle$. Even when $xy \neq 0$, we still have that $xy \in Fuv$, so that the algebra $\langle x, y \rangle$ \emph{does} contain the line $Fuv$. It follows that $\langle x, y \rangle$ contains the elements $x - \epsilon uv = \alpha u + \gamma v$ and $y - \zeta uv = \beta u + \delta v$ and since we established that $\det \begin{pmatrix} \alpha & \beta \\ \gamma  & \delta \end{pmatrix} \neq 0$ when $U'$ does not contain $Fuv$, these elements together span $U = \spam(u, v)$. Hence $\langle x, y \rangle \supseteq \langle u, v \rangle$. The converse inclusion is true by assumption.
\end{proof}

\subsection{No $q$-Fano planes from split Cayley algebras}

Now armed with a better understanding of the algebras generated by the various two-dimensional subspaces of $\im(O)$ we return to the question raised at the beginning of the section of how far the collection of 3-dimensional spaces $\im(H)$ where $H$ ranges over the four-dimensional associative subalgebras of the split octonion algebra $O$, is from providing a $q$-Fano plane structure on $\im(O)$. From Theorem \ref{classification} and the completeness (Lemma \ref{complete}) of the classification given in Definition \ref{types} the following `strengthening' of Theorem \ref{quantumfano} is immediate:

\begin{corollary}[Existence of `almost q-Fano planes']\label{almost}
Let $O$ be a Cayley algebra (not-necessarily non-split) and let $V$, $\mathfrak{B}$ be as in Theorem \ref{quantumfano}. Then for every two-dimensional subspace $U$ of $V$ not of type U or Z there exists a unique $B \in \mathfrak{B}$ such that $U \subseteq B$.
\end{corollary}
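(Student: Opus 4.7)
My plan is to adapt the proof of Theorem~\ref{quantumfano} verbatim, using the classification established in Theorem~\ref{classification} together with the completeness statement of Lemma~\ref{complete} to handle all two-dimensional subspaces $U \subset V$ uniformly. The only issue to watch is that $\langle U \rangle$ must actually be four-dimensional; everything else is then a routine dimension count.

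First I would observe that Lemma~\ref{complete} (together with Proposition~\ref{main}, which takes care of the case that $U$ contains no nilpotent element, and the definitions in~\ref{types}, which cover the case that every element of $U$ is nilpotent) shows that every two-dimensional subspace $U \subseteq V$ falls into exactly one of the six types Q, U, D, M, Z, or J. By Propositions~\ref{3dim}, \ref{typeU}, and by the summary in Theorem~\ref{classification}, the types U and Z are precisely those for which $\langle U \rangle$ is only three-dimensional. Hence, under the hypothesis of the corollary, $U$ is of type Q, D, M or J, and in each of these cases Theorem~\ref{classification} guarantees that $\langle U \rangle$ is a four-dimensional subalgebra of $O$. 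It is automatically associative since $O$ is alternative and any subalgebra of an alternative algebra generated by two elements is associative.

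For existence, I would set $H \coloneqq \langle U \rangle$ and $B \coloneqq \im(H)$. By Lemma~\ref{H}, $\im(H) = H \cap \im(O) = H \cap V$, so $U \subseteq H \cap V = B$. Since $H$ is four-dimensional and associative, $B$ lies in $\mathfrak{B}$ by its definition in Theorem~\ref{quantumfano}, which gives the desired element of $\mathfrak{B}$ containing $U$.

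For uniqueness I would essentially copy the last paragraph of the proof of Theorem~\ref{quantumfano}: suppose $B_1, B_2 \in \mathfrak{B}$ both contain $U$, and write $B_i = \im(H_i)$ with $H_i$ four-dimensional and associative. Since $H_i$ is a subalgebra containing $U$, it contains $\langle U \rangle$; but $\dim H_i = 4 = \dim \langle U \rangle$ by the previous paragraph, forcing $H_i = \langle U \rangle$ for $i = 1, 2$ and therefore $B_1 = B_2 = \im(\langle U \rangle)$. There is no genuine obstacle in this argument: the work has already been done in establishing Theorem~\ref{classification} and Lemma~\ref{complete}, and the corollary is essentially a formal consequence, with the exclusion of types U and Z serving exactly to make the dimension count work.
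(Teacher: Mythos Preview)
Your proof is correct and follows exactly the approach the paper indicates: the corollary is stated there as ``immediate'' from Theorem~\ref{classification} and Lemma~\ref{complete}, and you have simply (and correctly) spelled out the existence and uniqueness arguments by rerunning the proof of Theorem~\ref{quantumfano} with the four-dimensionality of $\langle U\rangle$ supplied by the classification rather than by the division-algebra hypothesis.
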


Of course this `strengthening' is of little additional value since subspaces of type U and Z \emph{do} appear in $O$ whenever $O$ is split. The interesting question is what goes wrong if we extend the claim in the naive way to all subspaces $U$ including those for which $\dim \langle U \rangle = 3$. Is it existence, is it uniqueness, is it both? We already saw in Example \ref{typeUex} that for at least one type U space existence is \emph{not} a problem and the same is true for the type $Z$ spaces mentioned in Corollary \ref{F1J3subspaces}. It seems not too much of a stretch to imagine that for \emph{every} $U$ of type U or Z there exists at least one $B$ in $\mathcal{B}$ such that $U \subset B$. It was pointed out to me by Relinde Jurrius that this would mean that the pair $(V, \mathcal{B})$ provides an example of a $q$-covering design and we will show in the next section that this is indeed the case. 

However when it comes to uniqueness, things are looking less sunny.

\begin{notation}
Let $u, w \in \im(O)$ such that $u^2 = w^2 = 0$, $2\tau(uw) = 1$ (so $\spam(u, w)$ is of type M by Lemma \ref{typeMbasis}). Then we write $H_{u, w} \coloneqq \langle u, w \rangle$ and write $\phi_{u, w}\colon \Mat(2, F) \to H_{u, w}$ for the isomorphism given in Proposition \ref{typeM}. (So $\phi_{u, w}$ depends not just on the type M space, but really on the vectors $u$ and $w$.) Moreover let $T \subset \Mat(2, F)$ denote the three-dimensional subalgebra of upper triangular matrices.
\end{notation}

\begin{example}\label{problem}
Let $O$ be a split Cayley algebra. Then there exists a 2-dimensional linear space $U \subset \im(O)$ of type U and a two-dimensional \emph{affine} subspace $W \subseteq \im(O)$ such that for every $w \in W$ the space $U' \coloneqq \spam(u, w)$ is of type M, the four-dimensional algebra $H_{u, w}$ contains the three-dimensional algebra $\langle U \rangle$ (so in particular $U \subseteq \im(H_{u, w})$) and $\langle U \rangle = \phi_{u, w}(T) \subset \phi_{u, v}(\Mat(2, F)) = H_{u, w}$. Moreover, for $w_1 \neq w_2 \in W$ we have that $H_{u, w_1} \neq H_{u, w_2}$ and hence $H_{u, w_1} \cap H_{u, w_2} = \langle U \rangle$.
\end{example}
\begin{proof}
In the basis of $O$ given by Lemma \ref{tab}, let $U$ be the space spanned by $u = q_1$ and $v = p_1 - p_2$. It follows that $\langle U \rangle = \spam(p_1, p_2, q_1)$. Let $W = -r_1 + Fr_2 + Fr_3$. Then for every $w \in W$ we have that $w^2 = 0$ and $uw = p_1$ so that $2\tau(uw) = p_1 + p_1^* = p_1 + p_2 = 1$. The statements on the structure of $H_{u, w}$ then follow from (the proof of) Proposition \ref{typeM}. 

To see that $H_{u, w_1}$ and $H_{u, w_2}$ are different we notice that as vecor spaces they are equal to $\langle U \rangle \oplus F w_1$, $\langle U \rangle \oplus F w_2$ respectively and so the result follows from the observations that 
$\{w_1 - w_2 \colon w_1, w_2 \in W\} = \spam(r_2, r_3)$ and $\spam(r_2, r_3) \cap \langle U \rangle = \{0\}$.
\end{proof}

Of course it is totally conceivable that one can prove a statement of the form `for every Type U space $U$ there is a \emph{unique} four-dimensional associative subalgebra of $O$ containing $\langle U \rangle$ and satisfying additional property X', but this won't salvage the $q$-Fano plane. The problem here lies in the fact that for each type M space $U'$ appearing in Example \ref{problem}, the algebra $H_{u, w}$ is the \emph{only} four-dimensional associative algebra containing $U'$. Hence we see:

\begin{corollary}\label{hopeless}
Let $\mathfrak{H}$ be any collection of four-dimensional associative subalgebras of a \emph{split} Cayley algebra $O$ and let $\mathfrak{B} = \{\im(H) \colon H \in \mathfrak{H}\}$. Then either there is a type M subspace $U'$ not contained in any $B \in \mathfrak{B}$ or there is a type U subspace $U$ contained in more than one of the $B \in \mathfrak{B}$.
\end{corollary}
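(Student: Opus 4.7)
My plan is to prove the contrapositive: if every type M subspace of $V = \im(O)$ is contained in some $B \in \mathfrak{B}$, then there exists a type U subspace $U$ contained in at least two distinct elements of $\mathfrak{B}$. The idea is to feed the explicit configuration constructed in Example \ref{problem} into the hypothesis and extract two distinct blocks containing the same type U space.

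First I would invoke Example \ref{problem} to obtain a type U subspace $U = \spam(u,v) \subset \im(O)$, a two-dimensional affine family $W \subset \im(O)$ of elements $w$ such that $U'_w \coloneqq \spam(u,w)$ is of type M, and, for each such $w$, a four-dimensional algebra $H_{u,w} \supseteq \langle U \rangle$. The key structural fact from Example \ref{problem} is that for $w_1 \neq w_2 \in W$ we have $H_{u,w_1} \neq H_{u,w_2}$. Next, under our assumption, for each $w \in W$ the type M space $U'_w$ is contained in some $B_w = \im(H_w)$ with $H_w \in \mathfrak{H}$. Since $H_w$ is a subalgebra containing $U'_w$, it contains $\langle U'_w\rangle$; but $\langle U'_w\rangle$ is already four-dimensional by Proposition \ref{typeM}, and $\dim H_w = 4$, so in fact $H_w = \langle U'_w\rangle = H_{u,w}$.

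The decisive step is to observe that the map $H \mapsto \im(H)$ is injective on four-dimensional associative subalgebras of $O$: by Lemma \ref{H} we have $\im(H) = H \cap \im(O)$, and since $1 \in H$ and $\dim \im(H) = 3 = \dim H - 1$, the subalgebra $H$ is recovered as $F1 \oplus \im(H)$. Hence $H_{u,w_1} \neq H_{u,w_2}$ implies $B_{w_1} = \im(H_{u,w_1}) \neq \im(H_{u,w_2}) = B_{w_2}$. On the other hand, $U \subseteq \langle U \rangle \subseteq H_{u,w}$ (by Example \ref{problem}) and $U \subseteq \im(O)$, so $U \subseteq H_{u,w} \cap \im(O) = \im(H_{u,w}) = B_w$ for \emph{every} $w \in W$. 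Picking any two distinct $w_1, w_2 \in W$ therefore exhibits a type U space $U$ contained in two distinct blocks of $\mathfrak{B}$, completing the contrapositive.

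I do not anticipate a serious obstacle: all the hard work has already been done in Example \ref{problem} (which supplies the critical configuration), Proposition \ref{typeM} (which pins down $\langle U'_w\rangle$ as a four-dimensional simple algebra, forcing $H_w = \langle U'_w\rangle$), and Lemma \ref{H} (which lets us recover $H$ from $\im(H)$). The only point to be careful about is the injectivity of $H \mapsto \im(H)$, which rests on the dimension count and on the fact that $1 \in H$ by our standing convention that subalgebras are unital.
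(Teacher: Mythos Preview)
Your proof is correct and follows exactly the reasoning the paper sketches in the paragraph immediately preceding the corollary: the uniqueness of the four-dimensional algebra containing a type M space forces $H_w = H_{u,w}$, and then the distinct $H_{u,w}$ from Example \ref{problem} all contain the same type U space $U$. You have simply spelled out carefully the steps the paper leaves implicit, including the injectivity of $H \mapsto \im(H)$ via $H = F1 \oplus \im(H)$.
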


In other words: in order to extend Theorem \ref{quantumfano} to fields were every Cayley algebra is split, a new idea is needed.

%\section{$q$-Covering designs from split Cayley algebras}\label{qcover}
%deze naam is misschien niet de beste maar willen nu even Tex-dingen goed krijgen.
%\begin{theorem}\label{main2}
%Dummy placeholder theorem
%\end{theorem}

\section{$q$-Covering designs from split Cayley Algebras}\label{qcover}
It was pointed out to me by Relinde Jurrius that Theorem \ref{almost} `almost' shows that the set $\mathcal{B}$ of Theorem \ref{quantumfano} is a $(7, 3, 2)$-$q$-covering design over $F$ (even when it fails to be a proper $2$-$(7, 3, 1)$-subspace design over $F$, that is in the cases where the underlying Cayley algebra $O$ is split.) We recall the definition:
\begin{definition}
A $(v, b, t)$-$q$-covering design over a field $F$ is a collection $\mathcal{B}$ of $b$-dimensional subspaces of a $v$-dimensional space $V$ such that every $t$-dimensional subspace $T$ of $V$ is contained in at least one element of $\mathcal{B}$.
\end{definition}

Indeed, in the light of Corollary \ref{almost} the only thing missing from a proof that $\mathcal{B}$ yields a $q$-covering design is an argument showing that the type Z spaces mentioned in Proposition \ref{F1J3subspaces} and the type U space mentioned in Example \ref{typeUex} (both of which are obviously contained in one or more elements of $\mathcal{B}$) are, in the right sense, `representative' of all type Z and type U spaces respectively. Such an argument is handed to us on a golden platter by the following result, taken from Section 1.7 of the book by Springer and Veldkamp \cite{SprVeld}:

\begin{theorem}\label{transitive}
Let $D_1, D_2$ be isomorphic subalgebras of a Cayley algebra $O$. Then every linear isomorphism $\phi \colon D_1 \to D_2$ extends to an automorphism of all of $O$.
\end{theorem}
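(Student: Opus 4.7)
The plan is induction on $\dim D_1 = \dim D_2$, with the Dickson-doubling construction as the engine that grows the isomorphism's domain. The base cases $\dim D_1 = 1$ (both subalgebras forced to equal $F1$, and $\phi$ is the identity) and $\dim D_1 = 8$ (here $\phi$ already is an automorphism of $O$) are immediate, so the real work sits at dimensions $2$ and $4$. Throughout, I use that an algebra isomorphism between quadratic algebras with strong involution automatically respects $*$, because $*$ is determined by the purely algebraic data $\tau = \frac{1}{2}(\mathrm{id} + *)$ together with the algebraic characterization $F1 = \tau(O)$.

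\textbf{Inductive step, Cayley--Dickson case.} First I would handle the case in which $D_1$ and $D_2$ are Cayley--Dickson subalgebras. By Lemma \ref{H} they are closed under $*$ and by the stronger version of Proposition \ref{doubling} alluded to in Remark \ref{special} (Theorem \ref{doubling2} in the paper) one can pick $i_1 \in \im(O) \setminus D_1$ with $\gamma \coloneqq i_1^2 \in F^\times$ such that $D_1' \coloneqq D_1 \oplus i_1 D_1$ is literally the Dickson double $D_\gamma(D_1)$ sitting inside $O$. The key task is to find a matching $i_2 \in \im(O) \setminus D_2$ with $i_2^2 = \gamma$ (and $i_2 D_2 \cap D_2 = \{0\}$); once such an $i_2$ is available, the formula $\phi'(a + i_1 b) \coloneqq \phi(a) + i_2 \phi(b)$ defines an algebra isomorphism $D_1' \to D_2 \oplus i_2 D_2$ by a direct computation against (\ref{eyesleft}), and applying the induction hypothesis to $\phi'$ (now on a subalgebra of strictly larger dimension) finishes the step.

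\textbf{The hard part: matching doubling constants.} Producing $i_2$ with the correct square is the main obstacle. My plan is to invoke Witt's Extension Theorem applied to the norm form $n$ on $O$. Because $\phi$ is a $*$-preserving algebra isomorphism, it is an isometry $(D_1, n|_{D_1}) \to (D_2, n|_{D_2})$ between regular (i.e. non-degenerate) subspaces of the regular quadratic space $(O, n)$ — regularity of $n$ on a Cayley--Dickson subalgebra is a consequence of $n$ being a Pfister form in disguise, and amounts to the simplicity results collected in Theorem \ref{CDsimple}. Witt's theorem then supplies an isometry $\Psi \colon O \to O$ of the quadratic space extending $\phi$. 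Setting $i_2 \coloneqq \Psi(i_1)$ gives $i_2 \in \im(O)$ (since $\Psi$ preserves the orthogonal decomposition $O = F1 \perp \im(O)$), satisfies $i_2^2 = -n(i_2) = -n(i_1) = i_1^2 = \gamma$ by (\ref{taun}), and avoids $D_2$ because $\Psi$ is an isomorphism with $i_1 \notin D_1$. Note that $\Psi$ itself is \emph{not} expected to be an algebra automorphism; it is used only as a locator for $i_2$.

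\textbf{The genuinely non-semisimple case.} The case where $D_1$ and $D_2$ contain a nonzero Jacobson radical (for instance $D_i \isom U_3 \subset \Mat(2, F)$, which is possible in the split setting) is the subtlest. My plan here is to invoke the Wedderburn decomposition of Theorem \ref{SJ} to write $D_i = S_i \oplus J(D_i)$, first extend $\phi|_{S_1}$ to an automorphism of $O$ by the Cayley--Dickson case above (reducing to $S_1 = S_2$ and $\phi|_{S_1} = \id$), and then handle the radical separately: elements of $J(D_i)$ are strongly imaginary nilpotents (Lemma \ref{stronglyimaginary}), so $\phi|_{J(D_1)}$ is an $S_1$-bimodule isometry on the nilpotent part, and one can produce the required extension by a direct construction mimicking Example \ref{problem} that exhibits the freedom in choosing nilpotent companions. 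This radical-matching step is where I expect to rely most heavily on Springer--Veldkamp's detailed analysis rather than reproduce it.
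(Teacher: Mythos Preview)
The paper does not prove Theorem \ref{transitive}; it imports the result wholesale from Springer--Veldkamp \cite{SprVeld}, Section 1.7. So there is no in-paper proof to compare your attempt against, only the external reference.

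Your approach via Witt's extension theorem for the norm form $n$, followed by Cayley--Dickson doubling, is precisely the mechanism Springer--Veldkamp use for \emph{composition} subalgebras (those on which $n$ restricts non-degenerately, equivalently the Cayley--Dickson subalgebras of dimensions $1,2,4$). One point you glossed over: for $\phi'$ to be an algebra map you need not only $i_2^2 = \gamma$ and $i_2 \notin D_2$, but also $i_2 \in D_2^\perp$ with respect to the bilinear form associated to $n$ --- equivalently $\tau(i_2 x) = 0$ for all $x \in D_2$, which is what forces (\ref{xi}) and hence (\ref{pq1})--(\ref{pq3}) via Lemma \ref{allfromxi2}. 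This does follow, since $i_1 \in D_1^\perp$ by its construction in Proposition \ref{doubling}/Theorem \ref{doubling2} and the Witt extension $\Psi$ carries $D_1^\perp$ onto $D_2^\perp$; but it deserves to be said explicitly.

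The non-composition case, which you correctly flag as the delicate one, is genuinely needed here: the paper invokes Theorem \ref{transitive} in Corollary \ref{ZUtrans} for subalgebras $F1 \oplus Z$ (type Z) and $F1 \oplus U$ (type U) on which $n$ is degenerate, so Witt's theorem does not apply directly to the whole subalgebra. Your sketch --- reduce the semisimple part via the composition case, then match radicals separately as isotropic pieces --- is the right shape, and your honesty that this step leans on Springer--Veldkamp's case analysis rather than a single stroke is fair; that is also how the reference handles it.
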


By definition an automorphism is a linear map from an algebra to itself that preserves the multiplication. We verify the (unsurprising) fact that in the special case of Cayley algebras the automorphims also preserve most of the other structure we care about:

\begin{lemma}\label{autonice}
Let $\phi$ be an automorphism of $O$ and $x \in O$. Then  $\tau(\phi(x)) = \tau(x)$, $\phi(x^*) = \phi(x)^*$ and $\phi(x) \in \im(O)$ if and only if $x \in \im(O)$.
\end{lemma}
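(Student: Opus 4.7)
The plan is to first establish that $\phi$ fixes $1$, then use the quadratic identity \eqref{taun} to transfer information about $\tau$ between $x$ and $\phi(x)$, and finally derive the statements about $*$ and $\im(O)$ as straightforward corollaries.

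First I would note that $\phi(1) = 1$: since $\phi$ is bijective and multiplicative, $\phi(1)\phi(a) = \phi(a)$ for every $a \in O$, so $\phi(1)$ acts as a (two-sided) identity on the image of $\phi$, which is all of $O$; uniqueness of the identity forces $\phi(1) = 1$. In particular, by linearity, $\phi(\alpha 1) = \alpha 1$ for every $\alpha \in F$, so $\phi$ restricts to the identity on $F1$.

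Next I apply $\phi$ to the quadratic identity \eqref{taun}, which reads $x^2 = 2\tau(x)x - n(x)1$. On one hand this yields $\phi(x)^2 = 2\tau(x)\phi(x) - n(x)1$, using the previous paragraph to pass scalars through $\phi$. On the other hand, applying \eqref{taun} directly to $\phi(x)$ gives $\phi(x)^2 = 2\tau(\phi(x))\phi(x) - n(\phi(x))1$. Subtracting, we obtain
$$2\bigl(\tau(\phi(x)) - \tau(x)\bigr)\phi(x) = \bigl(n(\phi(x)) - n(x)\bigr)1.$$
If $\phi(x) \notin F1$, then $\phi(x)$ and $1$ are linearly independent and (using $\chr(F) \neq 2$) both coefficients must vanish, giving $\tau(\phi(x)) = \tau(x)$. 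If instead $\phi(x) \in F1$, then bijectivity of $\phi$ together with $\phi(F1) = F1$ forces $x \in F1$ as well, and then $\tau(\phi(x)) = \tau(x)$ holds trivially because $\phi$ is the identity on $F1$.

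With $\tau$-invariance established, the remaining two claims follow formally. Combining $x^* = 2\tau(x)1 - x$ (which is just a rewriting of \eqref{taustar}) with $\phi(1) = 1$ and linearity yields
$$\phi(x^*) = 2\tau(x)1 - \phi(x) = 2\tau(\phi(x))1 - \phi(x) = \phi(x)^*.$$
Finally, since $\im(O) = \ker \tau$, the equivalence $\phi(x) \in \im(O) \iff x \in \im(O)$ is immediate from $\tau(\phi(x)) = \tau(x)$. The only place requiring any care is the case distinction $\phi(x) \in F1$ versus $\phi(x) \notin F1$ in the middle paragraph; everything else is bookkeeping.
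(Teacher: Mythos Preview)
Your proof is correct. It takes a somewhat different route from the paper's argument, so a short comparison is in order.

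The paper proves the third claim first, using the characterization \eqref{im} of $\im(O)$: if $x \in \im(O)$ then $x^2 \in F1$, hence $\phi(x)^2 = \phi(x^2) \in F1$, forcing $\phi(x) \in \im(O)$ or $\phi(x) \in F1$, and the latter is ruled out by bijectivity. From this the paper reads off $\tau$-invariance via the decomposition $x = \tau(x) + \im(x)$, and then $*$-compatibility.

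You instead go straight for $\tau$-invariance by comparing the quadratic identity \eqref{taun} for $x$ (pushed through $\phi$) with the same identity for $\phi(x)$, and then deduce the other two claims from $\im(O) = \ker\tau$ and $x^* = 2\tau(x)1 - x$. This is a clean and slightly more uniform argument: it avoids the somewhat ad hoc case split ``$\phi(x) \in \im(O)$ or $\phi(x) \in F1$'' in favour of the linear-independence dichotomy, and it yields $n(\phi(x)) = n(x)$ as a free byproduct whenever $\phi(x) \notin F1$. The paper's version, on the other hand, makes the geometric picture (that $\phi$ preserves the direct sum $F1 \oplus \im(O)$) more visibly the central point. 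Both are equally short and neither requires any real idea beyond ``automorphisms fix $F1$ and respect the quadratic equation.''
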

\begin{proof}
All statements are obvious if $x = 0$, so from now on we assume that $x \neq 0$. We recall that any automorphism satisfies $\phi(1) = 1$ and hence by linearity maps every element of $F1$ to itself.

We first prove the third statement. Let $x \in \im(O)$. Then $x^2 \in F1$ and hence $\phi(x^2) = x^2$. But $\phi(x^2) = \phi(x)^2$ so that $\phi(x)^2 \in F1$. It follows that either $\phi(x) \in \im(O)$ or $\phi(x) \in F1$. But the latter case we would have $x = \phi^{-1}(\phi(x)) \in F1$ which is absurd. It follows that $\phi(x) \in \im(O)$ as desired. Conversely, suppose that $\phi(x) \in \im(O)$, then $x = \phi^{-1}(\phi(x)) \in \im(O)$ by the statement just proved.

Now let $x$ be general. We can write $x = \tau(x) + \im(x)$ as in Section \ref{quadratic} with $\tau(x) \in F1$ and $\im(x) \in \im(O)$. It then follows that $\phi(x) = \phi(\tau(x)) + \phi(\im(x)) = \tau(x) + \phi(\im(x))$ and since we just established that $\phi(\im(x)) \in \im(O)$ we find that $\tau(\phi(x)) = \tau(x)$ and $\im(\phi(x)) = \phi(\im(x))$. Finally, from $x^* = \tau(x) - \im(x)$ we then obtain that $\phi(x)^* = \phi(x^*)$.
\end{proof}

We explicitely write out the relevant consequences of this theorem for type Z and type U spaces:

\begin{corollary}\label{ZUtrans}
Let $O$ be a Cayley algebra. The automorphism group $\Aut(O)$ of $O$ acts transitively on the set of type Z subspaces of $O$ and on the set of type U subspaces of O.
\end{corollary}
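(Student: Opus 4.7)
Both transitivity claims will follow from Theorem \ref{transitive} once we exhibit, for any two subspaces $U_1, U_2 \subset \im(O)$ of the same type (Z or U), an algebra isomorphism $\phi \colon \langle U_1 \rangle \to \langle U_2 \rangle$ satisfying $\phi(U_1) = U_2$. Granting such a $\phi$, Theorem \ref{transitive} produces an automorphism $\Phi$ of $O$ extending $\phi$, whence $\Phi(U_1) = \phi(U_1) = U_2$. So the plan is to reduce each of the two claims to building such a $\phi$, and to do so by appealing to the structure theorems of the preceding section.

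For the type Z case I would invoke Proposition \ref{3dim}: each $\langle U_i \rangle$ equals $F1 \oplus U_i$ as a vector space, with multiplication completely described by $1 \cdot x = x \cdot 1 = x$ and $u \cdot v = 0$ for $u, v \in U_i$. Consequently any linear isomorphism $\phi_0 \colon U_1 \to U_2$ extends, by setting $\phi(1) = 1$, to a linear isomorphism $\phi \colon \langle U_1 \rangle \to \langle U_2 \rangle$ which will automatically be multiplicative (a one-line check on the four types of products involving $1$ and elements of the $U_i$); by construction $\phi(U_1) = U_2$.

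For the type U case I would use Proposition \ref{typeU}: both $\langle U_1 \rangle$ and $\langle U_2 \rangle$ are isomorphic to the algebra of upper triangular $2 \times 2$ matrices over $F$, and under the explicit isomorphisms $\phi_{u_i, v_i}$ provided there, the subspace $U_i = \im(\langle U_i \rangle)$ is carried precisely onto the traceless upper triangular matrices. Composing one such $\phi_{u_1, v_1}$ with the inverse of a suitable $\phi_{u_2, v_2}$ will yield the desired algebra isomorphism $\phi$ with $\phi(U_1) = U_2$.

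The main obstacle has already been absorbed into Theorem \ref{transitive} (the nontrivial extension result quoted from Springer--Veldkamp); what remains on our side is purely structural and is supplied by the earlier propositions on the algebras generated by type Z and type U subspaces. As a consistency check, even without arranging $\phi(U_1) = U_2$ by hand, this equality would be automatic: the subspace $\im$ of a quadratic algebra is intrinsically determined by the algebra structure via the characterization (\ref{im}), and is therefore preserved by any algebra isomorphism, so any $\phi$ obtained from the abstract isomorphism of the three-dimensional subalgebras would do.
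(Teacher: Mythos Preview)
Your proposal is correct and follows essentially the same approach as the paper: build an algebra isomorphism between the three-dimensional subalgebras $\langle U_1\rangle$ and $\langle U_2\rangle$ using Proposition~\ref{3dim} (type~Z) or Proposition~\ref{typeU} (type~U), then extend via Theorem~\ref{transitive}. The paper's only cosmetic difference is that for type~U it does not bother arranging $\phi(U_1)=U_2$ in advance, instead invoking Lemma~\ref{autonice} on the extended automorphism to conclude $\im(\langle U_1\rangle)$ maps to $\im(\langle U_2\rangle)$---exactly the consistency check you mention at the end.
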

\begin{proof}
Let $Z_1, Z_2$ be type Z subspaces of $O$ and let $\phi \colon Z_1 \to Z_2$ be \emph{any} linear isomorphism between them. Then $\phi$ extends to an algebra morphism between the algebras $F1 \oplus Z_1$ and $F1 \oplus Z_2$ by setting $\phi(1) = 1$ (cf the text just below Propostion \ref{3dim}) and hence to an automorphism of all of $O$ by Theorem \ref{transitive}.

Similarly let $U_1, U_2$ be two U subspaces. By Propostion \ref{typeU} both $F1 \oplus U_1$ and $F_1 \oplus U_2$ are subalgebras of $O$, isomorphic to the algebra of upper triangular 2-by-2-matrices over $F$ and hence in particular isomorphic to eachother. Let $\phi: F1 \oplus U_1 \to F1 \oplus U_2$ be any isomorphism. Then $\phi$ extend to an isomorphism of all of $O$ which, by Lemma \ref{autonice} maps $U_1 = \im(F1 \oplus U_1)$ to $\im(F1 \oplus U2) = U_2$.
\end{proof}

In particular: let $U_2$ be any type U or Z subspace of $\im(O)$ and and $U_1$ a subspace of the same type we know to be contained in $\im(H)$ for some four-dimensional subalgebra $H$ of $O$. Then by Cor. \ref{ZUtrans} there exists an automorphism $\phi$ of $O$ such that $\phi(U_1) = U_2$. It follows that $U_2$ is contained in $\phi(H)$, which is a four-dimensional subalgebra of $\im(O)$ by the automorphism property of $\phi$. Lemma \ref{autonice} then reassures us that in fact it is even contained in $\im(H)$. Combining this with the results of the previous two sections we obtain:

\begin{theorem}\label{main1}
Let $O$ be a Cayley algebra over $F$, $V = \im(O)$ and 
$$\mathcal{B} = \{\im(H) \colon H \textrm{ is a 4-dimensional subalgebra of } O\}.$$ 
Then $(V, \mathcal{B})$ is a $(7, 3, 2)$-$q$-covering design which is a $2$-$(7, 3, 1)$-subspace design if and only if $O$ is a division algebra.
\end{theorem}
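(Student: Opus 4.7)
The plan is to prove the three pieces of the theorem separately, each of which has been set up by a named result earlier in the paper. For the easy implication of the equivalence (if $O$ is a division algebra then $(V, \mathcal{B})$ is a $q$-Fano plane) I would just invoke Theorem \ref{quantumfano}. For the converse, if $O$ is not a division algebra then by Lemma \ref{division} it contains zero divisors and hence is split by the Uniqueness Theorem \ref{splitunique}; in that case Corollary \ref{hopeless} directly asserts that no collection $\mathcal{B}$ of imaginary parts of four-dimensional associative subalgebras of $O$ can be a $q$-Fano plane, so $(V, \mathcal{B})$ certainly fails to be one.

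The substantive part is the covering-design claim. I would case split the two-dimensional subspaces $U \subseteq V$ along the six types of Definition \ref{types}. In the four cases Q, D, M, J, Theorem \ref{classification} gives that $\langle U \rangle$ is itself a four-dimensional associative subalgebra of $O$, and Lemma \ref{H} gives $U \subseteq \im(O) \cap \langle U \rangle = \im(\langle U \rangle) \in \mathcal{B}$. The work is therefore concentrated in the two remaining types U and Z, where $\langle U \rangle$ is only three-dimensional and so not itself a witness.

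To deal with those, the key leverage is Corollary \ref{ZUtrans}: the automorphism group $\Aut(O)$ acts transitively on type U subspaces and on type Z subspaces of $V$. By Lemma \ref{autonice}, every $\phi \in \Aut(O)$ sends four-dimensional associative subalgebras to four-dimensional associative subalgebras and commutes with the formation of imaginary parts, so $\phi(\im H) = \im(\phi(H)) \in \mathcal{B}$ for every $H \in \mathcal{H}$. Therefore it suffices to exhibit a single type U subspace contained in some $\im(H) \in \mathcal{B}$ and a single type Z subspace with the same property; transitivity then carries every other subspace of that type into an element of $\mathcal{B}$ by a suitable automorphism. A type U witness is already provided by Example \ref{typeUex} together with Example \ref{problem}: the space $U = \spam(q_1, p_1 - p_2)$ is contained in the four-dimensional algebra $H_{q_1, w}$ for any $w \in W$. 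A type Z witness is handed to us by Corollary \ref{F1J3subspaces}: with $\spam(u, v)$ of type J, the line $F(uv)$ together with any other line in $\im(\langle u, v \rangle)$ spans a type Z subspace contained in $\im(\langle u, v \rangle)$, and $\langle u, v \rangle$ is four-dimensional by Proposition \ref{typeJ}.

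The hardest point is really just checking that the transitivity shortcut preserves membership in $\mathcal{B}$, which is what Lemma \ref{autonice} was engineered to do; beyond that, the proof is assembly of previously established facts with essentially no new computation required.
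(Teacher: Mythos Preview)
Your proposal is correct and follows essentially the same route as the paper: handle types Q, D, M, J via Theorem~\ref{classification} (the paper phrases this via Corollary~\ref{almost}), handle types U and Z by exhibiting one witness of each and transporting via Corollary~\ref{ZUtrans} together with Lemma~\ref{autonice}, and read off the equivalence from Theorem~\ref{quantumfano} and Corollary~\ref{hopeless}. One small point worth making explicit: the $\mathcal{B}$ in Theorem~\ref{main1} is defined using \emph{all} four-dimensional subalgebras, while Theorem~\ref{quantumfano} and Corollary~\ref{hopeless} are stated for four-dimensional \emph{associative} subalgebras; for the ``if'' direction you should note that in a Cayley division algebra every four-dimensional subalgebra $H$ is associative (pick any two-dimensional $U\subset\im(H)$ and apply Proposition~\ref{main} to get $H=\langle U\rangle$), and for the ``only if'' direction the two algebras $H_{u,w_1}\neq H_{u,w_2}$ from Example~\ref{problem} are themselves four-dimensional subalgebras, so their imaginary parts lie in your $\mathcal{B}$ regardless of whether non-associative four-dimensional subalgebras exist.
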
 

Now the existence of a $q$-covering designs is more interesting the less blocks it contains. After all the set of \emph{all} 3-dimensional subspaces of $V$ constitutes a $(7, 3, 2)$-$q$-covering design as well. Hence we might hope that we can get a more impressive, if somewhat more ugly, result by specifiying in Theorem \ref{main1} that the collection $\mathcal{B}$ should be restricted to the imaginary parts of four-dimensional algebras generated by two-dimensional subspaces of type Q, D, M or J. However such a specification is unnecessary since, as it turns out, \emph{every} four-dimensional subalgebra of a Cayley algebra $O$ is generated by a two-dimensional subspace of one of the four mentioned types. By the results of Section \ref{split} it suffices to show that every four-dimensional subalgebra of $O$ \emph{contains} at least one subpace of one of the four types and that is what we will do now.

\begin{proposition}\label{QMDJ}
Let $O$ be a Cayley algebra. Every four-dimensional subalgebra of $O$ contains (and hence is generated by) at least one subspace of type either Q, M, D or J.
\end{proposition}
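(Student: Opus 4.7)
The plan is to proceed by contradiction: suppose every $2$-dimensional subspace of $\im(H)$ is of type U or Z, so in particular every such subspace contains a nonzero nilpotent element. I will derive a contradiction by case analysis on $d \coloneqq \dim J(H)$. Throughout, $H$ inherits a strong involution from $O$ by Lemma \ref{H} and is a quadratic alternative algebra; moreover $J(H) \subseteq \im(H)$ by Lemma \ref{stronglyimaginary}, so $d \in \{0,1,2,3\}$.

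A preliminary enumeration handles two of these cases. By Theorem \ref{SJ} the semisimple part $S$ of $H$ is itself a quadratic subalgebra of $O$, of dimension $4-d$, which by Theorem \ref{semisimple} decomposes as a direct sum of simple alternative algebras. Inspecting the possibilities, any semisimple algebra of dimension $3$ over $F$ (either $F \oplus F \oplus F$, $F \oplus K$ with $K$ a $2$-dimensional field extension, or a cubic extension) has elements of minimal polynomial of degree $3$ and so fails to be quadratic; this rules out $d = 1$. The same argument applied in dimension $4$ shows that when $d = 0$ the algebra $H$ must itself be simple, hence a quaternion algebra (either a division algebra or $\Mat(2,F)$). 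In the division-algebra case $\im(H)$ contains no nonzero nilpotent and every $2$-dimensional subspace is type Q; in the case $\Mat(2,F)$ the traceless matrices $e_{12}$ and $e_{21}$ span a visible type M subspace. Either outcome contradicts the hypothesis.

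The case $d = 2$ is the main obstacle. Here $S$ is $2$-dimensional and $J(H)$ is a $2$-dimensional subspace of nilpotents. Proposition \ref{typeJ} shows $J(H)$ cannot be type J (else $\langle J(H) \rangle = H$ would have a $3$-dimensional Jacobson radical), so $J(H)$ is of type Z. Fix imaginary $s \in S$ with $s^{2} = \sigma \neq 0$. For every nonzero $u \in J(H)$ the subspace $\spam(s,u)$ has unique nilpotent line $Fu$ and non-nilpotent direction $Fs$, so is of type U or D; if any is of type D, we are done. Otherwise $su \in Fu$ for every $u \in J(H)$, and applying the same condition to $u_1+u_2$ (which is nilpotent since $J(H)$ is type Z) forces $L_{s}$ to act on $J(H)$ as a single scalar $\alpha$ with $\alpha^{2} = \sigma$. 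Then $p_1 \coloneqq (s+\alpha)/(2\alpha)$ and $p_2 \coloneqq (\alpha-s)/(2\alpha)$ are complementary idempotents in $S$ with $p_{1}u = u$ and $p_{2}u = 0$ for every $u \in J(H)$, so $J(H)$ is a $2$-dimensional subspace of $p_1 O \cap \im(O)$. Using Theorem \ref{transitive} to send $(p_1,p_2)$ to the standard idempotents of Lemma \ref{tab}, I reduce to $J(H) \subseteq \spam(q_1, q_2, q_3)$; but a direct $2\times 2$-minor computation on Table \ref{multtab} shows that any two linearly independent elements of $\spam(q_1, q_2, q_3)$ have a nonzero product, contradicting $J(H)^{2} = 0$.

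Finally, for $d = 3$ we have $\im(H) = J(H)$ of dimension $3$. If some product $u_1 u_2$ with $u_i \in \im(H)$ is nonzero, then $u_1$ and $u_2$ are linearly independent (since each squares to zero) and $\spam(u_1,u_2)$ is of type J, against the hypothesis. Otherwise $\im(H)$ is a $3$-dimensional subspace of $\im(O)$ on which multiplication vanishes identically. By Corollary \ref{ZUtrans} I may conjugate some $2$-dimensional type Z subspace of $\im(H)$ to the standard $\spam(q_1, r_2)$; a direct computation of $u_3 q_1$ and $u_3 r_2$ for general $u_3 \in \im(O)$ in the basis of Lemma \ref{tab} then shows the only solutions of $u_3 q_1 = u_3 r_2 = 0$ lie in $\spam(q_1, r_2)$ itself, leaving no room for a third independent direction in $\im(H)$. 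This final contradiction completes the proof.
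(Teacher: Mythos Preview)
Your proof is correct and takes a genuinely different route from the paper's.

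The paper argues by contradiction as you do, but organises the casework differently: it splits into (A) all two-dimensional subspaces of $\im(H)$ are of type~Z, handled by the explicit computation you also perform (its Lemma~\ref{Z3}), and (B) $\im(H)$ contains a non-nilpotent element. In case~(B) it shows directly that $\im(H)$ contains a \emph{unique} type~Z plane $Z$ with every element outside $Z$ non-nilpotent, conjugates a type~U plane to $\spam(p_1-p_2,q_1)$, and then via an eigenspace analysis of $L_{p_1-p_2}$ exhibits a type~D subspace, contradicting the hypothesis.

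Your organisation by $d=\dim J(H)$ is more structural. The cases $d\in\{0,1\}$ you dispose of by the Wedderburn decomposition and the observation that no proper direct sum of simple algebras (nor any cubic or quartic field extension) is quadratic; the paper never isolates these cases, since under its hypothesis they turn out to be subsumed in case~(B). Your $d=3$ case coincides with the paper's case~(A). The substantive divergence is at $d=2$: where the paper produces a type~D plane by eigenvector bookkeeping, you instead force $L_s$ to act as a scalar on $J(H)$, pass to idempotents, and conjugate $J(H)$ into $\spam(q_1,q_2,q_3)$, where the cross-product computation shows no two-dimensional type~Z plane can live --- a different and arguably cleaner contradiction.

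One small comment: your $d=0$ step (``simple four-dimensional quadratic $\Rightarrow$ quaternion'') implicitly invokes the classification of simple alternative algebras (Kleinfeld) to rule out a non-associative simple summand of dimension $\leq 4$. This is standard but not stated in the paper. If you prefer to stay self-contained, you can instead observe that under the contradiction hypothesis any nilpotent $u\in\im(H)$ satisfies $uv\in Fu$ for every $v\in\im(H)$ (from the type~U condition), hence $u$ is strongly imaginary and so lies in $J(H)$ by Lemma~\ref{stronglyimaginary}; since nilpotents exist, $d\geq 1$, and your $d=1$ argument then gives $d\geq 2$, making the $d=0$ case vacuous.
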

\begin{proof}
The proof consists of imagining what a four-dimensional algebra $H$ such that all two-dimensional subspaces of $\im(H)$ are of type U or Z would look like and then showing that such an algebra cannot appear as a subalgebra of $O$. This suffices since by Lemma \ref{complete} the six types Q, M, D, J, U, Z are the only possibilities. Throughout the proof we assume that $H = F1 \oplus \im(H)$ with every two-dimensional subspace of $\im(H)$ being of type either U or Z.

The simplest example of such an algebra $H$ is an algebra in which the product of \emph{any} two elements of $\im(H)$ is zero. This is a perfectly well defined associative four-dimensional quadratic algebra with a strong involution in which every two-dimensional subspace of $\im(H)$ is of type Z. However it cannot appear as a subalgebra of a Cayley division algebra (obviously) and neither as a subalgebra of the split one as we will verify with help of Table \ref{multtab}. For future reference we make this latter claim into a separate lemma.

\begin{lemma}\label{Z3}
The split Cayley algebra $O$ contains no four-dimensional subalgebras $H$ such that the product of any two imaginary elements in $H$ equals zero.
\end{lemma}
\begin{proof} 
Aiming for a contradition, assume that $H \subset O$ is such an algebra. Let $z_1, z_2, z_3$ a basis of $\im(H)$. Since $\spam(z_1, z_2)$ is of type Z, (the proof of) Corollary \ref{ZUtrans} guarantees the existence of an automorphism $\phi$ of $O$ such that $\phi(z_1) = q_1$ and $\phi(z_2) = r_2$. Since $\phi(z_3) \in \im(O)$, we can write $\phi(z_3) = \alpha(p_1 - p_2) + \sum_{i =1}^3 \beta_i q_i + \gamma_i r_i$ for scalars $\alpha, \beta_i, \gamma_i \in F$. Solving $q_1 \phi(z_3) = r_2\phi(z_3) =  0$ we find that $\alpha = \beta_2 = \beta_3 = \gamma_1 = \gamma_3 = 0$ and hence that $\phi(z_3) \in \spam(\phi(z_1), \phi(z_2))$ contradicting the fact that $\phi$, being an automorphism, is in particular a linear isomorphism.
\end{proof}
We continue with the proof of Proposition \ref{QMDJ}. Recall that we assumed $H$ to be a four-dimensional subalgebra of $O$ such that every two-dimensional subspace of $\im(H)$ is of type either U or Z. Since Lemma \ref{Z3} rules out the case that \emph{all} two-dimensional subspaces are of type Z we can move on to algebras $H$ that contain at least one non-nilpotent imaginary element. 

Let $\{h_1, h_2, h_3\}$ be a basis of $\im(H)$ with $h_1$ not nilpotent. Since the spaces $\spam(h_1, h_2)$ and $\spam(h_1, h_3)$ both contain $h_1$ they are not of type Z and hence of type U. Let $l_1, l_2$ be their respective unique nilpotent lines. Then $l_1 \neq l_2$ and hence they span a space $Z$ which, containing at least two nilpotent lines cannot be of type U and hence must be of type Z. We conclude $\im(H)$ must contain at least one type $Z$ space.

On the other hand, such $H$ also contains at most one type Z space. For let $Z_1, Z_2$ be two distinct type Z subspaces of $\im(H)$ and let $M$ be a two-dimensional subspace containing at least one non-nilpotent element. Then since $M$ contains at least two nilpotent lines ($M \cap Z_1$ and $M \cap Z_2$) as well as a non-nilpotent, Lemma \ref{complete} states that it has to be of type M, contradicting the assumption that all subspaces of $\im(H)$ are of types Z or U.

Knowing that $H$ contains exactly one type Z space $Z$ it is easy to deduce that every element of $\im(H)$ not in $Z$ is non-nilpotent: any plane $U \neq Z$ must be of type U meaning that the nilpotent line $U \cap Z$ is the \emph{only} nilpotent line in $U$.

Now again, algebras of this shape are possible, but they cannot occur as subalgebras of $O$. Again we prove this using Table \ref{multtab}. Let $U$ be a type U subspace of $H$ and $z \in Z$ in $H \backslash U$ so that $\im(H) = Fz \oplus U$. By Lemma \ref{ZUtrans} there is an automorphism $\phi$ of $O$ such that $\phi(U) = \spam(p_1 - p_2, q_1)$ where $p_1, p_2, q_1$ are as in Table \ref{multtab}. We compute the possible locations of $\phi(z)$.

Since $p_1 - p_2$ is not nilpotent we have that $\spam(p_1 - p_2, \phi(z))$ is of type U with $Fz$ being its unique nilpotent line. It follows that $\phi(z)$ is an eigenvector for the operator $L_{p_1 - p_2}$ (left multiplication by $p_1 - p_2$) acting on $O$. Since $(p_1 - p_2)^2 = 1$ the only eigenvalues of $L_{p_1 - p_2}$ are $1$ and $-1$ and we compute from Table \ref{multtab} that $L_{p_1 - p_2}$ has four-dimensional $+1$-eigenspace $\spam(p_1, q_1, q_2, q_3)$ and four-dimensional $-1$-eigenspace $\spam(p_2, r_1, r_2, r_3)$. Since $\phi(z)$ must belong to one of these but also must belong to $\im(O)$ we find that $\phi(z) \in \spam(q_1, q_2, q_3) \cup \spam(r_1, r_2, r_3)$. On the other hand, since $q_1$ is nilpotent and hence $q_1 \in \phi(Z)$ we find that $\phi(z) \in \ker L_{q_1} \cap \im(O)= \spam(q_1, r_2, r_3)$ where the latter equality can be read off from Table \ref{multtab}. The intersection of this space with the $+1$-eigenspace of $L_{p_1 + p_2}$ is the one dimensional line $Fq_1$ and since $q_1$ and $\phi(z)$ are linearly independent, $\phi(z)$ can not lie in this space. It follows that $\phi(z)$ is an eigenvector of $L_{p_1 - p_2}$ of eigenvalue $-1$.

Now we finally arrive at our contradiction. Since the elements $q_1$ and $\phi(z)$ are both elements of $\phi(Z)$ we have that their sum is an element of $\phi(Z)$ as well. This vector is the sum of a nonzero eigenvector of $L_{p_1 - p_2}$ at eigenvalue $1$ and a non-zero eigenvector of $L_{p_1 - p_2}$ at eigenvalue $-1$ and hence cannot be an eigenvector of $L_{p_1 - p_2}$ itself. It follows that the product of $p_1 - p_2$ and $q_1 + \phi(z)$ lies outside the subspace $U' \coloneqq \spam(p_1 - p_2, q_1 + \phi(z))$ and hence that $U' \subset \im(\phi(H))$ is of type $D$. But since $\phi$ is an automorphism so is $\phi^{-1}$ and we find that the subspace $\phi^{-1}(U') \subset \im(H)$ is of type $D$ as well, contradicting the presumed nature of $H$.

\end{proof}

Proposition \ref{QMDJ} shows that the set $\mathcal{B}$ is mininimal with respect to the partial ordering on the set of all $(7, 3, 2)$-$q$-covering designs given by inclusion: every $B \in \mathcal{B}$ contains a two-dimensional subspace of type either $Q, D, M$ or $J$ and since subspaces of those types cannot be contained in any other 4-dimensial subalgebra than the one they generate we see that no proper subcollection of $\mathcal{B}$ can be a $(7, 3, 2)$-$q$-covering design as well. We thus obtain the main result of the paper, the following, slightly improved, version of Theorem \ref{main1}.

\begin{theorem}\label{main2}
Let $O$ be a Cayley algebra over $F$, $V = \im(O)$ and 
$$\mathcal{B} = \{\im(H) \colon H \textrm{ is a 4-dimensional subalgebra of } O\}.$$ 
Then $(V, \mathcal{B})$ is a $(7, 3, 2)$-$q$-covering design which is always a `local minimum' in the sense that no proper subcollection of $\mathcal{B}$ covers all 2-dimensional subspaces of $V$ and which is a `global minimum', in the sense of being a $2$-$(7, 3, 1)$-subspace design if and only if $O$ is a division algebra.
\end{theorem}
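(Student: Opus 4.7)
The plan is to note first that Theorem \ref{main2} packages together three assertions, two of which are already in hand. The statement that $(V,\mathcal{B})$ is a $(7,3,2)$-$q$-covering design, and the equivalence asserting that it is a $2$-$(7,3,1)$-subspace design precisely when $O$ is a division algebra, are exactly the content of Theorem \ref{main1} (which in turn rested on Theorem \ref{quantumfano} together with the transitivity result of Corollary \ref{ZUtrans}). So I would simply cite Theorem \ref{main1} and reduce the proof of Theorem \ref{main2} to its one piece of new content, namely the inclusion-minimality of $\mathcal{B}$.

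For inclusion-minimality, the strategy is to show that every $B \in \mathcal{B}$ is indispensable, by producing for each $B$ a two-dimensional subspace $U \subseteq V$ whose only cover in $\mathcal{B}$ is $B$ itself. Writing $B = \im(H)$ for a four-dimensional subalgebra $H \subseteq O$, I would invoke Proposition \ref{QMDJ} to obtain a two-dimensional subspace $U \subseteq \im(H)$ of type Q, D, M, or J. The point is that subspaces of these four types all generate four-dimensional subalgebras (by Propositions \ref{main}, \ref{typeD}, \ref{typeM}, and \ref{typeJ} respectively), so $\langle U \rangle$ is four-dimensional and sits inside $H$; a dimension count then gives $\langle U \rangle = H$.

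The uniqueness step is then short. If $U$ were also contained in $\im(H')$ for some four-dimensional subalgebra $H' \neq H$, then $H'$, being a subalgebra containing $U$, would contain $\langle U \rangle = H$; since $\dim H' = \dim H = 4$, this forces $H' = H$, a contradiction. Therefore $B$ is the unique block containing $U$, and removing $B$ from $\mathcal{B}$ would leave $U$ uncovered. Chaining this over all $B$ gives inclusion-minimality.

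The main obstacle is not really in Theorem \ref{main2} itself but upstream, at Proposition \ref{QMDJ}: one has to rule out the pathological possibility of a four-dimensional subalgebra $H$ whose imaginary part consists entirely of type U and type Z subspaces, which was settled by the explicit argument with Table \ref{multtab} in Section \ref{split}. Once that structural fact is in hand, the deduction of Theorem \ref{main2} is essentially a formal dimension count and requires no further calculation.
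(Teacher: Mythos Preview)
Your proposal is correct and follows essentially the same route as the paper: the paper likewise appeals to Theorem \ref{main1} for the covering property and the division-algebra equivalence, and derives inclusion-minimality from Proposition \ref{QMDJ} via exactly the dimension count you describe (each $B=\im(H)$ contains a type Q, D, M or J subspace $U$, and any four-dimensional subalgebra containing $U$ must equal $\langle U\rangle=H$).
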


%\section{The relation to associativity}\label{associative}
\section{Relation to associativity}\label{associative}
Another consequence of Proposition \ref{QMDJ} and Theorem \ref{classification} which is not directly related to the $q$-covering design is the following.
\begin{corollary}\label{4ass}
Every 4-dimensional subalgebra of a Cayley algebra is associative.
\end{corollary}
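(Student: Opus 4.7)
The plan is to observe that the result is an immediate combination of Proposition \ref{QMDJ}, Lemma \ref{generated}, and the alternativity of Cayley algebras (Theorem \ref{Albert}), with no further case analysis required.

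First I would invoke Proposition \ref{QMDJ}: every four-dimensional subalgebra $H$ of $O$ is of the form $\langle U \rangle$ for some two-dimensional subspace $U \subseteq \im(O)$ of type Q, M, D or J. Choose any basis $\{u,v\}$ of $U$. By Lemma \ref{generated} we then have $H = \langle U \rangle = \langle u, v \rangle$, so $H$ is generated (as an algebra) by just two elements.

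Next I would recall that any Cayley algebra $O$ is alternative by Theorem \ref{Albert}, since $O$ is the Dickson double of a quaternion algebra, which is in turn associative. By the very definition of \emph{alternative}, every subalgebra of $O$ generated by two elements is associative. Applying this to $H = \langle u, v \rangle$ shows that $H$ is associative, which is what we had to prove.

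The one step that required genuine work was already done: it is Proposition \ref{QMDJ}, whose content is that no four-dimensional subalgebra of $O$ can be built solely out of type U and type Z planes (the only types whose associated algebras are three-dimensional). Once that is in hand, the associativity assertion follows formally with no new calculation: there is no obstacle in this final step, since alternativity of $O$ hands us the associativity of any two-generated subalgebra for free. In particular, the corollary does \emph{not} require one to inspect the four types Q, M, D, J individually and check associativity in each (although this could also be done directly, using Proposition \ref{main} for Q, Proposition \ref{typeD} for D, Proposition \ref{typeM} for M, and Proposition \ref{typeJ} for J).
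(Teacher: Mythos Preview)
Your proof is correct and follows essentially the same approach as the paper. The paper derives the corollary from Proposition~\ref{QMDJ} together with Theorem~\ref{classification}; you replace the appeal to Theorem~\ref{classification} by the cleaner observation that once $H = \langle u, v\rangle$ is generated by two elements, the definition of alternativity immediately gives associativity. Both arguments rest entirely on Proposition~\ref{QMDJ}, and your version is a slight streamlining rather than a different route.
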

In particular this means that the blocks in the $q$-cover design of Theorem \ref{main2} are examples of the following structure.
\begin{definition}
Let $A$ be an algebra. A subspace $V$ of $A$ will be called \emph{associative} when $(xy)z = x(yz)$ for all $x, y, z \in V$, regardless of whether the products $xy, xyz$ and $yz$ lie inside or outside $V$. Equivalently, we say that $V$ is associative if the associatior $(. , ., .)$ (Defined in Thm. \ref{associator}) vanishes identically on $V$. 
\end{definition}
By Artin's Theorem \ref{associator}, checking associativity of three-dimensional subspaces of an alternative algebra is not that hard: by linearity and alternativity of the associator we have that:
\begin{corollary}\label{altcheck}
A three-dimensional subspace $D$ of an alternative algebra $A$ is associative if and only if it has a basis $a, b, c$ such that $(a, b, c) = 0$.
\end{corollary}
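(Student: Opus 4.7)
The forward direction is immediate from the definition of associativity: if every associator $(x,y,z)$ with $x,y,z \in D$ vanishes, then in particular $(a,b,c)=0$ for any basis $a,b,c$ of $D$. So the content lies in the converse, and my plan is to extract it purely from the trilinearity and the alternating property of the associator guaranteed by Artin's Theorem \ref{associator}.

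For the converse, I would fix a basis $a,b,c$ of $D$ with $(a,b,c)=0$ and pick arbitrary elements $x,y,z \in D$, writing $x=\alpha_1 a + \alpha_2 b + \alpha_3 c$ and similarly for $y$ and $z$ with coefficients $\beta_i$ and $\gamma_i$. Expanding $(x,y,z)$ by trilinearity gives a sum of $27$ terms of the form $\alpha_i \beta_j \gamma_k (e_i,e_j,e_k)$ where $e_1,e_2,e_3 = a,b,c$. Because $A$ is alternative, the associator is alternating (Theorem \ref{associator}), so any term in which two of the three indices coincide vanishes. Only the $6$ terms corresponding to permutations $(i,j,k)$ of $(1,2,3)$ survive, and each such term is equal to $\pm (a,b,c)$ times a product of coefficients. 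Since $(a,b,c)=0$ by hypothesis, all six of these terms vanish as well, so $(x,y,z)=0$. As $x,y,z\in D$ were arbitrary, $D$ is associative.

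There is no real obstacle here: the whole argument rides on Artin's theorem, which has already been stated, together with the bare trilinearity of the associator. The only thing to be mildly careful about is invoking the full alternating property (rather than merely antisymmetry in two adjacent slots) to kill the $27-6=21$ terms with a repeated index; that is exactly the content of item (3) of Theorem \ref{associator}.
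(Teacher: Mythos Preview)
Your proof is correct and is essentially the same argument the paper has in mind: the paper does not give a separate proof but simply states the corollary follows ``by linearity and alternativity of the associator'', and your expansion of $(x,y,z)$ into $27$ terms, killing all but the six permutation terms via the alternating property, is precisely the unpacking of that remark.
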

The purpose of this section is to establish the following converse to Corollary \ref{4ass}.
\begin{theorem}\label{ass4}
Let $A$ be a three-dimensional associative subspace of a Cayley algebra $O$ over $F$. Then $F1 \oplus A$ is closed under the multiplication on $O$ and hence a four-dimensional associative subalgebra of $O$.
\end{theorem}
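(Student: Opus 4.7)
The plan is to show that $F1\oplus A$ is closed under multiplication. Since the direct-sum notation implies $1\notin A$, the projection $A\to\im(O)$ along $F1$ is injective, and its image $A'\subset\im(O)$ is a $3$-dimensional subspace satisfying $F1+A'=F1+A$; moreover $A'$ inherits associativity from $A$ because $F1$ lies in the nucleus of $O$. So I may assume from now on that $A\subset\im(O)$, and the goal reduces to showing $ab\in F1+A$ for all $a,b\in A$.

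Fix linearly independent $a,b\in A$ and let $U=\spam(a,b)$. By the classification of 2-dimensional subspaces of $\im(O)$ (Definition~\ref{types} and Lemma~\ref{complete}), $U$ falls into one of six types. If $U$ is of type U or Z, Propositions~\ref{3dim} and~\ref{typeU} give $\langle U\rangle=F1+U$, so $ab\in F1+U\subset F1+A$. Otherwise $U$ is of type Q, M, D, or J, and by Theorem~\ref{classification} and Corollary~\ref{4ass} the algebra $H:=\langle U\rangle$ is a $4$-dimensional associative subalgebra of $O$. I claim $A=\im(H)$, whence $F1+A=H$ is itself a subalgebra and $ab\in H=F1+A$. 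Since $U\subset A\cap\im(H)$ and both $A$ and $\im(H)$ are $3$-dimensional, either $A=\im(H)$ (and we are done) or $A\cap\im(H)=U$; in the latter case any $c\in A\setminus U$ lies outside $\im(H)$, while $(a,b,c)=0$ by associativity of $A$. Everything hinges on the following

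\textbf{Key Lemma:} \emph{If $u,v\in O$ generate a $4$-dimensional subalgebra $\langle u,v\rangle=H$, then $\{c\in O:(u,v,c)=0\}=H$.}

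\noindent Granting the Key Lemma, $c\in H\cap\im(O)=\im(H)$ by Lemma~\ref{H}, contradicting $c\notin\im(H)$, and the claim is established.

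For the Key Lemma, the inclusion $H\subset\{c:(u,v,c)=0\}$ is immediate from associativity of $H$. For the reverse inclusion in cases Q and M, $H$ is a quaternion (hence Cayley--Dickson) subalgebra, so the Dickson doubling decomposition $O=H\oplus iH$ is available from Proposition~\ref{doubling} and its split-case analogue. Writing $c=c_H+id$ with $c_H,d\in H$, equations (\ref{pq1}) and (\ref{pq3}) give $(u,v,c)=(u,v,c_H)+(u,v,id)=-2i\bigl(\im(uv)\cdot d\bigr)$, so $(u,v,c)=0$ is equivalent to $\im(uv)\cdot d=0$. In case Q the space $U$ contains no nilpotents, so $n|_U$ is anisotropic and the Gram determinant $n(\im(uv))=B(u,u)B(v,v)-B(u,v)^{2}$ is nonzero; hence $\im(uv)$ is invertible in $H$, forcing $d=0$ and $c\in H$. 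In case M a direct computation in $H\cong\Mat(2,F)$ shows that $\im(uv)$ is a nonzero scalar multiple of a diagonal matrix with distinct nonzero entries, hence invertible, with the same conclusion.

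The main obstacle lies in cases D and J. Here $H$ has a nontrivial Jacobson radical (Propositions~\ref{typeD} and~\ref{typeJ}) and is not semi-simple, so $H$ is not a Cayley--Dickson algebra and the decomposition $O=H\oplus iH$ is no longer at our disposal; moreover $\im(uv)$ turns out to be nilpotent, so the clean reduction to an invertibility question inside $H$ breaks down. My plan is to exploit the uniqueness of the split Cayley algebra (Theorem~\ref{splitunique}) together with Theorem~\ref{transitive}: the action of $\Aut(H)$ on the family of type D subspaces of $\im(H)$ (parametrised by lines in the Jacobson radical, which form a homogeneous space under $D_\alpha(F)^{\times}$ when $\alpha$ is non-square---and the parameter is forced to be non-square in case D by the semi-simple-part analysis of Proposition~\ref{typeD}) lifts to automorphisms of $O$, showing that $\Aut(O)$ acts transitively on type D subspaces with a fixed parameter and on type J subspaces. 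This reduces the Key Lemma in these cases to verification for the standard pairs $(u,v)=(q_1,q_2-\alpha r_2)$ (case D, Example~\ref{typeDex}) and $(u,v)=(q_1,q_2)$ (case J, Example~\ref{typeJex}) expressed in the basis of Lemma~\ref{tab}. A direct computation with Table~\ref{multtab} then shows that the equation $(u,v,c)=0$ cuts out precisely the $4$-dimensional subspace $H$, completing the argument. I expect a uniform conceptual proof of the Key Lemma---perhaps via non-degeneracy properties of the associator as an alternating trilinear form on a Cayley algebra---exists, but I do not see one; the explicit-model approach above seems the most direct route.
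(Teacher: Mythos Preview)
Your Key Lemma strategy is sound and your handling of cases Q and M is actually cleaner than the paper's. The identity $(u,v,id)=-2i\bigl(\im(uv)\cdot d\bigr)$ is correct (via (\ref{pq1}) and associativity of $H$), and since $\im(u'v')=(\alpha\delta-\beta\gamma)\im(uv)$ under a change of basis of $U$, the invertibility of $\im(uv)$ follows from nondegeneracy of $n|_U$ in both cases---anisotropic in case Q, hyperbolic in case M. The paper instead bootstraps $(u,v,c)=0$ to $(x,y,c)=0$ for all $x,y\in H$ via Lemma~\ref{abc}, then derives $(sr-rs)q=0$ and argues $q=0$ separately in the division and split cases. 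Your plan for case J coincides with the paper's: it performs exactly the Table~\ref{multtab} computation you propose (matrix (\ref{assq1q2})).

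Case D has two real errors. First, the assertion that ``the parameter is forced to be non-square'' is false: $v^2=\alpha$ may be any nonzero scalar, and $D_\alpha(F)$ may well be split (Example~\ref{typeDex}, and Lemma~\ref{Dns} over finite fields). Second, type D subspaces of $\im(H)$ are \emph{not} parametrised by lines in $J(H)$: each such subspace meets $J(H)$ in a line, but many subspaces share the same line (over $\mathbb{F}_q$ there are $q^2+q$ type D subspaces versus $q+1$ lines), so your transitivity scheme does not reduce to a single standard pair. The clean repair is to observe---using Lemma~\ref{abc} and trilinearity, since $\im(H)=\spam(u,v,uv)$---that $\{c:(u,v,c)=0\}=\{c:(x,y,c)=0\text{ for all }x,y\in H\}$ depends only on $H$; then Theorem~\ref{transitive} reduces to one $H$ per isomorphism class, and the explicit computation with $(u,v)=(q_1,q_2-\alpha r_2)$ from Example~\ref{typeDex} goes through uniformly in $\alpha\in F^\times$. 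The paper sidesteps the Key Lemma for D entirely: it assumes $A$ contains a type D subspace but none of types Q, M, J, deduces that $A=Fv\oplus Z$ with $Z$ the unique type Z subspace, and shows that $uv\notin Z$ would produce a three-dimensional zero-product subspace, contradicting Lemma~\ref{Z3}.
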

Theorem \ref{ass4} shows that not only are the blocks $B \in \mathcal{B}$ from Theorem \ref{main2} three-dimensional associative subspaces of $O$ but also together they are \emph{all} such spaces. Hence we get the following,`cleaner' reformulation of Theorem \ref{main2}, already mentioned in the introduction.

\begin{theorem}\label{main3}
Let $O$ be a Cayley algebra over $F$, $V = \im(O)$ and let $\mathcal{B}$ the collection of three-dimensional associative subspaces of $V$. Then $(V, \mathcal{B})$ is a $2$-$(7, 3, 1)$ $q$-cover design which is `locally minimal' in the sense that no proper subcollection of $\mathcal{B}$ covers all 2-dimensional subspaces of $V$ and which is  a $2$-$(7, 3, 2)$ subspace design if and only if $O$ is a division algebra.
\end{theorem}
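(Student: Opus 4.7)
My plan is to deduce Theorem \ref{main3} directly from Theorem \ref{main2} by showing that the two candidate collections of blocks coincide. Write $\mathcal{B}'$ for the collection $\{\im(H) \colon H \text{ is a 4-dimensional subalgebra of } O\}$ appearing in Theorem \ref{main2}, and let $\mathcal{B}$ be the collection of three-dimensional associative subspaces of $V$ as in the current statement. Once we establish $\mathcal{B} = \mathcal{B}'$, every assertion of Theorem \ref{main3} (the $q$-covering design property, local minimality, and the characterization of the subspace design case) becomes a literal transcription of Theorem \ref{main2}.

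For $\mathcal{B}' \subseteq \mathcal{B}$: let $H$ be any four-dimensional subalgebra of $O$. Being a subalgebra of $O$, $H$ inherits a quadratic structure with strong involution, so Lemma \ref{codim1} gives $\dim \im(H) = 3$, and Lemma \ref{H} gives $\im(H) = H \cap \im(O) \subseteq V$. By Corollary \ref{4ass} the algebra $H$ is associative, and associativity of $H$ trivially passes to any subspace, in particular to $\im(H)$. Hence $\im(H) \in \mathcal{B}$.

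For the reverse inclusion $\mathcal{B} \subseteq \mathcal{B}'$: let $A$ be a three-dimensional associative subspace of $V$. Since $A$ is in particular a three-dimensional associative subspace of $O$, Theorem \ref{ass4} (established earlier) guarantees that $H \coloneqq F1 \oplus A$ is a four-dimensional associative subalgebra of $O$. Because $A \subseteq \im(O)$ and the unit satisfies $\tau(1) = 1 \neq 0$ so that $F1 \cap \im(O) = \{0\}$, Lemma \ref{H} yields
\[
\im(H) \;=\; H \cap \im(O) \;=\; (F1 \cap \im(O)) \oplus (A \cap \im(O)) \;=\; A.
\]
Thus $A = \im(H) \in \mathcal{B}'$.

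The only non-formal ingredient is Theorem \ref{ass4}, which converts the a priori ``relational'' condition that a three-dimensional subspace of $V$ be associative into the algebraic statement that it generates (together with $F1$) a four-dimensional subalgebra. With that theorem in hand, no further obstacles arise, and Theorem \ref{main3} follows from Theorem \ref{main2} by the equality $\mathcal{B} = \mathcal{B}'$ just proved.
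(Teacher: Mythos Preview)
Your proof is correct and follows the same route as the paper: the paper derives Theorem \ref{main3} as an immediate reformulation of Theorem \ref{main2} once Corollary \ref{4ass} and Theorem \ref{ass4} establish that the blocks $\im(H)$ for four-dimensional subalgebras $H$ are exactly the three-dimensional associative subspaces of $V$. The only cosmetic point is that your identity $H \cap \im(O) = (F1 \cap \im(O)) \oplus (A \cap \im(O))$ relies implicitly on $A \subseteq \im(O)$ (since intersecting a direct sum with a subspace does not in general split this way); it might be cleaner to simply note that $\alpha 1 + a \in \im(O)$ with $a \in A \subset \im(O)$ forces $\alpha = \tau(\alpha 1 + a) = 0$.
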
 

The current section will also provide a proof of the following result which is not needed in the sequel, but is included because it is a beautiful result and settles an issue that bothered me for a long time.

\begin{theorem}\label{ass1234}
A subalgebra of a  Cayley algebra is associative if and only if it has dimension less than or equal to $4$.
\end{theorem}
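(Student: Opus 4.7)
I would split by dimension. For the easy direction (subalgebras of dimension $\leq 4$ are associative): dimension $1$ gives only $F1$; dimension $2$ gives an algebra $F1 + Fa$ generated by a single non-scalar element, which is associative because powers of $a$ associate trivially; dimension $3$ subalgebras $A$ equal $F1 \oplus \im(A)$ by Lemma \ref{H}, with $\im(A)$ two-dimensional and $\langle\im(A)\rangle = A$ three-dimensional, so by Lemma \ref{uvinU} and Proposition \ref{3dim} the subspace $\im(A)$ is of type U or Z, both of which give associative algebras by Proposition \ref{typeU} (isomorphic to upper-triangular $2 \times 2$ matrices) or by the explicit formula $(\alpha+u)(\beta+v) = \alpha\beta + \alpha v + \beta u$ from the text following Proposition \ref{3dim}; and dimension $4$ is Corollary \ref{4ass}.

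For the hard direction I would argue by contradiction: assume $A \subseteq O$ is associative with $\dim A \geq 5$, so $\dim \im(A) \geq 4$. The first step is to show every two-dimensional subspace of $\im(A)$ is of type U or Z. Given linearly independent $v, w \in \im(A)$, pick $u_1, u_2 \in \im(A)$ so that $v, w, u_1, u_2$ are linearly independent. The three-dimensional subspaces $\spam(v, w, u_i)$ are associative subspaces of $O$ because they sit inside $A$, so Theorem \ref{ass4} makes $H_i := F1 \oplus \spam(v, w, u_i)$ into four-dimensional subalgebras of $O$. Both contain $vw$, so $vw \in H_1 \cap H_2 = F1 \oplus \spam(v, w)$; hence $\langle v, w \rangle \subseteq F1 \oplus \spam(v, w)$ has dimension at most $3$, and Lemma \ref{uvinU} identifies $\spam(v, w)$ as being of type U or Z.

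The second step is to produce three linearly independent nilpotents in $\im(A)$ that pairwise annihilate. I would consider the symmetric bilinear form $b(v, w) := vw + wv = 2\tau(vw)$ on $\im(A)$, the polar form of the quadratic form $q(v) := v^2$. In type U the form $q|_U$ has rank $1$ and in type Z it is identically zero; in either case $b|_U$ is degenerate. Since this holds for every two-dimensional $U \subseteq \im(A)$, an orthogonal diagonalization of $b$ (valid in characteristic $\neq 2$) cannot contain two non-zero diagonal entries, so $b$ has rank at most $1$ and $\dim \mathrm{rad}(b) \geq \dim \im(A) - 1 \geq 3$. Every $v \in \mathrm{rad}(b)$ satisfies $2v^2 = b(v, v) = 0$, hence is nilpotent; and any two linearly independent nilpotents in $\im(A)$ span a subspace containing two distinct nilpotent lines, which under the type-U-or-Z constraint is forced to be of type Z (type U has only one nilpotent line), so their product vanishes.

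Choose three linearly independent $v_1, v_2, v_3 \in \mathrm{rad}(b)$. They pairwise annihilate, so $V_0 := \spam(v_1, v_2, v_3)$ satisfies $V_0 \cdot V_0 = 0$, and $F1 \oplus V_0$ is a four-dimensional subalgebra of $O$ in which every product of two imaginary elements is zero. In a Cayley division algebra this contradicts the absence of nonzero nilpotents; in the split case it is precisely what Lemma \ref{Z3} rules out. The hard point is guessing the polar-form reformulation of the classification from Section \ref{split}; once in hand, the rank bound and the invocation of Lemma \ref{Z3} are short.
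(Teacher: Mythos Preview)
Your proof is correct and shares its overall architecture with the paper's: both apply Theorem~\ref{ass4} to two overlapping three-dimensional subspaces of $\im(A)$ to force every two-dimensional subspace of $\im(A)$ to be of type~U or~Z, and both finish by producing three linearly independent, pairwise-annihilating nilpotents in $\im(A)$ to contradict Lemma~\ref{Z3}. The difference lies in how the three nilpotents are found. The paper recycles structural observations from the proof of Proposition~\ref{QMDJ}: once a three-dimensional imaginary subspace contains a non-nilpotent, it has a unique type-Z plane; choosing two such three-dimensional subspaces $A_1,A_2$ whose intersection is of type~U, the corresponding type-Z planes $Z_1,Z_2$ meet in a line, and elements $z_0\in Z_1\cap Z_2$, $z_1\in Z_1\setminus Z_2$, $z_2\in Z_2\setminus Z_1$ do the job. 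Your route via the polar form $b(v,w)=vw+wv$ is more direct and self-contained: the observation that $b$ is degenerate on every type-U or type-Z plane immediately bounds the rank of $b$ on $\im(A)$ by~$1$, giving a radical of dimension at least~$3$ that is automatically nilpotent and, by Step~1, pairwise of type~Z. One minor remark on the easy direction: your treatment of the three-dimensional case is more elaborate than necessary, since once $A=F1\oplus\im(A)$ with $\dim\im(A)=2$, alternativity of $O$ gives associativity of $A=\langle\im(A)\rangle$ directly, without invoking the type-U/Z dichotomy.
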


I would not at all be surprised if Theorem \ref{ass1234} has been found before, and if indeed it is known in the literature we can derive Theorem \ref{ass4} from it with relative ease. This argument will be given at the end of the section in Remark \ref{easyroute}. However, since I never saw Theorem \ref{ass1234} in print we will first take the opposite route: giving a `direct' proof of Theorem \ref{ass4} and then deriving Theorem \ref{ass1234} from it. Our proof of Theorem \ref{ass4} requires the following two results, the second of which has already been announced in Remark \ref{special}.

\begin{lemma}\label{abc}
Let $a, b, c \in \im(O)$ be three elements satisfying $(a, b, c) = 0$. Then $(x, y, z) = 0$ for every triple $x, y, z$ where exactly one of $x, y, z$ is a product of two of the elements $a, b, c$ and the other two are taken from the triple $a, b, c$.
\end{lemma}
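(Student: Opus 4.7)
The plan is to reduce the lemma, via the alternating property of the associator and the fact that $uv + vu \in F1$ for any $u, v \in \im(O)$ (as used in the proof of Lemma \ref{3of4}), to the two essential cases $(ab, c, a) = 0$ and $(ab, c, b) = 0$. Indeed, any triple $(p, x, y)$ with $p$ one of the six products $ab, ba, ac, ca, bc, cb$ and $x, y \in \{a, b, c\}$ is either zero by alternating (when $x = y$), zero because the subalgebra it lies in is generated by only two of $a, b, c$ and hence associative by Artin's theorem, or it reduces — modulo a central-scalar contribution of the form $(ab + ba, x, y)$ which vanishes — to $\pm(ab, c, a)$ or $\pm(ab, c, b)$ under some permutation of the roles of $a, b, c$. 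The hypothesis $(a, b, c) = 0$ is symmetric in its three arguments (being alternating and zero), so these permutations are free.

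For $(ab, c, a) = 0$ I would use the hypothesis rewritten as $(ab)c = a(bc)$, flexibility, and the middle Moufang identity $(xy)(zx) = x(yz)x$, standard in alternative algebras. Concretely
\[
((ab)c)a \;=\; (a(bc))a \;=\; a((bc)a) \;=\; a(bc)a,
\]
the first equality by the hypothesis and the second by flexibility applied to $(a, bc, a) = 0$, while the middle Moufang identity with $x = a$, $y = b$, $z = c$ gives $(ab)(ca) = a(bc)a$. Subtracting, $(ab, c, a) = ((ab)c)a - (ab)(ca) = 0$.

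The case $(ab, c, b) = 0$ is where the middle Moufang identity no longer applies directly; this is the main obstacle and the step where imaginary-ness of $b$ and $c$ really has to be used. I would invoke the Teichm\"uller identity
\[
(wx, y, z) - (w, xy, z) + (w, x, yz) = w(x, y, z) + (w, x, y)z,
\]
which holds in any non-associative algebra by a straightforward expansion of both sides. Specialising to $w = a$, $x = b$, $y = c$, $z = b$, the right-hand side vanishes, since $(b, c, b) = 0$ by flexibility and $(a, b, c) = 0$ by hypothesis, so
\[
(ab, c, b) = (a, bc, b) - (a, b, cb).
\]
Rewriting $(a, bc, b) = -(a, b, bc)$ using alternating yields $(ab, c, b) = -(a, b, bc + cb)$. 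The final observation is that since $b, c \in \im(O)$, the sum $bc + cb$ equals $2\tau(bc)$ and lies in $F1$, which is central, so $(a, b, \lambda 1) = 0$ for every $\lambda \in F$. Hence $(ab, c, b) = 0$, completing the proof.
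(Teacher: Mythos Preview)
Your proof is correct. The Teichm\"uller identity and the middle Moufang identity are both valid in this setting (the former in any algebra, the latter in any alternative algebra), and your applications of them check out line by line.

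The paper's argument is different in two respects. First, it reduces to a \emph{single} essential case, $(a, ab, c) = 0$, rather than your two: once you have $(ab, c, a)$, the swap $a \leftrightarrow b$ together with $ab + ba \in F1$ already turns $(ab, c, b)$ into $\pm(ab, c, a)$, so your Teichm\"uller computation, while correct, is avoidable. Second, for the remaining case the paper avoids the Moufang identity entirely and instead uses only that $a^2 = \alpha \in F1$: writing $(a, ab, c) = (a(ab))c - a((ab)c)$, the left term is $(\alpha b)c = \alpha(bc)$ since $a(ab) = (aa)b$ in the associative subalgebra $\langle a, b\rangle$, while the right term becomes $a(a(bc)) = (aa)(bc) = \alpha(bc)$ after one use of the hypothesis and associativity in $\langle a, bc\rangle$. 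This is more elementary in that it invokes nothing beyond Artin's theorem as stated in the paper, whereas your route imports the Moufang identities (standard, but not developed in the paper). On the other hand, your Moufang argument for $(ab, c, a)$ does not use that $a$ is imaginary, so it is in that sense slightly more general.
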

(The somewhat clumsy formulation seemed more effecient than listing all 243 equations.)
\begin{proof}
Since the associator alternates by Theorem \ref{associator} we may freely move around the entries in the associatior. In particular we may assume that $y$ is the product of two elements from $a, b, c$ and $x$ and $z$ are elements of $\{a, b, c\}$. If $x = z$ we have that the entire equation takes place within the algebra $\langle x, y \rangle$ which is associative by alternativity. Similarly we have that $(x, y, z) = 0$ if $y = xz$ or $y = zx$ or when $y \in \{a^2, b^2, c^2\} \subset F1$, because in all these cases the equation takes place in the associative algbera $\langle x, z \rangle$. In the remaining case exactly one factor of $y$ equals either $x$ or $z$ and by permuting the terms of the associator some more we may assume it is $x$. It follow that we may assume without loss of generality that the associator we are interested in is either $(a, ab, c)$ or $(a, ba, c)$. Finally since $ba = (ab + (-b)(-a)) - ab = (ab + b^*a^*) - ab = (ab + (ab)^*) - ab = \tau(ab) - ab$ we have that $(a, ba, c) = (a, \tau(ab), b) - (a, ab, c) = -(a, ab, c)$ and hence the only thing left to show is that $(a, ab, c) = 0$. Writing out the definition of the associatior we are comparing $l \coloneqq (a(ab))c$ to $r \coloneqq (a((ab)c))$.

Write $a^2 = \alpha 1$ with $\alpha \in F$. Now the `inner' multiplication $a(ab)$ in $l$ takes place inside the associative algebra $\langle a, b \rangle$ and hence yields $\alpha b$. Linearity of the multiplication implies that we don't have to worry about the remaining brackets and we find that $l = \alpha b c$. On the other hand we know by assumption that $(ab)c = a(bc)$ and hence $r$ can be rewritten $r = a(a(bc))$. Now this multiplication takes place inside the associative algebra $\langle a, bc \rangle$ and hence we find that $r = (aa)(bc) = \alpha bc = l$. 
\end{proof}

\begin{theorem}\label{doubling2}
Let $A$ be an alternative Cayley-Dickson algebra of dimension $2^n, n = 1, 2, 3$ and $B$ a Cayley-Dickson subalgebra of dimension $2^{n-1}$ such that the involution of $B$ is just the restriction to $B$ of the involution of $A$. Then there exists a $\gamma \in F^\times$ and $i \in \im(A)$, not contained in $B$ such that $A = B \oplus iB$ as a vector space and $(\ref{pq1}, \ref{pq2}, \ref{pq3})$ hold in $A$ for all $p, q \in B$. Consequently, $A$ is isomorphic to the Dickson double $D_\gamma(B)$. Moreover if $B$ is split we have that the same statements hold for \emph{every} $\gamma \in F^\times$. 
\end{theorem}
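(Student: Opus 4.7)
The plan is to replace the division-algebra steps of Proposition~\ref{doubling} with arguments based on the non-degenerate norm form $n$. The key move is to identify the candidate space for $i$ with the orthogonal complement of $B$ with respect to the bilinear form $b_n(x,y) = \tau(x^*y)$ polarising $n$. Since $n$ is non-degenerate on every Cayley-Dickson algebra (a standard inductive consequence of the doubling construction: along $D_\gamma(B) = B \oplus iB$ the form splits as $n|_B \perp (-\gamma) n|_B$) and $n|_B$ is non-degenerate by the same induction, we obtain an orthogonal decomposition $A = B \oplus B^\perp$ with $\dim B^\perp = 2^{n-1}$ and $B \cap B^\perp = \{0\}$. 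Because $1 \in B$, we have $B^\perp \subseteq (F1)^\perp = \im(A)$, and the condition $i \in B^\perp$ is precisely $\tau(ib) = 0$ for all $b \in B$; so setting $W := B^\perp$, this space plays the role that in Proposition~\ref{doubling} was played by $\bigcap_j \ker(\tau \circ R_{b_j})$, but with the useful extra information that its dimension is exactly $2^{n-1}$ and that it misses $B$.

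Next I would pick $i \in W$ with $i^2 \neq 0$. Because $n|_A$ is non-degenerate and $A = B \perp W$, the restriction $n|_W$ is itself non-degenerate, and in characteristic $\neq 2$ a non-degenerate quadratic form on a nonzero space cannot vanish identically (otherwise its polarisation would too). Hence there is $i \in W$ with $n(i) \neq 0$, i.e.\ $i^2 = -n(i) \cdot 1 = \gamma \cdot 1$ with $\gamma \in F^\times$. This $i$ lies outside $B$ since $W \cap B = \{0\}$. Equation (\ref{xi}) is now automatic: $xi \in \im(A)$ combined with $i^* = -i$ gives $(xi)^* = -ix^* = -xi$, so $xi = ix^*$, and Lemma~\ref{allfromxi2} then provides $(\ref{pq1}, \ref{pq2}, \ref{pq3})$ for all $p,q \in B$.

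To complete the decomposition $A = B \oplus iB$ I would show $iB = W$. Injectivity of $L_i$ on $B$ follows from alternativity and $\gamma \neq 0$: inside the associative $2$-generator subalgebra $\langle i, b\rangle$ one has $i(ib) = i^2 b = \gamma b$, so $ib = 0$ forces $b = 0$. Thus $\dim iB = 2^{n-1} = \dim W$, and it suffices to verify the inclusion $iB \subseteq W = B^\perp$. For $b,c \in B$, a short trace computation gives
\[ b_n(ib, c) = \tau((ib)^* c) = -\tau((ib)c) = -\tau(i(bc)), \]
where the first equality uses $ib \in \im(A)$ and the last uses associativity of $\tau$, which holds on any alternative algebra with strong involution since the associator is imaginary and therefore traceless. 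As $bc \in B$ and $i \in W$, the element $i(bc)$ is imaginary and its trace vanishes. Hence $iB = W$ by dimension, and $A = B \oplus iB \cong D_\gamma(B)$.

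For the final clause, when $B$ is split I would argue that $n|_W$ is universal. Pick a nonzero isotropic $e \in B$ (which exists by Lemma~\ref{splitn0}); then $ie \in iB = W$ is nonzero by injectivity of $L_i$, and $n(ie) = n(i) n(e) = 0$ by multiplicativity of $n$ on the alternative algebra $A$. So $n|_W$ is a non-degenerate quadratic form admitting a nonzero isotropic vector; in characteristic $\neq 2$ such a form splits off a hyperbolic plane and is therefore universal. Given any prescribed $\gamma \in F^\times$, choosing $i' \in W$ with $n(i') = -\gamma$ furnishes the required element, and the arguments above apply verbatim to $i'$ in place of $i$. The main obstacle I foresee is a bookkeeping one: making sure the quadratic-form identification $W = B^\perp$ matches the trace-based description of the candidate space demanded by Lemma~\ref{allfromxi2}, so that lemma may be invoked directly; once that identification is in place, the division-algebra manoeuvres of Proposition~\ref{doubling} translate cleanly into linear-algebra ones.
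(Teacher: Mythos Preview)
Your proof is correct and takes a genuinely different route from the paper's.

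The paper splits into cases: the division case was already handled in Proposition~\ref{doubling}, and for split $A$ it invokes the uniqueness theorem (Theorem~\ref{splitunique}) to realize $A$ abstractly as $D_\gamma(B)$, giving a copy $B'\cong B$ inside $A$; it then transports $B'$ onto the given $B$ by an automorphism --- via Theorem~\ref{transitive} when $\dim A = 8$, via explicit conjugation in $\Mat(2,F)$ when $\dim A = 4$, and trivially when $\dim A = 2$. The ``every $\gamma$'' clause falls out of Corollary~\ref{sqrt1}.

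Your approach is the classical composition-algebra argument: identify the candidate space for $i$ as $B^\perp$ with respect to the polarization of $n$, use non-degeneracy of $n$ on $A$ and on $B$ to get $A = B \perp B^\perp$ with $\dim B^\perp = 2^{n-1}$ and $B^\perp \subset \im(A)$, pick $i$ anisotropic in $B^\perp$, and then run Lemma~\ref{allfromxi2}. The step $iB = B^\perp$ via $\tau((ib)c) = \tau(i(bc))$ is justified because the associator of imaginary elements is imaginary (the computation in Lemma~\ref{tauabc} works verbatim in any alternative algebra with strong involution), and general associators reduce to those by trilinearity. The ``every $\gamma$'' clause then follows from universality of $n|_{B^\perp}$ once you exhibit an isotropic vector $ie$.

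What each buys: your argument is uniform --- no division/split dichotomy, no appeal to Theorem~\ref{transitive} or the $\Mat(2,F)$ model --- at the cost of importing the non-degeneracy of $n$ (not proved in the paper, though standard). The paper's argument stays within the toolkit it has already built, at the cost of a three-way case split and heavier external input (Theorem~\ref{transitive} for $n=3$, Jordan normal form for $n=2$). Your route is essentially the Springer--Veldkamp one and is arguably the cleaner proof.
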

\begin{proof}
We have already proven this in the case that $A$ (and hence $B$) is a division algebra (Prop. \ref{doubling}), so we are left with the case that $A$ is split. We will take $\gamma$ arbitrary when $B$ is split and otherwise take $\gamma = 1$. The 8-dimensional case is the only one we need for the proof of Theorem \ref{ass4}, but we include the other two cases for the sake of completeness.

When $A$ is split, Theorem \ref{splitunique} (cf. also its Corrolary \ref{sqrt1})  implies that $A$ is isomorphic to the `abstract' Dickson double $D_\gamma(B)$ and in particular that there exists a $2^{n-1}$-dimensional subalgebra $B' \subset A$ isomorphic to $B$ and an element $i'$ not in $B'$ satisfying $(i')^2 = \gamma$ such that $A = B' + i'B'$ and such that multiplication is governed by relatations (\ref{pq1}, \ref{pq2}, \ref{pq3}). Now if $\dim A = 2$ we have that $\dim B = 1$ and hence $B' = B = F1$ by uniqueness of the one-dimensional Cayley-Dickson algebra. At the other extreme, if $\dim A = 8$, we have by Theorem \ref{transitive} that there is an automorphism $\phi$ of $A$ mapping $B'$ to $B$ and setting $i \coloneqq \phi(i')$ we have that $A = B + iB$ with multiplication again governed by relatations (\ref{pq1}, \ref{pq2}, \ref{pq3}). 

We are left with the intermediate case that $\dim A = 4$. We know that we can identify $A$ with $\Mat(2, F)$ (Prop. \ref{Mat2F}). We'll immitate the proof of the $(\dim A = 8)$-case, replacing Theorem \ref{transitive} with more familiar results from linear algebra. Let $\beta$ be such that $B \isom D_\beta(F)$ and let $j \in B$ be the imaginary squareroot of $\beta$ used in the construction of $B$ from $F$. Let $i'$, $B'$ be as before and let $j' \in B$ be the imaginary squareroot of $\beta$ used in the doubling step to obtain $B'$ from $F1$. Since $B'$ and $B$ are two-dimensional and $j^2 = (j')^2 = \beta1$ we have that any linear map $\phi \colon A \to A$ sending $1$ to $1$ and $j'$ to $j$ is an algebra isomorphism from $B'$ to $B$. What we need to finish the proof is a map $\phi$ that does this while at the same time being an algebra automorphism of $A$.

From Proposition \ref{Mat2F} we see that the characteristic polynomial of $j$ equals $\det(j - \lambda I) = n(j - \lambda 1)$ which by (\ref{n}) and (\ref{im2}) can be written $(j - \lambda1)(-j - \lambda1) = \lambda^2 - \beta$. For the same reason the characteristic polynomial of $j'$ equals $\lambda^2 - \beta$ as well. Now the roots of this polynomial (in a sufficently large field extension $F'$ of $F$ containing them) are distinct. Hence we don't have to worry about Jordan blocks and can conclude that there exists a matrix $g \in \Mat(2, F')$ such that $g^{-1}j'g = j$. By the standard argument we can choose $g$ in such a way that all its coefficients are from $F$ and hence $g \in \Mat(2, F)$. Now the map $x \mapsto g^{-1}xg$ is clearly an algebra automorphism of $\Mat(2, F)$ and hence we can take this map as our desired automorphism $\phi$. 
\end{proof}

\begin{proof}[{P}roof of Theorem \ref{ass4}]
Let $A \subset \im(O)$ be an associative subspace. We fall back on the case distinction of Theorem \ref{classification}. If $ab \in F1 \oplus A$ for every $a, b \in A$ we have that $F1 \oplus A$ is a subalgebra and there is nothing left to prove. Otherwise we can pick $a, b$ such that $ab \nin F1 \oplus A$ which in particular means that $ab \nin \spam(a, b)$. It follows that $\spam(a, b)$ is not of type U or Z and hence, by Lemma \ref{complete}, of type either Q, D, M or J in the terminology of Section \ref{split}. (In the case that $O$ is a division algebra, we can skip the previous two sentences and just recall that every two-dimensional subspace of $\im(O)$ is of type Q.) The cases where $A$ contains a two-dimensional subspace $T$ of type either Q or M will be treated first. Then we will consider the case where $A$ contains a space of type $J$ and finally the case where $A$ contains neither, but does contain a type D space.

In the case where $A$ contains a subspace $T$ of type either Q or M the subalgebra $H$ generated by $T$ is a four-dimensional Cayley-Dickson algebra. (See Thm \ref{classification}.) 

From Theorem \ref{doubling2} we have the decomposition 
\begin{equation}\label{HiH}
O = H + iH.
\end{equation} 
If $T$ is of type Q, we see from Proposition \ref{main} that we can pick a basis $a, b$ of $T$ such that $\{a, b, ab\}$ is a basis of $\im(H)$ and if $T$ is of type M we obtain the same conclusion from Proposition \ref{typeM}.

Let $c \in A$. By (\ref{HiH}) we can write $c = p + iq$ with $p, q \in H$. Since $A$ is an associative subspace we have that $(a, b, c) = 0$. It then follows from Lemma \ref{abc} that $(x, y, c) = 0$ for every $x, y \in \{a, b, ab\}$ and from there that $(x, y, c) = 0$ for all $x, y \in H$. (The case that both $x$ and $y$ equal $ab$ is not covered by Lemma \ref{abc} but follows directly from alternativity of $O$.) For reasons that will become clear soon we note in particular that $(r^*, s^*, c) = 0$ for all $r, s \in H$. At the same time, because $H$ is an associative algebra, we have that $(r^*, s^*, p) = 0$ for all $r, s \in H$. It follows that $(r^*, s^*, iq) = (r^*, s^*, c) - (r^*, s^*, p) = 0$ for all $r, s \in H$. Writing out the definition of the associator $(r^*, s^*, iq)$ and applying (\ref{pq1}) three times we obtain that $((sr - rs)q)i = 0$, and hence 
\begin{equation}\label{qnul}
(sr - rs)q = 0 \textnormal{ for all } r, s \in H.
\end{equation} 
This is an equation that takes place entirely inside the four-dimensional Cayley-Dickson algebra $H$. When $H$ is a division algebra, the last equation immediately implies $q = 0$. Before discussing the consequences of that fact for the proof of Propostion \ref{ass4}, we will first show that $(\ref{qnul})$ \emph{also} implies that $q = 0$ in the case that $H$ is split, hence isomorphic to the algebra of two-by-two matrices over $F$ (Proposition \ref{Mat2F}). 

In that scenario, equation (\ref{qnul}) states that $R_q(x) = 0$ for every $x \in H \isom \Mat(2, F)$ expressible in the form $x = sr - rs$. Viewing $q$ as a non-zero two-by-two matrix, it must have determinant 0 owing to the fact that there exist two-by-to matrices with non-zero determinant expressible in the form $sr - sr$. An example is the matrix $h$ given below. 

When we think of the matrix $q$ as acting on row vectors by right multiplication we know from $\det(q) = 0$ that either $q = 0$ or $\dim \ker(q) = 1$. In the latter case it follows that $\dim((\ker R_q) \cap H) = 2$: the only matrices mapped to zero by $R_q$ are those both whose rows are vectors in the one-dimensional space $\ker(q)$. On the other hand, the subpace of $\Mat(2, F)$ of matrices expressible in the form $(sr - rs)$ contains a basis of the set of all traceless 2-by-2-matrices: the famous $\mathfrak{sl}_2(F)$-triple $h = \begin{pmatrix}
1 & 0 \\ 0 & -1
\end{pmatrix}$, $x = \begin{pmatrix}
0 & 1 \\ 0 & 0
\end{pmatrix}$, $y = \begin{pmatrix}
0 & 0 \\ 1 & 0
\end{pmatrix}$, satisfying $h = xy - yx , x = \frac{1}{2}hx - \frac{1}{2}xh, y = \frac{1}{2}yh - \frac{1}{2}hy$. This shows that $\ker(R_q) \cap H$ is at least three-dimensional, which rules out the case that $q \neq 0$. We conclude that $q = 0$ both in the case that $H$ is split and in the case that $H$ is a division algebra.

Now recalling the definition of $q$ we see that $q = 0$ implies that $c \in H$. But since $c$ was a generic element of $A$ this means that $A \subset H$ and hence, since $H$ is an algebra, that $\langle A \rangle \subset H$. By dimension considerations it then follows that $\langle A \rangle = F1 \oplus A$ as we wanted to show.

We move on to the case that $A$ contains a subspace $T$ of type $J$. This case can be handled in a more hands-on way, owing to the fact that the isomorphism class of the algebra $H \coloneqq \langle T \rangle$ is completely determined by Proposition \ref{typeJ}. In particular we can choose elements $u, v \in T, w = uv \in H$ that behave as in that proposition. With notation as in Table \ref{multtab}, let $\phi \colon H \to \spam(1, q_1, q_2, r_3) \subset H$ be the linear map sending $1 \mapsto 1, u \mapsto q_1, v \mapsto q_2, w \mapsto r_3$. Then by Theorem \ref{transitive}, $\phi$ extends to an automorphism of all of $O$. Let $a \in A$. By definition of associative space we have that $(u, v, a) = 0$ and hence we find that $0 = (\phi(u), \phi(v), \phi(a)) = (q_1, q_2, \phi(a)) = (L_{q_1q_2} - L_{q_1}L_{q_2})(\phi(a))$. In other words: $\phi(a)$ lies in the kernel of the linear map $(L_{r_3} - L_{q_1}L_{q_2})$. We can write down this map rather explicitely: the matrix representing this map with respect to the basis of Table \ref{multtab} equals:
\begin{equation}\label{assq1q2}
\begin{pmatrix}
0 &  0 & 0 & 0 &  1 &  0 &  0 & 0 \\
0 &  0 & 0 & 0 & -1 &  0 &  0 & 0 \\
0 &  0 & 0 & 0 &  0 &  0 &  0 & 0 \\
0 &  0 & 0 & 0 &  0 &  0 &  0 & 0 \\
0 &  0 & 0 & 0 &  0 &  0 &  0 & 0 \\
0 &  0 & 0 & 0 &  0 &  0 & -1 & 0 \\
0 &  0 & 0 & 0 &  0 &  1 &  0 & 0 \\
1 & -1 & 0 & 0 &  0 &  0 &  0 & 0 
\end{pmatrix}.
\end{equation}
It is not hard to see that this matrix has four-dimensional kernel, which then, by the fact that $\spam(1, q_1, q_2, r_3)$ is an associative subalgebra of $O$ must mean that $\ker(L_{q_1q_2} - L_{q_1}L_{q_2}) = \spam(1, q_1, q_2, r_3)$. Of course, this latter fact can also be seen directly by staring at the above matrix. Either way, we conclude that $\phi(a) \in \spam(1, q_1, q_2, r_3)$ and hence $a \in \phi^{-1}(\spam(1, q_1, q_2, r_3)) = H$. But since $a$ was a generic element of $A$ we have that $A \subset H$ and since $H$ is a subalgebra we find that $\langle A \rangle \subset H$. It follows that $\langle A \rangle$ is 4-dimensional and hence equal to $F1 \oplus A$ as we wanted to show.

The only case left from the case distinction at the beginning of the proof is the case where $A$ does not have any two-dimensional subspaces of types Q, M or J but does have a subspace of type D. Absense of type Q subspaces means that the three-dimensional space $A$ must contain at least 2 nilpotent lines. (This is an understatement of course). Lemma \ref{complete} states that every two-dimensional subspace containg more than two nilpotent lines consist entirely of nilpotent elements. Absense of type M spaces then implies that $A$ must contain at least one two-dimensional subspace $Z$ consisting entirely of nilpotent elements, and furthermore that the elements of $A \backslash Z$ are either all nilpotent or all non-nilpotent. Existence of a type D subpace $T$ then implies that we are in the latter of the last to scenarios. Absence of type J subspaces in A implies moreover that $Z$ is of type Z.

Again we let $H = \langle T \rangle$ be the four-dimensional algebra generated by $T$, and this time we know from Proposition \ref{typeD} that it has basis $\{1, u, v, uv\}$ where $\{u, v\}$ is a basis of $T$, with $u$ nilpotent so that $Fu = T \cap Z$, while $v^2 \neq 0$ and $(uv)^2 = 0$. 

We note that the vector space $A$ decomposes as $A = Fv \oplus Z$. Hence, if $uv \in Z$ we have that $\{u, v, uv\}$ is a basis of $A$ so that $A \subset H$ and hence $\langle A \rangle = H$ as desired.  We will rule out the alternative: $uv \not\in Z$. 

Under the latter assumption, we look at the space $Z_3 = Z + Fuv$ which now is three-dimensional. Let $z \in Z \backslash T$ so that $\{z, u\}$ is a basis of $Z$ and $\{z, u, uv\}$ is a basis of $Z_3$. We know that $zu = 0$ since $Z$ is of type Z. From associativity between elements of $A$ we have that $z(uv) = (zu)v = 0v = 0$. Since $zu = 0$, $z(uv) = 0$, and $u(uv) = 0$, we have that $Z_3$ is a three-dimensional space all whose elements multiply to zero. We already saw in the proof of Proposition \ref{QMDJ} (concretely: in Lemma \ref{Z3}) that such spaces do not appear as subspaces of $O$.
\end{proof}

\begin{proof}[Proof of Thm \ref{ass1234}]
We have already done all the work on the `if' direction of the theorem: when a subalgebra $A$ is of dimension $\leq 3$ it is generated by the at most two-dimensional subspace $\im(A)$ and hence associative by alternativity. The case that $\dim A = 4$ is covered by Corollary \ref{4ass}. What remains to be done is proving `only if' direction of the theorem, i.e. the claim that any associative subalgebra of a Cayley algebra has dimension at most 4.

Aiming for a contradiction, let $\mathcal{A}$ be an associative subalgebra of a Cayley algebra $O$ such that $\dim \mathcal{A} \geq 5$. Let $A_1, A_2$ be two distinct three-dimensional subspaces of $\im(\mathcal{A})$ with two-dimensional intersection. $A_1, A_2$ are associative subspaces since they are contained in the associative algebra $\mathcal{A}$ and hence, by Theorem \ref{ass4} the spaces $F1 \oplus A_1$ and $F1 \oplus A_2$ are closed under the multiplication in $O$. It follows that the same is true for their intersection $F1 \oplus (A_1 \cap A_2)$ and hence $A_1 \cap A_2$ is of type either U or Z by Theorem \ref{classification} and Lemma \ref{complete}. When $O$ is a division algebra we know that this is impossible and the result follows. In case $O$ is split a litle bit more work is needed.

Since every two-dimensional subspace of the space $\im(\mathcal{A})$ can be realized as the intersection of two three-dimensional subspaces of that space, we find that every two-dimensional subspace of the at-least-four-dimensional space $\im(\mathcal{A})$ is of type either U or Z. We discussed \emph{three}-dimensional spaces with this property before: by Lemma \ref{Z3} they must contain exactly one type Z subspace $Z$ and for each type U subspace $U$ of such a space the unique nilpotent line in $U$ equals $U \cap Z$. 

Now let $A_1, A_2$ be two three-dimensional subspaces of $\im(\mathcal{A})$ whose intersection contains a non-nilpotent element and hence is of type U. We denote by $Z_1, Z_2$ the unique type Z subspaces of $A_1$, $A_2$ respectively. Since $A_1 \cap A_2$ is of type U we have $Z_1 \cap Z_2$ is one-dimensional: it is the unique nilpotent line in $A_1 \cap A_2$. Let $z_0 \in Z_1 \cap Z_2$ and $z_1 \in Z_1, z_2 \in Z_2$ so that $z_1 \nin Z_2$ and $z_2 \nin Z_1$. Now the space $\spam(z_1, z_2) \subset \im(\mathcal{A})$ contains at least two nilpotent lines and hence must be of type $Z$. It follows that $z_1z_2 = 0$. But since we already knew that $z_0z_1 = 0$ (as this multiplication takes place within $Z_1$) and that $z_0z_2 = 0$ (as this multiplication takes place in $Z_2$), we find that $\spam(1, z_0, z_1, z_2)$ is a subalgebra of $O$ of the type forbidden by Lemma \ref{Z3}.
\end{proof}

\begin{remark}
We saw in Section \ref{split} that a split Cayley algebra $O$ contains exactly two isomorphism classes of three-dimensional subalgebras while a Cayley division algebra contains none: any such subalgebra $A$ is of the form $F1 \oplus \im(A)$ with the two-dimensional subspace $\im(A)$ being of either type U or type Z, which then fully determines the algebra structure. Now if $\im(A)$ is of type U we know (cf the beginning of Section \ref{qcover}) that $A$ is contained in a subalgebra $B$ of $O$ isomorphic to the split Cayley-Dickson algebra of dimension 4. Applying Theorem \ref{doubling2} to $B$ we then conclude that $O$ contains a six-dimensional subalgebra isomorphic to the Dickson double $D_\gamma(A)$ for every $\gamma \in F^\times$. A natural question is whether the same is true when $\im(A)$ is of type Z.

Theorem \ref{ass1234} tells us that the answer is no: since in that case $A$ is commutative, the six-dimensional algebra $D_\gamma(A)$ is associative by Albert's theorem \ref{Albert} and hence not contained in $O$.
\end{remark} 
\begin{remark}\label{easyroute}
As remarked at the beginning of the current section, Theorem \ref{ass4} can be derived from Theorem \ref{ass1234}. In fact it follows directly from the combination of Theorem \ref{ass1234} and the following result which can easily be proven by induction using Lemma \ref{abc}: 
\end{remark}
\begin{corollary}[to Lemma \ref{abc}]
Every subalgebra of an alternative algebra generated by an associative subspace is associative.
\end{corollary}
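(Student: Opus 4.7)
The plan is to show that $(x,y,z) = 0$ for every triple of bracketed products of elements of $S$, by induction on the complexity of these products. Since $\langle S\rangle$ is spanned by $1$ together with such bracketed products and the associator is trilinear with $1$ in the nucleus, this suffices. Assign each bracketed product $p$ a multiplication count $m(p)$ via $m(s)=0$ for $s \in S$ and $m(pq) = 1+m(p)+m(q)$; the primary induction is on the total count $m(x)+m(y)+m(z)$.

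The base case $m = 0$ is precisely the hypothesis that $S$ is associative. For the inductive step, after using alternation of the associator to move the entry of greatest $m$-value to the first slot, I write $x = pq$. The key input from Lemma \ref{abc} is its full polarization: varying $a, b, c$ over $S$ via trilinearity of the associator turns the conclusion of Lemma \ref{abc} into the first-stratum statement $(pq, u, v) = 0$ for all $p, q, u, v \in S$. For deeper layers I would invoke the Teichm\"uller identity
\[(pq, y, z) = (p, qy, z) - (p, q, yz) + p(q, y, z) + (p, q, y)z,\]
which is valid in every alternative algebra. The two terms $p(q,y,z)$ and $(p,q,y)z$ contain associators of strictly smaller total multiplication count and hence vanish by the inductive hypothesis; the remaining terms $(p, qy, z)$ and $(p, q, yz)$ keep the same total count but strictly decrease a secondary lexicographic measure on how unevenly the multiplication count is distributed among the three slots. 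A double induction on the pair (total count, this secondary measure) then closes the argument.

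The main obstacle is choosing the secondary measure so that the Teichm\"uller reduction genuinely decreases it without cycling. This is routine but delicate bookkeeping among the standard identities of alternative algebras; alternation of the associator eases the task considerably by permitting the entry on which one reduces to be freely relocated to whichever slot makes the reduction cleanest.
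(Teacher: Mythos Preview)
Your overall plan---induction on total degree with the Teichm\"uller identity as the engine---is the right one and matches the paper's (terse) hint of ``easily by induction using Lemma~\ref{abc}.'' But the step you call the ``first stratum'' has a real gap: polarizing Lemma~\ref{abc} does \emph{not} yield $(pq,u,v)=0$ for arbitrary $p,q,u,v\in S$. Linearizing the identities $(ab,a,c)=0$, $(ab,c,b)=0$, etc.\ over all triples in $S$ shows only that the four-linear map $F(p,q,u,v):=(pq,u,v)$ is \emph{alternating} in its four arguments; once $\dim S\ge 4$ this is strictly weaker than $F=0$. To kill $F$ you must already invoke Teichm\"uller at this layer: combining it with the alternating property gives $F=-F-F$, hence $3F=0$. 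So Teichm\"uller is not reserved for ``deeper layers''---it is essential from the start.

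Once this is fixed, the secondary induction you worry about disappears. At every layer the same two moves suffice: write the entry of largest degree as $pq$; use the Bruck--Kleinfeld function $f(a,b,c,d)=(ab,c,d)-a(b,c,d)-(a,c,d)b$, which is alternating in any alternative algebra, together with the inductive hypothesis to conclude that $(pq,v,w)$ is alternating in $(p,q,v,w)$; then Teichm\"uller gives $3(pq,v,w)=0$ exactly as above. No secondary lexicographic measure is needed, and the ``delicate bookkeeping'' you anticipate never materializes. Two caveats worth noting: the argument yields $3(\cdot,\cdot,\cdot)=0$, so $\chr F\neq 3$ is tacitly required; and Lemma~\ref{abc} as proved in the paper uses $a^2\in F1$, so for the corollary as stated (general alternative algebras) one really needs the Bruck--Kleinfeld identity rather than Lemma~\ref{abc} itself.
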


%\section{Geometric reformulation of the main results}\label{geometric}
\section{Geometric reformulation of the main results}\label{geometric}
We are now in the position to derive the `octonion free' formulation of Theorems \ref{main2} and \ref{main3} given (for the special cases that $F \subset \mathbb{R}$ or $F$ is finite) as Theorem \ref{mainintro} in the introduction. We recall
\begin{definition}
Let $n, k \in \mathbb{N}$ and let $V$ be an $n$-dimensional vector space over $F$. The \emph{Grasmannian} $\Gr_{k}(V)$ is defined as the set of all $k$-dimensional subspaces of $V$. Let $W \coloneqq \bigwedge^k V$ and let $\mathbb{P}(W)$ its projective space, i.e. the set of lines in $W$. The \emph{Pl\"{u}cker embedding} $\psi \colon \Gr_{k}(V) \to \mathbb{P}(W)$ is the map that takes a $k$-subspace $A$ and maps it to the line $F (a_1 \wedge \ldots \wedge a_k)$ where $\{a_1, \ldots, a_k\}$ is any basis of $A$. It is well-defined (if $\{b_1, \ldots, b_k\}$ as any other basis of $A$ then $b_1 \wedge \ldots \wedge b_k = \Delta (a_1 \wedge \ldots \wedge a_k)$ where $\Delta$ is the determinant of the unique linear transformation mapping the one basis to the other) and moreover injective, giving $\Gr_{k}(V)$ the structure of projective algebraic variety.
\end{definition}
The proof of injectivity can be found in many places, including Wikipedia.

Our concern will be with the Grasmanian $\Gr_{3}(V)$ of 3-dimensional subspaces of the seven-dimensional space $V = \im(O)$ for $O$ a Cayley algebra. The set of blocks $\mathcal{B}$ from Theorems \ref{main2} and \ref{main3} turns out be a projective subvariety of $\Gr_{3}(V)$ that can be described very explicitly in the above realization of $\Gr_{3}(V)$ as a subvariety of $\mathbb{P}(W)$ where $W = \bigwedge^3(V)$. The crucial observation is that $V = \im(O)$ already has an interesting and natural relationship with the space $W = \bigwedge^3 V$ thanks to alternativity of $O$: since the associator $(. , . , . )$ is tri-linear and alternating (see Theorem \ref{associator}) the universal property of the $\bigwedge^3$-functor implies that it factors over a linear map $\Ass \colon W \to O$, which, by Lemma \ref{tauabc} below takes values in $\im(O)$ and hence can be viewed as a linear map $\Ass \colon W \to V$. In more down-to-earth terms:

\begin{lemma}\label{tauabc}
Let $O$ be a Cayley algebra, $V = \im(O)$ and $W = \bigwedge^3 V$. Then the map $\Ass \colon W \to V$ given by $a \wedge b \wedge c \mapsto (ab)c - a(bc)$ is well-defined.
\end{lemma}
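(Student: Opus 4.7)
The proof has two separate things to establish. First, that the rule $(a, b, c) \mapsto (ab)c - a(bc)$ descends from a map $V \times V \times V \to O$ to a linear map from $W = \bigwedge^3 V$ to $O$; second, that the image of this map actually lies in the subspace $V = \im(O)$, rather than just $O$.

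The first part is essentially automatic from Artin's theorem. By bilinearity of multiplication in $O$ the associator is trilinear in its three arguments, and by Theorem \ref{associator}(3), since Cayley algebras are alternative, the associator alternates (changes sign under transpositions). The universal property of the third exterior power then gives a unique linear factorization through $W$, producing the map $\Ass \colon W \to O$. The only thing to check is that the hypotheses of Theorem \ref{associator} apply with $a, b, c$ restricted to $V$ rather than all of $O$, but trilinearity and alternation over $V \subset O$ are immediate consequences of the same properties on $O$.

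The main content is the second part: showing $(ab)c - a(bc) \in \im(O)$ whenever $a, b, c \in \im(O)$. The cleanest approach is via characterization (\ref{im2}): it suffices to verify that $(a,b,c)^* = -(a,b,c)$. Applying $(xy)^* = y^*x^*$ (equation (\ref{star})) twice and using $a^* = -a$, $b^* = -b$, $c^* = -c$ (from (\ref{im2})), I would compute
\begin{align*}
((ab)c)^* &= c^*(ab)^* = c^*(b^*a^*) = (-c)(ba), \\
(a(bc))^* &= (bc)^*a^* = (c^*b^*)a^* = (cb)(-a),
\end{align*}
so that $((ab)c - a(bc))^* = -c(ba) + (cb)a = (c,b,a)$. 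By the alternation of the associator (Theorem \ref{associator}(3) again), $(c,b,a) = -(a,b,c)$, giving the desired $(a,b,c)^* = -(a,b,c)$.

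I do not anticipate any real obstacle here: both ingredients (Artin's alternation theorem and the formula $(xy)^* = y^*x^*$) have already been established in the preliminaries, and no manipulation requires associativity. The only mild care needed is to keep the bracketings intact throughout the computation, since $O$ is not associative; but since every rearrangement in the calculation above only moves a scalar sign or applies the anti-automorphism rule on a single product, no implicit re-bracketing is required.
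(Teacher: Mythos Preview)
Your proposal is correct and follows essentially the same argument as the paper: both use Artin's theorem to get the alternating property (hence the factorization through $\bigwedge^3 V$), and both verify $(a,b,c)\in\im(O)$ by computing $(a,b,c)^* = (c,b,a) = -(a,b,c)$ via the anti-automorphism rule $(xy)^* = y^*x^*$ and $x^* = -x$ for imaginary $x$. Your write-up is slightly more explicit about the universal-property step, but the substance is identical.
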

\begin{proof}
In light of Artin's Theorem \ref{associator}, it suffices to show that $(a, b, c) \in \im(O)$ for all $a, b, c \in \im(O)$.

Using the properties of the $*$ operator (equation (\ref{star}) and linearity) we obtain

\begin{equation}\label{abcstar}
(a, b, c)^* = ((ab)c)^* - (a(bc))^* = c^*(ab)^* - (bc)^*a^* = c^*(b^*a^*) - (c^*b^*)a^*
\end{equation}
Now since $a, b, c \in \im(O)$ we have $a^* = -a, b^* = -b, c^* = -c$ by (\ref{im2}) so that (\ref{abcstar}) reduces to
\begin{equation}\label{abccba}
(a, b, c)^* = -c(ba) + (cb)a = (c, b, a)
\end{equation}
But $(c, b, a) = -(a, b, c)$ by Artin's Theorem \ref{associator} so that (\ref{abccba}) reads 
\begin{equation}
(a, b, c)^* = -(a, b, c).
\end{equation}
This means, by (\ref{im2}), that $(a, b, c) \in \im(O)$ as we wanted to show.
\end{proof}

The following observation is trivial, but no less useful:

\begin{lemma}
Let $w \in W$ then $\Ass(w) = 0$ if and only if $\Ass(w') = 0$ for every $w'$ on the line $Fw \subset W$.
\end{lemma}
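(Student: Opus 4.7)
The plan is to observe that the entire content of this lemma is encapsulated in the linearity of the map $\Ass \colon W \to V$, which was established just above via the universal property of $\bigwedge^3$. So the proof is essentially a one-liner.

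First I would unwind the statement: every element $w'$ on the line $Fw$ has the form $w' = \lambda w$ for some $\lambda \in F$. For the forward direction, assume $\Ass(w) = 0$; then for any such $\lambda$, linearity gives $\Ass(\lambda w) = \lambda \Ass(w) = \lambda \cdot 0 = 0$, so $\Ass$ vanishes on the entire line $Fw$. For the converse direction, assume $\Ass$ vanishes on all of $Fw$; since $w \in Fw$ (take $\lambda = 1$), we get $\Ass(w) = 0$ immediately.

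There is no real obstacle here: the map $\Ass$ is linear by construction (it factors through $\bigwedge^3 V$ precisely because $(a,b,c)$ is trilinear and alternating by Artin's theorem \ref{associator}), and linear maps obviously either vanish on a one-dimensional subspace entirely or not at all. The purpose of promoting the observation to a lemma is presumably to allow subsequent arguments to speak of the condition ``$\Ass$ vanishes on $L$'' for a projective point $L \in \mathbb{P}(W)$ without fixing a representative vector, which is exactly what is needed when transporting the block description into the Grassmannian/Pl\"ucker picture.
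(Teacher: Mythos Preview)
Your proposal is correct and matches the paper's treatment: the paper itself calls the observation ``trivial, but no less useful'' and gives no proof, relying (exactly as you do) on the linearity of $\Ass$.
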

Finally we note: 
\begin{lemma}\label{K}
Let $O, V, W$ be as in Lemma \ref{tauabc} and let $\psi \colon \Gr_{3}(V) \to \mathbb{P}(W)$ be the Pl\"{u}cker embedding. The associative subspaces among the three-dimensional subspaces of $V$ are precisely the spaces $x \in \Gr_{3}(V)$ for which the line $\psi(x)$ lies entirely inside the space $K \coloneqq \ker(\Ass)$.
\end{lemma}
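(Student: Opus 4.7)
The plan is to unpack both sides of the equivalence and observe that they reduce to the same statement about the associator of a single basis of $B$.

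First I would note that if $\{b_1, b_2, b_3\}$ is any basis of $B$, then by definition of the Plücker embedding $\psi(B) = F(b_1 \wedge b_2 \wedge b_3)$, a one-dimensional subspace of $W$. By the preceding trivial lemma, $\psi(B) \subseteq K = \ker(\Ass)$ holds if and only if $\Ass$ vanishes on the single non-zero vector $b_1 \wedge b_2 \wedge b_3$, which by the construction of $\Ass$ in Lemma \ref{tauabc} is equivalent to $(b_1, b_2, b_3) = 0$.

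Next I would invoke Corollary \ref{altcheck}, which applies since any Cayley algebra is alternative: it states that a three-dimensional subspace $B$ of an alternative algebra is an associative subspace if and only if it has \emph{some} basis $a, b, c$ for which $(a, b, c) = 0$. Combining this with the previous paragraph gives both directions: if $\psi(B) \subseteq K$, then the associator vanishes on a basis and hence $B$ is associative by Corollary \ref{altcheck}; conversely, if $B$ is associative, then the associator vanishes on every triple in $B$ and in particular on any basis, whence $b_1 \wedge b_2 \wedge b_3 \in K$ and $\psi(B) \subseteq K$.

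There is no real obstacle here; the content of the statement lies entirely in Corollary \ref{altcheck} (which lets a single-basis witness suffice to guarantee associativity on the whole subspace) together with the trilinear alternating nature of the associator that was already used to define $\Ass$ via the universal property of $\bigwedge^3 V$. The proof itself should be just a few lines of translation between these facts.
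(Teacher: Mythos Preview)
Your proof is correct and matches the paper's approach: the paper in fact states this lemma without proof (introducing it with ``Finally we note:''), treating it as immediate from Corollary~\ref{altcheck}, the trivial lemma about lines $Fw$ in $W$, and the definition of $\Ass$ in Lemma~\ref{tauabc}. You have simply made explicit the translation the paper leaves to the reader.
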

Combining this with Theorem \ref{main3} and the fact that over every field $F$ of characteristic not 2 at least one Cayley algebra exists (Section \ref{CayleyDickson}) we obtain the following `algebra-free' version of one of our main results:
\begin{proposition}\label{geometicqcover}
Let $V$ be a seven-dimensional space over a field $F$ of characteristic $\neq 2$, let $W \coloneqq \bigwedge^3 V$, $\pi \colon W \backslash \{0\} \to \mathbb{P}(W)$ be the projection map and $\psi \colon \Gr_{3}(V) \to \mathbb{P}(W)$ be the Pl\"{u}cker embedding. Then there exists an explicitly computable 28-dimensional linear subspace $K \subset W$ such that the set $\mathcal{B} \coloneqq \psi(\Gr_{3}(V)) \cap \pi(K \backslash \{0\})$ is a $q$-covering design which is minimal with respect to the inclusion ordering on the set of $q$-covering designs in $\Gr_{3}(V)$.
\end{proposition}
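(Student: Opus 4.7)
The plan is to fix a Cayley algebra structure $O$ on $V$ (which exists over every field of characteristic $\neq 2$ by Section \ref{CayleyDickson}, e.g.\ as $D_{-1}(D_{-1}(D_{-1}(F)))$), to identify $V$ with $\im(O)$, and to take $K := \ker(\Ass)$ where $\Ass \colon W \to V$ is the well-defined linear map of Lemma \ref{tauabc} sending $a \wedge b \wedge c$ to the associator $(ab)c - a(bc)$. Lemma \ref{K} then immediately identifies $\mathcal{B} = \psi(\Gr_{3}(V)) \cap \pi(K \setminus \{0\})$ with the image under the Pl\"ucker embedding of the collection of all three-dimensional associative subspaces of $V$. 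By Theorem \ref{main3}, this collection, viewed as a subset of $\Gr_{3}(V)$, is an inclusion-minimal $(7,3,2)$-$q$-covering design on $V$; injectivity of $\psi$ transfers both the covering and the minimality properties verbatim to $\mathcal{B}$.

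The only remaining task is the dimension count $\dim K = 28$, which by rank--nullity is equivalent to surjectivity of $\Ass$. I would first dispose of the split case by direct computation in Table \ref{multtab}. For example
\[
(q_1, q_2, q_3) = (q_1 q_2) q_3 - q_1 (q_2 q_3) = r_3 q_3 - q_1 r_1 = -p_2 - (-p_1) = p_1 - p_2,
\]
and similar one-line calculations yield $(q_1, r_1, q_2) = -q_2$, $(q_1, r_1, q_3) = -q_3$, $(q_1, r_1, r_2) = r_2$, $(q_1, r_1, r_3) = r_3$, $(q_2, r_2, q_1) = -q_1$ and $(q_2, r_2, r_1) = r_1$. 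These associators together cover the basis $\{p_1 - p_2,\, q_1,\, q_2,\, q_3,\, r_1,\, r_2,\, r_3\}$ of $V = \im(O)$, so $\Ass$ is surjective whenever $O$ is split.

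The main (though not very deep) obstacle is reducing the general case to the split case, which I would handle by base change. For any field extension $F'/F$ over which $O \otimes_F F'$ is split (one always exists, e.g.\ an algebraic closure of $F$, cf.\ the discussion following Corollary \ref{sqrt1}), one has $V \otimes_F F' = \im(O \otimes_F F')$ and the associator map is compatible with $\otimes_F F'$ in the obvious sense. Surjectivity of $\Ass$ amounts to the matrix representing $\Ass$ (in any chosen basis) having rank $7$, a property invariant under field extension. Hence the surjectivity just established over $F'$ implies surjectivity already over $F$, so $\dim K = 35 - 7 = 28$ as required, and the proof is complete.
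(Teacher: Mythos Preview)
Your proof is correct and its overall skeleton---choose a Cayley algebra, set $K=\ker(\Ass)$, and invoke Lemma \ref{K} together with Theorem \ref{main3}---is exactly the paper's.

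The only substantive difference is the verification that $\dim K=28$, i.e.\ that $\Ass$ is surjective. You do this by computing seven associators explicitly in the basis of Table \ref{multtab} (split case) and then pulling back via a field extension over which $O$ becomes split; the rank-invariance argument is clean and valid. The paper does not argue this way: it defers the dimension count to Section \ref{combinatorics}, where the map $\Ass$ is computed on the full basis $\{w_l\}\cup\{w_{v,i}\}$ of $W$ in terms of the Fano plane structure (Corollary \ref{Asslv}, Proposition \ref{workhorse}, equation (\ref{Assaction})), giving an explicit description of $\ker\Ass$ as the common zero locus of the seven functionals $\eta_v$. Your route is shorter and more self-contained for the bare dimension statement; the paper's route costs more but delivers the explicit combinatorial description of $K$ promised by the word ``computable'' and needed for Theorem \ref{maincombinatorially}.
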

And combining Lemma \ref{K} with Theorem \ref{main3} and Serre's Theorem \ref{Serre2} we obtain the following, even nicer, algebra-free result:
\begin{proposition}\label{geometicqcover2}
Let $V$ be a seven-dimensional space over a (necessarily infinite) field $F$ of characteristic $\neq 2$, satisfying $H^3(F, \mathbb{Z}/2\mathbb{Z}) \neq 0$. Let $W \coloneqq \bigwedge^3 V$, $\pi \colon W \backslash \{0\} \to \mathbb{P}(W)$ be the projection map and $\psi \colon \Gr_{3}(V) \to \mathbb{P}(W)$ be the Pl\"{u}cker embedding. Then there exists an explicitly computable 28-dimensional linear subspace $K \subset W$ such that the set $\mathcal{B} \coloneqq \psi(\Gr_{3}(V)) \cap \pi(K \backslash \{0\})$ is a $q$-Fano plane.
\end{proposition}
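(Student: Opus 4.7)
The plan is to import the conclusion of Theorem \ref{main3} to the abstract setting of the proposition via the linear map $\Ass$ introduced in Lemma \ref{tauabc}. First, use Serre's Theorem \ref{Serre2}: the hypothesis $H^3(F, \mathbb{Z}/2\mathbb{Z}) \neq 0$ guarantees the existence of a Cayley division algebra $O$ over $F$. Choose any linear isomorphism $V \cong \im(O)$ (both are seven-dimensional $F$-vector spaces, so such an isomorphism exists); this induces a canonical isomorphism $W = \bigwedge^3 V \cong \bigwedge^3 \im(O)$. Under these identifications we define $K \coloneqq \ker(\Ass) \subseteq W$, where $\Ass \colon W \to V$ is the linear map of Lemma \ref{tauabc} sending $a \wedge b \wedge c$ to the associator $(a,b,c) = (ab)c - a(bc)$. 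Note that $K$ is \emph{explicitly computable}: given any basis of $V$, the associator is given by the explicit multiplication table of $O$, so one writes out $\Ass$ as an explicit $7 \times 35$ matrix and reads off its kernel.

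The second step is to recognize that $\mathcal{B}$, as defined in the statement, coincides with the image under $\psi$ of the collection of three-dimensional associative subspaces of $V$. Indeed, by Lemma \ref{K}, a three-dimensional subspace $B \in \Gr_3(V)$ is associative precisely when its Plücker line $\psi(B)$ lies inside $K$, which is the same as saying $\psi(B) \in \pi(K \setminus \{0\})$. Theorem \ref{main3} then applies, since $O$ is a division algebra, and yields directly that this set is a $q$-Fano plane on $V$.

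The only remaining point is the numerical claim that $\dim K = 28$. Since $\dim W = \binom{7}{3} = 35$, this reduces to the surjectivity of $\Ass \colon W \to V$. This is the step I expect to require the most actual work (though it is not deep). I would verify it via an explicit computation using Table \ref{multtabgeneral}: taking $O = D_\gamma(D_\beta(D_\alpha(F)))$ with the basis $\{e_0, \ldots, e_6\}$ of $\im(O)$, one computes the associator $(e_i, e_j, e_k)$ for various triples $\{i,j,k\}$ that do \emph{not} form a line in the Fano-plane structure described in Section \ref{OtoFano}. A single such computation, e.g.\ showing that $(e_0, e_1, e_2)$ is a non-zero scalar multiple of $e_5$, combined with the order-$7$ cyclic symmetry $e_n \mapsto e_{n+1 \pmod 7}$ of the Fano plane (which lifts to an algebra automorphism of $O$, and hence preserves associators), produces non-zero associators proportional to each of $e_0, \ldots, e_6$. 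This shows $\im(\Ass) = V$ and therefore $\dim K = 28$, completing the proof.
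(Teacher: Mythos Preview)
Your overall strategy is exactly the paper's: combine Serre's Theorem~\ref{Serre2}, Lemma~\ref{K}, and Theorem~\ref{main3}, with $K = \ker(\Ass)$. That part is correct and complete.

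There is one small but genuine slip in your surjectivity argument for $\Ass$. You claim that the order-$7$ cyclic permutation $e_n \mapsto e_{n+1}$ ``lifts to an algebra automorphism of $O$''. In general it does not: for $O = D_\gamma(D_\beta(D_\alpha(F)))$ with arbitrary $\alpha,\beta,\gamma$, this map is only what the paper (in Section~\ref{combinatorics}) calls a \emph{projective automorphism}, meaning $\phi(x)\phi(y) = \lambda\,\phi(xy)$ for some $\lambda \in F^\times$ depending on $x,y$. Since the Cayley division algebra supplied by Theorem~\ref{Serre2} need not have $\alpha=\beta=\gamma=-1$, you cannot assume the genuine automorphism property. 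The fix is easy: either compute the seven associators directly from Table~\ref{multtabgeneral} (they are all of the form $(e_i,e_j,e_k) = \mu\, e_l$ with $\mu \in F^\times$, as the paper verifies in Section~\ref{combinatorics}), or observe that the projective-automorphism property still carries non-vanishing of $(e_0,e_1,e_2)$ to non-vanishing of $(e_{n},e_{n+1},e_{n+2})$ for each $n$, since both $(ab)c$ and $a(bc)$ land in the same one-dimensional space $Fe_l$ and the rescalings are by non-zero constants. Either route yields surjectivity and hence $\dim K = 28$.
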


\section{Quantizing the Fano plane: combinatorial reformulation of the main results}\label{combinatorics}

We will now live up to the claim that the space $K$ is explicitly computable by explicitly computing it. More precisely we'll derive a more general version of Theorem \ref{mainintro} from Section \ref{introresults} (Theorem \ref{maincombinatorially} below), constructing the $q$-covering designs and $q$-Fano planes of Theorem \ref{main1} from an ordinary Fano plane, a field $F$, a choice of three elements $\alpha, \beta, \gamma$ in $F$ and a choice of an order 7 autormorphism $\phi$ and a point $v_0$ of the Fano-plane. The first two claims of Theorem \ref{mainintro} (that the construction given there yields a $q$-covering design which is a $q$-Fano plane when $F \subset \mathbb{R}$) follow from Theorem \ref{maincombinatorially} by setting $\alpha = \beta = \gamma = -1$. The last claim of Theorem \ref{mainintro} (giving the number of blocks in the design when $F$ is finite) will be proven in Section \ref{counting}.

\begin{theorem}\label{maincombinatorially} Let $F$ be field of characteristic $\neq 2$, let $\alpha, \beta, \gamma \in F^\times$ and let $(\mathcal{V}, \mathcal{L})$ be a Fano-plane with vertex set $\mathcal{V}$ and line set $\mathcal{L} \subset \mathcal{V}^3$. We fix an automorphism $\phi$ of $(\mathcal{V}, \mathcal{L})$ so that the cyclic group generated by $\phi$ acts transitively on $\mathcal{V}$ (and hence $\mathcal{L}$) and choose a cyclic ordering on the three elements of each $l \in \mathcal{L}$ in such a way that application of $\phi$ will preserve the orderings on the lines. Finally we pick a `special' point $v_0 \in \mathcal{V}$ and label the points and lines in $\mathcal{L}$ with elements of $\ZZ/7\ZZ$ as follows (cf Figure \ref{fanoplot} (a)):
if $\{v_0, \phi(v_0), \phi^3(v_0)\} \in \mathcal{L}$ then we define $v_n \coloneqq \phi^n(v_0)$ for $n \in \ZZ/7\ZZ$, if $\{v_0, \phi(v_0), \phi^3(v_0)\} \nin \mathcal{L}$ then we define $v_n \coloneqq \phi^{-n}(v_0)$ for $n \in \ZZ/7\ZZ$. Note that in both cases we have for each $n \in \ZZ/7\ZZ$ that $l_n \coloneqq \{v_n, v_{n+1}, v_{n+3}\} \in \mathcal{L}$ and that every $l \in \mathcal{L}$ is of this form. We note moreover that $v_0 = l_{-1} \cap l_0$, $v_1 = l_0 \cap l_1$, $v_2 = l_1 \cap l_{-1}$. 

Now as in Theorem \ref{mainintro} we define $V$ to be the $F$-vector space with basis $\mathcal{V}$ and define $W = \bigwedge^3 V$. Again we set $\Delta = v_0 \wedge v_1 \wedge v_2 \ldots \wedge v_6 \in \bigwedge^7 V \isom F$. Unlike in Theorem \ref{mainintro} we moreover define a function $h \colon \mathcal{L} \to F$ by $h(l_1) = -\alpha$, $h(l_{-1}) = -\beta, h(l_0) = -\gamma$ and $h(l) = 1$ for $l \in \mathcal{L} \backslash \{l_{-1}, l_0, l_1\}$. Now, for each $l \in \mathcal{L}$ let $w_l = h(l) u_1 \wedge u_2 \wedge u_3 \in W$ where $u_1, u_2, u_3$ are the elements of $l$ in the given cyclic order. (Note that this is well defined since 3 is an odd number and that the contribution of the function $h$ is invisible in the special case that $\alpha = \beta = \gamma = -1$.) 

These data in turn define, for every $v \in \mathcal{V}$, a linear functional $\eta_v \colon W \to F$ by $v \wedge (\sum_{l \in \mathcal{L}} w_l) \wedge w = \eta_v(w)\Delta$.

Let $\Gr_3(V)$ be the set of three-dimensional subspaces of $V$ and $\Gr_1(W)$ the set of one-dimensional subspaces of $W$. Let $\psi \colon \Gr_3(V) \to \Gr_1(W)$ be the Pl\"{u}cker embedding. 

Then the set $\mathfrak{B} = \{B \in \Gr_3(V) \colon \psi(B) \subset \ker \eta_v \textnormal{ for all } v \in \mathcal{V} \}$ is an inclusion minimal $q$-covering design with parameters $(7, 3, 2)$ on $V$. %That is: every two-dimensional subspace of $V$ is contained in at least one element of $\mathfrak{B}$ and no proper subset of $\mathfrak{B}$ has this property. 
Moreover, if $F, \alpha, \beta, \gamma$ are such that the eight-dimensional Cayley-Dickson algebra $D_\gamma(D_\beta(D_\alpha(F)))$ is a division algebra, then $\mathfrak{B}$ is a $q$-Fano plane.%, that is: in that case we have that every two-dimensional subspace of $V$ is contained in \emph{exactly one} element of $\mathfrak{B}$. 
\end{theorem}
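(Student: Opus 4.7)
The plan is to identify the combinatorial set $\mathfrak{B}$ with the algebraic set $\mathcal{B}$ of Theorem~\ref{main2} (equivalently Theorem~\ref{main3}) applied to the specific Cayley algebra $O := D_\gamma(D_\beta(D_\alpha(F)))$, and then deduce both conclusions from that single theorem. First I would use the basis $\{e_0, \ldots, e_6\}$ of $\im(O)$ constructed in Section~\ref{OtoFano} to identify $V$ with $\im(O)$ via $v_n \mapsto e_n$; the resulting Fano plane structure on $\{e_0, \ldots, e_6\}$, its compatible cyclic ordering on lines, and the choice of distinguished vertex match the data supplied by the theorem, with the two cases on whether $\{v_0,\phi(v_0),\phi^3(v_0)\}$ is or is not a line simply accommodating the two possible orientations of the $\ZZ/7\ZZ$-action. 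Under this identification $\Delta = e_0 \wedge \cdots \wedge e_6$.

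With this setup in place, both sides of the desired identification $\mathfrak{B} = \mathcal{B}$ can be rephrased as kernel conditions on $W$. Section~\ref{geometric} presents $\mathcal{B}$ as those $B \in \Gr_3(V)$ with $\psi(B) \subseteq K := \ker(\Ass)$, where $\Ass : W \to V$ is the alternating map of Lemma~\ref{tauabc}. On the combinatorial side, non-degeneracy of the wedge pairing $V \otimes \bigwedge^6 V \to \bigwedge^7 V$ shows that the simultaneous vanishing $\eta_v(w) = 0$ for all $v \in \mathcal{V}$ is equivalent to $\omega \wedge w = 0$, where $\omega := \sum_{l \in \mathcal{L}} w_l$. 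Consequently the theorem reduces to proving the kernel equality $\ker(\omega \wedge \cdot) = \ker(\Ass)$; once this is in hand, Theorem~\ref{main2} directly delivers the inclusion-minimal $(7,3,2)$-$q$-covering design statement in general, and the $q$-Fano plane statement precisely when $O$ is a division algebra.

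To establish the kernel equality I would construct an explicit linear isomorphism $\Phi : V \to \bigwedge^6 V$ satisfying $\omega \wedge w = \Phi(\Ass(w))$ for every $w \in W$, which at once forces $\ker(\omega \wedge \cdot) = \ker(\Ass)$. It suffices to check the identity on the basis $\{e_i \wedge e_j \wedge e_k : i < j < k\}$ of $W$. Two combinatorial facts about the Fano plane organize the computation: the complement of a Fano line contains no Fano line, and for any non-collinear triple $\{i,j,k\}$ the three ``third points'' on $\overline{ij}, \overline{jk}, \overline{ik}$ themselves form a Fano line disjoint from $\{i,j,k\}$, leaving a unique ``leftover'' point $p$. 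The first fact together with the quaternion-subalgebra property of Fano lines yields the vanishing of both $\Ass$ and $\omega \wedge \cdot$ on line-triples; the second shows that for every non-line triple both $\Ass(e_i \wedge e_j \wedge e_k) \in F e_p$ and $\omega \wedge (e_i \wedge e_j \wedge e_k) \in F \cdot (e_0 \wedge \cdots \wedge \widehat{e_p} \wedge \cdots \wedge e_6)$ are supported on a single basis vector indexed by the leftover $p$. Setting $\Phi(e_p) := r_p \cdot (e_0 \wedge \cdots \wedge \widehat{e_p} \wedge \cdots \wedge e_6)$ then gives the desired isomorphism, provided the ratio $r_p$ of the two coefficients depends only on $p$. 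The main obstacle is precisely this uniformity check: for each leftover point $p$ there are four candidate non-line triples, and one must verify that the specific weights $h(l_{-1}) = -\beta$, $h(l_0) = -\gamma$, $h(l_1) = -\alpha$ (chosen precisely to absorb the non-standard squares $e_0^2 = \alpha$, $e_1^2 = \beta$, $e_2^2 = \gamma$ of Table~\ref{multtabgeneral}) yield the same ratio in all four cases. Verifying this is a direct but patient computation from Table~\ref{multtabgeneral}.
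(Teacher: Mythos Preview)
Your proposal is correct and follows essentially the same route as the paper: both reduce the theorem to Theorem~\ref{main2}/\ref{main3} via the kernel identification $\bigcap_v \ker\eta_v = \ker\Ass$ (your $\ker(\omega\wedge\cdot)=\ker\Ass$), and both isolate as the crux the uniformity statement that for each ``leftover'' vertex $p$ the four non-line triples with that leftover give the same ratio, which is exactly the paper's Proposition~\ref{workhorse}. Your packaging via an intertwiner $\Phi:V\to\bigwedge^6 V$ with $\omega\wedge w=\Phi(\Ass(w))$ is a clean rephrasing of the paper's computation in the basis $\{w_l,\,w_{v,i}\}$ of $W$; the substantive verification is the same, and the paper carries it out by introducing auxiliary functions $s,r,h$ on the Fano plane satisfying the product identity~(\ref{dooreenpunt}), which lets the four-case check be done once in closed form rather than directly from Table~\ref{multtabgeneral}.
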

\begin{remark}
If we want an entirely algebra-free statement we can replace the condition `if $F, \alpha, \beta, \gamma$ are such that the eight-dimensional Cayley-Dickson algebra $D_\gamma(D_\beta(D_\alpha(F)))$ is a division algebra' in the last sentence by the equivalent condition `if $F, \alpha, \beta, \gamma$ are such that equation (\ref{stroth1}) only has the zero-solution $\lambda_0 = \lambda_1 = \ldots =  \lambda_7 = 0$', for reasons explained just above that equation.
\end{remark}
\begin{figure}
\caption{The Fano plane with (from left to right): (a) lines $l_{-1}, l_0, l_1$ and points $v_0, \ldots v_6$ from Thm. \ref{maincombinatorially}; (b) the values of the function $s$ of (\ref{defS1}, \ref{defS2}) at those points; (c)  an example of a labeling of the points as in (\ref{efg}, \ref{abc1}, \ref{abc2}) for the special case $v = v_0$, $l_{v, i} = l_1$. \label{fanoplot}}
\includegraphics[width = \textwidth]{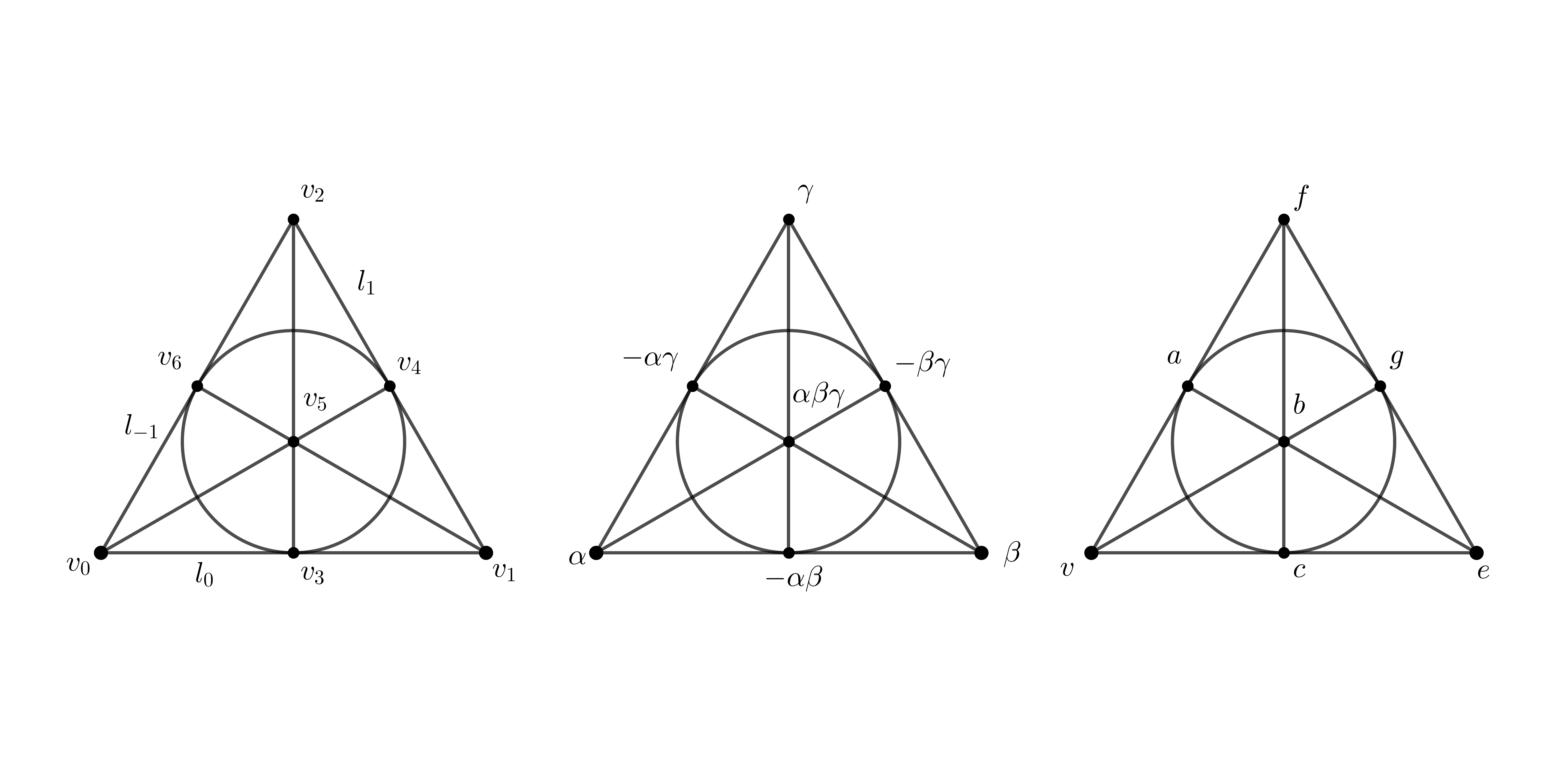}
\end{figure}

\begin{proof}
We will reduce the theorem to Theorem \ref{main3} above. The first step is then of course to construct the appropriate Cayley algebra from the data given. 

In addition to the function $h \colon \mathcal{L} \to F$ defined above as
 
\begin{align}\label{defH1}
h(l_1) &= -\alpha, \qquad h(l_{-1}) = -\beta,  \qquad h(l_0) = -\gamma \\
h(l) &= 1 \qquad \textnormal{ for } l \in \mathcal{L} \backslash \{l_1, l_{-1}, l_0\}, \label{defH2}
\end{align}
we define functions $s \colon \mathcal{V} \to F$, $r \colon \mathcal{V} \times \mathcal{V} \to F$, that together describe the multiplication in the Cayley algebra. 

Concretely, 
the function $s$ is defined by 
\begin{equation}\label{defS1}
s(v_0) = \alpha, \qquad s(v_1) = \beta, \qquad s(v_2) = \gamma 
\end{equation}
and demanding that for every $l \in \mathcal{L}$ we have that 
\begin{equation}\label{defS2}
\Prod_{v \in l} s(v) = -\frac{(\alpha\beta\gamma)^2}{h(l)^2}. 
\end{equation}
(See also figure \ref{fanoplot} (b).)

The function $r \colon \mathcal{V} \times \mathcal{V} \to F$ is defined by 
\begin{equation}\label{defR1}
r(a, b) = \frac{\alpha\beta\gamma}{h(l)s(c)}
\end{equation}
where $l \in \mathcal{L}$ is the unique line through $a$ and $b$ and $c$ is the third point on $l$. 

From here we see that
\begin{equation}\label{rsymmetric}
r(a,b) = r(b, a).
\end{equation}

Combining (\ref{defR1}) and (\ref{defS2}) we moreover find the relation
\begin{equation}\label{defR2}
r(a,b)^2 = -\frac{s(a)s(b)}{s(c)}
\end{equation}
where $c$, again, is the third point on the unique line through $a$ and $b$. (Cf Figure \ref{fanoplot} (b).) 

\begin{remark}
In the special case that $\alpha = \beta = \gamma = -1$ we have that $h$ is identically 1, $s$ is identically $-1$ and $r$ is identically $1$ again.  
\end{remark}

Before moving on to the construction of the Cayley algebra from these functions, we note for future reference the following relation between the functions $h$ and $s$ which is in a sense dual to relation (\ref{defS2}) and can be verified from Figure \ref{fanoplot}(b):

\begin{equation}\label{dooreenpunt}
\prod_{l \ni p} h(l) = \frac{\alpha\beta\gamma}{s(p)} \qquad \textnormal{for all } p \in \mathcal{V}.
\end{equation}

\begin{proposition}\label{FanotoO}
Let $\mathcal{O}$ be the eight-dimensional vector space $F1 \oplus V$. We define a multiplication on $\mathcal{O}$ making it into an algebra as defined in Definition \ref{algebra} by letting the element $1$ act as the identity and defining the product of basis elements of $V$ as follows:

\begin{itemize}
\item For each $v \in \mathcal{V}$ we set $v^2 = s(v)1$
\item For each cyclically ordered line $l = (a, b, c)$ we set $ab = r(a, b) c$ while $ba = -r(a, b) c$.
\end{itemize}

Then the algebra $\mathcal{O}$ is isomorphic to the Cayley algebra $D_\gamma(D_\beta(D_\alpha(F)))$ defined in Definition \ref{DicksonDouble}. 
\end{proposition}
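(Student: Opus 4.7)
The plan is to exhibit an explicit algebra isomorphism $\Phi \colon \mathcal{O} \to D_\gamma(D_\beta(D_\alpha(F)))$ given by $\Phi(1) = 1$ and $\Phi(v_i) = e_i$ for $i \in \mathbb{Z}/7\mathbb{Z}$, where $\{1, e_0, \ldots, e_6\}$ is the basis set up in Section \ref{OtoFano}. Since $\Phi$ is visibly a linear bijection of eight-dimensional spaces and sends $1$ to $1$, the content is the verification that $\Phi(v_i \cdot v_j) = e_i e_j$ for all $i, j$, which amounts to comparing the definitions of $s$, $h$, $r$ in Proposition \ref{FanotoO} against the entries of Table \ref{multtabgeneral}.

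First I would nail down the cyclic orderings. By $\phi$-compatibility and transitivity of $\langle \phi \rangle$ on $\mathcal{L}$, fixing the ordering on $l_0$ determines it on every other line; the choice $(v_0, v_1, v_3)$ then forces the ordering on $l_n$ to be $(v_n, v_{n+1}, v_{n+3})$ (the opposite choice yields the opposite algebra $\mathcal{O}^{\mathrm{op}}$, which is still isomorphic to $D_\gamma(D_\beta(D_\alpha(F)))$ via the strong involution $*$, so nothing is lost). Next I would use (\ref{defS2}) to read off $s(v_i)$ for every $i$: starting from $s(v_0) = \alpha$, $s(v_1) = \beta$, $s(v_2) = \gamma$, the three lines $l_0$, $l_1$, $l_{-1}$ (which carry the non-trivial $h$-values $-\gamma$, $-\alpha$, $-\beta$) yield $s(v_3) = -\alpha\beta$, $s(v_4) = -\beta\gamma$, $s(v_6) = -\alpha\gamma$, and any line through $v_5$ then gives $s(v_5) = \alpha\beta\gamma$. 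Comparison with the diagonal of Table \ref{multtabgeneral} shows $v_i^2 = e_i^2$ in all seven cases.

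For the off-diagonal products I would then run, line by line, through the seven Fano lines. On each line $l_n$ the cyclic ordering $(v_n, v_{n+1}, v_{n+3})$ and formula (\ref{defR1}) produce three scalars $r(v_a, v_b)$; each is a single rational-function evaluation in $\alpha, \beta, \gamma$ and matches the corresponding coefficient of $e_a e_b$ in Table \ref{multtabgeneral}. The remaining 21 products are then forced by the antisymmetry $v_j v_i = -v_i v_j$ built into the construction. The only ``obstacle'' is bookkeeping: once one observes that definitions (\ref{defH1}--\ref{defR1}), via the auxiliary identities (\ref{defS2}) and (\ref{defR2}), are engineered precisely to reproduce the structure of Table \ref{multtabgeneral}, the matching is mechanical.
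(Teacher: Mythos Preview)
Your proof is correct and follows essentially the same approach as the paper: both identify $v_i$ with $e_i$ and verify the multiplication against Table~\ref{multtabgeneral} by unpacking the definitions of $s$ and $r$. The only cosmetic difference is in handling the opposite cyclic ordering: the paper uses the explicit substitution $e_n = -v_n$ rather than your appeal to the strong involution, but these amount to the same thing since the map $v_i \mapsto -v_i$ \emph{is} the involution $*$ on $\mathcal{O}$.
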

\begin{proof}
Let the $v_n, l_n$ and $\phi$ be as in Theorem \ref{maincombinatorially}
Now if the orientation of the line $l_0$ reads $(v_0, v_1, v_3)$ we have that for every line $l_n$ the orientation reads $(v_n, v_{n+1}, v_{n+3})$ and we define $e_n = v_n$ for each $n = 0, \ldots, 6$. If on the other hand the orientation reads $(v_0, v_3, v_1)$ then we set $e_n = -v_n$ for each $n = 0, \ldots, 6$. In both cases we can check, unpacking the definitions of $s$ and $r$, that the given multiplication on $\mathcal{O}$ is described exactly by Table \ref{multtabgeneral} from Section \ref{preliminaries}.
\end{proof}

\begin{remark}
In view of the description of the relationship between the classical Fano-plane and the Cayley algebra $D_\gamma(D_\beta(D_\alpha(F)))$ in Section \ref{OtoFano} it was to be expected that functions $s$ and $r$ as in the definition of the multiplication in Proposition \ref{FanotoO} would appear: existence of \emph{some} scalar $s(v)$ such that $v^2 = s(v)1$ is essentially Lemma \ref{imbasis} while existence of some $r(a,b)$ such that $ab = r(a,b)c$ for each Fano line $\{a, b, c\}$ is Lemma \ref{linescalars} (where $r(a, b)$ was called $\lambda_{a, b}$). Also the origin of the relation (\ref{defR2})
is easy to understand from the perspective of the multiplication: we find on one hand that $(ab)^2 = (r(a,b)c)^2 = r(a,b)^2s(c)1$ and on the other hand, because $a, b, ab \in \im(\mathcal{O})$, that $(ab)^2 = a(ba)b = -a(ab)b = -a^2b^2 = -s(a)s(b)1$.

What is not immediately obvious is which of the two `solutions' to (\ref{defR2}), i.e. (\ref{defR1}) or (\ref{defR1}) with the right hand side multiplied with $-1$, yields the correct definition of $r$. We see by explicit computation (as in the proof of Proposition \ref{FanotoO}) that (\ref{defR1}) is the correct expression but I don't have a more conceptual explanation for that.
\end{remark}

By Proposition \ref{FanotoO}, Theorem \ref{associator} and Lemma \ref{tauabc} the associator $(.,.,.) \colon V \times V \times V \to \mathcal{O}$ defined by $(a, b, c) = (ab)c - a(bc) \in \mathcal{O}$ takes values in $V$ and is trilinear and alternating; hence the associated linear map $V \tensor V \tensor V \to V$ factors over a linear map $\Ass \colon W \to V$. In the light of Lemma \ref{K} and Theorem \ref{main3}, Theorem \ref{maincombinatorially} reduces to the equality 
\begin{equation}\label{etavsass}
\bigcap_{v \in \mathcal{V}} \ker \eta_v = \ker \Ass. 
\end{equation}

We will derive this equality from a number of intermediate results, starting with a closer look at the functionals $\eta_v$ defined in theorem \ref{maincombinatorially}.

\begin{lemma}\label{eta}
Let $v \in \mathcal{V}$ and let $w = a \wedge b \wedge c \in W$ where $a, b, c \in \mathcal{V}$. Then:
\begin{itemize}
\item if $v \in \{a, b, c\}$ then $\eta_v(w) = 0$
\item if $\{a, b, c\} \in \mathcal{L}$ or $\{a, b, v\} \in \mathcal{L}$ or $\{a, v, c\} \in \mathcal{L}$ or $\{v, b, c\} \in \mathcal{L}$ then $\eta_v(w) = 0$.
\item if $\{a, b, c, v\}$ is a four element subset of $\mathcal{V}$ containing no lines then $l' \coloneqq \mathcal{V} \backslash \{a, b, c, v\}$ is a line (i.e. an element of $\mathcal{L}$) and either $\eta_v(w) = h(l')$ or $\eta_v(w) = -h(l')$. 
\end{itemize} 
\end{lemma}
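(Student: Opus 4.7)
My plan is a direct calculation in the one-dimensional space $\bigwedge^7 V = F\Delta$, reducing everything to the combinatorics of 4-subsets of the Fano plane. First I would note that if $a,b,c$ are not pairwise distinct then $a \wedge b \wedge c = 0$ and the claim is trivial, so I will assume they are distinct. Expanding the defining equation of $\eta_v$ by bilinearity of the wedge gives
\begin{equation*}
\eta_v(a \wedge b \wedge c)\,\Delta \;=\; \sum_{l \in \mathcal{L}} h(l)\, v \wedge u_1^l \wedge u_2^l \wedge u_3^l \wedge a \wedge b \wedge c,
\end{equation*}
where $(u_1^l, u_2^l, u_3^l)$ is the chosen cyclic ordering of $l$. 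Each summand is a wedge of seven basis vectors of $V$, so it vanishes unless those vectors are pairwise distinct, in which case it equals $\pm \Delta$ according to the sign of the corresponding permutation of $v_0, \ldots, v_6$. Thus the $l$-th summand can be nonzero only when $v \notin \{a,b,c\}$ and $l = \mathcal{V} \setminus (\{v\} \cup \{a,b,c\})$, so at most one term contributes. This immediately yields the first bullet.

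For the other two bullets I would invoke the following elementary Fano-plane fact: a 3-subset of $\mathcal{V}$ is a line if and only if its complementary 4-subset contains no line. This is a short counting argument. Since any two distinct lines in a projective plane meet in a point, a 4-subset cannot contain two different lines (they would share two points), so there are exactly $7 \cdot 4 = 28$ four-subsets containing a line, leaving $\binom{7}{4} - 28 = 7$ line-free four-subsets. These 7 must be exactly the complements of the 7 lines, because the complement of any line $\ell$ cannot contain another line $\ell'$ (otherwise $\ell \cap \ell' = \emptyset$, contradicting the meeting axiom).

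Applying this to $T := \{v\} \cup \{a,b,c\}$ (in the case $v \notin \{a,b,c\}$) finishes the proof. If any of $\{a,b,c\}$, $\{a,b,v\}$, $\{a,v,c\}$, $\{v,b,c\}$ is a line, then $T$ contains a line, hence $\mathcal{V} \setminus T$ is not a line, no $l \in \mathcal{L}$ equals $\mathcal{V}\setminus T$, and the entire sum vanishes, giving $\eta_v(w) = 0$. Otherwise $T$ is line-free, and $l' := \mathcal{V} \setminus T$ is a line; the unique surviving summand is then $h(l')\,\varepsilon\,\Delta$ with $\varepsilon \in \{+1,-1\}$ the sign of the permutation $(v,u_1^{l'},u_2^{l'},u_3^{l'},a,b,c) \mapsto (v_0, \ldots, v_6)$, so $\eta_v(w) = \pm h(l')$ as claimed. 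There is no real obstacle in the proof: the only bookkeeping is the permutation sign in the last case, and since the lemma only asserts the value there up to sign this need not be computed.
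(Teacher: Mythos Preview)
Your proof is correct and follows essentially the same approach as the paper: expand the defining sum over $l \in \mathcal{L}$ and observe that at most one term survives, namely the one with $l = \mathcal{V} \setminus \{v,a,b,c\}$. The only cosmetic difference is in the second bullet: the paper rewrites $v \wedge (\sum_l w_l) \wedge w$ as $\pm h(l')^{-1}\, d \wedge (\sum_l w_l) \wedge w_{l'}$ (where $l'$ is the line inside $\{v,a,b,c\}$ and $d$ is the remaining point) and kills every term via $l \cap l' \neq \emptyset$, whereas you phrase the same fact as ``the complement of a line-containing $4$-set is never a line''; these are the same combinatorics.
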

\begin{proof}
The first of these cases is straightforward. For $w$ of this form we have $v \wedge (\sum w_l) \wedge w = 0$. Since $\Delta \neq 0$ we must have that $\eta_v(w) = 0$.

Let $l' \in \mathcal{L}$ be the line mentioned in the second bullet point and let $d \in V$ be the unique element of $\{a, b, c, v\} \backslash l'$. Rearranging we find that $\eta_v(w)\Delta = \pm \frac{1}{h(l')} d \wedge (\sum w_l) \wedge w_{l'} = \pm \frac{1}{h(l')} \sum_{l \in \mathcal{L}} d \wedge w_l \wedge w_{l'}$. Each term in this sum is zero, owing to how in the Fano plane every two lines have non-empty intersection.  It follows that the sum itself is zero and hence that $\eta_v(w) = 0$.

Finaly let $l'$ be as in the third case. Then every $l \neq l'$ has non-empty intersection with $\{a, b, c, v\}$ and hence we find $v \wedge (\sum w_l) \wedge w = v \wedge w_{l'} \wedge w = \pm h(l')\Delta$ and the result follows. 
\end{proof} 
 
\begin{corollary}\label{wvi}
For each $v \in \mathcal{V}$ there are exactly four elements of $W$, to be called $w_{v, 1}, w_{v, 2}, w_{v, 3}, w_{v, 4}$ simultaneously satisfying:
\begin{itemize}
\item $w_{v, i}$ is of the form $\zeta a \wedge b \wedge c$ for some $a, b, c \in \mathcal{V}$ and $\zeta \in F^\times$.
\item $\eta_v(w_{v, i}) = 1$
\end{itemize} 
Moreover, for $u \neq v \in \mathcal{V}$ we have 
$$\spam(w_{u, 1}, w_{u, 2}, w_{u, 3}, w_{u, 4}) \cap \spam(w_{v, 1}, w_{v, 2}, w_{v, 3}, w_{v, 4}) = \{0\} \subset W.$$
\end{corollary}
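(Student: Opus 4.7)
The plan is to use Lemma \ref{eta} to pin down which basis elements of $W$ (in the standard basis indexed by 3-subsets of $\mathcal{V}$) can appear in $w_{v,i}$, then observe that these are in bijection with the lines of $\mathcal{L}$ not passing through $v$, and finally use the Fano plane axiom that two distinct lines meet in exactly one point.

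First, the requirement $w_{v,i} = \zeta\, a \wedge b \wedge c$ with $\zeta \in F^\times$ and $\eta_v(w_{v,i}) = 1$ forces $\{a,b,c\}$ to be a 3-element subset of $\mathcal{V}$ with $\eta_v(a \wedge b \wedge c) \neq 0$. By Lemma \ref{eta}, this happens exactly when $\{a,b,c,v\}$ is a 4-element subset containing no line, equivalently when $l' \coloneqq \mathcal{V} \setminus \{a,b,c,v\}$ is a line in $\mathcal{L}$. In this case $\eta_v(a \wedge b \wedge c) = \pm h(l') \in F^\times$, so the scalar $\zeta$ is uniquely determined by the normalization $\eta_v(w_{v,i}) = 1$. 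Counting: the admissible 3-subsets $\{a,b,c\}$ are exactly those of the form $\mathcal{V} \setminus (l' \cup \{v\})$ as $l'$ ranges over lines not containing $v$; since $v$ lies on exactly three of the seven lines in $\mathcal{L}$, there are exactly $7-3 = 4$ such lines $l'$ and hence exactly 4 such triples. This establishes both the existence and uniqueness of the four elements $w_{v,1}, w_{v,2}, w_{v,3}, w_{v,4}$.

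For the disjointness claim, note that each $w_{v,i}$ is a nonzero scalar multiple of a distinct basis element of $W$ in the standard basis $\{a \wedge b \wedge c \colon \{a,b,c\} \subset \mathcal{V},\ |\{a,b,c\}| = 3\}$; hence $\spam(w_{v,1}, \ldots, w_{v,4})$ is the coordinate $4$-plane spanned by these four basis vectors. It therefore suffices to show that for $u \neq v$ no 3-subset arises both from $u$ and from $v$. Suppose to the contrary that $\{a,b,c\} = \mathcal{V} \setminus (l' \cup \{v\}) = \mathcal{V} \setminus (l'' \cup \{u\})$ for some lines $l' \not\ni v$ and $l'' \not\ni u$. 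Then $S \coloneqq l' \cup \{v\} = l'' \cup \{u\}$ is a 4-element subset, and $l' = S \setminus \{v\}$, $l'' = S \setminus \{u\}$ are two distinct 3-subsets of $S$ whose intersection $S \setminus \{u,v\}$ has size $2$. This contradicts the fact that distinct lines in the Fano plane meet in exactly one point, completing the proof.

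Nothing here is a serious obstacle: once Lemma \ref{eta} is in hand, the whole statement is a bookkeeping exercise combined with the elementary incidence properties of the Fano plane. The only subtle point is recognizing at the outset that distinct choices of $l'$ (among the four lines missing $v$) really do give genuinely distinct basis vectors of $W$, so that $\spam(w_{v,1}, \ldots, w_{v,4})$ is a coordinate subspace -- but this follows immediately from the bijection $l' \leftrightarrow \mathcal{V} \setminus (l' \cup \{v\})$ between lines missing $v$ and their complementary 3-subsets.
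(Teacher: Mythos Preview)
Your proof is correct and follows essentially the same approach as the paper, which treats the result as an immediate corollary of Lemma \ref{eta} together with the brief remark that the admissible $\{a,b,c\}$ are exactly those with $\{a,b,c,v\}$ line-free and $\zeta = \pm 1/h(l')$. You have simply spelled out the underlying Fano-plane counting (four lines miss $v$) and incidence argument (two distinct lines meet in a single point) that the paper leaves implicit.
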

Concretely, we see that if $\zeta a \wedge b \wedge c$ is one of the $w_{v, i}$ for given $v \in \mathcal{V}$ then $\{a, b, c, v\}$ is a four element set containing no lines and $\zeta = \pm 1/h(l)$ where $l = \mathcal{V} \backslash \{a, b, c, v\} \in \mathcal{L}$ and the sign depends on the order of $a, b$ and $c$.

\begin{corollary}\label{Wbasis}
The 7 elements $w_l$, (one for each $l \in \mathcal{L}$) defined in Theorem \ref{maincombinatorially} together with the 28 elements $w_{v, i}$, (on for each $v \in \mathcal{V}, i = 1, 2, 3, 4$) defined in the last corollary form a basis of the 35-dimensional space $W$.
\end{corollary}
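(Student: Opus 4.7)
The plan is to compare the 35 elements $w_l$, $w_{v,i}$ against the obvious monomial basis of $W = \bigwedge^3 V$ given by the wedge products $v_a \wedge v_b \wedge v_c$ where $\{a,b,c\}$ ranges over the $\binom{7}{3}=35$ three-element subsets of $\mathcal{V}$. These monomial basis vectors split into two natural classes: the $7$ line-triples (the three-element subsets that lie in $\mathcal{L}$) and the $28$ non-line triples. Since $7 + 28 = 35$ equals $\dim W$, and the proposed family has exactly $7$ line-type and $28$ point-indexed elements, a counting argument reducing to a bijection will do the job.

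First I would observe that each $w_l$ from Theorem \ref{maincombinatorially}, being of the form $h(l)\cdot u_1 \wedge u_2 \wedge u_3$ with $\{u_1,u_2,u_3\}=l \in \mathcal{L}$ and $h(l)\in F^\times$ (because $\alpha,\beta,\gamma \in F^\times$), is a \emph{nonzero scalar multiple} of the monomial basis vector indexed by the line-triple $l$. Hence the seven $w_l$ already form a basis of the $7$-dimensional subspace of $W$ spanned by the line-indexed monomials. Similarly, each $w_{v,i}$ from Corollary \ref{wvi} is, by its defining first bullet, of the form $\zeta \, a \wedge b \wedge c$ with $\zeta \in F^\times$ and $\{a,b,c,v\}$ a line-free 4-subset of $\mathcal{V}$; in particular $\{a,b,c\}$ is not itself a line, so each $w_{v,i}$ is a nonzero scalar multiple of the monomial basis vector indexed by some non-line triple.

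The heart of the argument is then purely combinatorial: show that, as $v$ ranges over $\mathcal{V}$ and $i$ over $\{1,2,3,4\}$, the 28 non-line triples $\{a,b,c\}$ appearing in the $w_{v,i}$ exhaust the 28 non-line three-subsets of $\mathcal{V}$, each exactly once. By Corollary \ref{wvi} this is equivalent to exhibiting a bijection between the set of pairs $(v, \{a,b,c\})$ with $\{a,b,c,v\}$ a line-free 4-subset and the set of non-line 3-subsets of $\mathcal{V}$. The bijection in one direction forgets $v$; to see it is a bijection it suffices to check that for each non-line triple $\{a,b,c\}$ there is a \emph{unique} $v$ completing it to a line-free 4-set. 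This follows from a quick Fano-plane count: the three pairs in $\{a,b,c\}$ determine three distinct lines with third points $x_1, x_2, x_3$ (distinct from each other and from $a,b,c$, since otherwise $\{a,b,c\}$ would itself be a line), and $\{a,b,c,v\}$ is line-free precisely when $v \notin \{a,b,c,x_1,x_2,x_3\}$, which leaves exactly $7-6=1$ choice of $v$.

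Combining these two observations, the family $\{w_l\}_{l\in\mathcal{L}} \cup \{w_{v,i}\}_{v\in\mathcal{V},\,i=1,\ldots,4}$ consists of $35$ nonzero scalar multiples of the $35$ distinct monomial basis vectors of $W$, with the scalar multiples matching bijectively. Hence this family is itself a basis of $W$. The main (mild) obstacle is simply the bookkeeping of the Fano-plane count in the previous paragraph; no analytic or algebraic difficulty arises, since the nonvanishing of all the scalars is guaranteed by the assumption $\alpha,\beta,\gamma\in F^\times$ together with $h(l)=1$ for $l\notin\{l_{-1},l_0,l_1\}$.
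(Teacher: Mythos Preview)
Your proof is correct and follows essentially the same approach that the paper leaves implicit: the paper presents this as an immediate corollary of Corollary~\ref{wvi} and Lemma~\ref{eta}, and your argument spells out exactly the combinatorial bijection (each non-line triple $\{a,b,c\}$ is completed to a line-free $4$-set by a unique $v$) that makes the ``Moreover'' clause of Corollary~\ref{wvi} work. The only addition worth noting is that the paper phrases the key fact dually---the complement of a line-free $4$-subset of the Fano plane is a line---whereas you count forbidden completions directly; both are standard and equivalent.
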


\begin{corollary}[to Lemma \ref{eta}]
By the last corollary every element $w \in W$ can be written as $w = \sum_{l \in \mathcal{L}} \xi_l w_l + \sum_{v \in \mathcal{V}} \sum_{i=1}^4 \xi_{v, i} w_{v, i}$ for unique scalars $\xi_l, \xi_{v, i} \in F$. Let $u \in \mathcal{V}$. Then 
\begin{equation}\label{kereta}
\ker \eta_u = \{\sum_{l \in \mathcal{L}} \xi_l w_l + \sum_{v \in \mathcal{V}} \sum_{i=1}^4 \xi_{v, i} w_{v, i} \in W \colon \xi_{u, 1} + \xi_{u, 2} + \xi_{u, 3} + \xi_{u, 4} = 0 \}.
\end{equation}
\end{corollary}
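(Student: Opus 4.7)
The corollary is essentially linearity of $\eta_u$ unpacked on the basis of $W$ provided by Corollary~\ref{Wbasis}; my plan is to evaluate $\eta_u$ separately on each of the three kinds of basis vectors. First, for every $l \in \mathcal{L}$ the element $w_l$ is a scalar multiple of $a \wedge b \wedge c$ with $\{a,b,c\} = l \in \mathcal{L}$, so the second bullet of Lemma~\ref{eta} gives $\eta_u(w_l) = 0$ for every $u \in \mathcal{V}$. Second, by the defining property of the $w_{v,i}$ in Corollary~\ref{wvi}, we have $\eta_u(w_{u, i}) = 1$ for $i = 1, 2, 3, 4$. So the whole content of the corollary comes down to the vanishing $\eta_u(w_{v, i}) = 0$ whenever $u \neq v$.

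To establish this, I would write $w_{v, i} = \zeta\,a \wedge b \wedge c$ with $\{a, b, c, v\}$ a line-free 4-subset of $\mathcal{V}$ and $l' := \mathcal{V} \setminus \{a, b, c, v\} \in \mathcal{L}$ (as recorded just after Corollary~\ref{wvi}). If $u \in \{a, b, c\}$ the first bullet of Lemma~\ref{eta} kills $\eta_u(w_{v, i})$ immediately. Otherwise $u$ lies in the three-element set $\mathcal{V} \setminus \{a, b, c\} = \{v\} \cup l'$, and since $u \neq v$ this forces $u \in l'$. The goal is then to show that the 4-set $\{a, b, c, u\}$ contains a line, so that the second bullet of Lemma~\ref{eta} applies.

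The key combinatorial input is the well-known fact that a 4-subset of the Fano plane is line-free if and only if its complement is a line. (One quick counting argument: there are $\binom{7}{4} = 35$ four-subsets, and exactly $7 \cdot 4 = 28$ of them contain a line, leaving 7 line-free ones, which matches the 7 line complements.) Applying this to $\{a, b, c, u\}$, I must check that its complement $\mathcal{V} \setminus \{a, b, c, u\} = \{v\} \cup (l' \setminus \{u\})$ is \emph{not} a line. Writing $l' = \{u, p, q\}$, this complement is $\{v, p, q\}$; if it were a line, then $l'$ and $\{v, p, q\}$ would be two distinct lines meeting in the two points $p, q$, contradicting the Fano axiom that two distinct lines intersect in a single point. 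Hence $\{a, b, c, u\}$ contains a line and $\eta_u(w_{v, i}) = 0$.

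Putting the three computations together and expanding by linearity,
\begin{equation*}
\eta_u(w) \;=\; \sum_{l \in \mathcal{L}} \xi_l\,\eta_u(w_l) + \sum_{v \in \mathcal{V}} \sum_{i = 1}^{4} \xi_{v, i}\,\eta_u(w_{v, i}) \;=\; \sum_{i=1}^{4} \xi_{u, i},
\end{equation*}
which is precisely the characterization of $\ker \eta_u$ claimed in the statement. I do not anticipate any serious obstacle; the only non-formal step is the little Fano-plane argument above, which is essentially one-line once one remembers that line-free 4-subsets are exactly the line complements.
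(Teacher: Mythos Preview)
Your proof is correct and is exactly the argument the paper has in mind: the corollary is stated without proof in the paper, being labeled simply as a corollary to Lemma~\ref{eta}, and your write-up spells out the straightforward evaluation of $\eta_u$ on each basis vector. The only nontrivial step, showing $\eta_u(w_{v,i}) = 0$ for $u \neq v$, is handled cleanly by your Fano-plane observation that line-free $4$-subsets are precisely the complements of lines.
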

\begin{corollary}\label{kerofalletas}
$\bigcap_{v \in \mathcal{V}} \ker \eta_v =  \{\sum_{l \in \mathcal{L}} \xi_l w_l + \sum_{v \in \mathcal{V}} \sum_{i=1}^4 \xi_{v, i} w_{v, i} \in W \colon \sum_{i = 1}^4 \xi_{v, i} = 0 \textnormal{ for all } v \in \mathcal{V} \}$
\end{corollary}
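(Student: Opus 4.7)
The proof is essentially a mechanical consequence of the preceding corollary (equation (\ref{kereta})), so my plan is short. The key point is that the conditions defining the individual kernels $\ker \eta_u$ involve disjoint sets of coefficients, so their intersection is obtained by simply conjoining all seven conditions.

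The plan is as follows. First I would invoke Corollary \ref{Wbasis} to fix once and for all the decomposition
\[
w = \sum_{l \in \mathcal{L}} \xi_l w_l + \sum_{v \in \mathcal{V}} \sum_{i=1}^4 \xi_{v, i} w_{v, i}
\]
with uniquely determined scalars $\xi_l, \xi_{v,i} \in F$. Then I would apply (\ref{kereta}) for each $u \in \mathcal{V}$: the element $w$ lies in $\ker \eta_u$ if and only if $\xi_{u,1} + \xi_{u,2} + \xi_{u,3} + \xi_{u,4} = 0$. Crucially, the constraint imposed by $\ker \eta_u$ involves only the four basis coefficients $\xi_{u,i}$ with $i \in \{1,2,3,4\}$, and for $u \neq u'$ the index sets $\{(u,1),(u,2),(u,3),(u,4)\}$ and $\{(u',1),(u',2),(u',3),(u',4)\}$ are disjoint, so the conditions for distinct $u$'s do not interact. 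The coefficients $\xi_l$ are entirely unconstrained by any $\ker \eta_u$.

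From here the conclusion is immediate: $w$ lies in $\bigcap_{v \in \mathcal{V}} \ker \eta_v$ if and only if each of the seven conditions $\sum_{i=1}^4 \xi_{v, i} = 0$ (one per $v \in \mathcal{V}$) holds, which is exactly the description of the set on the right-hand side. There is no genuine obstacle; the only thing to double-check is that Corollary \ref{Wbasis} really does make the $\xi_l$ and $\xi_{v,i}$ unique so that the seven linear conditions can be conjoined without any hidden dependence between them, and this is guaranteed by the fact that the 35 vectors $\{w_l\}_{l \in \mathcal{L}} \cup \{w_{v,i}\}_{v \in \mathcal{V},\, i \in \{1,2,3,4\}}$ form a basis of the 35-dimensional space $W$.
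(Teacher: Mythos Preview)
Your proposal is correct and matches the paper's approach exactly: the paper states this corollary without proof, as an immediate consequence of the description (\ref{kereta}) of each individual $\ker \eta_u$, and your argument spells out precisely the trivial step of conjoining the seven independent linear conditions.
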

In order to show that this space equals $\ker \Ass$ we'll give a description (Corollary \ref{kerass} below) of this latter space in terms of the basis of Corollary \ref{Wbasis}.

\begin{lemma}
Let $a, b, c \in \mathcal{V}$ and $(. , . ,. )$ be the associator of the multiplication on $F1 \oplus V$ definined in Proposition \ref{FanotoO}. Then:
\begin{itemize}
\item If $\{a, b, c \} \in \mathcal{L}$ then $(a, b, c) = 0$
\item If $\{a, b, c \} \nin \mathcal{L}$ then $(a, b, c) = \lambda d$ for some $\lambda \in F^\times$ where $d \in \mathcal{V}$ is the unique element so that $(\mathcal{V} \backslash \{a, b, c, d\}) \in \mathcal{L}$.
\end{itemize}
\end{lemma}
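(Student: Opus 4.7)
The plan is to handle the two bullets separately, combining the explicit multiplication on $\mathcal{O}$ from Proposition \ref{FanotoO} with elementary Fano-plane combinatorics, and to reduce the non-vanishing of the associator in the second case to a structural contradiction using Theorem \ref{ass4} rather than a direct computation.

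For the first bullet, suppose $\{a,b,c\} = l \in \mathcal{L}$. By Proposition \ref{FanotoO}, $ab$ equals $\pm r(a,b)$ times the third point $c'$ of the unique line through $a$ and $b$, and $r(a,b) \in F^\times$. Since $\{a,b,c\}$ is a line, $c' = c$, so $c \in \langle a, b \rangle$. The subalgebra $\langle a, b \rangle$ is associative by alternativity of $\mathcal{O}$, so $(a,b,c) = 0$.

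For the second bullet, suppose $\{a,b,c\} \notin \mathcal{L}$. First I would set up the Fano-plane combinatorics. Let $c_1$, $b_1$, $a_1$ denote the third points on the lines through $\{a,b\}$, $\{a,c\}$, $\{b,c\}$ respectively. Non-collinearity of $\{a,b,c\}$ forces $a_1, b_1, c_1$ to be three distinct points, each distinct from $a,b,c$. A standard fact about the Fano plane is that $\{a_1, b_1, c_1\}$ is itself a line. The remaining point $d$ is the unique element of $\mathcal{V}$ with $\mathcal{V} \setminus \{a,b,c,d\} = \{a_1, b_1, c_1\} \in \mathcal{L}$, giving existence and uniqueness of $d$. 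Counting incidences (each point lies on exactly three lines), the three remaining Fano-lines must be $\{a, a_1, d\}$, $\{b, b_1, d\}$ and $\{c, c_1, d\}$. Using Proposition \ref{FanotoO}, $ab \in F^\times c_1$, so $(ab)c \in F^\times (c_1 c)$; and $c_1 c \in F^\times d$ via the line $\{c, c_1, d\}$, giving $(ab)c \in Fd$. Symmetrically $a(bc) \in Fd$ via the line $\{a, a_1, d\}$. Hence $(a,b,c) = \lambda d$ for some $\lambda \in F$.

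The main obstacle is showing $\lambda \neq 0$, not merely that $(a,b,c)$ lies on the line $Fd$. Direct calculation using (\ref{defR1}) and tracking the cyclic orderings is feasible but unpleasant, so I would prefer a structural argument by contradiction. Suppose $\lambda = 0$. By Corollary \ref{altcheck} the three-dimensional subspace $\spam(a,b,c)$ is then an associative subspace of $\mathcal{O}$, and Theorem \ref{ass4} implies that $F1 \oplus \spam(a,b,c)$ is a four-dimensional associative subalgebra of $\mathcal{O}$. In particular $ab$ would have to lie in $F1 \oplus \spam(a,b,c)$. But $ab$ is a nonzero scalar multiple of $c_1$, and $c_1 \in \mathcal{V}$ is a basis element of $\mathcal{O}$ distinct from each of $1, a, b, c$, so $ab \notin F1 \oplus \spam(a,b,c)$, a contradiction. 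This completes the proof.
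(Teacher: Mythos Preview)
Your proof is correct and takes a genuinely different route from the paper's. The paper observes that the order-$7$ map $\phi$ on $\mathcal{V}$ extends to a \emph{projective automorphism} of $\mathcal{O}$ (i.e.\ $\phi(x)\phi(y)\in F^\times\phi(xy)$), and uses transitivity of the $\phi$-action on lines and on non-collinear triples to reduce the whole lemma to an explicit computation for one triple of each kind. Your argument instead extracts the Fano-plane combinatorics directly (identifying $c_1,b_1,a_1$, the complementary line $\{a_1,b_1,c_1\}$, and the three lines through $d$) to pin $(a,b,c)$ onto the line $Fd$, and then replaces the non-vanishing computation by the structural input of Theorem~\ref{ass4}: if $(a,b,c)=0$ then $F1\oplus\spam(a,b,c)$ would be closed under multiplication, contradicting $ab\in F^\times c_1$ with $c_1\notin\spam(1,a,b,c)$. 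This is a clean use of material already proved in Section~\ref{associative} and avoids any case-by-case calculation; the paper's approach, by contrast, is more self-contained (it does not appeal to Theorem~\ref{ass4}) and foregrounds the cyclic symmetry that is anyway central to the combinatorial reformulation. One small remark: both your argument and the paper's tacitly treat $a,b,c$ as distinct; when two coincide the associator vanishes by alternativity and the second bullet is not meant to apply.
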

%Bewijs: houden lambda hier vaag; deze wordt pas concreet in volgende lemma.
\begin{proof}
This can be checked by explicit computation. Since $\mathcal{V}$ forms a basis of $V$, the automorphism $\phi$ of $(\mathcal{V}, \mathcal{L})$ described in Theorem \ref{maincombinatorially} gives rise to a bijective linear endomorphism $\phi \colon V \to V$ which we can extend to a bijective linear endomorphism $\phi \colon \mathcal{O} \to \mathcal{O}$ by letting it act as the identity on $F1$. This linear endomorphism is not an algebra automorphism of $\mathcal{O}$ (unless $\alpha = \beta = \gamma = -1$) but it is close: for every $x, y \in O$ we have that $\phi(x)\phi(y) = \lambda \phi(xy)$ for some $\lambda \in F^\times$. This `projective automorphism' property of the map $\phi$ allows us to verify the lemma on just two triples $\{a, b, c \}$ (one with $\{a, b, c\} \in \mathcal{L}$ and one with $\{a, b, c\} \nin \mathcal{L}$) and conclude the lemma holds for all triples by transitivity of $\phi$.
\end{proof}
\begin{corollary}\label{Asslv}
$\Ass(w_l) = 0$ for all $l \in \mathcal{L}$ and for each $v \in \mathcal{V}$ and $i = 1, 2, 3, 4$ there exists a $\lambda_{v, i} \in F^\times$ such that $\Ass(w_{v, i}) = \lamba_{v, i} v$.
\end{corollary}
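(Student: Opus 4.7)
The proof should be a short, direct application of the preceding lemma, combined with the concrete description of the basis vectors $w_l$ and $w_{v,i}$.

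First I would dispatch the claim $\Ass(w_l) = 0$. By construction $w_l = h(l)\, u_1 \wedge u_2 \wedge u_3$ where $\{u_1,u_2,u_3\} = l$ is a line of $\mathcal{L}$. Since $\Ass$ is the linear map through which the trilinear associator factors, $\Ass(w_l) = h(l)\,(u_1, u_2, u_3)$, and the previous lemma gives $(u_1, u_2, u_3) = 0$ because $\{u_1, u_2, u_3\} \in \mathcal{L}$. (It does not matter what $h(l)$ is, though it is non-zero.)

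Next I would handle the vectors $w_{v,i}$. Following Corollary~\ref{wvi}, each $w_{v,i}$ is of the form $\zeta\, a \wedge b \wedge c$ where $\zeta \in F^\times$ and $\{a,b,c,v\}$ is a four-element subset of $\mathcal{V}$ containing no line; in particular $\{a,b,c\} \notin \mathcal{L}$. The preceding lemma therefore gives $(a,b,c) = \lambda d$ for some $\lambda \in F^\times$ and the unique $d \in \mathcal{V}$ for which $\mathcal{V} \setminus \{a,b,c,d\} \in \mathcal{L}$. The key combinatorial observation is that this $d$ must be $v$: the four-element set $\mathcal{V} \setminus \{a,b,c\}$ equals $\{v\} \cup l'$ with $l' := \mathcal{V} \setminus \{a,b,c,v\} \in \mathcal{L}$ by the third bullet of Lemma~\ref{eta}, so the only way to remove one point from $\mathcal{V} \setminus \{a,b,c\}$ and obtain a line is to remove $v$. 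Hence $\Ass(w_{v,i}) = \zeta \lambda\, v$, and we may set $\lambda_{v,i} = \zeta\lambda \in F^\times$.

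There is no real obstacle here; the whole statement is a direct combinatorial reading of the preceding lemma against the explicit description of the basis of $W$ given in Corollary~\ref{Wbasis}. The only step that requires a moment's thought is identifying the point $d$ produced by the lemma with the distinguished point $v$ attached to $w_{v,i}$, but this is forced by the Fano-plane fact that the four-element complement of $\{a,b,c\}$ uniquely decomposes as $\{v\}$ together with the line $l'$.
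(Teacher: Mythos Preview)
Your argument is correct and is exactly the intended derivation: the paper states this result as an immediate corollary of the preceding lemma without further proof, and your unpacking of $w_l$ and $w_{v,i}$ together with the identification $d = v$ via the Fano-plane complement is precisely how one reads it off. The only minor simplification is that once the lemma asserts uniqueness of $d$, you need only observe that $\mathcal{V}\setminus\{a,b,c,v\}\in\mathcal{L}$ (which is part of the defining property of $w_{v,i}$) to conclude $d=v$; the additional check that no other point works is already built into the lemma's uniqueness clause.
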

Now the reason behind defining the $\eta_v$ in the way we did lies in the following proposition.
\begin{proposition}\label{workhorse}
Let $\lambda_{v, i} \in F^\times$ be as in the last corollary. Then for each $v \in \mathcal{V}$ we have $\lambda_{v, 1} = \lambda_{v, 2} = \lambda_{v, 3} = \lambda_{v, 4}$.
\end{proposition}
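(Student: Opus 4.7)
The plan is to compute $\mathrm{Ass}(w_{v,i})$ explicitly from the multiplication rules of Proposition \ref{FanotoO}, reduce the identity $\lambda_{v,1} = \lambda_{v,2} = \lambda_{v,3} = \lambda_{v,4}$ to a purely combinatorial sign identity in the oriented Fano plane, and verify it by a short case check at a single vertex, transferring the result to the remaining vertices via the automorphism $\phi$.

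Fix a vertex $v$ and one of the four triples $\{a,b,c\}$ with $\{a,b,c,v\}$ line-free. Let $l' = \mathcal V\setminus\{a,b,c,v\}$ be the complementary line, and let $d_1, d_2, d_3$ denote the third points on the Fano lines through $\{a,b\}, \{b,c\}, \{c,a\}$ respectively. A short incidence argument (the line through $a,b$ cannot meet $\{c,v\}$ since $\{a,b,c,v\}$ is line-free, so it meets $l'$ at $d_1$; the line through $d_1$ and $c$ likewise cannot meet $\{a,b\}$ or $l'\setminus\{d_1\}$, so it passes through $v$) shows $\{d_1,d_2,d_3\} = l'$ and identifies the three lines through $v$ as $\{v,c,d_1\}, \{v,a,d_2\}, \{v,b,d_3\}$; in particular both $(ab)c$ and $a(bc)$ lie in $Fv$.

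Writing out Proposition \ref{FanotoO} one obtains
\[
(ab)c = \sigma_1\sigma_2\,r(a,b)\,r(d_1,c)\,v, \qquad a(bc) = \sigma_3\sigma_4\,r(b,c)\,r(a,d_2)\,v,
\]
with $\sigma_k \in \{\pm 1\}$ the signs recording whether each ordered pair of factors matches the cyclic orientation of the line on which it lies. The identities (\ref{defR1}), (\ref{defS2}) and (\ref{dooreenpunt}) collapse both scalars $r(a,b)r(d_1,c)$ and $r(b,c)r(a,d_2)$ to the common value $\alpha\beta\gamma\,h(l')/s(v)$. Combined with the normalising scalar $\zeta_i = \operatorname{sign}(\pi_i)/h(l')$ from Lemma \ref{eta} (where $\pi_i$ is the permutation bringing $(v,d_1,d_2,d_3,a,b,c)$ to standard order $(v_0,\ldots,v_6)$), this gives
\[
\lambda_{v,i} \;=\; \operatorname{sign}(\pi_i)\,(\sigma_1\sigma_2 - \sigma_3\sigma_4)\,\frac{\alpha\beta\gamma}{s(v)}.
\]

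Since the factor $\alpha\beta\gamma/s(v)$ is independent of $i$, the proposition reduces to the claim that $\epsilon_{v,i} := \operatorname{sign}(\pi_i)(\sigma_1\sigma_2 - \sigma_3\sigma_4)$ is a single non-zero value for $i = 1,2,3,4$. This is a purely combinatorial statement about the Fano plane with its chosen orientations, and one readily checks that $\phi$-equivariance gives $\epsilon_{\phi(v), \phi_*(i)} = \epsilon_{v, i}$ (orientations are preserved by hypothesis, and the 7-cycle induced by $\phi$ on the standard basis of $V$ is an even permutation, so $\phi$ fixes $\Delta$ and preserves each $\operatorname{sign}(\pi_i)$). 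Thus it suffices to verify the claim at $v = v_0$ by running through the four triples $\{v_3,v_5,v_6\}, \{v_1,v_4,v_6\}, \{v_1,v_2,v_5\}, \{v_2,v_3,v_4\}$; with orientations $(v_n,v_{n+1},v_{n+3})$ on each $l_n$, a brief case analysis produces $\epsilon_{v_0,i} = -2$ throughout. The main obstacle is the bookkeeping of orientation and permutation signs; the $\phi$-reduction cuts this from 28 cases to 4, after which the remaining work is a short and uniform verification.
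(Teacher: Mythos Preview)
Your proposal is correct and follows essentially the same route as the paper's proof: both compute the associator $(a,b,c)$ explicitly from the multiplication rules, collapse the products $r(a,b)r(d_1,c)$ and $r(b,c)r(a,d_2)$ to the common value $\alpha\beta\gamma\,h(l')/s(v)$ via (\ref{defR1}) and (\ref{dooreenpunt}), reduce the remaining content to a purely combinatorial sign statement, and verify the latter by a four-case check at a single vertex after invoking $\phi$-transitivity.

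The only difference is bookkeeping. The paper chooses a labeling $(e,f,g)$ of $l'$ in its given cyclic order and then pins down $(a,b,c)$ via the incidence conditions (\ref{abc1}); this absorbs all sign information into two structural statements (Claim 1 on the permutation sign, Claim 2 on orientation matching), which are then verified for the four triples at $v_0$. You instead carry the signs as explicit factors $\sigma_1,\ldots,\sigma_4$ and $\operatorname{sign}(\pi_i)$ and check the product $\epsilon_{v,i}$ directly. One caution: your $\pi_i$ is defined via the tuple $(v,d_1,d_2,d_3,a,b,c)$, but the scalar $\zeta_i$ coming from Lemma \ref{eta} actually involves $w_{l'}$, hence the \emph{cyclically ordered} points of $l'$ rather than your $(d_1,d_2,d_3)$. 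This introduces an extra sign (the parity of $(d_1,d_2,d_3)\mapsto(e,f,g)$) that your formula for $\lambda_{v,i}$ as written does not display; it is harmless provided your four-case check at $v_0$ is carried out from the definitions rather than from the displayed formula, but it is worth making the convention explicit.
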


Before giving the proof we will see how this proposition helps us in our quest to prove Theorem \ref{maincombinatorially}. Together, Corollary \ref{Asslv} and Proposition \ref{workhorse} can be summarized as: 
\begin{corollary}
$\Ass(w_l) = 0$ for all $l \in \mathcal{L}$ and for each $v \in \mathcal{V}$ there is a $\lambda_v \in F^\times$ such that for each $i \in \{1, 2, 3, 4\}$ we have that $\Ass(w_{v, i}) = \lambda_v v$. Equivalently, there exist $\lambda_v \in F^\times$, one for each $v \in \mathcal{V}$ such that:
\begin{equation}\label{Assaction}
\Ass(\sum_{l \in \mathcal{L}} \xi_l w_l + \sum_{v \in \mathcal{V}} \sum_{i=1}^4 \xi_{v, i} w_{v, i}) = \sum_{v \in \mathcal{V}} \lambda_v (\xi_{v,1} + \xi_{v,2} + \xi_{v,3} + \xi_{v,4})v
\end{equation} 
\end{corollary}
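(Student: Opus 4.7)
The plan is to compute the associator $(a, b, c)$ explicitly in the Cayley algebra of Proposition \ref{FanotoO} and then observe that, after an appropriate normalization, the answer depends only on $v$. Fix $v \in \mathcal{V}$ and $i \in \{1,2,3,4\}$, let $m \in \mathcal{L}$ denote the line associated with $w_{v,i}$ (so that $v \notin m$), and set $\{a,b,c\} = \mathcal{V} \setminus (m \cup \{v\})$. Elementary Fano-plane combinatorics shows that the three ``diagonal'' points $d_{ab}, d_{bc}, d_{ca}$ --- the third points on the lines joining the three pairs from $\{a,b,c\}$ --- are exactly the three points of $m$, so that two of the three lines through $v$ are $\{v, c, d_{ab}\}$ and $\{v, a, d_{bc}\}$ (with the third being $\{v, b, d_{ca}\}$).

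Using the multiplication rule of Proposition \ref{FanotoO} one computes
\[
(ab)c \;=\; \epsilon_1 \, r(a,b) \, r(d_{ab}, c) \, v \quad\text{and}\quad a(bc) \;=\; \epsilon_2 \, r(b,c) \, r(a, d_{bc}) \, v,
\]
with signs $\epsilon_1, \epsilon_2 \in \{\pm 1\}$ determined by how the ordered pairs $(a,b), (d_{ab},c), (b,c), (a,d_{bc})$ fit into the cyclic orientations of the four internal lines $l_{ab}, l_{vd_{ab}}, l_{bc}, l_{vd_{bc}}$. The crucial simplification is that the two coefficient products collapse to a common value. Substituting the definition (\ref{defR1}) of $r$ and applying (\ref{dooreenpunt}) at the point $d_{ab}$ (whose three incident lines are precisely $l_{ab}, l_{vd_{ab}}, m$) yields
\[
r(a,b) \, r(d_{ab}, c) \;=\; \frac{\alpha\beta\gamma \, h(m)}{s(v)},
\]
and the symmetric computation at $d_{bc}$ produces the same value for $r(b,c) \, r(a, d_{bc})$. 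Hence $(a,b,c) = (\epsilon_1 - \epsilon_2) \, \tfrac{\alpha\beta\gamma \, h(m)}{s(v)} \, v$, and the sign-normalization $w_{v,i} = (\epsilon_{v,i}/h(m)) \, a \wedge b \wedge c$ (with $\epsilon_{v,i} \in \{\pm 1\}$ fixed by $\eta_v(w_{v,i}) = 1$) gives
\[
\lambda_{v,i} \;=\; \frac{\epsilon_{v,i} (\epsilon_1 - \epsilon_2) \, \alpha\beta\gamma}{s(v)}.
\]
In particular $|\lambda_{v,i}| \in \{0, \, 2\alpha\beta\gamma/s(v)\}$, and this magnitude is visibly independent of $i$.

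The main obstacle is the final step: showing that the composite sign $\epsilon_{v,i}(\epsilon_1 - \epsilon_2) \in \{-2, 0, 2\}$ is independent of $i$ (and in particular non-zero). The sign $\epsilon_1 - \epsilon_2$ is controlled by the cyclic orientations of the four internal lines, while $\epsilon_{v,i}$ is, by Lemma \ref{eta}, the sign of the permutation rearranging $(v, m(1), m(2), m(3), a, b, c)$ into the reference order $(v_0, \ldots, v_6)$, where $m(1), m(2), m(3)$ are the points of $m$ in their cyclic order. Since both signs stem from the same globally chosen, $\phi$-equivariant cyclic orientation of $\mathcal{L}$, the bookkeeping is rigid and can be settled by case analysis. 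I would finish the proof by using the cyclic $\phi$-action to reduce to (at most) three essentially different cases --- according to whether $v$ lies on two, one, or zero of the special lines $l_{-1}, l_0, l_1$ --- and then explicitly verifying the sign identity for each of the four admissible lines $m$ in each representative case. A sample calculation for $v = v_0$, $m = l_1$ (so that $\{a,b,c\} = \{v_3, v_5, v_6\}$) gives $\epsilon_1 = +1$, $\epsilon_2 = -1$, $\epsilon_{v,i} = -1$, so that $\lambda_{v_0, 1} = -2\beta\gamma = -2\alpha\beta\gamma/s(v_0)$; the remaining cases yield the same uniform value $\lambda_v = -2\alpha\beta\gamma/s(v)$.
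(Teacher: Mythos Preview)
Your proposal is correct and follows essentially the same route as the paper's proof of Proposition~\ref{workhorse}: compute $(ab)c$ and $a(bc)$ via the multiplication of Proposition~\ref{FanotoO}, then collapse the two $r$-products using (\ref{defR1}) and (\ref{dooreenpunt}) applied at the appropriate point of $m$, obtaining the common value $\alpha\beta\gamma\, h(m)/s(v)$; the factor $h(m)$ then cancels against the normalization of $w_{v,i}$, and the remaining work is pure sign bookkeeping handled by case analysis for a fixed $v$ and all four $i$.

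The only organizational difference is that the paper front-loads the sign analysis: it first fixes a specific labeling $(e,f,g)$ of $m$ and $(a,b,c)$ of its complement satisfying two combinatorial claims (Claim~1 on the wedge sign, Claim~2 on orientation compatibility), verifies these by drawing the four pictures for a single $v$, and then runs the scalar computation once with all signs already pinned down. Your version carries the signs $\epsilon_1,\epsilon_2,\epsilon_{v,i}$ symbolically through the computation and defers their determination to the end; the content is the same. One small overcaution: your proposed reduction to ``three cases according to whether $v$ lies on two, one, or zero of the special lines $l_{-1},l_0,l_1$'' is unnecessary. The signs $\epsilon_1,\epsilon_2,\epsilon_{v,i}$ are purely combinatorial (independent of $\alpha,\beta,\gamma$) and hence $\phi$-equivariant, so a single choice of $v$ with its four values of $i$ suffices, exactly as the paper does.
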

By Corollary \ref{Wbasis}, every element of $w$ can be written in the form of the argument of the $\Ass$-operator in (\ref{Assaction}), so (\ref{Assaction}) fully describes the action of $\Ass$ on $W$. It follows that:
\begin{corollary}\label{kerass}
$$\ker \Ass = \{\sum_{l \in \mathcal{L}} \xi_l w_l + \sum_{v \in \mathcal{V}} \sum_{i=1}^4 \xi_{v, i} w_{v, i} \in W \colon \sum_{i = 1}^4 \xi_{v, i} = 0 \textnormal{ for all } v \in \mathcal{V} \}$$
\end{corollary}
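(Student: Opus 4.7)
The plan is to derive Corollary \ref{kerass} directly from equation (\ref{Assaction}), which was just established in the preceding corollary. Since that equation already provides a completely explicit formula for $\Ass$ acting on an arbitrary element of $W$ expressed in the basis of Corollary \ref{Wbasis}, almost no extra work is needed; what remains is essentially a bookkeeping exercise in linear algebra.

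First I would remark that, by Corollary \ref{Wbasis}, writing $w \in W$ in the form $\sum_{l \in \mathcal{L}} \xi_l w_l + \sum_{v \in \mathcal{V}} \sum_{i=1}^4 \xi_{v, i} w_{v, i}$ amounts to choosing a unique coordinate vector for $w$, so it makes sense to speak of ``the'' coefficients $\xi_l, \xi_{v,i}$. Plugging such a $w$ into (\ref{Assaction}) yields
\begin{equation*}
\Ass(w) \;=\; \sum_{v \in \mathcal{V}} \lambda_v\!\left(\sum_{i=1}^{4} \xi_{v,i}\right) v \;\in\; V.
\end{equation*}
Since $\mathcal{V}$ is by construction a basis of $V$, the right hand side vanishes if and only if every coefficient $\lambda_v(\xi_{v,1}+\xi_{v,2}+\xi_{v,3}+\xi_{v,4})$ vanishes. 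The crucial input from Proposition \ref{workhorse} (via the preceding corollary) is that each $\lambda_v$ lies in $F^\times$, so this is equivalent to $\sum_{i=1}^{4}\xi_{v,i} = 0$ for every $v \in \mathcal{V}$. Both inclusions of the claimed set equality now follow immediately.

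No step of this proof strikes me as a genuine obstacle; all the substantive work has already been done in establishing Corollary \ref{Asslv} (so that the $w_l$ lie in $\ker \Ass$ and each $w_{v,i}$ maps to a scalar multiple of $v$) and Proposition \ref{workhorse} (so that across the four elements $w_{v,1}, \ldots, w_{v,4}$ associated to a fixed $v$, the scalar is the same). Given those, the only thing to be careful about is to verify that the $\xi_l$-coefficients drop out cleanly (they do, since $\Ass(w_l)=0$), and to note explicitly that the $\lambda_v$ being non-zero is what prevents ``accidental'' cancellations when reading off the kernel.
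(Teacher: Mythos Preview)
Your proof is correct and follows exactly the approach the paper takes: the paper simply states that (\ref{Assaction}) fully describes $\Ass$ on $W$ (via Corollary \ref{Wbasis}) and then asserts Corollary \ref{kerass} as an immediate consequence. You have just made explicit the two observations the paper leaves implicit, namely that $\mathcal{V}$ is a basis of $V$ and that each $\lambda_v$ is invertible.
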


Comparing Corollary \ref{kerass} to Corolarry \ref{kerofalletas} we find that
$$ \bigcap_{v \in \mathcal{V}} \ker \eta_v = \ker \Ass \subset W.$$
This is equation (\ref{etavsass}) which in conjunction with Lemma \ref{K} and Theorem \ref{main3} implies Theorem \ref{maincombinatorially}, as explained above that equation.
\end{proof}

\begin{proof}[Proof of Proposition \ref{workhorse}]
We will assume without loss of generality that the orientation on the line $\{v_n, v_{n+1}, v_{n+3}\} \in \mathcal{L}$ reads $(v_n, v_{n+1}, v_{n+3})$ and show that for each $i \in \{1, 2, 3, 4\}$ and each $v \in \mathcal{V}$ we have that $\Ass(w_{v, i}) = \frac{-2\alpha\beta\gamma}{s(v)}v$. Since the right hand side does not depend on $i$, this proves the claim. When instead the orientation on $\{v_n, v_{n+1}, v_{n+3}\}$ reads $(v_{n+1}, v_{n}, v_{n+3})$, one can show with a different but highly similar proof that in that case $\Ass(w_{v, i}) = \frac{+2\alpha\beta\gamma}{s(v)}v$ for each $i \in \{1, 2, 3, 4\}$ and each $v \in \mathcal{V}$, which again does not depend on $i$ and hence suffices to prove Proposition \ref{workhorse}.

By its definition in Corollary \ref{wvi}, the element $w_{v, i}$ is a scalar multiple of a wedge product $a \wedge b \wedge c$ for some set $\{a, b, c\} \subset \mathcal{V}$ with the property that $\mathcal{V} \backslash \{a, b, c, v\}$ is a line in $\mathcal{L}$. We'll denote this line by $l_{v, i}$ and assign the labels $e, f, g$ to the elements of the line $l_{v, i}$ in such a way that
\begin{equation}\label{efg}
(e, f, g) \textnormal{ is the line } l_{v, i} \textnormal{ in the cyclic ordering given in Thm. \ref{maincombinatorially}}
\end{equation}
Note that there are three possible choices satisfying \ref{efg}. Once we have settled on one, we'll asign the labels $a, b, c$ to the elements of the set $\{a, b, c\}$ defined above by
\begin{equation}\label{abc1}
\{v, a, f\} \in \mathcal{L}; \quad  \{v, b, g\} \in \mathcal{L}; \quad \{v, c, e\} \in \mathcal{L}.
\end{equation}
Since $\{e, f, g\}$ is a line in $\mathcal{L}$ not containig $v$ we see (as in Figure \ref{fanoplot} (c)) that $a, b, c$ are well defined by (\ref{abc1}) and not equal to any of $e, f, g, v$, hence $\mathcal{V} = \{a, b, c, v, e, f, g\}$. From here we conclude that
\begin{equation}\label{abc2}
\{a, b, e\} \in \mathcal{L}; \quad  \{b, c, f\} \in \mathcal{L}; \quad \{c, a, g\} \in \mathcal{L}.
\end{equation}
Now these choices have two not immediately obvious implications.
\begin{claim1}
$a \wedge b \wedge c \wedge v \wedge e \wedge f \wedge g = \Delta$
\end{claim1}
and
\begin{claim2}
$$(a, b, e) \sim (v, c, e); \quad (b, c, f) \sim (v, a, f); \quad (c, a, g) \sim (v, b, g)$$
where the equivalence relation $\sim$ between ordered elements of $\mathcal{L}$ is defined by $l \sim r$ if either \emph{both} $l$ and $r$ are ordered according to the given cyclic ordering on the elements of $\mathcal{L}$ or \emph{neither} of them are. 
\end{claim2}
We note that all statements made up-to-and-including Claim 1 do also hold when we had assumed the cyclic ordering on the lines to be $(v_{n+1}, v_n, v_{n+3})$ rather than $(v_{n}, v_{n+1}, v_{n+3})$. However Claim 2 should be replaced by a different Claim $2'$ under that assumption.

In order to verify (under the current assumption) the two claims for a given pair $(v, i)$, it suffices to do so for only one of the three assignments of $e, f, g$ to $l_{v, i}$ satisfying (\ref{efg}): switching to any of the other two amounts to cyclically permuting the labels $e, f, g$ which, by (\ref{abc1}, \ref{abc2}) results in cyclically permuting $a, b, c$ in the same direction. It is clear that both claims are invariant under simultaneous permuations of this type.

Similarly we see that once we verified the two claims for all four pairs $(v, 1)$, $(v, 2)$, $(v, 3)$, $(v, 4)$ for a fixed element $v \in \mathcal{V}$ it follows that the claims hold for all 28 combinations $(v, i)$ as the claims are clearly invariant under application of $\phi$. 

That said, I unfortunately don't see a more elegant way of proving the claims than doing just that: verifying them for the four cases coming from a fixed $v$ (e.g. the element $v_0$ of Thm \ref{maincombinatorially}) by drawing four pictures of the type of Figure \ref{fanoplot} (c).

We now look at some consequences of the two claims. In particular we assume that $v$ and $i$ are fixed and elements $a, b, c, e, f, g$ have been chosen accordingly as above. From Claim 1 we find that $v \wedge e \wedge f \wedge g \wedge a \wedge b \wedge c = \Delta$ as well and hence that 
\begin{equation}\label{hDelta}
v \wedge w_{l_{v, i}} \wedge a \wedge b \wedge c = h(l_{v, i})\Delta. 
\end{equation}
This enables us to compute $w_{v, i}$. 

From the definition of $w_{v, i}$ in Corollary \ref{wvi} we see that 
\begin{enumerate}
\item $w_{v, i} = \zeta a \wedge b \wedge c$ with $a, b, c$ as determined by (\ref{abc1}) for some $\zeta \in F^\times$,
\item $v \wedge (\sum_{l \in \mathcal{L}} w_l) \wedge w_{v, i} = \Delta$.
\end{enumerate}
Combined these equalities read
\begin{equation}\label{zetaDelta}
\zeta(v \wedge (\sum_{l \in \mathcal{L}} w_l) \wedge a \wedge b \wedge c) = \Delta.
\end{equation}
However, since every line in $\mathcal{L}$ except for $l_{v, i}$ intersects the four element set $\{v, a, b, c\}$, we find that the left hand side of (\ref{zetaDelta}) equals $\zeta(v \wedge w_{l_{v, i}} \wedge a \wedge b \wedge c)$. Thus (\ref{zetaDelta}) and (\ref{hDelta}) imply that $\zeta = \frac{1}{h(l_{v, i})}$ and hence
\begin{equation}\label{wviabc}	
w_{v, i} = \frac{1}{h(l_{v, i})} a \wedge b \wedge c.
\end{equation}

It follows that 
\begin{equation}\label{Asswvi}
\Ass(w_{v, i}) = \frac{1}{h(l_{v, i})}(a, b, c) = \frac{1}{h(l_{v, i})}((ab)c - a(bc)).
\end{equation}
Using (\ref{abc1}, \ref{abc2}), Claim 2 and the definition of the multiplication given in Proposition \ref{FanotoO} we can compute the terms on the right hand side of (\ref{Asswvi}) as follows:
\begin{eqnarray}
(ab)c = -r(a, b)r(e, c)v  \\
a(bc) = +r(a, f)r(b, c)v
\end{eqnarray}
Substituting this into (\ref{Asswvi}) we get:
\begin{equation}\label{Asswvi2}
-\Ass(w_{v, i}) = \left(\frac{r(a, b)r(e, c)}{h(l_{v, i})}\right) v + \left(\frac{r(a, f)r(b, c)}{h(l_{v, i})}\right) v.
\end{equation}
Recall that $l_{v, i} = \{e, f, g\}$. Expanding out the coefficient of the first term on the right hand side of (\ref{Asswvi2}) using the definition (\ref{defR1}) of the function $r$ and the relation (\ref{dooreenpunt}) with $p = e$ we find:
\begin{equation}\label{Asswvi3}
\frac{r(a, b)r(e, c)}{h(l_{v, i})} = \frac{(\alpha\beta\gamma)(\alpha\beta\gamma)}{h(\{e, f, g\})h(\{a, b, e\})h(\{e, c, v\})s(e)s(v)} = \frac{\alpha\beta\gamma}{s(v)}.
\end{equation}
Expanding out the coefficient of the second term on the right hand side of (\ref{Asswvi2}) using the definition (\ref{defR1}) of the function $r$ and the relation (\ref{dooreenpunt}) with $p = f$ we find:
\begin{equation}\label{Asswvi4}
\frac{r(a, f)r(b, c)}{h(l_{v, i})} = \frac{(\alpha\beta\gamma)(\alpha\beta\gamma)}{h(\{e, f, g\})h(\{a, f, v\})h(\{b, c, f\})s(f)s(v)} = \frac{\alpha\beta\gamma}{s(v)}.
\end{equation}
Combining(\ref{Asswvi2}, \ref{Asswvi3}, \ref{Asswvi4}) we obtain:
\begin{equation}\label{Asswvi5}
\Ass(w_{v, i}) = -2\frac{\alpha\beta\gamma}{s(v)} v
\end{equation}
as announced in the first line of the proof.
\end{proof}

%\section{Counting the number of blocks in the finite field case}\label{counting}
\section{Quantifying the results of section \ref{split} in the finite field case}\label{counting}

\subsection{Notational conventions}
We specialize to the case $F = \mathbb{F}_q$ for $q$ an odd prime power. $O$ is still a Cayley algebra over $F$, which now necessarily is split by Corollary \ref{splitfinite}. Everything being finite we are able to count the number of two-dimensional subspaces of each of the six types as well as the number of 4-dimensional associative subalgebras of $O$. The latter number will equal the number of blocks in the $q$-covering design of Theorem \ref{main2} and thus provides an upper bound to the $q$-covering number $\mathcal{C}_q(7, 3, 2)$ defined in \cite{Lambert} as follows:

\begin{definition}
The $q$-covering number $\mathcal{C}_q(n, k, t)$ is the minimum size of a $q$-covering design over $\mathbb{F}_q$.
\end{definition}
 
Lambert \cite{Lambert} gives a construction for $q$-Covering designs over finite fields $\mathbb{F}_q$ which, for parameters $(7, 3, 2)$, contains $q^8 + 2q^6 + 3q^4 + q^3 + 2q^2 + q + 1$ blocks. This provides the following bounds on $\mathcal{C}_q(7, 3, 2)$: 
\begin{equation}\label{Cq}
\qbinom{7}{2}/\qbinom{3}{2} \leq C_q(7, 3, 2) \leq q^8 + 2q^6 + 3q^4 + q^3 + 2q^2 + q + 1
\end{equation}
where the lower bound, which equals $q^8 + q^6 + q^5 + q^4 + q^3 + q^2  + 1$ is the number of block in a hypothetical $q$-fano plane over $\mathbb{F}_q$. As we will see in this section, the number blocks in the $q$-covering design of Theorem \ref{main2} in case of finite $F$ will be strictly larger than the upperbound of (\ref{Cq}), showing that, apparently, vastly different $q$-covering designs that are minimal with respect to the inclusion ordering can exist over the same field. Of course, from Theorem \ref{main2} we already knew that in the case of fields, such as $\RR$, over which both split and division Cayley algebras exist.  

The lower bound in (\ref{Cq}) employed the following standard notation:

\begin{notation}
Viewing $q$ as an abstract variable that (if needed) can take values in the complex numbers, we have for $n, k \in \mathbb{N}$ the following polynomials in $q$:
\begin{itemize}
\item $[n]_q \coloneqq \frac{q^{n} - 1}{q-1} = 1 + q + \ldots + q^{n-1}$, with the convention that $[0]_q = 0$. It is easy to see that $\lim_{q \to 1} [n]_q = n$.
\item $[n]_q! \coloneqq [1]_q[2]_q \cdots [n]_q$ with the convention $[0]_q! = 1$. Again $\lim_{q \to 1} [n]_q! = n!$.
\item $\qbinom{n}{k} \coloneqq \frac{[n]_q!}{[n-k]_q![k]_q!}$. We see that $\lim_{q \to 1} \qbinom{n}{k} = \binom{n}{k}$. The $\qbinom{n}{k}$ are called $q$-binomial coefficients or Gaussian binomial coefficients.
\end{itemize}
\end{notation}

We recall:

\begin{proposition}[See e.g. \cite{Cohn}]
\mbox{}
\begin{itemize} 
\item The $q$-binomial coefficients satisfy the recurrence relation 
$$\qbinom{n}{k} = \qbinom{n-1}{k} + q^{n-k}\qbinom{n-1}{k-1}$$ 
and (hence) are te coefficients of $X^kY^{n-k}$ in the expansion of $(X + Y)^n$ in the ring $\mathbb{Q}[X, Y]_q$ of (non-commutative) polynomials in two variables $X, Y$ subject (only) to the relaton $YX = qXY$.
\item Let $q$ be an actual number and let $F$ be a field with $q$ elements (or, more generally, a topological field with Euler characteristic $q$). Then $\qbinom{n}{k}$ equals the number of $k$-dimensional subspaces in an $n$-dimensional vector space over $F$. (Or, more generally, the Euler-characteristic of the Grasmannian $\Gr_{n, k}(F)$).
\end{itemize}
\end{proposition}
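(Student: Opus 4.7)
The plan is to handle the two bullet points separately, with the first being essentially algebraic and the second essentially combinatorial (or, in the topological generalization, geometric).

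For the first bullet, my first step would be to verify the recurrence by direct manipulation of the defining formula $\qbinom{n}{k} = [n]_q!/([n-k]_q![k]_q!)$. Writing both summands on the right-hand side over the common denominator $[n-k]_q![k]_q!$ and pulling out $[n-1]_q!$, the claim reduces to the identity $[n-k]_q + q^{n-k}[k]_q = [n]_q$, which is immediate from the definition $[m]_q = 1 + q + \cdots + q^{m-1}$. With the recurrence in hand I would prove the polynomial interpretation by induction on $n$. The base case $n = 0$ is trivial. For the inductive step I would expand $(X+Y)^n = (X+Y)^{n-1}(X+Y)$, apply the induction hypothesis to the first factor, and move each $X$ past the accumulated $Y$'s using $YX = qXY$; the factor $q^{n-k}$ in the recurrence appears precisely because moving a single $X$ past $n-k$ copies of $Y$ produces the scalar $q^{n-k}$. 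I expect no real obstacle here; it is a bookkeeping exercise once one keeps careful track of the order of the variables.

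For the second bullet (in the case of an honest finite field $F = \mathbb{F}_q$) I would use the standard double-counting argument. First count ordered $k$-tuples of linearly independent vectors in $F^n$: there are $q^n - 1$ choices for the first, $q^n - q$ for the second (avoiding the span of the first), and so on, giving $\prod_{i=0}^{k-1}(q^n - q^i)$. Any fixed $k$-dimensional subspace $U \subset F^n$ admits exactly $\prod_{i=0}^{k-1}(q^k - q^i)$ ordered bases, by the same argument applied inside $U$. Dividing and simplifying (factoring out appropriate powers of $q$ from numerator and denominator) yields exactly $\qbinom{n}{k}$.

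The more delicate part is the parenthetical topological generalization, which is what I would flag as the main obstacle: it requires interpreting $q$ as the Euler characteristic of a not-necessarily-finite topological field and identifying $\qbinom{n}{k}$ with the Euler characteristic of $\Gr_{k}(F^n)$. The cleanest route is to exhibit the Schubert cell decomposition $\Gr_{k}(F^n) = \bigsqcup_\lambda C_\lambda$, indexed by partitions $\lambda$ fitting in a $k \times (n-k)$ box, where $C_\lambda$ is an affine cell of $F$-dimension $|\lambda|$ and hence (if $F$ is a topological field with $\chi(F) = q$) has Euler characteristic $q^{|\lambda|}$; additivity of the Euler characteristic across this stratification then gives $\chi(\Gr_k(F^n)) = \sum_\lambda q^{|\lambda|}$, which is a classical generating-function identity for $\qbinom{n}{k}$. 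Verifying that the Schubert cells really are affine spaces (using row echelon forms for the parametrization) and that Euler characteristic is additive over the stratification is where the actual work lies; but since the statement appears with a ``See e.g.\ \cite{Cohn}'' citation, I would simply invoke the literature rather than redo this verification in detail.
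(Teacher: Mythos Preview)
Your proof sketch is correct and follows the standard arguments for these well-known facts. However, the paper does not actually prove this proposition: it is stated with the citation ``See e.g.\ \cite{Cohn}'' and no proof is given, as the result is treated as background material from the literature. So there is nothing to compare against; your proposal simply supplies the textbook argument that the paper chose to omit.
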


Of course the second bullet point is the main reason that this notation is useful to us in the current section.

In Section \ref{split} we defined 6 types of two-dimensional subspaces: Q, U, D, M, J and Z. In Proposition \ref{QMDJ} we saw that every four-dimensional associative subalgebra is generated by a subspace of type Q, D, M or J and hence has structure described by Proposition \ref{main}, \ref{typeD}, \ref{typeM} or \ref{typeJ} respectively. (See also Theorem \ref{classification}) In particular they are either a four-dimensional Cayley-Dickson algebra, a (vectorspace) direct sum of a two-dimensional Cayley-Dickson algebra and a two-dimensional Jacobson radical, or a direct sum of the 1-dimensional Cayley-Dickson algebra and a three-dimensional Jacobson radical. Over the finite field $F = \mathbb{F}_q$ there are up-to-isomorphism exactly one four-dimensional Cayley-Dickson algebra (the algebra $\Mat(2, F)$) by Cor. \ref{splitfinite} and exactly two two-dimensonal Cayley-Dickson algebras: the split one and one isomorphism class of division Cayley-Dickson algebra. The reason for the uniqueness of the latter is that 2-dimensional Cayley-Dickson algebras are commutative by Thm \ref{Albert} and hence the divison algebras among them are fields, more specifically quadratic field extensions of the ground field. The uniqueness of these in the case the ground field is finite has been known for a long time. Having thus a complete classification up-to-isomorphism of the four-dimensional subalgebras of $O$ it remains to give each class of subalgebra a name in order to simplify notation throughout the rest of the section.

\begin{notation}\label{subalgtypes}
Let $H$ be a four-dimensional subalgebra of $O$ then we say that $H$ is of:
\begin{itemize}
\item[Type M4] if it is a four-dimensional simple algebra, which is necessarily isomorphic to $\Mat(2, F)$ by the above.
\item[Type F2J2] If it has 2-dimensional Jacobson radical $J(H)$ and decomposes as a vector space direct sum $H = D \oplus J(H)$ with $\dim(D) = \dim J(H) = 2$, where $J(H)$ is a type Z space and $D$ is a two-dimensional Cayley-Dickson division algebra (hence isomorphic to $\mathbb{F}_{q^2}$). All algebras in this class are isomorphic since, by Proposition \ref{QMDJ}, they contain a type D subspace and hence the relation between the subalgebras $D$ and $J(H)$ is determined completely by Propostion \ref{typeD}.
\item[Type S2J2] If it has 2-dimensional Jacobson radical $J(H)$ and decomposes as a vector space direct sum $H = D \oplus J(H)$ with $\dim(D) = \dim J(H) = 2$, where $J(H)$ is a type Z space and $D$ is the two-dimensional split Cayley-Dickson algebra (hence isomorphic to the \emph{algebra} direct sum $F \oplus F$). All algebras in this class are isomorphic since, by Proposition \ref{QMDJ}, they contain a type D subspace and hence the relation between the subalgebras $D$ and $J(H)$ is determined completely by Propostion \ref{typeD}.
\item[Type F1J3] If it has 3-dimensional Jacobson radical $J(H)$ and hence $H$ decomposes as the vector space direct sum $F1 \oplus J(H)$. All algebras in this class are isomorphic again, with structure described by Proposition \ref{typeJ}. (So unlike the M4 and F2J2 cases the uniqueness up to isomorphism here does not use that $F$ is finite.)
\end{itemize} 
\end{notation}

We recall from Theorem \ref{classification} that 2-dimensional subspaces of types Q and M generate subalgebras of type M4, 2-dimensional subspaces of type $J$ generate subalgebras of type F1J3 and two-dimensional subspaces of type D generate subalgebras of type either F2J2 or S2J2. We refine this latter point:

\begin{lemma}\label{Dns}
Let $D$ be a type D space with basis $u, v$ as in Proposition \ref{typeD}, so $u^2 = 0$ and $v^2 = \beta \neq 0 \in F$. Let $v' \in D$ be any non-nilpotent element. Then $(v')^2$ is a square in $F$ if and only if $\beta$ is a square in $F$ if and only if $\langle D \rangle$ is of type S2D2. Conversely, $(v')^2$ is a non-square in $F$ if and only if $\beta$ is a non-square in $F$ if and only if $\langle D \rangle$ is of type F2D2. 
\end{lemma}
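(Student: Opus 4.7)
The plan is to reduce both biconditionals to a single short computation of $(v')^2$ in terms of the basis $\{u,v\}$, together with the structural information about $\langle D \rangle$ already supplied by Proposition \ref{typeD} and Lemma \ref{2dim}.

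First I would parametrize the non-nilpotent elements of $D$. Since $u$ spans the unique nilpotent line of the type D space $D$ (by the characterization of type D in Definition \ref{types} and Proposition \ref{typeD}), every element of $D$ can be written uniquely as $v' = \lambda u + \mu v$, and $v'$ is non-nilpotent if and only if $\mu \neq 0$. Now I would compute
\[
(v')^2 = \lambda^2 u^2 + \lambda\mu (uv + vu) + \mu^2 v^2.
\]
By hypothesis $u^2 = 0$ and $v^2 = \beta$. The crucial observation is that, because $D$ is of type D, the product $uv$ lies in $\im(O)$ (this is exactly statement (2) of Corollary \ref{curious}, or equivalently the defining feature of type D in Lemma \ref{complete}), so $uv + vu = 2\tau(uv) = 0$. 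Therefore $(v')^2 = \mu^2 \beta$ with $\mu \neq 0$. Since $\mu^2$ is a nonzero square, $(v')^2$ is a square in $F$ if and only if $\beta$ is a square, giving the first biconditional.

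For the second biconditional I would invoke Proposition \ref{typeD}: it furnishes the vector space decomposition $\langle D \rangle = \langle v \rangle \oplus J(\langle D \rangle)$ in which the semisimple part is the two-dimensional Cayley-Dickson subalgebra $\langle v \rangle \cong D_\beta(F)$. By Notation \ref{subalgtypes}, $\langle D \rangle$ is of type S2J2 precisely when this semisimple part is the split two-dimensional Cayley-Dickson algebra over $F$, and of type F2J2 precisely when it is a (necessarily quadratic) field extension of $F$. Lemma \ref{2dim} now says exactly that $D_\beta(F)$ is split when $\beta$ is a square and is a quadratic field extension when $\beta$ is a non-square, which completes the chain of equivalences.

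There is no real obstacle here beyond keeping the definitions straight: the only step with any content is verifying that $uv + vu = 0$ for the type D basis, and once that is in place the computation $(v')^2 = \mu^2 \beta$ together with Propositions \ref{typeD} and Lemma \ref{2dim} does all the work. One should note in passing that the argument also confirms that the classification S2J2 vs.\ F2J2 is well-defined, i.e.\ independent of which non-nilpotent $v \in D$ one chose to play the role of the second basis vector in Proposition \ref{typeD}.
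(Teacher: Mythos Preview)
Your proof is correct and takes essentially the same approach as the paper, which simply cites Proposition~\ref{typeD} and Lemma~\ref{2dim}; you have merely spelled out the computation $(v')^2 = \mu^2\beta$ (which the paper already performed in the proof of Lemma~\ref{complete}) rather than leaving it implicit.
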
 
\begin{proof}
Directly from Proposition \ref{typeD} and Lemma \ref{2dim}
\end{proof}
\begin{notation}
We say that a type D subspace of $\im(O)$ all whose non-nilpotent elements square to squares in $F$ is of type Ds and that a type D subspace of $\im(O)$ all whose non-nilpotent elements square to non-squares in $F$ is of type Dn.
\end{notation}

\subsection{Summary of the results}
The goal of this section is to count, in case that $F = \mathbb{F}_q$ the following quantities:
\begin{itemize} 
\item For each type $X$ of two-dimensional subspace of $\im(O)$ the total number of spaces of type $X$. This number will be denoted $\Theta_X$.
\item For each type $Y$ of four-dimensional associative subalgebra of $O$ the total number of subalgebras of that type. This number will be denoted $N_Y$.
\item For each type $X$ of two-dimensional subspace of $\im(O)$, each fixed subspace $U$ of type $X$ and each type $Y$ of four-dimensional associative subalgebra of $O$ the number of subalgebras of type $Y$ containing $U$. This number will be denoted $H_{X,Y}$.
\item For each type $X$ of two-dimensional subspace of $\im(O)$, each type $Y$ of four-dimensional associative subalgebra of $O$ and each fixed subalgebra $H$ of type $Y$, the number of spaces of type $X$ contained in $H$. This number will be denoted $T_{X,Y}$.
\end{itemize}

It follows from Theorem \ref{transitive} that the last two numbers only depend on the types $X$, $Y$ and not on the specific choice of $U$ and $H$ so that the absense of $U$ and $H$ from the notation makes sense.

%Some numbers \emph{will} hovever depend on whether or not $-1$ is a square in $F$, which is the case if only if $q \equiv 3 \mod 4$. We write $\epsilon$ for the unique elmenent of $\{+1, -1\}$ that satisfies $q \equiv \epsilon \mod 4$, a notational convention taken form \cite{die notes op internet. Zie een van die emails met links}.

The results of this section are collected in tables \ref{Thetavalues}, \ref{Nvalues}, \ref{Qvalues}, \ref{Tvalues}. We note that the values of $H_{\textrm{Q}, Y}$, $H_{\textrm{M}, Y}$, $H_{\textrm{Ds}, Y}$, $H_{\textrm{Dn}, Y}$ $H_{\textrm{J}, Y}$ given in Table \ref{Qvalues} have already been established in section \ref{split} as have the zeros among the $T_{\textrm{Q}, Y}$, $T_{\textrm{M}, Y}$, $T_{\textrm{Ds}, Y}$, $T_{\textrm{Dn}, Y}$, $T_{\textrm{J}, Y}$ in Table \ref{Tvalues}. The remaining entries in tables \ref{Thetavalues}, \ref{Nvalues}, \ref{Qvalues}, \ref{Tvalues} will be calculated here. 

We will frequently use the `double counting' identity 

$$\Theta_X H_{X, Y} = T_{X, Y} N_Y.$$

\begin{table}
\caption{\label{Thetavalues} Number $\Theta_X$ of two-dimensional subspaces of type $X$ of $\im(O)$ where $X$ runs over the seven types described in section \ref{split}.}
\begin{tabular}{r|r}
Q  & $\frac{1}{2}([11]_{-q} - [5]_{-q})$   \\ 
U  & $[8]_q - [2]_q$   \\
Dn & $\frac{q^9 + q^8 - q^3 - q^2}{2}$   \\
Ds & $\frac{q^9 + q^8 - q^3 - q^2}{2}$   \\
M  & $\frac{1}{2}([11]_q - [5]_q)$ \\
J  & $[8]_q - [2]_q$   \\
Z  & $[6]_q$    
\end{tabular}
\end{table}

\begin{table}
\caption{\label{Nvalues} Number $N_Y$ of four-dimensional associative subalgebras of $O$ of type $Y$ where $Y$ runs over the three types defined above.}
\begin{tabular}{rrrrr}%heading misschien wel centreren of links uitlijnen? Ben vergeten hoe het moet. Misschien ook onzin.
M4                  & F2J2                 & S2J2                                                   & F1J3                                                %& Total 
\\ \hline
$q^8 + q^6 + q^4$   & $\frac{q^7 - q}{2}$  & $\frac{q^7 + 2q^6 + 2q^5 + 2q^4 + 2q^3 + 2q^2 + q}{2}$ & $q^5 + q^4 + q^3 + q^2 + q + 1$  %& $q^8 + q^7 + 2q^6 + 2q^5 + 3q^4 + 2q^3 + 2q^2 + q + 1$ 
%dus totaal F2J2 + S2J2 = q^7 + q^6 + q^5 + q^4 + q^3 + q^2 = q^2[6]_q          
\end{tabular}
\end{table}

\begin{table}
\caption{\label{Qvalues} Number $H_{X, Y}$ of four-dimensional associative subalgebras of $O$ of type $Y$ (indexing the columns) containing a fixed two-dimensional subspace $U$ of $\im(O)$ of type $X$ (indexing the rows).}
\begin{tabular}{l|rrrrr}%heading misschien wel centreren of links uitlijnen? Ben vergeten hoe het moet. Misschien ook onzin.
     &    M4 &              F2J2 &               S2J2 &    F1J3    %& Sum 
     \\ \hline 
Q    &   $1$ &               $0$ &                $0$ &     $0$   \\% & $1$        \\    
U    & $q^2$ &               $0$ &              $q+1$ &     $0$    \\%& $[3]_q$    \\
Dn   &   $0$ &               $1$ &                $0$ &     $0$    \\%& $1$        \\
Ds   &   $0$ &               $0$ &                $1$ &     $0$   \\% & $1$        \\
M    &   $1$ &               $0$ &                $0$ &     $0$    \\% & $1$        \\
J    &   $0$ &               $0$ &                $0$ &     $1$    \\% & $1$        \\
Z    &   $0$ & $\frac{q^2-q}{2}$ &  $\frac{q^2+q}{2}$ & $q + 1$    \\% & $[3]_q$   \\
\end{tabular}
\end{table}

\begin{table}
\caption{\label{Tvalues} Number $T_{X, Y}$ of two-dimensional subspaces of type $X$ (indexing the rows) of $\im(H)$ where $H$ is a fixed four-dimensional associative subalgebra of $O$ of type $Y$ (indexing the columns).}
\begin{tabular}{l|rrrr}%heading misschien wel centreren of links uitlijnen? Ben vergeten hoe het moet. Misschien ook onzin.
    &                     M4 &      F2J2 &      S2J2 &    F1J3   \\ \hline
Q   & $\frac{1}{2}(q^2 - q)$ &       $0$ &       $0$ &     $0$        \\    
U   &                $q + 1$ &       $0$ &      $2q$ &     $0$        \\
Dn  &                    $0$ & $q^2 + q$ &       $0$ &     $0$        \\
Ds  &                    $0$ &       $0$ & $q^2 - q$ &     $0$        \\
M   & $\frac{1}{2}(q^2 + q)$ &       $0$ &       $0$ &     $0$        \\
J   &                    $0$ &       $0$ &       $0$ &   $q^2$        \\
Z   &                    $0$ &       $1$ &       $1$ & $q + 1$        \\
\end{tabular}
\end{table}

By Proposition \ref{QMDJ} and Theorem \ref{classification}, the number of blocks in the $q$-covering design of Theorem \ref{main2} is the sum of the entries in Table \ref{Qvalues} and hence equals

$$q^8 + q^7 + 2q^6 + 2q^5 + 3q^4 + 2q^3 + 2q^2 + q + 1.$$

A very useful tool in this section will be Theorem \ref{transitive} which states that isomorphisms between subalgebras extend to automorphisms of $O$. %We finish this subsection with establishing a further useful property of these automorphisms: %[STAAT AL IN H5]

%\begin{lemma}\label{autonice}
%Let $\phi$ be an automorphism of $O$ and $x \in O$. Then  $\tau(\phi(x)) = \tau(x)$, $\phi(x^*) = \phi(x)^*$ and $\phi(x) \in \im(O)$ if and only if $x \in \im(O)$.
%\end{lemma}
%\begin{proof}
%All statements are obvious if $x = 0$, so from now on we assume that $x \neq 0$. We recall that any automorphism satisfies $\phi(1) = 1$ and hence by linearity maps every element of $F1$ to itself.
%
%We first prove the third statement. Let $x \in \im(O)$. Then $x^2 \in F1$ and hence $\phi(x^2) = x^2$. But $\phi(x^2) = \phi(x)^2$ so that $\phi(x)^2 \in F1$. It follows that either $\phi(x) \in \im(O)$ or $\phi(x) \in F1$. But the latter case we would have $x = \phi^{-1}(\phi(x)) \in F1$ which is absurd. It follows that $\phi(x) \in \im(O)$ as desired. Conversely, suppose that $\phi(x) \in \im(O)$, then $x = \phi^{-1}(\phi(x)) \in \im(O)$ by the statement just proved.
%
%Now let $x$ be general. We can write $x = \tau(x) + \im(x)$ as in Section \ref{quadratic} with $\tau(x) \in F$ and $\im(x) \in \im(O)$. It then follows that $\phi(x) = \phi(\tau(x)) + \phi(\im(x)) = \tau(x) + \phi(\im(x))$ and since we just established that $\phi(\im(x)) \in \im(O)$ we find that $\tau(\phi(x)) = \tau(x)$ and $\im(\phi(x)) = \phi(\im(x))$. Finally, from $x^* = \tau(x) - \im(x)$ we then obtain that $\phi(x)^* = \phi(x^*)$.
%\end{proof}

\subsection{The subalgebras of type M4}

We start our journey by introducing a notion Section 1.7 of \cite{SprVeld} (the same section that gave us Theorem \ref{transitive}):

\begin{definition}
A \emph{special $(-1, 1)$-pair} is a pair $(e_+, e_-)$ of elements of $O$ such that $e_+ \in \im(O)$, $e_- \in \im(O)$, $e_+^2 = 1$, $e_-^2 = -1$, $e_+e_- \in \im(O)$. 
\end{definition}

We find the following relation to the classification in Section \ref{split}:

\begin{lemma}\label{epluseminM}
Let $(e_+, e_-)$ be a special $(-1, 1)$-pair. Then $u \coloneqq \frac{e_+ + e_1}{2}$ and $v \coloneqq \frac{e_+ - e_1}{2}$ satisfy $u^2 = v^2 = 0$, $2\tau(uv) = 1$. Hence $\spam(e_+, e_-)$ is of type M by Lemma \ref{typeMbasis} and subsequently $\langle e_+, e_- \rangle$ is of type M4. Conversely, any type M space $U$ with $u, v$ as in Corollary \ref{typeMbasis} contains at least one special $(-1, 1)$-pair given by $e_+ = u + v$, $e_- = u- v$.
\end{lemma}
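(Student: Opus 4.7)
The plan is to verify both implications by a direct computation, exploiting the identities $(xy)^* = y^*x^*$ from \eqref{star} and $x^* = -x$ for $x \in \im(O)$ from \eqref{im2}, together with the relation $x + x^* = 2\tau(x)$ from \eqref{taustar}. I do not expect a genuine obstacle here; the only thing to be careful about is keeping track of signs and using the fact that $\im(O)$ is a linear subspace closed under negation.

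For the forward direction, first I would note that since $e_+, e_- \in \im(O)$, equation \eqref{star} combined with \eqref{im2} yields $(e_+ e_-)^* = e_-^* e_+^* = (-e_-)(-e_+) = e_- e_+$, so $e_+ e_- + e_- e_+ = e_+ e_- + (e_+ e_-)^* = 2\tau(e_+ e_-)$. The assumption $e_+ e_- \in \im(O)$ then forces $e_+ e_- + e_- e_+ = 0$. Substituting this into $u^2 = \tfrac{1}{4}(e_+^2 + e_+ e_- + e_- e_+ + e_-^2) = \tfrac{1}{4}(1 + 0 - 1) = 0$ and similarly $v^2 = \tfrac{1}{4}(e_+^2 - e_+ e_- - e_- e_+ + e_-^2) = 0$ gives the first two required identities. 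For $\tau(uv)$ I would compute $4uv = (e_+ + e_-)(e_+ - e_-) = 1 - e_+ e_- + e_- e_+ - (-1) = 2 - 2 e_+ e_-$, so $uv = \tfrac{1}{2} - \tfrac{1}{2} e_+ e_-$, and since $e_+ e_- \in \im(O)$ we get $\tau(uv) = \tfrac{1}{2}$, i.e.\ $2\tau(uv) = 1$. Lemma \ref{typeMbasis} then says $\spam(u,v) = \spam(e_+, e_-)$ is of type M, and Proposition \ref{typeM} (or Theorem \ref{classification}) says the algebra it generates is four-dimensional and isomorphic to $\Mat(2,F)$, hence of type M4 in the sense of Notation \ref{subalgtypes}.

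For the converse, given $u, v$ as in Lemma \ref{typeMbasis}, I would verify the three defining conditions of a special $(-1,1)$-pair for $e_+ \coloneqq u+v$, $e_- \coloneqq u-v$. Imaginarity of $e_\pm$ is immediate since $u,v \in U \subseteq \im(O)$ and $\im(O)$ is a linear subspace. The identities $e_+^2 = u^2 + uv + vu + v^2 = uv + vu = 2\tau(uv) = 1$ and $e_-^2 = -(uv + vu) = -1$ follow from $u^2 = v^2 = 0$ together with $uv + vu = (uv) + (uv)^* = 2\tau(uv)$, using again that $(uv)^* = v^*u^* = vu$. Finally, $e_+ e_- = -(uv - vu) = -(uv - (uv)^*) = -2\im(uv) \in \im(O)$, giving the last condition.

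Both directions are a handful of lines of calculation, so the proof is essentially just these two verifications, and nothing should be harder than applying \eqref{star}, \eqref{im2}, and \eqref{taustar} at the right moment.
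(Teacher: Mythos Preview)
Your proof is correct and follows exactly the approach the paper takes: the paper's own proof is a one-liner pointing to the identity $2\tau(xy) = xy + yx$ for $x,y \in \im(O)$ and to Proposition~\ref{typeM}, and your argument simply unpacks those computations in full detail.
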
 
\begin{proof}
The statement about $\langle e_+, e_- \rangle$ follows from Prop. \ref{typeM}. For the remaining statements it suffices to remember (from Section \ref{quadratic}) that for $x, y \in \im(O)$ we have that $2\tau(xy) = xy + yx$.
\end{proof}

\begin{corollary}\label{plusminpaar}
Let $(e_+, e_-)$ be a special $(-1, 1)$-pair. Then $\langle e_+, e_- \rangle \isom D_{-1}(D_1(F)) \isom \Mat(2, F)$ where we may view $e_-$ as the element $i$ used in the second doubling step and $e_+$ as the element $i$ used in the first doubling step. 
\end{corollary}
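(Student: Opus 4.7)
The plan is to apply Lemma \ref{allfromxi2} with $A = \langle e_+ \rangle$ and $i = e_-$, so that $\langle e_+, e_- \rangle$ is forced to contain, and by dimension equal, a copy of $D_{-1}(D_1(F))$ in which $e_+$ and $e_-$ play exactly the indicated roles. First I would observe that since $e_+ \in \im(O)$ and $e_+^2 = 1 \neq 0$, the element $e_+$ is not a scalar, so $A \coloneqq \langle e_+ \rangle = F1 \oplus Fe_+$ is a two-dimensional subalgebra of $O$. It is closed under ${}^*$ because $e_+^* = -e_+$ by (\ref{im2}), and since the defining generator $e_+$ satisfies $e_+^2 = 1 \cdot 1$, the algebra $A$ is isomorphic to $D_1(F)$ with $e_+$ playing the role of the doubling element used in the first step of the Cayley--Dickson process.

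Next I would verify the hypotheses of Lemma \ref{allfromxi2} for the element $e_-$. Clearly $e_- \in \im(O)$. To see that $e_- \notin A$, suppose to the contrary that $e_- = \alpha 1 + \beta e_+$; applying ${}^*$ and using $e_-^* = -e_-$, $e_+^* = -e_+$ yields $\alpha = 0$, hence $e_- = \beta e_+$ and $e_+ e_- = \beta 1 \in F1$. But the special-pair condition $e_+ e_- \in \im(O)$ then forces $\beta = 0$, contradicting $e_-^2 = -1$. The crux is the verification of (\ref{xi}), namely $x e_- = e_- x^*$ for every $x \in A$; by linearity it reduces to the cases $x = 1$ (trivial) and $x = e_+$, where (using $e_+^* = -e_+$) it reads $e_+ e_- = -e_- e_+$, equivalently $e_+ e_- + e_- e_+ = 2\tau(e_+ e_-)1 = 0$. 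This last identity is exactly the condition $e_+ e_- \in \im(O)$ built into the definition of a special $(-1,1)$-pair; this is the only place where that last axiom is used, and it is what really powers the corollary.

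Finally, Lemma \ref{allfromxi2} (with $\gamma = e_-^2 = -1$) gives equations (\ref{pq1}, \ref{pq2}, \ref{pq3}) in $\langle e_+, e_- \rangle$ and produces an injective homomorphism $D_{-1}(D_1(F)) \hookrightarrow \langle e_+, e_- \rangle$ in which $e_+$ and $e_-$ appear as the two successive doubling elements. Since the source has dimension $4$ and the target has dimension at most $4$ by Lemma \ref{3of4}, this inclusion is an equality. The second isomorphism $D_{-1}(D_1(F)) \isom \Mat(2, F)$ is then either immediate from Lemma \ref{epluseminM} or, more self-containedly, follows because $D_{-1}(D_1(F))$ contains the imaginary square root $e_+$ of $1$, is therefore split by Corollary \ref{sqrt1}, and hence is isomorphic to $\Mat(2, F)$ by Theorem \ref{splitunique} and Proposition \ref{Mat2F}.
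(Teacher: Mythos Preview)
Your proof is correct and takes a genuinely different route from the paper. The paper's one-line argument invokes Lemma \ref{epluseminM} and Proposition \ref{typeM}: it first passes to the nilpotent basis $u = \tfrac{1}{2}(e_+ + e_-)$, $v = \tfrac{1}{2}(e_+ - e_-)$ to recognize $\spam(e_+, e_-)$ as a type M space, and then quotes the explicit matrix model of Proposition \ref{typeM} to obtain $\langle e_+, e_- \rangle \cong \Mat(2,F)$. By contrast, you bypass the type M machinery entirely and apply Lemma \ref{allfromxi2} directly with $A = \langle e_+ \rangle \cong D_1(F)$ and $i = e_-$, the key observation being that the special-pair condition $e_+ e_- \in \im(O)$ is exactly what is needed to verify (\ref{xi}). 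Your route has the advantage of making the clause ``we may view $e_-$ as the element $i$ used in the second doubling step and $e_+$ as the element $i$ used in the first doubling step'' completely transparent, whereas in the paper's argument this identification is left implicit in the passage through $\Mat(2,F)$. The paper's route, on the other hand, ties the corollary into the classification framework of Section \ref{split} that drives the subsequent counting arguments.
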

\begin{proof}
Directly from Lemma \ref{epluseminM} and Proposition \ref{typeM}.
\end{proof}

We use this information to estabish the numbers $N_{\textrm{M4}}$, $\Theta_\textrm{M}$ and $T_{\textrm{M}, \textrm{M4}}$. Of course we already know from Proposition \ref{typeM} that $H_{M, M4} = 1$.

\begin{lemma}
There are $q^{11} - q^5$ special $(-1, 1)$-pairs.
\end{lemma}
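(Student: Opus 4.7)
The plan is to translate the three defining conditions on a special $(-1,1)$-pair into conditions on the norm form $n$ and its polar bilinear form $b(x,y)=xy^*+yx^*$, and then apply standard counts for the number of representations by a hyperbolic quadratic form over $\mathbb{F}_q$.

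First, using equation (\ref{taun}) together with the fact that $x\in\im(O)$ satisfies $x^*=-x$, the conditions $e_\pm\in\im(O)$, $e_+^2=1$, $e_-^2=-1$ become the norm conditions $n(e_+)=-1$, $n(e_-)=1$ (still subject to $e_\pm$ imaginary). Similarly, for imaginary $e_\pm$ one has $(e_+e_-)^*=e_-e_+$, so the condition $e_+e_-\in\im(O)$, i.e.\ $\tau(e_+e_-)=0$, is equivalent to the orthogonality relation $b(e_+,e_-)=0$.

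Next, since $F=\mathbb{F}_q$ is finite, $O$ is split (Corollary \ref{splitfinite}) and so may be realised as $D_1(D_1(D_1(F)))$; equation (\ref{generaln}) then displays $n|_{\im(O)}$ explicitly and identifies it, up to isometry, as the rank-$7$ form $\langle -1\rangle\perp\mathrm{hyp}(6)$. (Alternatively, Witt cancellation applied to the orthogonal decomposition $n=\langle 1\rangle\perp n|_{\im(O)}$, with $n$ of hyperbolic type of rank $8$, gives the same conclusion without a basis.) Any $e_+$ with $n(e_+)=-1$ is then anisotropic, so $e_+^\perp\subset\im(O)$ is $6$-dimensional, and a second Witt cancellation yields $n|_{e_+^\perp}\cong\mathrm{hyp}(6)$.

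Given these reductions the counting is routine. Using the standard identities that over $\mathbb{F}_q$ the equation $\mathrm{hyp}(6)(v)=c$ has $q^5+q^3-q^2$ solutions when $c=0$ and $q^5-q^2$ solutions when $c\ne 0$, one counts the admissible $e_+$ as the sum over $\alpha\in\mathbb{F}_q$ (the coefficient of the anisotropic basis vector) of the number of vectors in the hyperbolic part taking the value $\alpha^2-1$; the two values $\alpha=\pm 1$ contribute the isotropic count and the other $q-2$ values the non-isotropic count, and the sum simplifies to $q^6+q^3$. For each such $e_+$, the orthogonality constraint forces $e_-\in e_+^\perp$, and the number of $e_-$ there with $n(e_-)=1$ is $q^5-q^2$ by the same identity. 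Multiplying gives $(q^6+q^3)(q^5-q^2)=q^{11}-q^5$. The only step not reducing to bookkeeping is the identification of the isometry type of $n|_{\im(O)}$ (and consequently of $n|_{e_+^\perp}$), for which Witt cancellation is the natural tool.
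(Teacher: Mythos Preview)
Your proof is correct and reaches the same intermediate numbers ($q^6+q^3$ choices for $e_+$, then $q^5-q^2$ choices for $e_-$) as the paper, but the toolkit differs. The paper works in the explicit basis of Table~\ref{multtab}: it solves $e_+^2=1$ by a short case analysis on which coefficients vanish, then invokes Theorem~\ref{transitive} (automorphisms of $O$ act transitively on imaginary square roots of $1$) to reduce the $e_-$ count to the single case $e_+=p_1-p_2$, where the orthogonality condition kills one coordinate and the remaining equation is again solved by cases. Your argument replaces the appeal to the automorphism group by Witt cancellation (showing $n|_{e_+^\perp}\cong\mathrm{hyp}(6)$ for every admissible $e_+$), and replaces the case-by-case counts by the standard formula for representation numbers of a hyperbolic form. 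Your route is more conceptual and avoids the Cayley-algebra-specific Theorem~\ref{transitive}; the paper's route is more self-contained in that it does not assume Witt's theorem or the hyperbolic representation formulas.
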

\begin{proof}
Using the basis of Table \ref{multtab} (and recalling that $p_1 + p_2 = 1$), we see that the elements $p_1 - p_2$, $q_1, q_2, q_3$, $r_1, r_2, r_3$ form a basis of $\im(O)$. Let $e_+ = \zeta(p_1 - p_2) + \sum_{i = 1}^3 \eta_i q_i + \theta_i r_i$ with greek letters denoting scalars in $F$. Then the condition $e_+^2 = 1$ ammounts to $\zeta^2 - \eta_1 \theta_1 - \eta_2 \theta_2 - \eta_3 \theta_3 = 1$. It is clear that there are $q^5(q-1)$ solutions to this equation satisfying $\eta_3 \neq 0$: given any choice of $\zeta, \eta_1, \zeta_1, \eta_2, \zeta_2 \in F$, $\eta_3 \in F^\times$ we can compute $\zeta_3 = \frac{-1 + \zeta^2 - \eta_1 \theta_1 - \eta_2 \theta_2}{\eta_3}$. Similarly there are $q^3(q-1)q$ solutions satisfying $\eta_3 = 0$, $\eta_2 \neq 0$ (where the factor $q^3$ denotes the choices of $\zeta, \eta_1, \theta_1$, the factor $q - 1$ the choices for $\eta_2$ and the last $q$ the choices for $\theta_3$) and $q(q-1)q^2$ solutions satisfying $\eta_3 = \eta_2 = 0$, $\eta_1 \neq 0$. Finally there are $2q^3$ solutions satisfying $\eta_1 = \eta_2 = \eta_3 = 0$ where the $2$ corresonds to the possible choices $\zeta = 1$ and $\zeta = -1$ for $\zeta$ and the $q^3$ to the choices of the $\theta_i$. Adding it all up we find that there are $q^6 + q^3$ choices for $e_+$.

One such possible choice is $e_+ = p_1 - p_2$. Let $(e_+, e_-)$ be any special $(-1, 1)$ pair. Both $\langle e_+  \rangle$ and $\langle (p_1 - p_2) \rangle $ are isomorphic to the two-dimensional split Cayley-Dickson algebra  $D_1(F)$ and hence, by Thm. \ref{transitive} there is an automorphism $\phi$ of $O$ such that $\phi(e_+) = p_1 - p_2$. Now we see with help from Lemma \ref{autonice} that $(\phi(e_+), \phi(e_-))$ is a special $(-1, 1)$-pair as well. In other words: for a given $e_+ \in \im(O)$ satisfying $e_+^2 = 1$, the number of elements $e_- \in \im(O)$ satisfying both $e_-^2 = -1$ and $e_+e_- \in \im(O)$ does not depend on $e_+$ and equals the number of elements $e_- \in \im(O)$ satisfying both $e_-^2 = -1$ and $(p_1 - p_2)e_- \in \im(O)$.

In order to count the elements satisfying these last two equations we write $e_- = \zeta'(p_1 - p_2) + \sum_{i=1}^3 \eta_i' q_i + \theta_i' r_i$. The condition $\tau((p_1 - p_2)e_-) = 0$ implies that $\zeta' = 0$ and the condition that $e_-^2 = -1$ then gives that $\sum \eta_i' \theta_i' = 1$. With reasoning similar to what we did for $e_+$ we find that there are $q^4(q-1) + q^2(q-1)q + (q-1)q^2 = q^5 - q^2$ possible choices for the element $e_-$.

Hence the total number of special $(-1, 1)$-pairs is $(q^6 + q^3)(q^5 - q^2) = q^{11} - q^5$.  
\end{proof}

\begin{lemma}
The number of special $(-1, 1)$ pairs in $\Mat(2, F)$ equals $q^3 - q$.
\end{lemma}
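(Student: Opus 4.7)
The plan is to decompose the count as (number of valid $e_+$) times (number of valid $e_-$ for a fixed $e_+$), just as in the proof of the previous lemma, and then carry out each factor by elementary case analysis.

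First, I would identify the relevant structures concretely in $\Mat(2,F)$. By Proposition \ref{Mat2F}, $\tau(X) = \frac{1}{2}\Tr(X)$ and $X^* = \Tr(X)I - X$, so $\im(\Mat(2,F))$ is exactly the space of traceless matrices, and on this subspace $*$ acts as $-\Id$. Writing a generic traceless matrix $X = \begin{pmatrix} a & b \\ c & -a \end{pmatrix}$, one computes $X^2 = (a^2+bc)I$. I would then count the $e_+ \in \im(\Mat(2,F))$ with $e_+^2 = I$ by solving $a^2+bc = 1$: for the $q-2$ values of $a$ with $1-a^2 \neq 0$ the equation $bc = 1-a^2$ has $q-1$ solutions, while for the two values $a = \pm 1$ we get $bc = 0$ with $2q-1$ solutions. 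Totalling gives $(q-2)(q-1) + 2(2q-1) = q^2+q$ possibilities for $e_+$.

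Next, I would argue that by transitivity, the count of valid $e_-$ does not depend on the choice of $e_+$. Each such $e_+$ is a traceless involution, hence diagonalizable over $F$ with eigenvalues $+1$ and $-1$, so is $\GL(2,F)$-conjugate to $p_1-p_2 = \begin{pmatrix} 1 & 0 \\ 0 & -1\end{pmatrix}$; conjugation is an algebra automorphism of $\Mat(2,F)$ that preserves the trace and therefore preserves both $\im$ and the involution. (Alternatively one can invoke Theorem \ref{transitive} applied to the two copies of the split two-dimensional Cayley--Dickson algebra $\langle e_+\rangle$ and $\langle p_1-p_2\rangle$.) Hence it suffices to count the valid $e_-$ attached to $e_+ = p_1-p_2$.

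Finally, for $e_+ = p_1-p_2$ and $e_- = \begin{pmatrix} a & b \\ c & -a\end{pmatrix}$, one computes
\[
e_+ e_- \;=\; \begin{pmatrix} a & b \\ -c & a \end{pmatrix},
\]
whose trace is $2a$. Since $\chr(F) \neq 2$, the condition $e_+e_- \in \im(\Mat(2,F))$ forces $a=0$, and then $e_-^2 = -I$ reduces to $bc = -1$, which has exactly $q-1$ solutions. Multiplying the two counts gives $(q^2+q)(q-1) = q^3-q$ special $(-1,1)$-pairs. No step presents a real obstacle; the only thing to be mildly careful about is verifying that the automorphism (conjugation) used to normalize $e_+$ genuinely respects all the structure entering into the definition of a special pair, but this is immediate from the trace-preserving nature of conjugation.
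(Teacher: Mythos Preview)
Your proof is correct and is precisely the explicit version of the first approach the paper sketches (``repeat the proof of the previous lemma in the subalgebra with basis $p_1, p_2, q_1, r_1$''): count the $e_+$, normalize via conjugation, then count the $e_-$. The paper additionally offers a second, more conceptual argument you might enjoy: realizing $\Mat(2,F)$ as $D_{-1}(D_1(F))$ singles out a canonical special $(-1,1)$-pair, and any automorphism of $\Mat(2,F)$ is uniquely determined by where it sends that pair (and conversely every special pair arises this way), so the number of special $(-1,1)$-pairs equals $|\Aut(\Mat(2,F))| = |\PGL(2,q)| = q^3 - q$ by Skolem--Noether.
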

\begin{proof}
We can just repeat the proof of the previous lemma in the subalgebra of $O$ with vector-space basis $p_1, p_2, q_1, r_1$. Alternatively, for a more high-level perspective, we could realize $\Mat(2, F)$ as $D_{-1}(D_1(F))$ which singles out a `special' special $(-1, 1)$-pair consisting of the elements used in the role of $i$ in the first and second doubling step. (Cf. Cor. \ref{plusminpaar}.) Now every automorphism of $\Mat(2, F)$ sends this `special' special $(-1, 1)$-pair to some (possibly different) special $(-1, 1)$ pair and conversely every linear map that does this generates a unique automorphism of $D_{-1}(D_1(F))$ and hence $\Mat(2, F)$. We thus see that the number of special $(-1, 1)$-pairs equals the order of the automorphism group of $\Mat(2, F)$. By the Skolem-Noether theorem (see e.g. \cite{Pierce}) every automorphism is of the form $x \mapsto y^{-1}xy$ for some invertible element $y \in Mat(2, F)$, that is: for some $y \in \GL(n, F)$. Since elements $y_1$, $y_2$ in $\GL(2, F)$ define the same conjugation if and only if they are scalar multiples of each other we find that $\Aut(Mat(2, F))= \PGL(2, F)$. The orders of groups of this type are well known. In particular they are twice the orders of the even better known groups $\PSL(2, q)$.
\end{proof}
%remark: hoewel SL(2, q) en PGL(2, q) beide even veel elementen hebben (q^3 - q) zijn het heel verschilllende groepen. In het algemeen  is |SL(n, q)| = |GL(n, q)|/(q-1) = |PGL(n, q)|$ dus altijd gelijkheid van aantal elementen. Maar gelijkheid als groep geldt denk ik desda n oneven. (Denk ik. Niet goed nagegaan)

\begin{corollary}\label{NM4}
The number $N_{\textrm{M4}}$ of quaternion subalgebras of $O$ equals $\frac{q^{11} - q^5}{q^3 - q} = q^8 + q^6 + q^4$.
\end{corollary}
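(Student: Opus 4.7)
The plan is to use a straightforward double-counting argument based on the two lemmas immediately preceding the corollary. Consider the set $\mathcal{P}$ of pairs $(H, (e_+, e_-))$ where $H$ is a four-dimensional subalgebra of $O$ of type M4 and $(e_+, e_-)$ is a special $(-1,1)$-pair contained in $H$. We will count $|\mathcal{P}|$ in two different ways.

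First I would note, using Lemma \ref{epluseminM} and Proposition \ref{typeM}, that every special $(-1,1)$-pair $(e_+, e_-)$ in $O$ lies in \emph{exactly one} four-dimensional subalgebra of type M4, namely $\langle e_+, e_- \rangle$ itself. Indeed, any subalgebra containing $e_+$ and $e_-$ must contain the algebra they generate, which is already four-dimensional by Lemma \ref{epluseminM}; so no \emph{other} four-dimensional subalgebra can contain the pair. This immediately gives $|\mathcal{P}| = q^{11} - q^5$ by the first of the two preceding lemmas, counting over the second coordinate.

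On the other hand, counting $|\mathcal{P}|$ over the first coordinate, every subalgebra $H$ of type M4 is (by Notation \ref{subalgtypes}) isomorphic to $\Mat(2, F)$, so by the second preceding lemma it contains exactly $q^3 - q$ special $(-1,1)$-pairs. Hence $|\mathcal{P}| = N_{\textrm{M4}} \cdot (q^3 - q)$.

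Equating the two expressions and solving for $N_{\textrm{M4}}$, we obtain
\[
N_{\textrm{M4}} = \frac{q^{11} - q^5}{q^3 - q} = \frac{q^5(q^6 - 1)}{q(q^2 - 1)} = q^4 \cdot \frac{q^6 - 1}{q^2 - 1} = q^4(q^4 + q^2 + 1) = q^8 + q^6 + q^4,
\]
which is the desired result. There is essentially no hard step here; the only subtlety worth flagging is the uniqueness of the M4-subalgebra attached to a given special $(-1,1)$-pair, which is the reason we may count via Lemma \ref{epluseminM} without any overcounting correction.
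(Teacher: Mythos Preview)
Your proof is correct and is exactly the double-counting argument that the paper intends: the paper simply writes the quotient $\frac{q^{11}-q^5}{q^3-q}$ as the corollary, relying on the two preceding lemmas in precisely the way you spell out. Your explicit justification of the uniqueness of the M4-subalgebra containing a given special $(-1,1)$-pair (via Lemma~\ref{epluseminM}) makes the implicit step in the paper's one-line derivation fully transparent.
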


\begin{lemma}
Let $U$ be a type M space  Then $U$ contains $2(q-1)$ special $(-1, 1)$-pairs.
\end{lemma}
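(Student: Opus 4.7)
The plan is to set up an explicit bijection between special $(-1,1)$-pairs contained in $U$ and ordered pairs $(u,v)$ of elements of $U$ satisfying $u^2 = v^2 = 0$ together with $uv + vu = 1$, and then count the latter directly. The bijection is already essentially contained in Lemma \ref{epluseminM}: given a special $(-1,1)$-pair $(e_+, e_-)$ in $U$, setting $u = \tfrac{1}{2}(e_+ + e_-)$ and $v = \tfrac{1}{2}(e_+ - e_-)$ produces such a pair $(u,v)$, and conversely $(u,v) \mapsto (u+v, u-v)$ inverts this map. The only thing to verify is that these constructions do not leave $U$, which is immediate since both pairs are $F$-linear combinations of each other.

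Next I would count the pairs $(u,v)$. By definition of a type M subspace, $U$ contains exactly two nilpotent lines; Lemma \ref{typeMbasis} guarantees a basis $q, r$ with $q^2 = r^2 = 0$ and $qr + rq = 1$, so the two nilpotent lines are exactly $L_1 = Fq$ and $L_2 = Fr$. The conditions $u^2 = 0$, $v^2 = 0$ force $u$ and $v$ each to lie on $L_1 \cup L_2$. They cannot lie on the same line, since any two elements on the same nilpotent line in a type M space multiply to $0$, contradicting $uv + vu = 1$. So without loss of generality either $u \in L_1^\times$ and $v \in L_2^\times$ or vice versa.

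Writing $u = \lambda q$ and $v = \mu r$, bilinearity and $qr + rq = 1$ give $uv + vu = \lambda\mu$, so the constraint becomes $\lambda\mu = 1$. This yields exactly $q-1$ ordered pairs (one for each $\lambda \in F^\times$, with $\mu = \lambda^{-1}$), and swapping the roles of $L_1$ and $L_2$ yields another $q-1$. These $2(q-1)$ pairs are clearly distinct (as ordered pairs, $u$ determines which line $v$ lies on), and by the preceding paragraph they exhaust all candidates, so there are exactly $2(q-1)$ special $(-1,1)$-pairs in $U$.

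I do not expect any real obstacle here; the only subtle point is checking that $(u+v, u-v)$ really is a special $(-1,1)$-pair, i.e.\ that $(u+v)(u-v) \in \im(O)$. But from $u^2 = v^2 = 0$ and $uv + vu = 1$ one computes $(u+v)(u-v) = vu - uv = 2vu - 1$, which has $\tau = 2\tau(vu) - 1 = 0$ by (\ref{tauab}), so the imaginarity follows from the characterisation (\ref{im}).
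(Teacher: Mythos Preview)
Your proof is correct. It takes a somewhat different route from the paper's, so a brief comparison is in order.

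The paper counts the pairs $(e_+, e_-)$ directly: writing $e_+ = \alpha q + \beta r$ in the basis of Lemma~\ref{typeMbasis}, the condition $e_+^2 = 1$ becomes $\alpha\beta = 1$, giving $q-1$ choices for $e_+$; then, fixing $e_+ = q + r$, one solves for $e_-$ from the two remaining conditions and finds exactly $2$ choices. The product is $2(q-1)$.

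Your approach instead converts the problem via the bijection of Lemma~\ref{epluseminM} into counting ordered pairs $(u,v)$ of nilpotents in $U$ with $uv + vu = 1$, which you then count by noting that $u$ and $v$ must lie on distinct nilpotent lines. This has the minor advantage that the factor $2$ arises transparently from the choice of which nilpotent line carries $u$, whereas in the paper's argument one must (implicitly) check that the count of $2$ choices for $e_-$ does not depend on the particular $e_+$ fixed. On the other hand, the paper's argument is marginally shorter since it does not need to verify the inverse direction of the bijection.

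One small remark: in your final paragraph you cite (\ref{im}) for the conclusion that $e_+e_- \in \im(O)$, but the more natural reference is (\ref{im2}) (or simply the fact that $\tau$ is the projection onto $F1$), since what you have shown is $\tau(e_+e_-) = 0$ rather than $(e_+e_-)^2 \in F1$.
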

\begin{proof}
Let $q, r$ be a basis of $U$, as in Lemma \ref{typeMbasis}. An element $e_+ = \alpha q + \beta r$ satisfies $e_+^2 = 1$ if and only if $\alpha\beta = 1$, yielding $q - 1$ possible choices of $e_+$. We fix one such choice: $e_+ = q + r$. Now let $e_- = \gamma q + \delta r$. Then $e_+ e_- \in \im(O)$ means $2\tau(e_+e_-) = e_+e_- + e_-e_+ = 0$. But we compute that $e_+e_- + e_-e_+ = (\gamma + \delta)2\tau(qr) = \gamma + \delta$. It follows that $e_-$ is of the form $\gamma(q - r)$. The condition $e_-^2 = -1$ then yields $\gamma^2 = 1$, leaving two possibilities open for $e_-$.
\end{proof}

\begin{corollary}\label{typeMcount}
There are $\Theta_\textrm{M} = \frac{q^{11} - q^5}{2(q - 1)} = \frac{1}{2} (q^{10} + q^9 + q^8 + q^7 + q^6 + q^5)$ type M spaces in $\im(O)$ and $T_{M, M4} = \frac{q^3 - q}{2(q - 1)} = \frac{1}{2}(q^2 + q)$ type M subspaces in $\Mat(2, F)$.
\end{corollary}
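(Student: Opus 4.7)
The plan is a pure double-counting argument using the three numerical facts already in hand, namely: (i) there are $q^{11}-q^5$ special $(-1,1)$-pairs in $O$, (ii) there are $q^3-q$ special $(-1,1)$-pairs inside any fixed copy of $\Mat(2,F)$ sitting in $O$, and (iii) a single type $M$ subspace of $\im(O)$ contains exactly $2(q-1)$ special $(-1,1)$-pairs.

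First I would verify that each special $(-1,1)$-pair $(e_+,e_-)$ determines a unique type $M$ subspace of $\im(O)$, namely $U := \spam(e_+,e_-)$, and that conversely every special $(-1,1)$-pair $(e_+,e_-)$ counted in (iii) for a fixed $U$ satisfies $\spam(e_+,e_-)=U$. The first direction is Lemma \ref{epluseminM}; the second is immediate from the fact that $e_+$ and $e_-$ are linearly independent (their squares lie in different one-dimensional subspaces of $F1$) and both belong to $U$, which is itself two-dimensional. Consequently the assignment $(e_+,e_-) \mapsto \spam(e_+,e_-)$ partitions the set of all special $(-1,1)$-pairs of $O$ according to the type $M$ subspace they span, and each block of the partition has size $2(q-1)$.

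Counting the set of special $(-1,1)$-pairs in two ways then gives
\[
q^{11}-q^5 \;=\; \Theta_{\textrm{M}} \cdot 2(q-1),
\]
so $\Theta_{\textrm{M}} = \tfrac{1}{2}(q^{10}+q^9+q^8+q^7+q^6+q^5)$, as claimed. The identical argument, now carried out inside a fixed subalgebra $H$ of type M4 (isomorphic to $\Mat(2,F)$ by Corollary \ref{NM4} and the preceding discussion), yields
\[
q^3-q \;=\; T_{\textrm{M},\textrm{M4}} \cdot 2(q-1),
\]
and hence $T_{\textrm{M},\textrm{M4}} = \tfrac{1}{2}(q^2+q)$.

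I do not anticipate a genuine obstacle here: the content sits entirely in the three enumerative lemmas already proved and in Lemma \ref{epluseminM}, which guarantees the necessary bijection between special $(-1,1)$-pairs and the ordered frames in type $M$ subspaces considered in fact (iii). The only point requiring minor care is checking that the local count of $2(q-1)$ from fact (iii) genuinely applies uniformly to every type $M$ subspace, which follows from Theorem \ref{transitive} together with Lemma \ref{autonice}, since automorphisms of $O$ act transitively on type $M$ subspaces (any two are spanned by bases of the form in Lemma \ref{typeMbasis}, and any linear isomorphism between such bases extends to an automorphism of $O$ after extending trivially by $1 \mapsto 1$).
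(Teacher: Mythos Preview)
Your proposal is correct and matches the paper's approach exactly: the corollary is stated without proof there, following immediately from the three preceding lemmas by the same division/double-counting you spell out. Your extra care about uniformity of the $2(q-1)$ count is not strictly needed, since the preceding lemma already establishes that count for an \emph{arbitrary} type $M$ space directly from the basis of Lemma~\ref{typeMbasis}, without appeal to transitivity.
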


Having the value of $N_{M4}$ enables us to compute the values of $\Theta_\textrm{Q}$, $\Theta_\textrm{U}$, $\Theta_\textrm{M}$ from the numbers $T_{\textrm{Q}, \textrm{M4}}$, $T_{\textrm{U}, \textrm{M4}}$, $T_{\textrm{M}, \textrm{M4}}$ and $H_{\textrm{Q}, \textrm{M4}}$, $H_{\textrm{U}, \textrm{M4}}$, $H_{\textrm{M}, \textrm{M4}}$ and vice versa. Thm. \ref{transitive} tells us that the numbers $T_{X, \textrm{M4}}$ do not depend on the particular $M4$-subalgebra and we can compute these numbers in the very explicit model of such an algebra as the set of two by two matrices over $F$. This makes these computations rather straightforward and the results are given in Proposition \ref{TXM4} below. 

For the numbers $H_{X, M4}$ we notice that the results of Section \ref{split} give $H_{\textrm{Q}, \textrm{M4}} = H_{\textrm{M}, \textrm{M4}} = 1$, $H_{\textrm{U}, \textrm{M4}} \geq q^2$, $H_{\textrm{Dn}, \textrm{M4}} = H_{\textrm{Ds}, \textrm{M4}} = H_{\textrm{J}, \textrm{M4}} = 0$. The fact that $H_{\textrm{Z}, \textrm{M4}} = 0$ follows from the fact, established in Proposition \ref{TXM4} below, that $T_{\textrm{Z}, \textrm{M4}} = 0$. So the only missing info on the numbers $H_{X, \textrm{M4}}$ is the precise value of $H_{\textrm{U}, \textrm{M4}}$. This will be the subject of Proposition \ref{Q2M4} below.

\begin{proposition}\label{TXM4}
Let $P = \{\begin{pmatrix}
\alpha & \beta \\ \gamma & -\alpha
\end{pmatrix} \colon \alpha, \beta, \gamma \in F \} \isom \im(D_{-1}(D_1(F)))$. 
Then $P$ contains $T_{\textrm{Q}, \textrm{M4}} = \frac{1}{2}(q^2 - q)$ type Q subspaces, $T_{\textrm{U}, \textrm{M4}} = q+1$ type U subspaces, $T_{\textrm{Dn}, \textrm{M4}} = T_{\textrm{Ds}, \textrm{M4}} = T_{\textrm{J},  \textrm{M4}} = 0$ type D and J subspaces, $T_{\textrm{M}, \textrm{M4}} = \frac{1}{2}(q^2 + q)$ type M subspaces and $T_{\textrm{Z}, \textrm{M4}} = 0$ type Z subspaces.
\end{proposition}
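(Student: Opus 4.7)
The plan is to reduce the proposition to a classical count in projective geometry over $\mathbb{F}_q$. I identify $P$ with $\mathfrak{sl}_2(F)$, the $3$-dimensional space of traceless matrices, and exploit the fact that for $x \in P$ Cayley--Hamilton (equivalently Proposition~\ref{Mat2F}) gives $x^2 = -\det(x)\,I$, so $x$ is nilpotent if and only if $\det(x) = 0$. Thus the set of nilpotent elements of $P$ is the zero locus of the quadratic form $n = \det$, which is non-degenerate in characteristic $\neq 2$. Projectively, this locus is a smooth irreducible conic $C \subset \mathbb{P}(P) \cong \mathbb{P}^2(\mathbb{F}_q)$ possessing the rational point $[q_1] \in C(\mathbb{F}_q)$, so $|C(\mathbb{F}_q)| = q + 1$ and the nilpotent lines of $P$ number exactly $q + 1$.

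Next I would rule out the types Z, J, Dn, and Ds. A type J or type Z subspace $U$ consists entirely of nilpotents, so $\mathbb{P}(U)$ would be a projective line contained in $C$; but a smooth irreducible conic contains no projective line, a contradiction. For types Dn and Ds, Theorem~\ref{classification} and Proposition~\ref{typeD} force $\langle U \rangle$ to be a $4$-dimensional subalgebra with non-trivial Jacobson radical; but $\langle U \rangle \subseteq \Mat(2,F)$ with $\dim \langle U \rangle = 4 = \dim \Mat(2,F)$ forces $\langle U \rangle = \Mat(2,F)$, which is simple by Theorem~\ref{CDsimple}, a contradiction. Hence no subspace of $P$ is of type Dn, Ds, J, or Z, giving the zeros in the table.

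With only Q, U, M surviving, the classification becomes geometric: by B\'ezout each projective line in $\mathbb{P}(P)$ meets $C$ in exactly $0$, $1$, or $2$ points. A $2$-dimensional subspace $U \subset P$ with zero, one, or two nilpotent lines must therefore be of type Q, U, or M respectively---the identification being forced (rather than merely suggested) by the prior elimination of types D and J, which were the only other options for one nilpotent line. Standard projective line-counting in $\mathbb{P}^2(\mathbb{F}_q)$ then gives the numbers: total lines $q^2 + q + 1$; tangent lines in bijection with $C(\mathbb{F}_q)$, numbering $q+1$; secant lines $\binom{q+1}{2} = \tfrac{q(q+1)}{2}$; external (non-secant) lines the remainder, $q^2 + q + 1 - (q+1) - \tfrac{q(q+1)}{2} = \tfrac{q(q-1)}{2}$. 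Substituting yields $T_{\textrm{Q},\textrm{M4}} = \tfrac{q^2-q}{2}$, $T_{\textrm{U},\textrm{M4}} = q + 1$, $T_{\textrm{M},\textrm{M4}} = \tfrac{q^2+q}{2}$, matching the table.

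The main obstacle is not computational but organisational: ensuring the geometric trichotomy (external/tangent/secant) is rigidly matched to the algebraic trichotomy (Q/U/M) once types D, J, and Z are excluded, so that no case-splitting on the product $uv$ is needed. A brief sanity check against the known total $\qbinom{3}{2} = q^2 + q + 1$ confirms that the three counts exhaust all $2$-dimensional subspaces of $P$, closing the argument.
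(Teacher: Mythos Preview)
Your proof is correct. The paper's argument is more computational and less self-contained: it imports the value $T_{\textrm{M},\textrm{M4}} = \tfrac{1}{2}(q^2+q)$ from the earlier special $(-1,1)$-pair count (Corollary~\ref{typeMcount}), uses Jordan normal form to move an arbitrary nilpotent line to $\begin{pmatrix} 0 & F \\ 0 & 0\end{pmatrix}$ and then checks by hand that exactly one type~U plane contains it (the traceless upper-triangulars), and rules out type~Z by an explicit kernel computation. Your route replaces all of this with the single observation that the nilpotent locus in $\mathbb{P}(P)$ is a smooth conic, after which the external/tangent/secant trichotomy does the work. The gain is that your argument is self-contained (no dependence on Corollary~\ref{typeMcount}) and handles Z and J in one stroke; the paper's approach, by contrast, avoids invoking B\'ezout or properties of smooth conics and stays within elementary matrix manipulations. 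Both exclude type~D in the same way, via simplicity of $\Mat(2,F)$.
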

\begin{proof}
The statements about subspaces of type D, J, and M follow (respectively) from Prop. \ref{typeD}, Prop. \ref{typeJ} and Corollary \ref{typeMcount} above. For the remaining cases we first identify the nilpotents in $P$. From solving $\begin{pmatrix}
\alpha & \beta \\ \gamma & -\alpha
\end{pmatrix}^2 = 0$ we infer that $\begin{pmatrix}
\alpha & \beta \\ \gamma & -\alpha
\end{pmatrix}$ is nilpotent if and only if $\alpha^2 = -\beta\gamma$. 
It follows that there are $q^2 - 1$ non-zero nilpotent elements, together making up $(q^2 - 1)/(q - 1) = q + 1$ nilpotent lines. Moreover, by the theory of Jordan Normal Forms, for each such line $Fu$ there is a $g \in \GL(2, F)$ such that $g(Fu)g^{-1} = \begin{pmatrix}
0 & F \\ 0 & 0
\end{pmatrix}$. 
%(Explicitly: assuming wlog that $u \in Fu$ is the element with a 1 in the top right corner and a non-zero number $\alpha$ in the top left corner, we find that with $g = \begin{pmatrix} 0 & \alpha^{-1} \\ \alpha & 1$ we have that $gug^{-1} = \begin{pmatrix}0 & 1 \\ 1 & 0\end{pmatrix}$.
Since spaces of type U contain exactly one nilpotent line by definition, we can can compute the number $T_{\textrm{U}, \textrm{M4}}$ by counting the number of type U spaces containing the line $l \coloneqq \begin{pmatrix}
0 & F \\ 0 & 0
\end{pmatrix}$ and multiplying the outcomes by the total number $q+1$ of nilpotent lines.

It is easy to see that there is exactly one type U space in $P$ containing $l$ -- it consists of the traceless uppertriangular matrices. It follows that $T_{U, M4} = q + 1$. 

Finally let $Z$ be a hypothetical type Z subspace of $P$. By the $\GL(2, F)$ action we may assume without loss of generality that $Z$ contains $l$. However it is easy to see that every $p \in P$ satisfying $pl = \{0\}$ lies in $l$ itself, hence confirming that $T_{\textrm{Z}, \textrm{M4}} = 0$.

The value of $T_{Q, M4}$ then follows from subtracting the number of subspaces of the other types from the total number $\qbinom{3}{2} = q^2 + q + 1$ of two-dimensional spaces in $P$.
\end{proof}

\begin{corollary}
There are $\Theta_\textrm{Q} = N_{\textrm{M4}}T_{\textrm{Q}, \textrm{M4}}/H_{\textrm{Q}, \textrm{M4}} = (q^8 + q^6 + q^4)(\frac{1}{2}(q^2 - q))/1 = \frac{1}{2}(q^{10} - q^9 + q^8 - q^7 + q^6 - q^5)$ type Q subspaces in $\im(O)$ %and $\Theta_M = N_{M4}T_{M, M4}/Q_{M, M4} = (q^8 + q^6 + q^4)(\frac{1}{2}(q^2 + q)})/1 = \frac{1}{2}(q^{10} + q^9 + q^8 + q^7 + q^6 + q^5)$ type M subspaces in $\im(O)$. (laatste uit want die hadden we al)
\end{corollary}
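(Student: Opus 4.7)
The plan is a straightforward double-counting argument applied to the incidence set
\[
I = \{(U, H) : U \text{ is a type Q subspace of } \im(O), \ H \text{ is a type M4 subalgebra of } O,\ U \subset H\}.
\]
Counting $|I|$ by first fixing $H$ and then varying $U$ gives $|I| = N_{\textrm{M4}} \cdot T_{\textrm{Q}, \textrm{M4}}$, using that $T_{\textrm{Q}, \textrm{M4}}$ does not depend on the chosen $H$ by Theorem~\ref{transitive} combined with Lemma~\ref{autonice} (automorphisms of $O$ preserve both the type of an imaginary subspace and the type of a subalgebra, and the automorphism group acts transitively on the type M4 subalgebras since all of these are isomorphic). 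Counting $|I|$ by first fixing $U$ and then varying $H$ gives $|I| = \Theta_{\textrm{Q}} \cdot H_{\textrm{Q}, \textrm{M4}}$, with the same justification for the independence of $H_{\textrm{Q}, \textrm{M4}}$ on the chosen $U$.

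Thus $\Theta_{\textrm{Q}} = N_{\textrm{M4}} \cdot T_{\textrm{Q}, \textrm{M4}} / H_{\textrm{Q}, \textrm{M4}}$. All three ingredients have been established earlier in this section: $N_{\textrm{M4}} = q^8 + q^6 + q^4$ by Corollary~\ref{NM4}, $T_{\textrm{Q}, \textrm{M4}} = \tfrac{1}{2}(q^2 - q)$ by Proposition~\ref{TXM4}, and $H_{\textrm{Q}, \textrm{M4}} = 1$ by Proposition~\ref{main} (together with Theorem~\ref{classification}), which states that the type Q subspace $U$ generates a unique four-dimensional associative subalgebra, necessarily of type M4, and that this is the only four-dimensional associative subalgebra of $O$ containing $U$ (any such subalgebra $H \supset U$ must contain $\langle U \rangle$, and by dimension equals it).

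Substituting yields
\[
\Theta_{\textrm{Q}} = (q^8 + q^6 + q^4) \cdot \tfrac{1}{2}(q^2 - q) = \tfrac{1}{2}(q^{10} - q^9 + q^8 - q^7 + q^6 - q^5),
\]
finishing the proof. Since all the hard work of this section is concentrated in establishing the three numbers $N_{\textrm{M4}}$, $T_{\textrm{Q}, \textrm{M4}}$ and $H_{\textrm{Q}, \textrm{M4}}$, there is no real obstacle remaining; the only thing to be mildly careful about is the appeal to Theorem~\ref{transitive} to justify that the row and column sums in the double count are well-defined independent of the chosen representative.
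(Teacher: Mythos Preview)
Your proof is correct and is exactly the approach the paper takes: the corollary in the paper is stated without an explicit proof because it is just an instance of the double-counting identity $\Theta_X H_{X,Y} = T_{X,Y} N_Y$ set up earlier in the section, with the three ingredients $N_{\textrm{M4}}$, $T_{\textrm{Q},\textrm{M4}}$, and $H_{\textrm{Q},\textrm{M4}}$ supplied by Corollary~\ref{NM4}, Proposition~\ref{TXM4}, and Proposition~\ref{main} respectively. Your write-up simply makes explicit what the paper leaves implicit.
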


\begin{proposition}\label{Q2M4}
Example \ref{problem} lists all the type M4 subalgebras of $O$ containing $U = \spam(p_1 - p_2, q_1)$ and hence (by Thm. \ref{transitive}) $H_{U, M4} = q^2$.
\end{proposition}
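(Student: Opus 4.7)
The plan is to reduce the proposition to an explicit computation in the basis of Table \ref{multtab}. Let $H$ be any $M4$-subalgebra of $O$ containing $U$. By Example \ref{typeUex}, $\langle U \rangle = \spam(p_1, p_2, q_1)$, and since $\dim H = 4$ we may write $H = \langle U \rangle \oplus Fh$ for some $h \notin \langle U \rangle$. Counting such $H$ therefore amounts to counting the possible cosets $Fh + \langle U \rangle$ in $O/\langle U \rangle$ for which $\langle U \rangle \oplus Fh$ is again a (4-dimensional associative) algebra.

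Next I would pick a canonical representative $h$ of each coset using the $M4$-structure. Under any isomorphism $H \isom \Mat(2, F)$, Proposition \ref{typeU} identifies $\langle U \rangle$ with the upper triangular subalgebra, sending $p_1 \mapsto \begin{pmatrix}1&0\\0&0\end{pmatrix}$, $p_2 \mapsto \begin{pmatrix}0&0\\0&1\end{pmatrix}$, and $q_1 \mapsto \begin{pmatrix}0&1\\0&0\end{pmatrix}$. Any $h \notin \langle U \rangle$ must have a non-zero $(2,1)$-entry under such an identification, so by rescaling $h$ and subtracting a suitable element of $\langle U \rangle$ we may assume $h$ corresponds to the matrix unit $\begin{pmatrix}0&0\\1&0\end{pmatrix}$. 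Translating back, this says that $h \in \im(O)$, $h^2 = 0$, $q_1 h = p_1$, and $h q_1 = p_2$; conversely, any $h \in O$ satisfying these four identities generates, together with $\langle U \rangle$, a subalgebra isomorphic to $\Mat(2, F)$, because $p_1, p_2, q_1, h$ then obey exactly the matrix unit multiplication table.

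It only remains to classify the $h$'s satisfying these four identities. Writing the generic imaginary element as $h = \alpha(p_1 - p_2) + \sum_i \beta_i q_i + \sum_i \gamma_i r_i$, the single requirement $q_1 h = p_1$ forces $\alpha = \beta_2 = \beta_3 = 0$ and $\gamma_1 = -1$ by Table \ref{multtab}, and then $h q_1 = p_2$ is automatic. Setting $w \coloneqq -r_1 + \gamma_2 r_2 + \gamma_3 r_3$ and using that $r_i^2 = 0$ and $r_i r_j + r_j r_i = 0$ for $i \neq j$ (read off Table \ref{multtab}), one finds $w^2 = 0$ and $q_1 w + w q_1 = p_1 + p_2 = 1$, whence $h^2 = \beta_1^2 q_1^2 + \beta_1(q_1 w + w q_1) + w^2 = \beta_1$; thus $h^2 = 0$ forces $\beta_1 = 0$. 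The parametrization $h = -r_1 + \gamma_2 r_2 + \gamma_3 r_3$ with $(\gamma_2, \gamma_3) \in F^2$ therefore lists all admissible $h$'s, and distinct values of $(\gamma_2, \gamma_3)$ yield distinct subalgebras since $\spam(r_2, r_3) \cap \langle U \rangle = \{0\}$. This is exactly the family of Example \ref{problem}, so $H_{U, M4} = q^2$. The one mildly delicate step is the normalization $h \mapsto \begin{pmatrix}0&0\\1&0\end{pmatrix}$; this is a Skolem--Noether-style statement about inner automorphisms of $\Mat(2,F)$ by upper triangular invertibles, but the explicit coordinate computation above sidesteps it.
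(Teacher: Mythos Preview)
Your argument is essentially the same as the paper's: both pick, inside a given M4 subalgebra $H \supset U$, the element corresponding to the matrix unit $\begin{pmatrix}0&0\\1&0\end{pmatrix}$ under a suitable isomorphism $H \isom \Mat(2,F)$, translate this into explicit relations with $p_1,p_2,q_1$, and solve those relations in the basis of Table~\ref{multtab} to land in $-r_1 + Fr_2 + Fr_3$. The only difference is cosmetic: you characterise $h$ by $h\in\im(O)$, $h^2=0$, $q_1h=p_1$, $hq_1=p_2$, whereas the paper uses $q_1w=p_1$, $wq_1=p_2$, $p_1w=wp_2=0$ and then computes a four-fold kernel; both systems cut out the same affine plane.

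One small point: your closing remark that the coordinate computation ``sidesteps'' the Skolem--Noether normalisation is not quite right. The explicit computation classifies all $h\in O$ with the four properties, but to conclude that \emph{every} M4 subalgebra containing $U$ arises this way you still need to know that each such $H$ contains at least one such $h$, i.e.\ that the isomorphism $H\isom\Mat(2,F)$ can be chosen to send $\langle U\rangle$ to the upper triangulars with $p_1,p_2,q_1$ going to the standard matrix units. The paper does this by two explicit conjugations inside $\Mat(2,F)$; your invocation of Proposition~\ref{typeU} alone is not enough, since that proposition only gives an isomorphism of $\langle U\rangle$, not of $H$. This is exactly the Skolem--Noether step you name, so the argument is complete once you either spell out the conjugation (as the paper does) or use the Peirce decomposition of $H$ with respect to the idempotents $p_1,p_2$ to locate $h$ directly.
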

\begin{proof}
Let $H \subset O$ be a quaternion subalgebra containing $U$ and let $\psi_1 \colon H \to \Mat(2, F)$ be an isomorphism. We saw in the proof of Proposition \ref{TXM4} that there exist an element $g \in \Mat(2, F)$ such that $g(\psi_1(q_1))g^{-1} = \begin{pmatrix}
0 & 1 \\ 0 & 0 
\end{pmatrix}$.
Define an isomorphism $\psi_2 \colon H \to \Mat(2, F)$ by $\psi_2(x) = g\psi_1(x)g^{-1}$. From the algebraic relations $\psi_2(p_1 - p_2)\psi_2(u) = - \psi_2(u)\psi_2(p_1 - p_2) = \psi_2(u)$ we conclude that $\psi_2(p_1 - p_2) = \begin{pmatrix}
1 & \beta \\ 0 & -1 
\end{pmatrix}$ for some $\beta \in F$. Let $h = \begin{pmatrix}
1 & \beta/2 \\
0 & 1
\end{pmatrix}$ and define the isomorphism $\psi_3 \colon H \to \Mat(2, F)$ by $\psi_3(x) = h\psi_2(x)h^{-1}$. Then $\psi_3(q_1) = \begin{pmatrix}
0 & 1 \\ 0 & 0 
\end{pmatrix}$, $\psi_3(p_1 - p_2) = \begin{pmatrix}
1 & 0 \\ 0 & -1 
\end{pmatrix}$, and, since $p_1 + p_2 = 1$, $\psi_3(p_1) = \begin{pmatrix}
1 & 0 \\ 0 & 0 
\end{pmatrix}$ and $\psi_3(p_2) = \begin{pmatrix}
0 & 0 \\ 0 & 1 
\end{pmatrix}$.

Now let $w = \psi_3^{-1}(\begin{pmatrix}
0 & 0 \\
1 & 0 
\end{pmatrix}) \in H \subset O$. It is clear that by construction that $w$ satisfies 
\begin{equation} \label{wrelations}
q_1w = p_1; \qquad wq_1 = p_2; \qquad p_1w = wp_2 = 0
\end{equation}
and that $\psi_3$ equals the isomorphism $\psi_{u, w}$ of Example \ref{problem}. It is clear from Table \ref{multtab} that the same equalities (\ref{wrelations}) hold with $r_1$ in the role of $w$. Hence it follows that $w - r_1 \in \ker L_{q_1} \cap \ker R_{q_1} \cap \ker L_{p_1} \cap \ker R_{p_2}$ where $L_x, R_x$ denote left and right multiplication with $x$ respectively as in Lemma \ref{division}. Hence, in order to verify that the algebra $H$ appears in the list of Example \ref{problem} it suffices to show that $\ker L_{q_1} \cap \ker R_{q_1} \cap \ker L_{p_1} \cap \ker R_{p_2} = \spam(r_2, r_3)$. As we can explicitly write down the matrix representations of $L_{q_1}, R_{q_1}, L_{p_1}, R_{p_2}$ with respect to the basis of Table \ref{multtab} this is a straightforward application of Gaussian elimination.

Since moreover the above shows that $p_1, p_2, q_1, w$ form a vector space basis of $H$ it is clear that $w$ is the \emph{only} element of $H \cap W$ (with $W = r_1 + Fr_2 + Fr_3$ as in Example \ref{problem}), which means that two of the $q^2$ elements of $W$ define the same algebra. We hence find that the number of type $M4$-algebras containing $U$ equals $q^2$. Finally let $U' \subset \im(O)$ be an arbitrary type $U$ space with unique nilpotent line $l'$. Then $F1 \oplus U'$ is a subalgebra of $O$ by Proposition \ref{typeU} and any linear bijection $\phi \colon F1 \oplus U' \to F1 \oplus U$ sending $1$ to $1$ and $l'$ onto $Fq_1$ is an algebra isomorphism between these subalgebras that then, by Theorem \ref{transitive} exends to an automorphism $\overline{\phi}$ of $O$. Since $\overline{\phi}$ bijectively maps type M4-algebras containing $U'$ to M4-algebras containing $U$, we see that the number of type M4-algebras containing $U'$ equals $q^2$ as well and hence that the statement $Q_{U, M4} = q^2$ of the lemma is true and well defined.
\end{proof}
\begin{corollary}
There are $\Theta_\textrm{U} = N_{\textrm{M4}} T_{\textrm{U}, \textrm{M4}}/Q_{\textrm{U}, \textrm{M4}} = \frac{(q^8 + q^6 + q^4)(q+1)}{q^2} = q^7 + q^6 + q^5 + q^4 + q^3 + q^2$ type U spaces.
\end{corollary}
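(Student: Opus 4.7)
The plan is to apply the double-counting identity $\Theta_X H_{X,Y} = T_{X,Y} N_Y$ that has been the workhorse of this section, specialized to $X = \mathrm{U}$ and $Y = \mathrm{M4}$. The identity counts, in two different ways, the number of incidence pairs $(U', H)$ where $U'$ is a type U subspace of $\im(O)$ and $H$ is a type M4 subalgebra containing $U'$. Counting first by fixing $U'$ and then summing over it gives $\Theta_{\mathrm{U}} \cdot H_{\mathrm{U}, \mathrm{M4}}$; counting first by fixing $H$ gives $N_{\mathrm{M4}} \cdot T_{\mathrm{U}, \mathrm{M4}}$. That these two local counts are independent of the chosen reference objects is precisely what Theorem \ref{transitive} guarantees (via its consequences for the orbits of $\Aut(O)$), so the identity is legitimate.

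All three ingredients on the right-hand side have already been established. From Corollary \ref{NM4} we have $N_{\mathrm{M4}} = q^8 + q^6 + q^4$. From Proposition \ref{TXM4} we have $T_{\mathrm{U}, \mathrm{M4}} = q+1$, a count obtained by observing that each of the $q+1$ nilpotent lines of $\Mat(2, F)$ sits in exactly one type U plane (namely the plane of traceless upper-triangular matrices once that line is put into standard Jordan form). From Proposition \ref{Q2M4}, which used the explicit description of Example \ref{problem}, we have $H_{\mathrm{U}, \mathrm{M4}} = q^2$.

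Substituting yields
\[
\Theta_{\mathrm{U}} = \frac{N_{\mathrm{M4}} \cdot T_{\mathrm{U}, \mathrm{M4}}}{H_{\mathrm{U}, \mathrm{M4}}} = \frac{(q^8 + q^6 + q^4)(q+1)}{q^2} = \frac{q^9 + q^8 + q^7 + q^6 + q^5 + q^4}{q^2} = q^7 + q^6 + q^5 + q^4 + q^3 + q^2,
\]
which is the claimed value. There is no genuine obstacle here: the structural content lives in the three prior results (especially Proposition \ref{Q2M4}, which is the only one requiring nontrivial work with the split Cayley multiplication), and the corollary itself is simply the arithmetic consequence that fills the corresponding entry in Table \ref{Thetavalues}. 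The only thing worth double-checking is that the division by $q^2$ really is exact, which it is because each factor of $q+1$ appears in $N_{\mathrm{M4}} = q^4(q+1)(q^3+\ldots)$-style factorizations and the $q^2$ cancels cleanly with the $q^4$ factor.
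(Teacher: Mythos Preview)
Your proof is correct and matches the paper's approach exactly: the corollary is simply the double-counting identity $\Theta_X H_{X,Y} = T_{X,Y} N_Y$ specialized to $X=\mathrm{U}$, $Y=\mathrm{M4}$, with the three ingredients supplied by Corollary~\ref{NM4}, Proposition~\ref{TXM4}, and Proposition~\ref{Q2M4}. The only quibble is that your closing remark about the factorization of $N_{\mathrm{M4}}$ is vague and unnecessary; the exactness of the division is already visible from the explicit numerator $q^9+q^8+\cdots+q^4$ you wrote down.
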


\subsection{Completing Table \ref{Tvalues}}
At this point we have established values in the rows labeled Q and M in Tables \ref{Thetavalues} and \ref{Tvalues} and the values in the columns labeled M4 in Tables \ref{Nvalues} and \ref{Qvalues}, as well as the value in row U of Table \ref{Thetavalues} and those in rows Dn, Ds and J in Table \ref{Qvalues}. 

We now move on to completing the verification of Table \ref{Tvalues}. The elements $T_{X, M4}$ have been computed in proposition \ref{TXM4}, the number %$T_{X, F2J2}$ and 
$T_{X, F1J3}$ follow from %Proposition %\ref{typeD} and 
Corollary \ref{F1J3subspaces}. Concretely: 
%in Proposition \ref{typeD} it was shown that a type $D2J2$ subalgebra $D$ of $O$ contains exactly one type $Z$ subspace and that all other two-dimensional subspaces of $\im(D)$ are of type D. [LET OP: hier moet dus iets mis zijn. Gaan dat later na. Hmmm of nu.] It follows that $T_{Q, D2J2} = T_{U, D2J2} = T_{J, D2J2} = T_{M, D2J2} = 0$, $T_{Z, D2J2} = 1$ and $T_{D, D2J2} = \qbinom{3}{2} - 1 = q^2 + q$. Similarly 
Corollary \ref{F1J3subspaces} tells us that for every type F1J3 subalgebra $J$ of $O$ the space $\im(J)$ contains a unique line $l$ such that all two-dimensional subspaces of $\im(J)$ containing $l$ are of type Z while those not containing $l$ are of type J. It follows that $T_{Q, F1J3} = T_{U, F1J3} = T_{M, F1J3} = T_{Dn, F1J3} = T_{Ds, F1J3} = 0$, $T_{Z, F1J3} = (\qbinom{3}{1} - 1)/(\qbinom{2}{1} - 1) = q + 1$ and $T_{J, F1J3} = \qbinom{3}{2} - T_{Z, F1J3} = q^2$. %sanity check: berekenen $T_{J, F1J3}$ direct als volgt: eerste basis vector van vlak moet buiten lijn l liggen: q^3 - q keuzes. Tweede basis vector moet buiten hele vlak opgespannen door l en eerste basisvector liggen: q^3 - q^2 keuzes. Dit telt vlakken met een geordende basis (die l niet bevatten). Ieder vlak heeft (q^2 - 1)(q^2 - q) geordende bases: eerste basisvector mag niet nul zijn maar verder alles en tweede mag niet op lijn opgespannen door eerste basisvector liggen. Vinden dus T_{J, F1J3} = \frac{(q^3 - q)(q^3 - q^2}{(q^2 - 1)(q^2 - q)} = q^2$. Ok.

It remains to verify the numbers $T_{X, Y}$ where $Y$ is either $F2J2$ or $S2J2$. Let $H$ be an algebra of either type (so in particular $H = \langle D \rangle$ for some type D space $D$). We know from Proposition \ref{typeD} that $J(H)$ is of type Z and that every nilpotent element of $H$ is contained in $J(H)$. This latter fact implies that on one hand that $J(H)$ is the only type Z space in $H$ so that $T_{\textrm{Z}, \textrm{F2J2}} = T_{\textrm{Z}, \textrm{S2J2}} = 1$ and on the other hand that every two-dimensional subspace of $\im(H)$ unequal to $J(H)$ contains exactly one nilpotent line (its intersection with $J(H)$) and hence is of type either D or U. We also know from Lemma \ref{Dns} that the type D spaces are of type Dn if and only if $H$ is of type F2J2 and of type Ds if and only if $H$ is of type S2J2. This establishes two of the three remaining zeroes in Table \ref{Tvalues}. We proceed by counting the number of type U subspaces of $H$ for $H$ (still) of type either $F2J2$ or $S2J2$. 

\begin{lemma}\label{gamma2}
Let $H$ be the associative algebra generated by a type D subspace of $\im(O)$, so (by Corollary \ref{curious}), $\im(H)$ has basis $u, v, uv$ satisfying $u^2 = 0$, $v^2 = \beta \in F \backslash \{0\}$, $uv = -vu$.  Let $U$ be a type U subspace of $\im(H)$ and $l$ be the unique nilpotent line in $U$. Then $l$ is of the form $F(\gamma u + uv)$ with $\gamma$ satisfying $\gamma^2 = \beta$. 
\end{lemma}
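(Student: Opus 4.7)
The proof will be a direct computation in the three-dimensional space $\im(H)$ using the basis $\{u,v,uv\}$, so the plan is to first nail down the multiplication table on this basis, then identify all nilpotent elements, and finally impose the type-U condition on a candidate line $l$.

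First, I would record the relevant products. Since $H$ is associative (Corollary \ref{4ass}, or Theorem \ref{classification}), and since $u^2=0$, $v^2=\beta\cdot 1$, and $uv=-vu$, we compute inside $H$: $(uv)^2 = u(vu)v = -u^2v^2 = 0$, $u(uv) = u^2 v = 0 = (uv)u$, and $v(uv) = (vu)v = -uv^2 = -\beta u$, while $(uv)v = uv^2 = \beta u$. With these in hand, expanding $x^2$ for a general element $x = au + bv + c(uv)\in\im(H)$ collapses immediately: the $ab$-cross terms cancel by $uv+vu=0$, the $ac$- and $bc$-cross terms cancel by $u(uv)+(uv)u=0$ and $v(uv)+(uv)v=0$, leaving $x^2 = b^2\beta\cdot 1$. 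Since $\beta\neq 0$, this shows that the set of nilpotent elements of $\im(H)$ is precisely the two-dimensional subspace $Z := \spam(u,uv)$, which agrees with the identification $Z = J(H)$ from Proposition \ref{typeD}.

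Next I would show $l\neq Fu$. Since $U$ is of type $U$, the nilpotent line $l$ sits inside $Z$, so $l = F(\gamma u + \delta(uv))$ for some $(\gamma,\delta)\neq(0,0)$. If $\delta = 0$ (i.e.\ $l=Fu$), pick any $w\in U\setminus l$ and write $w = a u + b v + c(uv)$; then $b\neq 0$ because $w$ is non-nilpotent. The type-U condition demands $uw \in Fu$, but $uw = b(uv)$, which forces $b=0$, a contradiction. Hence $\delta\neq 0$ and we may rescale to assume $l = F(\gamma u + uv)$.

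Finally I would apply the type-U condition to a non-nilpotent $w = au + bv + c(uv)$ (with $b\neq 0$) in $U$. Using the multiplication table above,
\begin{equation*}
(\gamma u + uv)\, w \;=\; b\beta\, u \;+\; \gamma b\,(uv) \;=\; b\bigl(\beta u + \gamma(uv)\bigr).
\end{equation*}
For this to lie in $F(\gamma u + uv)$, the vectors $\beta u + \gamma(uv)$ and $\gamma u + uv$ must be proportional; comparing the two coordinates forces the proportionality constant to equal $\gamma$ (from the $uv$-coordinate) and simultaneously $\beta=\gamma^2$ (from the $u$-coordinate). This is the desired relation $\gamma^2=\beta$.

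There is no real obstacle here; the calculation is entirely linear-algebraic in $\im(H)$ once the products of the basis elements have been tabulated. The only subtle point worth being careful about is recording that $H$ is associative — otherwise shuffling brackets in $v(uv)$, $(uv)^2$, and $(\gamma u+uv)w$ would not be automatic — but this is guaranteed by Corollary \ref{4ass}.
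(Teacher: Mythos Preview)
Your proof is correct and follows essentially the same route as the paper's: identify $l\subset J(H)=\spam(u,uv)$, write a non-nilpotent $w\in U$ in the basis $\{u,v,uv\}$, expand $(\gamma u+uv)w$, and read off $\gamma^2=\beta$ from the proportionality condition. You are in fact slightly more careful than the paper, which writes $l=F(\gamma u+uv)$ without justifying that the $uv$-coefficient is nonzero; your argument that $l\neq Fu$ (via $uw=b(uv)\notin Fu$) fills exactly that gap.
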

\begin{proof}
Since every nilpotent element in $H$ lies in $J(H) = \spam(u, uv)$ (Prop. \ref{typeD}) we have that $l$ is of the form $F(\gamma u + uv)$ for \emph{some} $\gamma \in F$. Let $x = \delta u + \epsilon v + \zeta uv$ be an element of $U \backslash l$. Since $U \cap J(H) = l$ we see that $\epsilon \neq 0$. Now from $lx \subset l$ (the definition of type U space) we find that there exists an $\eta \in F$ such that $(\gamma u + uv)x = \eta(\gamma u + uv)$ yielding the equalities $\epsilon \beta = \eta \gamma$ and $\gamma \epsilon = \eta$ between elements of $F$. Substituting the second equation into the first and using that $\epsilon \neq 0$ we find that $\beta = \gamma^2$  
\end{proof}
This completes our verification of the column F2J2 of Table \ref{Tvalues}:
\begin{corollary}\label{TXF2J2}
$T_{\textrm{U}, \textrm{F2J2}} = 0$ and $T_{\textrm{D}, \textrm{F2J2}} = q^2 + q$.
\end{corollary}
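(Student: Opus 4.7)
The proof plan rests on combining several earlier results about the structure of type F2J2 algebras with the counting of two-dimensional subspaces in a three-dimensional space over $\mathbb{F}_q$.

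First, I would set up the structure. Let $H$ be a type F2J2 subalgebra. By Notation \ref{subalgtypes} and Proposition \ref{typeD}, $\im(H)$ has a basis $\{u, v, uv\}$ with $u^2 = 0$, $v^2 = \beta \in F^\times$ and $uv = -vu \in \im(H)$, where $\langle v \rangle \isom D_\beta(F)$ is a two-dimensional Cayley-Dickson division algebra. Since $D_\beta(F)$ is a field, Lemma \ref{2dim} forces $\beta$ to be a non-square in $F$. The total number of two-dimensional subspaces of $\im(H)$ is $\qbinom{3}{2} = q^2 + q + 1$, and these must be partitioned among the six types listed in Definition \ref{types}.

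Next I would identify the type Z subspaces. By Proposition \ref{typeD}, $J(H) = \spam(u, uv)$ is the set of \emph{all} nilpotent elements of $H$, and it is itself a type Z subspace; moreover, it is the unique such subspace, since any other two-dimensional subspace $U' \subseteq \im(H)$ intersects $J(H)$ in precisely a line (containing all nilpotents of $U'$) and therefore contains non-nilpotent elements. Hence every one of the remaining $q^2 + q$ subspaces has exactly one nilpotent line and by Lemma \ref{complete} is of type U or type D (types Q, M and J being ruled out by the presence of exactly one nilpotent line).

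The key step is then to eliminate type U. Apply Lemma \ref{gamma2}: the nilpotent line of any type U subspace of $\im(H)$ has the form $F(\gamma u + uv)$ with $\gamma^2 = \beta$. Since $\beta$ is a non-square in $F$, no such $\gamma$ exists, so $T_{\textrm{U}, \textrm{F2J2}} = 0$. This is the main (though very small) obstacle: everything hinges on converting the algebraic condition ``$D_\beta(F)$ is a field'' into the arithmetic condition ``$\beta$ is a non-square'' and then invoking Lemma \ref{gamma2}.

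It follows that all $q^2 + q$ two-dimensional subspaces other than $J(H)$ are of type D, and by Lemma \ref{Dns} they are all of type Dn (since their non-nilpotent elements square to $\beta$ times a non-zero square, i.e.\ to non-squares). This yields $T_{\textrm{Dn}, \textrm{F2J2}} = q^2 + q$ and completes the column F2J2 of Table \ref{Tvalues}, since the sum $0 + (q^2+q) + 1 = q^2 + q + 1$ matches the total subspace count.
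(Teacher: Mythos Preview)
Your proof is correct and follows essentially the same route as the paper: invoke Lemma~\ref{gamma2} to see that a type~U subspace would force $\beta$ to be a square, contradict this via Lemma~\ref{Dns} (or equivalently Lemma~\ref{2dim}) since $H$ is of type F2J2, and then obtain $T_{\mathrm{D},\mathrm{F2J2}}$ by subtracting the single type~Z subspace from the total $\qbinom{3}{2}=q^2+q+1$. Your additional remark that the type~D subspaces are in fact all of type~Dn (confirming the Table~\ref{Tvalues} entry) is correct and follows directly from Lemma~\ref{Dns}, though the parenthetical justification ``square to $\beta$ times a non-zero square'' would need a short computation to verify and is not needed---Lemma~\ref{Dns} already gives the conclusion since $\langle D'\rangle = H$ for any such $D'$.
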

\begin{proof}
The number $\beta$ in Lemma \ref{gamma2} was defined as the square of a non-nilpotent element of $\im(H)$, which by Lemma \ref{Dns} means that if $H$ is of type F2J2, $\beta$ is a non-square in $F$. In particular the equation $\gamma^2 = \beta$ does not have solutions in $F$ when $H$ is of type F2J2. Lemma \ref{gamma2} thus implies that in that case $H$ contains no type U subspaces and $T_{\textrm{U}, \textrm{F2J2}} = 0$. 

The second equality in the Corollary then follows by subtracting the number of type Z subspaces ($1$) from the total number of two-dimensional subspaces ($\qbinom{3}{2} = q^2 + q + 1$) of $\im(H)$. 
\end{proof}
What remains is computing the number of type U and type D subspaces of $H$ in case $H$ is of type S2J2.
\begin{lemma}\label{TXS2J2}
Let $H$ be of type S2J2. Then there are two lines $l_1$, $l_2$ in $J(H)$ such that a two-dimensional subspace $U$ of $\im(H)$ is of type U if and only if it contains exactly one of $l_1, l_2$. 
\end{lemma}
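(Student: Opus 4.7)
The plan is to pin down the two lines $l_1, l_2$ by combining Lemma \ref{gamma2} with Lemma \ref{Dns}, and then verify the biconditional by a short direct computation in the model $\im(H) = \spam(u,v,uv)$ with $u^2 = 0$, $v^2 = \beta$, $uv = -vu$ provided by Proposition \ref{typeD}. Since $H$ is of type S2J2, Lemma \ref{Dns} tells us that $\beta$ is a (non-zero) square in $F$, so there exists $\gamma_0 \in F^\times$ with $\gamma_0^2 = \beta$, and because $\chr(F) \neq 2$ the two roots $\pm \gamma_0$ are distinct. I would then define
\[
l_1 \coloneqq F(\gamma_0 u + uv), \qquad l_2 \coloneqq F(-\gamma_0 u + uv),
\]
which are two distinct lines inside $J(H) = \spam(u, uv)$.

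For the forward direction, suppose $U \subseteq \im(H)$ is of type U. By Lemma \ref{gamma2}, its unique nilpotent line is of the form $F(\gamma u + uv)$ with $\gamma^2 = \beta$, and hence equals one of $l_1, l_2$. It cannot contain both, for then $U \supseteq \spam(l_1, l_2) = J(H)$ would force $U = J(H)$, which is of type Z (as already recorded just above Corollary \ref{TXF2J2}). So $U$ contains exactly one of $l_1, l_2$.

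For the reverse direction, suppose $U$ contains exactly one of $l_1, l_2$, say $l_1$. Since $U \neq J(H)$, we know from the discussion preceding Corollary \ref{TXF2J2} that $U$ is of type either U or D, and in either case its unique nilpotent line is $U \cap J(H)$, which must equal $l_1$. To rule out type D, I would verify by direct computation, using the product table for the basis $\{u, v, uv\}$ (where $u \cdot uv = (uv)\cdot u = 0$, $(uv)\cdot v = \beta u$, $(uv)^2 = 0$) and the identity $\gamma_0^2 = \beta$, that for any $x = \delta u + \epsilon v + \zeta uv \in U$ one has
\[
(\gamma_0 u + uv)\, x \;=\; \epsilon\gamma_0 (\gamma_0 u + uv) \;\in\; l_1.
\]
Thus $l_1 \cdot U \subseteq l_1$, which by Definition \ref{types} is precisely the criterion for $U$ to be of type U (rather than type D).

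I do not anticipate a genuine obstacle: once Lemma \ref{gamma2} and Lemma \ref{Dns} are in hand, the only non-trivial ingredient is the one-line multiplication above, and its outcome lying in $l_1$ is forced by $\gamma_0^2 = \beta$. The only care needed is bookkeeping the two choices of square root and noting that $\chr(F) \neq 2$ keeps them distinct so that $l_1 \neq l_2$.
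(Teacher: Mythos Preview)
Your proof is correct and follows essentially the same route as the paper: you define the same two lines $l_1,l_2$ from the two square roots of $\beta$, invoke Lemma~\ref{gamma2} for the forward direction, and for the converse perform the same left-multiplication computation showing $l_1 x \subset l_1$, which is exactly what the paper means by ``the same computation as in the proof of Lemma~\ref{gamma2}, only with a different interpretation.'' Your explicit display of $(\gamma_0 u + uv)x = \epsilon\gamma_0(\gamma_0 u + uv)$ and your remark that $\chr(F)\neq 2$ keeps $l_1\neq l_2$ are welcome clarifications.
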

\begin{proof}
Let $u, v, uv$ be a basis of $\im(H)$ as in Lemma \ref{gamma2}, and let $\beta = v^2$ as in that lemma. By Lemma \ref{Dns} we have that there exist a number $\gamma$ such that $\gamma^2 = \beta$. Let $l_1 = F(\gamma u + uv)$ and $l_2 = F(-\gamma u + uv)$. It is easy to verify that every element of $l_1$ is nilpotent, as is every element of $l_2$. Lemma \ref{gamma2} tells us that every type U subspace of $\im(H)$ contains at least one of $l_1, l_2$, while the definition of type U space as a space containing exactly one nilpotent line prevents it from containing both. %Similarly there cannot be a third line $l_3$ with the described property since any type U space containing $l_3$ must contain one of $l_1, l_2$ as well. 

It remains to show that every two-dimensional subspace $U \subset \im(H)$ containing exactly one of $l_1, l_2$ is of type U. Let $l$ be the line from $\{l_1, l_2\}$ contained in $U$ and $l'$ be the other one. Since every nilpotent element of $H$ lies in $J(H) = \spam(l_1, l_2)$, we see that $U$ cannot contain any nilpotent element outside $l$ since otherwise $U$ would contain $l'$ as well. 

This reduces the possible types of $U$ to U and D and in order to show that $U$ is of type U we only need to verify that $lx \subset l$ for any $x \in U \backslash l$. This is exactly the same computation as we did in the proof of Lemma \ref{gamma2}, only with a different interpretation.
\end{proof}
\begin{corollary}
$T_{\textrm{U}, \textrm{S2J2}} = 2q$ and $T_{\textrm{Ds}, \textrm{S2J2}} = q^2 - q$. 
\end{corollary}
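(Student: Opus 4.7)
The plan is to derive both equalities directly from Lemma \ref{TXS2J2} together with the preceding discussion about $T_{\textrm{Z}, \textrm{S2J2}}$ and Lemma \ref{Dns}, by a short inclusion–exclusion count on the three-dimensional space $\im(H)$.

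First I would invoke Lemma \ref{TXS2J2} to identify the two distinguished nilpotent lines $l_1, l_2 \subset J(H) \subset \im(H)$ and recall that a two-dimensional subspace $U \subset \im(H)$ is of type U if and only if it contains exactly one of $l_1, l_2$. Since $\dim \im(H) = 3$, the set of two-dimensional subspaces of $\im(H)$ containing a fixed line has cardinality $\qbinom{2}{1} = q+1$ (each such subspace corresponds to a line in the two-dimensional quotient $\im(H)/l_i$). Hence there are $q+1$ two-dimensional subspaces containing $l_1$ and $q+1$ containing $l_2$. A two-dimensional subspace of $\im(H)$ contains \emph{both} $l_1$ and $l_2$ precisely when it contains $\spam(l_1,l_2) = J(H)$, and since this already has dimension $2$ there is exactly one such subspace, namely $J(H)$ itself. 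By inclusion–exclusion, the number of two-dimensional subspaces containing at least one of $l_1, l_2$ equals $(q+1)+(q+1)-1 = 2q+1$, and subtracting the unique subspace containing both yields $T_{\textrm{U},\textrm{S2J2}} = (2q+1)-1 = 2q$.

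For the second equality I would use the fact, already established in the paragraph preceding Corollary \ref{TXF2J2}, that every two-dimensional subspace of $\im(H)$ other than $J(H)$ has exactly one nilpotent line and hence is of type U or type D, while the unique type Z subspace is $J(H)$ itself, giving $T_{\textrm{Z},\textrm{S2J2}}=1$. Combined with Lemma \ref{Dns}, which forces every type D subspace of $\im(H)$ to be of type Ds (since $H$ is of type S2J2), we obtain
\[
T_{\textrm{Ds},\textrm{S2J2}} \;=\; \qbinom{3}{2} - T_{\textrm{U},\textrm{S2J2}} - T_{\textrm{Z},\textrm{S2J2}} \;=\; (q^2+q+1) - 2q - 1 \;=\; q^2 - q,
\]
completing the proof.

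The argument contains no real obstacle: all structural work has been done in Lemma \ref{TXS2J2}, Lemma \ref{Dns} and Proposition \ref{typeD}, and what remains is a bookkeeping count on lines through two fixed lines of a three-dimensional space over $\mathbb{F}_q$. The only point to be careful about is that the ``exactly one'' in Lemma \ref{TXS2J2} requires subtracting the subspace $J(H)$ (which contains both $l_1$ and $l_2$) rather than simply adding the two counts $q+1$, so I would make the inclusion–exclusion step explicit to avoid an off-by-one error.
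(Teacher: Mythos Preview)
Your proof is correct and follows essentially the same approach as the paper: both arguments invoke Lemma \ref{TXS2J2}, count the type U subspaces via the two distinguished lines $l_1, l_2$, and obtain $T_{\textrm{Ds},\textrm{S2J2}}$ by subtracting the U and Z counts from $\qbinom{3}{2}$. The only cosmetic difference is that the paper counts \emph{elements} in $\im(H)\setminus J(H)$ and divides by $q^2-q$ to get $q$ type U spaces through each $l_i$, whereas you count \emph{subspaces} through each $l_i$ directly as $q+1$ and use inclusion--exclusion; both yield $2q$ for the same reason.
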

\begin{proof}
Let $H, l_1, l_2$ be as in Lemma \ref{TXS2J2}. The two-dimensional space $J(H)$ contains $q^2$ elements, leaving $q^3 - q^2$ elements in $\im(H) \backslash J(H)$. For each element $x$ among these $q^3 - q^2$ we have that $\spam(l_1, x)$ is a type U space containing $l_1$ by Lemma \ref{TXS2J2}. Since every such space contains $q^2 - q$ elements not on $l_1$ we find that there are $\frac{q^3 - q^2}{q^2 - q} = q$ type U spaces containing $l_1$. An identical computation yields that there are $q$ type U spaces containing $l_2$ and by Lemma \ref{TXS2J2} together these are all type U spaces, bringing the total to $2q$. The number $q^2 - q$ of type D subpaces then follows from subtracting the numbers of type U spaces ($2q$) and type Z spaces ($1$) from the total number of $\qbinom{3}{2} = q^2 + q + 1$ of two-dimensional subspaces of $\im(H)$.
\end{proof}

This completes the verification of Table \ref{Tvalues}. 

\subsection{Subalgebras containing subspaces of type Z}
Our next goal is to verify the numbers $H_{X, Y}$ listed in Table \ref{Qvalues}. We note that the values of $H_{\textrm{Q}, Y}$ listed there follow from Proposition \ref{main}, the values of $H_{\textrm{Dn}, \textrm{Y}}$ and $H_{\textrm{Ds}, \textrm{Y}}$ follow from Proposition \ref{typeD} and Lemma \ref{Dns},
the values of $H_{\textrm{M}, Y}$ follow from Proposition \ref{typeM} and the values of $H_{\textrm{J}, Y}$ follow from proposition \ref{typeJ}. The value of $H_{\textrm{U}, \textrm{M4}}$ has been established in Proposition \ref{Q2M4}, while the equalities $H_{\textrm{U}, \textrm{F2J2}} = 0$ and $H_{\textrm{U}, \textrm{F1J3}} = 0$ follow respectively from the equalities $T_{\textrm{U}, \textrm{F2J2}} = 0$ and $T_{\textrm{U}, \textrm{F1J3}} = 0$ established above. Similarly we see that $H_{\textrm{Z}, \textrm{M4}} = 0$ from the equality $T_{\textrm{Z}, \textrm{M4}} = 0$ derived in Proposition \ref{TXM4}. It remains to compute the values of $H_{\textrm{U}, \textrm{S2J2}}$, and $H_{\textrm{Z}, Y}$ for $Y$ in F2J2, S2J2 and F1J3. We postpone computation of the number $H_{\textrm{U}, \textrm{S2J2}}$ until after we confirmed the values in Tables \ref{Thetavalues} and \ref{Nvalues} and take a more in depth look at the type Z subspaces of $O$ in order to compute the numbers $H_{\textrm{Z}, Y}$.

\begin{lemma}\label{Zideal}
Let $H$ be a four-dimensional associative subalgebra of $O$ containing a type Z subspace $Z$. Then $Z$ is a two-sided ideal of $H$. That is: $xZ \subset Z$ and $Zx \subset Z$ for all $x \in H$.
\end{lemma}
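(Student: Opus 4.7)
The plan is to argue by case analysis on the isomorphism type of $H$. By Proposition \ref{QMDJ}, every four-dimensional associative subalgebra of $O$ is generated by a two-dimensional subspace of type Q, D, M or J and therefore, by Theorem \ref{classification}, falls into one of the classes M4, F2J2, S2J2 or F1J3 (Notation \ref{subalgtypes}). I would first dispose of the M4 case by noting that Proposition \ref{TXM4} gave $T_{\mathrm{Z},\mathrm{M4}}=0$; hence if $H$ contains a type Z subspace, $H$ cannot be of type M4.

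Next I would handle the F2J2 and S2J2 cases together. In both cases Proposition \ref{typeD} identifies the two-dimensional Jacobson radical $J(H)$ as the unique type Z subspace of $\im(H)$ — the argument there shows that the nilpotent elements of $H$ are exactly the elements of $J(H)$, so any other two-dimensional subspace of $\im(H)$ contains non-nilpotent elements and therefore cannot be of type Z. Since $J(H)$ is by definition of the Jacobson radical a two-sided ideal of $H$ (see the text around Definition \ref{Jacobson} and Theorem \ref{semisimple}), the claim is immediate in these two cases.

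The remaining case is F1J3, which is where the actual bookkeeping happens. Proposition \ref{typeJ} supplies a basis $\{1,u,v,uv\}$ of $H$ with $u^2=v^2=(uv)^2=0$, $vu=-uv$, and $(uv)x=x(uv)=0$ for all $x\in\im(H)$. Corollary \ref{F1J3subspaces} then tells us that every type Z subspace $Z\subset \im(H)$ necessarily contains the distinguished line $F(uv)$, so we may write $Z=F(uv)\oplus Fz$ for some $z\in\im(H)\setminus F(uv)$. The key observation I would now establish by a one-line computation from the multiplication table of Proposition \ref{typeJ} is that for any two imaginary elements $j=au+bv+c(uv)$ and $j'=a'u+b'v+c'(uv)$ of $H$ one has $jj'=(ab'-ba')(uv)\in F(uv)\subset Z$. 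In other words, $\im(H)\cdot\im(H)\subseteq F(uv)\subseteq Z$.

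With this, the ideal property is immediate: for $h=\alpha\cdot 1+j\in H$ (with $\alpha\in F$, $j\in\im(H)$) and $w\in Z\subseteq\im(H)$, we get $hw=\alpha w+jw$, where $\alpha w\in Z$ trivially and $jw\in F(uv)\subseteq Z$ by the observation above; the same computation handles $wh$. The only step worth calling out as a potential obstacle is the verification that there are no other type Z subspaces to worry about, but this is precisely the content of Corollary \ref{F1J3subspaces} in the F1J3 case and of Proposition \ref{typeD} in the F2J2/S2J2 case, so no new work is needed.
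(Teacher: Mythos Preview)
Your proof is correct and follows essentially the same approach as the paper's own proof: the same case analysis via Proposition \ref{QMDJ} and Theorem \ref{classification}, the same disposal of M4 via $T_{\mathrm{Z},\mathrm{M4}}=0$, the same appeal to $Z=J(H)$ in the F2J2/S2J2 cases, and the same direct computation from the multiplication table of Proposition \ref{typeJ} in the F1J3 case. Your formulation of the F1J3 computation (observing that $\im(H)\cdot\im(H)\subseteq F(uv)$) is arguably a slightly cleaner packaging of the same calculation the paper carries out coordinate-wise.
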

\begin{proof}
Since we established that all four-dimensional associative subalgebras of $O$ are of type M4, F2J2, S2J2 or F1J3 (see the text preceding Notation \ref{subalgtypes}) and no type M4 algebra can contain a type Z space (Prop. \ref{TXM4}), it suffices to verify the claim for algebras of type either F2J2 or S2J2 and for algebras of type F1J3. In the first two cases, Proposition \ref{typeD} tells us that $Z$ is the unique type Z subspace of $H$ and that it equals the Jacobson radical of $H$, which by definition is an ideal. 

In the F1J3-case we pick a basis $u, v, uv$ of $\im(H)$ as in Proposition \ref{typeJ}, that is: satisfying $u^2 = v^2 = (uv)^2 = 0$, $vu = -uv$, $x(uv) = (uv)x = 0$ for all $x \in \im(H)$. We recall from Corollary \ref{F1J3subspaces} that $uv$ is contained in every type Z subspace of $H$ and that hence in particular $uv \in Z$.

Now let $x = \alpha + \beta u + \gamma v + \delta uv$ be a generic element of $H$ and $z = \epsilon u + \zeta v + \eta uv$ be an element of $Z$. Then we compute that $xz = \alpha z + (\beta \zeta - \gamma \epsilon) uv$ which is a sum of two elements of $Z$ and hence an element of $Z$ itself while $zx = \alpha z + (\gamma \epsilon - \beta \zeta) uv$ which is an element of $Z$ for similar reasons.
\end{proof}

We recall from Chapter 5:
\begin{lemma*}[Cor. \ref{ZUtrans}]
The automorphism group $\Aut(O)$ of $O$ acts transitively on the set of type Z subspaces of $O$.
\end{lemma*}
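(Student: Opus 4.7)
The plan is to reduce the claim to Theorem \ref{transitive} (the Springer–Veldkamp result), which extends any isomorphism between subalgebras of a Cayley algebra to an automorphism of the whole algebra. Concretely, given type Z subspaces $Z_1, Z_2 \subset \im(O)$, I would like to realize each $Z_i$ as the imaginary part of a subalgebra of $O$, to produce an algebra isomorphism between these two subalgebras sending $Z_1$ onto $Z_2$, and then invoke Theorem \ref{transitive} to extend it to an automorphism of $O$.

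First I would use Proposition \ref{3dim} together with the remark directly below it to conclude that whenever $U$ is of type Z, the space $F1 \oplus U$ is closed under the multiplication of $O$ and hence a three-dimensional subalgebra. The special feature of type Z is that products of any two imaginary elements vanish, so that the multiplication on $F1 \oplus U$ is determined purely by the linear structure via
\[
(\alpha 1 + u)(\beta 1 + v) \;=\; \alpha\beta 1 + \alpha v + \beta u, \qquad \alpha,\beta\in F,\ u,v \in U.
\]
This means that \emph{any} $F$-linear isomorphism between two type Z subspaces automatically extends to an isomorphism of the three-dimensional subalgebras they span together with $1$.

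Concretely, I would pick any $F$-linear isomorphism $\phi \colon Z_1 \to Z_2$, extend it to a linear isomorphism $\tilde\phi \colon F1 \oplus Z_1 \to F1 \oplus Z_2$ by setting $\tilde\phi(1) = 1$, and then observe from the multiplication formula above that $\tilde\phi$ is an algebra isomorphism. Theorem \ref{transitive} then supplies an automorphism $\Phi$ of $O$ whose restriction to $F1 \oplus Z_1$ equals $\tilde\phi$. To conclude that $\Phi(Z_1) = Z_2$, I would finally invoke Lemma \ref{autonice} (automorphisms preserve $\im(O)$) together with Lemma \ref{H} (so that $\im(F1 \oplus Z_i) = Z_i$), giving
\[
\Phi(Z_1) = \Phi\bigl(\im(F1 \oplus Z_1)\bigr) = \im\bigl(\Phi(F1 \oplus Z_1)\bigr) = \im(F1 \oplus Z_2) = Z_2.
\]

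There is essentially no obstacle here: once one notices that the trivial multiplication on a type Z space forces the three-dimensional algebra $F1 \oplus Z$ to carry the canonical ``square-zero extension'' structure, both the existence of the algebra isomorphism between $F1 \oplus Z_1$ and $F1 \oplus Z_2$ and the fact that this isomorphism respects the decomposition into scalars and imaginaries become automatic, and the hard work is entirely offloaded to Theorem \ref{transitive}.
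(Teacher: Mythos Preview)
Your proposal is correct and follows essentially the same approach as the paper: pick any linear isomorphism $\phi\colon Z_1\to Z_2$, extend it to an algebra isomorphism $F1\oplus Z_1\to F1\oplus Z_2$ using the trivial (square-zero) multiplication described just below Proposition~\ref{3dim}, and then invoke Theorem~\ref{transitive}. Your final verification that $\Phi(Z_1)=Z_2$ via Lemmas~\ref{autonice} and~\ref{H} is a bit more careful than necessary (since the extension $\Phi$ restricts to $\tilde\phi$ on $F1\oplus Z_1$, one has $\Phi(Z_1)=\tilde\phi(Z_1)=Z_2$ directly), but it is certainly correct.
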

%\begin{proof}%staat al in hoofdstuk 5
%Let $Z_1, Z_2$ be type $Z$ subspaces of $O$ and let $\phi \colon Z_1 \to Z_2$ be \emph{any} linear isomorphism between them. Then $\phi$ %extends to an algebra morphism between the algebras $F1 \oplus Z_1$ and $F1 \oplus Z_2$ by setting $\phi(1) = 1$ (cf Observation \ref{3D}) and hence to an automorphism of all of $O$ by Theorem \ref{transitive}.
%\end{proof}
%[idee achter dit zo expliciet maken is dat we dan later ook snel kunnen zeggen: aantal type Z spaces is $|Aut(O)|/|\GL(2, q)|$. Maar zie nu dat dat veronderstelt dat ieder morphisme $\phi$ uitbreidt tot \emph{uniek} automorphisme. Als in plaats daarvan ieder morphisme uitbreidt tot, zeg, 4 automorphimen dan wordt het aantal type Z spaces kleiner, namelijk $|Aut(O)|/(4 |\GL(2, q)|)$. Hmmm. Onverwachte tegenvaller.]

%Update: nu we het goede antwoord weten kunnen we wel zien hoeveel auto's per auto. Rekenen later een keer uit, zijn nu even bezig met iets anders.

The upshot of Corollary \ref{ZUtrans} is that the number $H_{\textrm{Z}, Y}$ equals the number of type $Y$ subalgebras containing the very explicit type Z space $Z_0 \coloneqq \spam(q_1, r_2)$ (with $q_1, r_2$ as in Table \ref{multtab}), for any automorphism mapping a given type Z space $Z$ to $Z_0$ will provide a bijection between the type $Y$ algebras containing $Z$ to those containing $Z_0$. We will count the latter starting from the following observation.

\begin{lemma}\label{typeZex4}
Let $Z_0 = \spam(q_1, r_2)$ (with notation as in Table \ref{multtab}) and let $w$ in $\im(O) \backslash Z_0$. Then $\spam(1, q_1, r_2, w)$ is a four-dimensional associative algebra containing $Z_0$ if and only if $wZ_0 \subset Z_0$ and $Z_0w \subset Z_0$.
\end{lemma}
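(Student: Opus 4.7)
\emph{Plan of proof.} Set $S := \spam(1, q_1, r_2, w)$. Since $w \notin Z_0$ and since $1 \in F1$ while $q_1, r_2, w \in \im(O)$, the set $\{1, q_1, r_2, w\}$ is automatically linearly independent, so $\dim S = 4$ and $Z_0 \subset S$ regardless of the assumptions on $w$. Hence the real content of the lemma is that $S$ is \emph{closed under multiplication} precisely when $wZ_0 \subset Z_0$ and $Z_0 w \subset Z_0$; once $S$ is known to be a four-dimensional subalgebra, associativity comes for free from Corollary \ref{4ass} (equivalently Theorem \ref{ass1234}).

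By bilinearity, to check closure of $S$ it suffices to check the products of the basis elements $1, q_1, r_2, w$. The products involving $1$ are trivial, while $q_1^2 = r_2^2 = q_1 r_2 = r_2 q_1 = 0 \in S$ follow immediately from Table \ref{multtab} (they express that $Z_0$ itself is of type Z). Moreover $w^2 \in F1 \subset S$ since $w \in \im(O)$ and $O$ is quadratic. Thus the closure of $S$ under multiplication reduces to the four conditions $q_1 w, wq_1, r_2 w, wr_2 \in S$. The ``if'' direction is then immediate: the hypotheses $wZ_0 \subset Z_0$ and $Z_0 w \subset Z_0$ place all four of these products inside $Z_0 \subset S$.

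For the ``only if'' direction, assume $S$ is a four-dimensional associative subalgebra. We wish to upgrade membership in $S$ to membership in $Z_0$ for each of the four products, the point being that no $1$- or $w$-component can appear. Consider for example $q_1 w$, and expand it in the basis as $q_1 w = \alpha + \beta q_1 + \gamma r_2 + \delta w$ with $\alpha, \beta, \gamma, \delta \in F$. Alternativity of $O$ gives $q_1(q_1 w) = q_1^2 w = 0$; on the other hand, multiplying the expansion on the left by $q_1$ and using $q_1^2 = q_1 r_2 = 0$ yields $\alpha q_1 + \delta (q_1 w) = 0$, i.e.\ $\alpha q_1 + \delta(\alpha + \beta q_1 + \gamma r_2 + \delta w) = 0$. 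Linear independence of $\{1, q_1, r_2, w\}$ forces $\delta \alpha = 0$, $\delta \gamma = 0$, $\delta^2 = 0$ and $\alpha + \delta\beta = 0$, so $\delta = 0$ and then $\alpha = 0$. This proves $q_1 w \in \spam(q_1, r_2) = Z_0$. The three remaining products $w q_1, r_2 w, w r_2$ are handled by the same calculation with the roles of the factors swapped, using the alternativity identities $(wq_1)q_1 = w q_1^2 = 0$, $r_2(r_2 w) = r_2^2 w = 0$ and $(wr_2) r_2 = w r_2^2 = 0$ respectively.

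The two directions together give the equivalence; the only step with any content is the short calculation in the last paragraph, and the main (very minor) obstacle is simply keeping track of which of the eight zero-products in Table \ref{multtab} are being used in each of the four symmetric cases.
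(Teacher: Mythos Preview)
Your proof is correct. The ``if'' direction matches the paper's exactly, and both appeal to Corollary~\ref{4ass} for associativity once closure is established.

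For the ``only if'' direction you take a genuinely different route. The paper invokes Lemma~\ref{Zideal}, whose proof in turn rests on the full classification of four-dimensional subalgebras of $O$ into types M4, F2J2, S2J2, F1J3: it rules out M4 via $T_{\textrm{Z},\textrm{M4}}=0$, identifies $Z$ with the Jacobson radical in the F2J2/S2J2 cases via Proposition~\ref{typeD}, and handles F1J3 by an explicit calculation in the basis of Proposition~\ref{typeJ}. Your argument bypasses all of this: expanding $q_1 w$ in the basis $\{1,q_1,r_2,w\}$ and applying the single alternative identity $q_1(q_1 w)=q_1^2 w=0$ forces the $1$- and $w$-coefficients to vanish by linear independence. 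This uses nothing beyond the type~Z relations $q_1^2=r_2^2=q_1 r_2=r_2 q_1=0$ and alternativity of $O$, and would in fact give a direct, classification-free proof of Lemma~\ref{Zideal} itself (replace $q_1,r_2$ by any basis of a type~Z subspace). The paper's approach has the advantage of being a one-line citation at this point in the text, since Lemma~\ref{Zideal} is already in hand; yours is more self-contained and arguably the more natural argument.
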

\begin{proof}
The `only if' direction follows from Lemma \ref{Zideal}. In the `if' direction, it follows trivially from the right hand side that $H$ is a subalgebra. What remains to be shown is that it is associative, but this was covered in Corollary \ref{4ass}.
\end{proof}
\begin{lemma}\label{Z0idealizer}
Let $Z_0$ be as above and let $w \in \im(O) \backslash Z_0$. Then $wZ_0 \subset Z_0$ if and only if $Z_0w \subset Z_0$ if and only if $w \in W_0 \coloneqq \spam(p_1 - p_2, q_1, r_2, q_3, r_3)$.
\end{lemma}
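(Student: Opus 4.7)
The plan is a direct computation in the basis $\{p_1, p_2, q_1, q_2, q_3, r_1, r_2, r_3\}$ of Table \ref{multtab}. A generic imaginary element can be written as
$$w = \alpha(p_1 - p_2) + \beta_1 q_1 + \beta_2 q_2 + \beta_3 q_3 + \gamma_1 r_1 + \gamma_2 r_2 + \gamma_3 r_3,$$
since $p_1 - p_2, q_1, q_2, q_3, r_1, r_2, r_3$ is a basis of $\im(O)$ (note $p_1 + p_2 = 1$). First I would verify all three equivalences by computing the four products $wq_1$, $wr_2$, $q_1 w$, $r_2 w$ and reading off when they land in $Z_0 = \spam(q_1, r_2)$.

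Concretely, using Table \ref{multtab} to evaluate each product entry-by-entry, one finds that
$$wq_1 = \alpha q_1 + \beta_3 r_2 - \beta_2 r_3 - \gamma_1 p_1, \qquad wr_2 = -\alpha r_2 - \gamma_3 q_1 + \gamma_1 q_3 - \beta_2 p_1.$$
The coefficients of $r_3$, $p_1$ and $q_3$ (all outside $Z_0$) must vanish, so both $wq_1 \in Z_0$ and $wr_2 \in Z_0$ if and only if $\beta_2 = \gamma_1 = 0$, which is exactly the statement $w \in W_0 = \spam(p_1-p_2, q_1, q_3, r_2, r_3)$. A similar computation for
$$q_1 w = -\alpha q_1 + \beta_2 r_3 - \beta_3 r_2 - \gamma_1 p_1, \qquad r_2 w = \alpha r_2 + \gamma_3 q_1 - \gamma_1 q_3 - \beta_2 p_2$$
shows that $Z_0 w \subset Z_0$ is likewise equivalent to $\beta_2 = \gamma_1 = 0$. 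Since $Z_0$ is two-dimensional with basis $q_1, r_2$, bilinearity of the multiplication means checking these four products suffices.

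This argument has no real obstacle: it is pure bookkeeping from Table \ref{multtab}. The only mild subtlety is to remember that $\im(O)$ has dimension $7$ and to use $p_1 - p_2$ (rather than $p_1$ and $p_2$ separately) as an imaginary basis vector, since $p_1$ and $p_2$ individually are not imaginary. Once the four products above are computed, the three equivalences are immediate from equating coefficients.
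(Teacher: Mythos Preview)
Your approach is correct and essentially the same as the paper's: both reduce the claim to checking $wq_1, wr_2 \in Z_0$ directly from Table~\ref{multtab} and reading off the conditions $\beta_2 = \gamma_1 = 0$. One small computational slip: in your expression for $wq_1$ the term $-\gamma_1 p_1$ should be $-\gamma_1 p_2$ (since $r_1 q_1 = -p_2$ in Table~\ref{multtab}), though this does not affect the conclusion as $p_2 \notin Z_0$ either.

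The only genuine difference is that the paper handles the first equivalence $wZ_0 \subset Z_0 \Leftrightarrow Z_0 w \subset Z_0$ conceptually rather than by a second round of table lookups: since $w$ and every $z \in Z_0$ are imaginary, one has $wz = w^*z^* = (zw)^*$, and $Z_0$ is visibly closed under $*$ (as $q_1^* = -q_1$, $r_2^* = -r_2$). This halves the bookkeeping and explains \emph{why} the left and right conditions coincide, whereas your version verifies it by inspection. Both routes are perfectly valid.
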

\begin{proof}
The first equivalence follows from the fact that $Z_0$ is closed under the $*$-operator and the observation that, since $Z_0 \subset \im(O)$ and $w \in \im(O)$, we have that $wz = (-w)(-z) = w^*z^* = (zw)^*$ for any $z \in Z_0$. The second, more interesting, equivalence can be verified directly from Table \ref{multtab} after realizing that the statement $wZ_0 \subset Z_0$ is equivalent to the more down-to-earth statement that both $wq_1 \in Z_0$ and $wr_2 \in Z_0$.
\end{proof}
\begin{lemma}
Let $Z_0, W_0$ be as above and let $w \in W_0 \backslash Z_0$. By the last two lemmas the space $H \coloneqq \spam(1, q_1, r_2, w)$ is a four-dimensional associative algebra containing $Z_0$. We have:
\begin{enumerate}
\item $H$ is of type F2J2 if and only if $w^2$ is a non-square in $F$.
\item $H$ is of type S2J2 if and only if $w^2$ is a non-zero square in $F$.
\item $H$ is of type F1J3 if and only if $w^2 = 0$.
\end{enumerate}
\end{lemma}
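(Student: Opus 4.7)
The plan is to reduce everything to the scalar $\beta \in F$ defined by $w^2 = \beta \cdot 1$, which makes sense because $w \in W_0 \subset \im(O)$ and $\im(O)$ is closed under squaring (elements square to $F1$ by (\ref{im})). The three cases of the lemma correspond exactly to $\beta = 0$, $\beta$ a non-zero square, and $\beta$ a non-square, and in each case we want to identify both the Jacobson radical of $H$ and (when $J(H)$ is 2-dimensional) the isomorphism type of its complementary two-dimensional Cayley-Dickson subalgebra.

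First I would handle the case $\beta = 0$. By Lemma \ref{nilpotent}, $w$ itself is nilpotent. I would show that the three-dimensional space $\im(H) = \spam(q_1, r_2, w)$ is contained in $J(H)$ by applying Lemma \ref{stronglyimaginary}: each generator is strongly imaginary in $H$. For $q_1$ and $r_2$ this is automatic since $Z_0$ is a two-sided ideal of $H$ by Lemma \ref{Zideal} and $Z_0 \subset \im(O)$. For $w$ it suffices to check $w \cdot 1, w \cdot q_1, w \cdot r_2, w \cdot w \in \im(H)$; the first three lie in $Z_0 \subset \im(H)$ by the defining property of $W_0$ in Lemma \ref{Z0idealizer}, and the last is $0$. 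Since $\dim \im(H) = 3$ and $J(H)$ cannot contain $1$, this forces $J(H) = \im(H)$ and so $H$ is of type F1J3 by Notation \ref{subalgtypes}.

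Next I would handle the case $\beta \neq 0$. Now $w$ is not nilpotent, and $\spam(1, w)$ is a two-dimensional subalgebra of $H$ isomorphic to the Cayley-Dickson algebra $D_\beta(F)$ (the element $w$ plays the role of $i$ in the doubling). We still have $Z_0 \subseteq J(H)$ by exactly the argument above (applied only to $q_1, r_2$, using Lemma \ref{Zideal}). For the reverse inclusion, I would use that $Z_0$ is a two-sided ideal to form the quotient algebra $H / Z_0$, which is two-dimensional and, via the natural map, isomorphic to $\spam(1, w) \cong D_\beta(F)$. By Lemma \ref{2dim}, $D_\beta(F)$ is either a quadratic field extension of $F$ (when $\beta$ is a non-square) or isomorphic to $F \oplus F$ (when $\beta$ is a square); in either case it is semi-simple with no non-zero nilpotents. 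Hence any strongly nilpotent element of $H$ must already lie in $Z_0$, giving $J(H) = Z_0$ and exhibiting $H = \spam(1, w) \oplus Z_0$ as the vector-space decomposition of Theorem \ref{SJ} with semi-simple part $D_\beta(F)$. Applying Lemma \ref{2dim} once more and comparing with Notation \ref{subalgtypes} then immediately gives: $H$ is of type F2J2 when $\beta$ is a non-square, and of type S2J2 when $\beta$ is a non-zero square.

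The only real obstacle is the reverse inclusion $J(H) \subseteq Z_0$ in the case $\beta \neq 0$: one has to pass through the quotient $H/Z_0$ cleanly and invoke that $D_\beta(F)$ is semi-simple (which is Theorem \ref{CDsemisimple} applied to a two-dimensional Cayley-Dickson algebra, or alternatively follows directly from Lemma \ref{2dim}). Everything else is bookkeeping against the classifying data already set up in Notation \ref{subalgtypes} and Lemma \ref{2dim}, and the `if and only if' in the statement is automatic because the three cases $\beta = 0$, $\beta$ a non-zero square, $\beta$ a non-square form a partition of $F$.
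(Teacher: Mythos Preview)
Your proof is correct and takes a genuinely different route from the paper's. The paper argues indirectly via the two-dimensional subspace classification: it looks at the plane $T = \spam(q_1, w)$, determines its type (U, D, or M when $w^2 \neq 0$), and then reads off the isomorphism class of $H$ from Lemma~\ref{Dns} together with the already established vanishing of various $T_{X,Y}$ and $H_{X,Y}$. Your argument bypasses all of that machinery: you compute $J(H)$ directly via Lemma~\ref{stronglyimaginary} and the ideal property of $Z_0$, then in the case $\beta \neq 0$ pass to the quotient $H/Z_0 \cong D_\beta(F)$ to conclude $J(H) = Z_0$ and identify the semi-simple part explicitly.

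Your approach is cleaner and more self-contained; the paper's approach has the virtue of reusing the classification it has just built. One small remark: your appeal to Lemma~\ref{Zideal} is valid but slightly roundabout, since that lemma's own proof already uses the type classification of $H$. You can avoid even the appearance of circularity by noting that $Z_0$ is a two-sided ideal of $H = \spam(1, q_1, r_2, w)$ directly from Lemma~\ref{Z0idealizer} (which gives $wZ_0, Z_0 w \subset Z_0$) together with the type-Z property $Z_0 \cdot Z_0 = 0$.
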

\begin{proof}
It is not surprising that F2J2, S2J2 and F1J3 are the only types of algebra appearing in the lemma: by Thm \ref{QMDJ} we know that these, together with type M4, are the only four possibilities and type M4 is ruled out by the fact that $Z_0 \subset H$ while $T_{\textrm{Z}, \textrm{M4}} = 0$. (Prop. \ref{TXM4}.)

First, assume that $H$ is of type either F2J2 or S2J2. The space $T \coloneqq \spam(q_1, w)$ is not equal to $Z_0$ and hence Table \ref{Tvalues} implies that it is of type either D or U. But that means in particular that it contains exactly one nilpotent line and since $q_1^2 = 0$ we cannot have that $w^2 = 0$ as well. Since we already know that the three types listed here are the only possibilities, this establishes the `if' direction of statement (3). Also knowing that $w^2 \neq 0$, having that $H$ is of type F2J2 implies that $w^2$ is a non-square by Lemma \ref{Dns} and having that $H$ is of type S2J2 implies by the same lemma that $w^2$ is a non-zero square. This establishes the `only if' directions of statements (1) and (2).

For the remaining three implications, suppose that $w^2 \neq 0$ and look again at the space $T = \spam(q_1, w)$. Since not every element of $T$ is nilpotent $T$ cannot be of type J or type Z. On the other hand, since not every element is non-nilpotent either it can also not be of type Q. By Theorem \ref{classification} this means that $T$ is of type either U, D, or M. From the equalities $H_{\textrm{U}, \textrm{F1J3}} = H_{\textrm{D}, \textrm{F1J3}} = H_{\textrm{M}, \textrm{F1J3}} = 0$ established above we find that $H$ is not of type F1J3. This establishes the `only if' direction of statement 3. The possibility of $H$ being of type M4 was ruled out above. It follows that $H$ being of type F2J2 or S2J2 are the only remaining options. Finding ourselves once again in the situation where we know that $w^2 \neq 0$ and $H$ is of type either F2J2 or S2J2, we can deduce the remaining implications from Lemma \ref{Dns}.
\end{proof}
\begin{lemma}\label{W0nilpotents}
The five-dimensional space $W_0$ contains $q^4$ nilpotent elements $w$, all satisfying $w^2 = 0$, $q^2$ of which lie in the subspace $Z_0$.
\end{lemma}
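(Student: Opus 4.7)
The plan is to reduce everything to a direct computation inside $W_0$, exploiting that $W_0$ sits inside $\im(O)$. First I would observe that each generator of $W_0$ is imaginary: the nilpotents $q_1,r_2,q_3,r_3$ are imaginary by Lemma \ref{nilpotent}, and $(p_1-p_2)^* = p_2 - p_1 = -(p_1-p_2)$ gives imaginarity of $p_1-p_2$ via (\ref{im2}). Hence $W_0 \subseteq \im(O)$, so for any $w \in W_0$ we have $w^2 \in F1$ by (\ref{im}), and the parenthetical claim that every nilpotent $w \in W_0$ automatically satisfies $w^2 = 0$ (not merely some higher power) follows from Lemma \ref{nilpotent}. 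Thus $w$ is nilpotent in $W_0$ if and only if $w^2 = 0$ as a scalar equation in $F$.

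Next I would expand $w = \alpha(p_1-p_2) + \beta q_1 + \gamma r_2 + \delta q_3 + \epsilon r_3$ and compute $w^2$. The crucial simplification is that for any two imaginary $u, v$, the cross term $uv + vu = 2\tau(uv) \cdot 1 \in F1$, so only the $F1$-components of the pairwise symmetrized products contribute. A direct pass through Table \ref{multtab} reveals that $(p_1-p_2)^2 = 1$, that $q_1^2 = r_2^2 = q_3^2 = r_3^2 = 0$, and that all ten symmetrized cross terms $uv + vu$ among the five generators vanish except one: $q_3 r_3 + r_3 q_3 = -p_1 - p_2 = -1$. Substituting gives the clean formula
\[
w^2 = (\alpha^2 - \delta\epsilon)\cdot 1,
\]
so $w$ is nilpotent precisely when $\alpha^2 = \delta\epsilon$, with $\beta, \gamma$ completely free.

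Finally I would count solutions of $\alpha^2 = \delta\epsilon$ in $F^3$. For $\alpha = 0$ the equation $\delta\epsilon = 0$ has $2q - 1$ solutions in $(\delta, \epsilon)$; for each of the $q - 1$ nonzero values of $\alpha$, the equation $\delta\epsilon = \alpha^2 \neq 0$ has $q - 1$ solutions (any $\delta \in F^\times$ determines $\epsilon$). Summing yields $(2q-1) + (q-1)^2 = q^2$ triples $(\alpha,\delta,\epsilon)$, and multiplying by the $q^2$ free choices of $(\beta,\gamma)$ gives exactly $q^4$ nilpotent elements in $W_0$. Those that lie in $Z_0 = \spam(q_1, r_2)$ are the ones with $\alpha = \delta = \epsilon = 0$, and there are $q^2$ such elements. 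There is no serious obstacle here; the only thing requiring care is the symmetrized-product bookkeeping in step two, which is mechanical once one notices that $q_3$ and $r_3$ are the unique pair of generators whose anticommutator is nonzero.
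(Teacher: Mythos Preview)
Your proof is correct and follows essentially the same approach as the paper: both compute $w^2 = \alpha^2 - \delta\epsilon$ directly from Table~\ref{multtab} and then count solutions, with only cosmetic differences in how the count is organized (you case-split on $\alpha$, the paper case-splits on the $q_3$-coefficient). Your explicit check of the anticommutators is a nice addition that the paper leaves implicit.
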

\begin{proof}
The third statement is obvious. The second statements follows from Lemma \ref{nilpotent}. The first follows by direct computation. Let $w = \alpha(p_1 - p_2) + \beta_1 q_1 + \beta_2 r_2 + \gamma q_3 + \delta r_3$ be an element of $W_0$. Then $w^2 = \alpha^2 - \gamma \delta$. It follows that every choice of $\alpha, \beta_1, \beta_2 \in F, \gamma \in F \backslash \{0\}$ produces a unique nilpotent element by setting $\delta = \frac{\alpha^2}{\gamma}$ and every choice of $\beta_1, \beta_2, \delta$ gives us a further nilpotent not listed before by setting $\alpha = \gamma = 0$. Together these exhaust all possibilities and one verifies that there are $q^3(q - 1) + q^3 = q^4$ possible choices.
\end{proof}
The last three lemmas enable us to compute the number of elements $w \in O$ such that $H \coloneqq \spam(1, q_1, r_2, w)$ is an algebra of type F1J3. Since in \emph{any} four-dimensional subspace $H$ of $O$ containing $F1 \oplus Z_0$ we have that each of the $q^3 - q^2$ elements $w \in \im(H) \backslash Z_0$ satisfies $H = \spam(1, q_1, r_2, w)$ we find the following result. 
\begin{corollary}
$Z_0$ is contained in $\frac{q^4 - q^2}{q^3 - q^2} = q + 1$ type F1J3 algebras and in $\frac{q^5 - q^4}{q^3 - q^2} = q^2$ algebras of type either F2J2 or D2J2. Hence (by Corollary \ref{ZUtrans}) $H_{\textrm{Z}, \textrm{F1J3}} = q + 1$ and $H_{\textrm{Z}, \textrm{F2J2}} + H_{\textrm{Z}, \textrm{S2J2}} = q^2$.
\end{corollary}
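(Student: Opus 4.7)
The plan is to partition the elements of $W_0 \setminus Z_0$ by the type of the resulting algebra $\spam(1, q_1, r_2, w)$, count each class directly, and then divide by how many $w$'s give rise to the same algebra. Thanks to the preceding lemma, this partition is governed entirely by the value of $w^2$: nilpotent $w$'s give type F1J3, $w$'s whose square is a non-zero square give type S2J2, and $w$'s whose square is a non-square give type F2J2. So the task splits into one numerator count (for F1J3) and one lumped numerator count (for F2J2 + S2J2), plus a single common denominator.

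First I would tally the numerators. The space $W_0$ has $q^5$ elements, $q^2$ of which lie in $Z_0$, so $|W_0 \setminus Z_0| = q^5 - q^2$. Lemma \ref{W0nilpotents} provides $q^4$ nilpotent elements in $W_0$, of which $q^2$ lie in $Z_0$; hence $W_0 \setminus Z_0$ contains $q^4 - q^2$ nilpotent elements and $q^5 - q^4$ non-nilpotent ones, matching exactly the two numerators appearing in the claim.

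Next I would pin down the denominator by showing that every four-dimensional associative algebra $H \supseteq F1 \oplus Z_0$ arises from exactly $q^3 - q^2$ of the $w$'s. By Lemma \ref{Zideal}, $Z_0$ is a two-sided ideal of any such $H$, so every $w \in \im(H) \setminus Z_0$ satisfies $wZ_0 \subseteq Z_0$ and therefore lies in $W_0$ by Lemma \ref{Z0idealizer}; consequently $\im(H) \setminus Z_0 \subseteq W_0 \setminus Z_0$, and this set has $q^3 - q^2$ elements because $\dim \im(H) = 3$. Conversely, any such $w$ recovers $H$ as $\spam(1, q_1, r_2, w)$ by Lemma \ref{typeZex4}. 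This double-counting step is the only substantive point in the argument, and it is where I expect the most care to be needed — in particular, verifying that the ambient five-space $W_0$ really captures \emph{every} algebra containing $Z_0$, not merely the ones obtained by the explicit construction.

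The ratios $(q^4 - q^2)/(q^3 - q^2) = q+1$ and $(q^5 - q^4)/(q^3 - q^2) = q^2$ then supply the announced counts of algebras of type F1J3 and of type F2J2/S2J2 containing $Z_0$. Finally, since Corollary \ref{ZUtrans} shows that $\Aut(O)$ acts transitively on type Z subspaces and automorphisms preserve the type of a four-dimensional subalgebra (by Lemma \ref{autonice} together with Notation \ref{subalgtypes}), these counts are independent of the chosen type Z subspace, giving $H_{\textrm{Z}, \textrm{F1J3}} = q+1$ and $H_{\textrm{Z}, \textrm{F2J2}} + H_{\textrm{Z}, \textrm{S2J2}} = q^2$.
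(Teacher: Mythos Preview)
Your proposal is correct and follows essentially the same approach as the paper: count the elements $w \in W_0 \setminus Z_0$ with $w^2 = 0$ (respectively $w^2 \neq 0$) using Lemma \ref{W0nilpotents}, then divide by the $q^3 - q^2$ choices of $w$ within a fixed algebra. If anything, you are slightly more explicit than the paper in justifying the denominator, spelling out via Lemmas \ref{Zideal} and \ref{Z0idealizer} that \emph{every} four-dimensional subalgebra containing $Z_0$ has its imaginary part inside $W_0$, whereas the paper leaves this to the reader.
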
.
\begin{lemma}
Of the $q^5 - q^4$ non-nilpotent elements $w$ in $W_0$, we have that $w^2$ is a square in $F$ for $\frac{q^5 - q^3}{2}$ elements and non-square in $F$ for $\frac{q^3(q-1)^2}{2}$ elements.
\end{lemma}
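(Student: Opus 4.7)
The plan is to reduce the statement to a count over triples $(\alpha, \gamma, \delta) \in F^3$, using the formula for $w^2$ that was already computed in the proof of Lemma \ref{W0nilpotents}. Writing a generic element of $W_0$ as $w = \alpha(p_1 - p_2) + \beta_1 q_1 + \beta_2 r_2 + \gamma q_3 + \delta r_3$, that computation yields $w^2 = (\alpha^2 - \gamma\delta)1$. The crucial observation is that $w^2$ depends only on $(\alpha, \gamma, \delta)$ and not on $(\beta_1, \beta_2)$, so the count of $w$'s with $w^2$ in a prescribed subset of $F$ is always $q^2$ times the count of triples $(\alpha, \gamma, \delta)$ with $\alpha^2 - \gamma\delta$ in that subset.

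Next I would count, for fixed $t \in F^\times$, the number $M(t)$ of triples $(\alpha, \gamma, \delta) \in F^3$ with $\alpha^2 - \gamma\delta = t$, splitting by whether $\gamma$ vanishes. If $\gamma \neq 0$, then for each of the $q$ possible $\alpha$'s there is a unique $\delta = (\alpha^2 - t)/\gamma$, giving $q(q-1)$ triples. If $\gamma = 0$, we need $\alpha^2 = t$: this gives $2q$ triples if $t$ is a (necessarily non-zero) square (two choices for $\alpha = \pm\sqrt{t}$, free $\delta$) and $0$ triples if $t$ is a non-square. Hence $M(t) = q^2 + q$ when $t$ is a non-zero square and $M(t) = q^2 - q$ when $t$ is a non-square.

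Summing over the $(q-1)/2$ non-zero squares and the $(q-1)/2$ non-squares gives
\begin{align*}
\#\{(\alpha,\gamma,\delta) : \alpha^2 - \gamma\delta \text{ is a non-zero square}\} &= \tfrac{q-1}{2}(q^2+q) = \tfrac{q^3-q}{2}, \\
\#\{(\alpha,\gamma,\delta) : \alpha^2 - \gamma\delta \text{ is a non-square}\} &= \tfrac{q-1}{2}(q^2-q) = \tfrac{q(q-1)^2}{2}.
\end{align*}
As a sanity check, adding these to the $q^2$ triples with $\alpha^2 - \gamma\delta = 0$ obtained from Lemma \ref{W0nilpotents} recovers $q^3$, the total number of triples. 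Multiplying each count by the $q^2$ free choices of $(\beta_1, \beta_2)$ produces $(q^5 - q^3)/2$ non-nilpotent elements $w$ with $w^2$ a square, and $q^3(q-1)^2/2$ with $w^2$ a non-square, as required. There is no genuine obstacle here — the proof is an elementary case split — the only thing to watch out for is that the doubling in the $\gamma = 0$ square case (two square roots of $t$) is exactly what produces the difference between the two answers.
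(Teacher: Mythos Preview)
Your proof is correct and follows essentially the same approach as the paper: reduce to counting triples $(\alpha,\gamma,\delta)$ with $\alpha^2-\gamma\delta$ equal to a fixed nonzero value, split on whether $\gamma=0$, and multiply by the $q^2$ free choices of $(\beta_1,\beta_2)$. The only cosmetic difference is that you fix the target value $t$ directly while the paper fixes a square root $\epsilon$ of it and then divides out the resulting double count; your organization is arguably slightly cleaner and also computes the non-square count directly rather than by subtraction.
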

\begin{proof}
As in the proof of Lemma \ref{W0nilpotents} we can write $w = \alpha(p_1 - p_2) + \beta_1 q_1 + \beta_2 r_2 + \gamma q_3 + \delta r_3$ and find $w^2 = \alpha^2 - \gamma \delta$. Now suppose that $w^2 = \epsilon^2$ for some $\epsilon \in F \backslash \{0\}$. Then $\gamma \delta = (\alpha - \epsilon)(\alpha + \epsilon)$. For fixed $\epsilon$ there is a unique value of $\delta$ for each choice of $\alpha \in F$ and $\gamma \in F \backslash \{0\}$. Moreover, there are $q$ choices of $\delta$ when $\alpha \in \{\epsilon, -\epsilon\}$ and $\gamma = 0$. Hence for a given value of $\epsilon$ the equation $\gamma \delta = (\alpha - \epsilon)(\alpha + \epsilon)$ has $q(q-1) + 2q = q(q + 1)$ solutions $(\alpha, \gamma, \delta) \in F^3$. Letting $\epsilon$ range over all non-zero elements of $F$ we see that $\epsilon_1, \epsilon_2$ share a solution $(\alpha, \gamma, \delta)$ only if $\epsilon_1 = \pm \epsilon_2$ and conversely that the solution sets for $\epsilon$ and $-\epsilon$ are completely identical. It follows that the number of triples $(\alpha, \beta, \gamma)$ such that $\alpha^2 - \gamma\delta$ is a non-zero square in $F$ equals $(q-1)q(q+1)/2 = (q^3 - q)/2$. The number of elements $w = \alpha(p_1 - p_2) + \beta_1 q_1 + \beta_2 r_2 + \gamma q_3 + \delta r_3 \in W_0$ such that $w^2$ is a non-zero square in $F$ then equals $q^2$ times this number (the extra factor coming from the choices for $\beta_1$ and $\beta_2$), yielding the number $\frac{q^5 - q^3}{2}$ from the statement of the lemma.

The number of elements $w$ such that $w^2$ is a non-square in $F$ then equals $q^5 - q^4$ minus this number, hence $\frac{q^5 - 2q^4 + q^3}{2} = \frac{q^3(q-1)^2}{2}$.
\end{proof}

\begin{corollary}
$Z_0$ is contained in $\frac{q^5 - q^3}{2(q^3 - q^2)} = \frac{q^2 + q}{2}$ type S2J2 algebras and in $\frac{q^3(q-1)^2}{2(q^3 - q^2)} = \frac{q^2 - q}{2}$ algebras of type F2J2. Hence (by Corollary \ref{ZUtrans}) $H_{\textrm{Z}, \textrm{S2J2}} = \frac{q^2 + q}{2}$ and $H_{\textrm{Z}, \textrm{F2J2}} = \frac{q^2 - q}{2}$.
\end{corollary}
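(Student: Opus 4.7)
The plan is to derive the corollary by an elementary double counting argument that combines the preceding lemma (counting elements $w \in W_0$ whose square is a non-zero square, non-square, or zero) with the observation already exploited at the previous corollary: every four-dimensional subalgebra $H \supseteq F1 \oplus Z_0$ can be written in the form $\spam(1, q_1, r_2, w)$ for \emph{any} of the $q^3 - q^2$ elements $w$ lying in $\im(H) \setminus Z_0$.

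First I would set up the bijection. Consider the set $P_{\textrm{S}}$ of pairs $(H, w)$, where $H$ is a type S2J2 subalgebra of $O$ containing $F1 \oplus Z_0$ and $w \in \im(H) \setminus Z_0$. On one hand, by Lemma \ref{Zideal} together with Lemma \ref{Z0idealizer}, every such $w$ lies in $W_0$ (and clearly $w \not\in Z_0$); conversely the lemma just proved tells us that $w \in W_0 \setminus Z_0$ with $w^2$ a non-zero square yields, via $H = \spam(1, q_1, r_2, w)$, a pair in $P_{\textrm{S}}$. Since elements of $Z_0$ are nilpotent (being linear combinations of $q_1$ and $r_2$, both nilpotent, in the type-Z space $Z_0$), all $\frac{q^5 - q^3}{2}$ non-zero-square elements from the previous lemma live in $W_0 \setminus Z_0$. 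Hence $|P_{\textrm{S}}| = \frac{q^5 - q^3}{2}$.

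Next I count $|P_{\textrm{S}}|$ the other way. For each fixed $H$ of type S2J2 containing $Z_0$, the space $\im(H)$ is three-dimensional (since $H = F1 \oplus \im(H)$) and $Z_0 \subset \im(H)$ is two-dimensional, so $\im(H) \setminus Z_0$ has precisely $q^3 - q^2$ elements, each a valid choice of $w$. Thus $|P_{\textrm{S}}|$ equals $(q^3 - q^2)$ times the number of type S2J2 algebras containing $Z_0$, and comparing the two expressions yields $\frac{q^5 - q^3}{2(q^3 - q^2)} = \frac{q^2 + q}{2}$ such algebras. The identical argument, applied to the $\frac{q^3(q-1)^2}{2}$ non-square elements, gives $\frac{q^2 - q}{2}$ type F2J2 algebras containing $Z_0$. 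The transition from ``$Z = Z_0$'' to a general type Z subspace $Z$ is handled by Corollary \ref{ZUtrans}: any automorphism of $O$ carrying $Z$ to $Z_0$ induces (by Lemma \ref{autonice}) a bijection between type S2J2 (resp. F2J2) algebras containing $Z$ and those containing $Z_0$, establishing the stated values of $H_{\textrm{Z}, \textrm{S2J2}}$ and $H_{\textrm{Z}, \textrm{F2J2}}$.

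There is no real obstacle here; the corollary is essentially a bookkeeping consequence of the preceding lemma. The only subtle point worth a sentence of justification is that elements of $Z_0$ are indeed nilpotent, so none of the $\frac{q^5 - q^3}{2}$ non-zero-square elements in $W_0$ can accidentally fall inside $Z_0$ and be lost from the count.
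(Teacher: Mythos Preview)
Your argument is correct and matches the paper's approach exactly: the corollary is stated without proof in the paper, relying on the same double counting already used in the previous corollary (dividing the number of admissible $w$'s by the $q^3 - q^2$ choices of $w$ per algebra). Your additional remark that elements of $Z_0$ are nilpotent, and hence none of the $\tfrac{q^5-q^3}{2}$ non-zero-square elements can lie in $Z_0$, is a welcome clarification that the paper leaves implicit.
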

%Moeten dit nog wel verifieren in F_3 case...

\subsection{Final computations}
All that remains is the verification of the eigth numbers $N_{\textrm{F2J2}}, N_{\textrm{S2J2}}, N_\textrm{F1J3}, \Theta_\textrm{Dn}, \Theta_\textrm{Ds}, \Theta_\textrm{J}, \Theta_\textrm{Z}$ and $H_{\textrm{U}, \textrm{S2J2}}$. Moreover, since we know all the relevant numbers $T_{X, Y}$, $H_{X, Y}$ and $\Theta_\textrm{U}$, knowledge of only one of them will allow a quick computation of the other seven using the double counting identity $\Theta_X H_{X, Y} = T_{X, Y}N_Y$. We opt to explicitly compute the number $\Theta_Z$ and derive the other seven values from it.

%Ik heb dit [tweede lemma] al eens uitgerekend maar ben te nieuwsgierig en doe het nog een keer.

\begin{lemma} 
Every non-zero nilpotent $u \in \im(O)$ is contained in exactly $q + 1$ type Z subspaces. 
\end{lemma}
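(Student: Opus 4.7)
My proof plan is as follows. First I would exploit Theorem \ref{transitive} to reduce to a convenient concrete nilpotent. Since any nonzero nilpotent $u \in \im(O)$ satisfies $u^2 = 0$ (Lemma \ref{nilpotent}), the two-dimensional subalgebras $F1 \oplus Fu$ are all isomorphic via any linear bijection fixing $1$ and sending the one nilpotent generator to the other. By Theorem \ref{transitive} every such isomorphism extends to an automorphism of $O$, so $\Aut(O)$ acts transitively on the set of nonzero nilpotent elements of $\im(O)$, and by Lemma \ref{autonice} it permutes the set of type Z subspaces. Hence it suffices to count type Z subspaces containing one fixed nonzero nilpotent, for which I would take $u = q_1$ in the basis of Table \ref{multtab}.

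Next I would reformulate the count. A type Z subspace $Z$ containing $u$ has the form $Z = Fu + Fw$ for some $w \in \im(O) \setminus Fu$, and the defining condition (Proposition \ref{3dim}) forces $w^2 = 0$ and $uw = 0$ (from which $wu = (uw)^*(-1)(-1) = 0$ follows automatically since $u,w \in \im(O)$). Conversely, given any such $w$, every product $zz'$ of elements of $Fu + Fw$ expands to a linear combination of $u^2$, $w^2$, $uw$ and $wu$, all zero; so the bijective correspondence is between type Z subspaces containing $u$ and lines $Fw \neq Fu$ inside the set $S \coloneqq \{x \in \im(O) : ux = 0,\ x^2 = 0\}$.

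Third, I would compute $S$ for $u = q_1$ using Table \ref{multtab} directly. An element $w = \alpha(p_1 - p_2) + \sum \beta_i q_i + \sum \gamma_i r_i$ of $\im(O)$ is killed on the left by $q_1$ exactly when $\alpha = \gamma_1 = \beta_2 = \beta_3 = 0$, giving $\ker L_{q_1} \cap \im(O) = \spam(q_1, r_2, r_3)$. A short second check using Table \ref{multtab} (where $q_1^2 = r_2^2 = r_3^2 = 0$ and the cross products $q_1 r_2, q_1 r_3, r_3 r_2 + r_2 r_3$ all vanish) shows every element of this three-dimensional space squares to zero, so $S = \spam(q_1, r_2, r_3)$. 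The number of two-dimensional subspaces of a three-dimensional space containing a fixed line equals $\qbinom{2}{1} = q+1$, yielding the claim.

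The step requiring the most care is the one I put second: one must note that the three-dimensional ambient $\spam(q_1, r_2, r_3)$ is \emph{not} itself type Z (indeed $r_2 r_3 = q_1 \neq 0$, so it is rather the $\im$-part of a type F1J3 algebra, consistent with $T_{\textrm{Z}, \textrm{F1J3}} = q+1$ in Table \ref{Tvalues}). The miracle that makes every \emph{two-dimensional} subspace containing $Fq_1$ genuinely type Z is that any two elements of such a subspace are, modulo $q_1$, scalar multiples of a single vector $\beta r_2 + \gamma r_3$, so the only potentially nonzero cross term $r_2 r_3 - r_3 r_2$ never appears with a nonzero coefficient.
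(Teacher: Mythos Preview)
Your proof is correct and follows essentially the same route as the paper's: reduce to $u=q_1$ via Theorem \ref{transitive}, identify the relevant subspace as $\spam(q_1,r_2,r_3)$, verify every element squares to zero, and count the $q+1$ planes through the line $Fq_1$. The only cosmetic differences are that the paper intersects $\ker L_{q_1}\cap\ker R_{q_1}$ (you use only $\ker L_{q_1}$, which suffices since $wu=(uw)^*$ for imaginary $u,w$) and that the paper counts elements and divides by $q^2-q$ whereas you quote $\qbinom{2}{1}$ directly.
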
 

\begin{proof}
Let $\phi \colon F1 \oplus Fu \to F1 \oplus Fq_1$ be the unique linear map sending $u$ to $q_1$ and $1$ to $1$. Then $\phi$ is actually an algebra isomorphism between the \emph{algebras} $F1 \oplus Fu$ and $F1 \oplus Fq_1$ and hence, by Theorem \ref{transitive} extends to an automorphism of all of $O$. Since automorphisms map type Z spaces to type Z spaces, we see that the number of type Z spaces containing $u$ equals the number type Z spaces containing $q_1$. Since $zq_1 = q_1z = 0$ for every $z$ in such a space, we see that every type Z space containing $q_1$ is contained in the linear space $\ker L_{q_1} \cap \ker R_{q_1}$.

Using Table \ref{multtab} and a little linear algebra we see that $$\ker L_{q_1} \cap \ker R_{q_1} = \spam(q_1, r_2, r_3)$$ and it is easy to verify that $z^2 = 0$ for every $z$ in this space. It follows that there are $q^3 - q$ elements $z \in \im(O)$ such that $\spam(q_1, z)$ is a type Z space (i. e. the $q^3 - q$ elements in $(\ker L_{q_1} \cap \ker R_{q_1}) \backslash Fq_1$) and since each such space contains $q^2 - q$ elements not on the line $Fq_1$ we find that $q_1$ is contained in $\frac{q^3 - q}{q^2 - q} = q + 1$ type Z subspaces.
\end{proof}

\begin{lemma}\label{nilpelts}
$O$ contains $q^6 - 1$ non-zero nilpotent elements.
\end{lemma}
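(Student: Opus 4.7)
The plan is to reduce the statement to counting isotropic vectors for the norm form $n$ on $\im(O)$, and then carry out that count explicitly using the basis given by Table \ref{multtab}.

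First, by Lemma \ref{nilpotent} a non-zero $x\in O$ is nilpotent if and only if $x\in\im(O)$ and $x^2=0$. For such $x$, equation (\ref{taun}) (with $\tau(x)=0$) reduces to $x^2=-n(x)\,1$, so nilpotency is equivalent to $x\in\im(O)$ and $n(x)=0$. Thus the lemma amounts to the claim that the norm form $n\colon\im(O)\to F$ has exactly $q^6$ zeros.

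Next, I would compute $n$ explicitly in the basis $\{p_1-p_2,\,q_1,q_2,q_3,\,r_1,r_2,r_3\}$ of $\im(O)$ given by Lemma \ref{tab}. Using $n(x)=x^*x$ and the identity $n(x+y)-n(x)-n(y)=-2\tau(xy)$ (valid for $x,y\in\im(O)$), a short inspection of Table \ref{multtab} gives $n(p_1-p_2)=-1$, $n(q_i)=n(r_i)=0$, the vector $p_1-p_2$ is orthogonal to every $q_i$ and $r_i$, the $q_i$ are mutually orthogonal and so are the $r_j$, and $q_ir_j$ lies in $\im(O)$ whenever $i\neq j$ while $q_ir_i+r_iq_i=-1$. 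Together these give
\begin{equation*}
n\bigl(\alpha(p_1-p_2)+\textstyle\sum_i\beta_iq_i+\sum_i\gamma_ir_i\bigr)=-\alpha^2+\beta_1\gamma_1+\beta_2\gamma_2+\beta_3\gamma_3.
\end{equation*}
(As a sanity check this agrees with equation (\ref{generaln}) applied to $D_1(D_1(D_1(F)))$ after a linear change of variables.)

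Finally I would count the zeros of this quadratic form on $\mathbb{F}_q^7$. Writing $N(c)$ for the number of $(\beta_i,\gamma_i)\in\mathbb{F}_q^6$ with $\beta_1\gamma_1+\beta_2\gamma_2+\beta_3\gamma_3=c$, a two-step recursion based on the elementary counts $|\{(\beta,\gamma)\in\mathbb{F}_q^2:\beta\gamma=0\}|=2q-1$ and $|\{(\beta,\gamma):\beta\gamma=c\neq 0\}|=q-1$ yields $N(0)=q^5+q^3-q^2$ and $N(c)=q^5-q^2$ for every $c\neq 0$. Summing over $\alpha\in\mathbb{F}_q$ (with the constraint $\sum\beta_i\gamma_i=\alpha^2$) gives $N(0)+(q-1)N(1)=(q^5+q^3-q^2)+(q-1)(q^5-q^2)=q^6$ total zeros, so $q^6-1$ non-zero ones. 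There is no real obstacle here: the main content is the translation from nilpotency to the isotropy of $n$ and the bookkeeping for the quadratic-form count, both of which are routine once the form has been identified.
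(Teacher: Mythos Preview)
Your proof is correct and follows essentially the same route as the paper: both use Lemma \ref{nilpotent} to reduce to counting solutions of $\alpha^2-\sum_i\beta_i\gamma_i=0$ in the basis of Table \ref{multtab}, and then do an elementary count. The only difference is bookkeeping: the paper counts by successive cases on whether $\gamma_3,\gamma_2,\gamma_1$ vanish, whereas you first compute $N(c)$ via a recursion and then sum over $\alpha$; both arrive at $q^6$ total nilpotents.
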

\begin{proof}
We know by Lemma \ref{nilpotent} that every nilpotent $u$ satisfies $u^2 = 0$ and hence is contained in $\im(O)$. In the notation of Table \ref{multtab}, let $u = \alpha (p_1 - p_2) + \sum_{i = 1}^3 (\beta_i q_i + \gamma_i r_i)$ be an element of $\im(O)$. Recalling that $p_1 + p_2 = 1$, Table \ref{multtab} tells us that $u^2 = \alpha^2 - \sum_{i=1}^3 \beta_i \gamma_i \in F1$. It follows that there are $q^5(q - 1)$ nilpotents for which $\gamma_3 \neq 0$, $q^4(q - 1)$ nilpotents for which $\gamma_3 = 0$ but $\gamma_2 \neq 0$, $q^3(q-1)$ nilpotents for which $\gamma_3 = \gamma_2 = 0$ but $\gamma_1 \neq 0$ and $q^3$ nilpotents with $\gamma_1 = \gamma_2 = \gamma_3$ bringing the total number of nilpotents (including 0) to $q^6$.   
\end{proof}

Now since every type Z space contains $q^2 - 1$ non-zero nilpotents we find:

\begin{corollary}
$\im(O)$ contains $\Theta_Z = \frac{(q^6 - 1)(q + 1)}{q^2 - 1} = q^5 + q^4 + q^3 + q^2 + q + 1 = [6]_q$ type Z subspaces.
\end{corollary}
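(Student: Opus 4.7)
The proof is a one-line double-counting argument assembled from the two preceding lemmas, so my plan is just to set it up carefully and then simplify the resulting fraction.

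The plan is to count, in two ways, the set $S$ of pairs $(u, Z)$ where $u \in \im(O)$ is a non-zero nilpotent and $Z$ is a type Z subspace of $\im(O)$ containing $u$. Counting by the first coordinate, Lemma \ref{nilpelts} gives $q^6 - 1$ choices of $u$, and the preceding lemma states that each such $u$ lies in exactly $q+1$ type Z subspaces, so $|S| = (q^6 - 1)(q + 1)$. Counting by the second coordinate, each type Z space $Z$ is two-dimensional, hence contains $q^2$ elements; since by definition every product of elements of $Z$ is zero, each non-zero element of $Z$ squares to zero and is therefore nilpotent, so $Z$ contributes exactly $q^2 - 1$ pairs to $S$. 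Hence $|S| = \Theta_Z (q^2 - 1)$.

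Equating the two counts yields $\Theta_Z = \frac{(q^6 - 1)(q + 1)}{q^2 - 1}$. To finish I would simplify using $\frac{q^6 - 1}{q^2 - 1} = q^4 + q^2 + 1$, giving $\Theta_Z = (q^4 + q^2 + 1)(q + 1) = q^5 + q^4 + q^3 + q^2 + q + 1 = [6]_q$.

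There is no real obstacle here; both ingredients are already in hand, and the only thing to be slightly careful about is the observation that every non-zero element of a type Z space is automatically a non-zero nilpotent (so that the local count per $Z$ is indeed $q^2 - 1$ and not smaller), which is immediate from the defining property $xy = 0$ for all $x, y \in Z$ together with Lemma \ref{nilpotent}.
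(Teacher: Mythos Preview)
Your proof is correct and is exactly the paper's argument: the paper likewise double-counts incidences between non-zero nilpotents and type Z subspaces, using the two preceding lemmas for the count of nilpotents and the number of type Z spaces through each, together with the observation that each type Z space contains $q^2-1$ non-zero nilpotents.
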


This result enables us to zig-zag our way through the remaining unverified entries in tables \ref{Thetavalues},  \ref{Nvalues} and \ref{Qvalues}.

\begin{corollary}
$O$ has 
\begin{itemize}
\item $N_{\textrm{F2J2}} = H_{\textrm{Z}, \textrm{F2J2}}\Theta_\textrm{Z}/T_{Z, \textrm{F2J2}} = \frac{(q^2 - q)[6]_q}{2\cdot 1} = \frac{q^7 - q}{2}$ subalgebras of type F2J2,
\item $N_{\textrm{S2J2}} = H_{\textrm{Z}, \textrm{S2J2}}\Theta_\textrm{Z}/T_{Z, \textrm{S2J2}} = \frac{(q^2 + q)[6]_q}{2\cdot 1} = \frac{q^7 + 2q^6 + 2q^5 + 2q^4 + 2q^3 + 2q^2 + q}{2}$ subalgebras of type S2J2, and
\item $N_{\textrm{F1J3}} = H_{\textrm{Z}, \textrm{F1J3}}\Theta_\textrm{Z}/T_{Z, \textrm{F1J3}} = \frac{(q+1)[6]_q}{q+1} = [6]_q$ subalgebras of type F1J3.
\end{itemize}
\end{corollary}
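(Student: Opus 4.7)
The plan is to obtain each of the three numbers $N_Y$ by a single application of the double counting identity
\[
\Theta_X \, H_{X,Y} \;=\; T_{X,Y}\, N_Y,
\]
specialized to $X = \textrm{Z}$. Since all six quantities on the right appear in Tables \ref{Thetavalues}, \ref{Qvalues} and \ref{Tvalues} and have been independently verified in the preceding subsections, the corollary reduces to a purely arithmetic computation.

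Concretely, I would first invoke the immediately preceding result $\Theta_\textrm{Z} = [6]_q$ together with the entries $H_{\textrm{Z}, \textrm{F2J2}} = \tfrac{q^2-q}{2}$, $H_{\textrm{Z}, \textrm{S2J2}} = \tfrac{q^2+q}{2}$, $H_{\textrm{Z}, \textrm{F1J3}} = q+1$ established at the end of the subsection on subalgebras containing type Z subspaces, and the values $T_{\textrm{Z}, \textrm{F2J2}} = T_{\textrm{Z}, \textrm{S2J2}} = 1$, $T_{\textrm{Z}, \textrm{F1J3}} = q+1$ read off from Table \ref{Tvalues} (verified just after Lemma \ref{Zideal}). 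Substituting into the identity gives
\[
N_{\textrm{F2J2}} \;=\; \frac{H_{\textrm{Z}, \textrm{F2J2}} \cdot \Theta_\textrm{Z}}{T_{\textrm{Z}, \textrm{F2J2}}} \;=\; \frac{\tfrac{q^2 - q}{2}\,[6]_q}{1},
\]
\[
N_{\textrm{S2J2}} \;=\; \frac{\tfrac{q^2 + q}{2}\,[6]_q}{1}, \qquad N_{\textrm{F1J3}} \;=\; \frac{(q+1)\,[6]_q}{q+1}.
\]
It then remains only to simplify: $\tfrac{1}{2}(q^2-q)(1+q+q^2+q^3+q^4+q^5) = \tfrac{q^7-q}{2}$, $\tfrac{1}{2}(q^2+q)[6]_q = \tfrac{q^7 + 2q^6 + 2q^5 + 2q^4 + 2q^3 + 2q^2 + q}{2}$, and cancellation of the factor $q+1$ in the F1J3 case yields $[6]_q$.

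The justification of the double counting identity itself is standard: counting pairs $(U, H)$ where $U \subset \im(O)$ is of type $X$ and $H \supset U$ is of type $Y$ in two ways (first summing over $U$, then over $H$) gives the identity, provided the counts $H_{X,Y}$ and $T_{X,Y}$ are independent of the particular choice of $U$ and $H$ of the specified types. This independence is exactly the content of Theorem \ref{transitive} applied to the isomorphism classes of $U$ and $H$ respectively, which is why it was permissible to drop $U$ and $H$ from the notation at the start of the section. There is no real obstacle here, since every ingredient has already been proved; the only modest subtlety is purely bookkeeping, namely confirming that the values being substituted are indeed the ones already verified and not ones still awaiting proof.
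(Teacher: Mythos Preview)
Your proposal is correct and follows exactly the paper's approach: the corollary is stated without a separate proof because the double counting identity $\Theta_X H_{X,Y} = T_{X,Y} N_Y$ with $X = \textrm{Z}$, together with the previously verified values, \emph{is} the proof. Your bookkeeping and arithmetic are accurate, and your remark that Theorem~\ref{transitive} underwrites the well-definedness of $H_{X,Y}$ and $T_{X,Y}$ matches the paper's explicit justification at the start of the section.
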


\begin{corollary}
$\im(O)$ has 
\begin{itemize}
\item $\Theta_{\textrm{Dn}} = T_{\textrm{Dn}, \textrm{F2J2}}N_{\textrm{F2J2}}/H_{\textrm{Dn}, \textrm{F2J2}} = \frac{(q^2 + q)(q^7 - q)}{2\cdot 1} = \frac{q^9 + q^8 - q^3 - q^2}{2}$ subspaces of type Dn, 
\item $\Theta_{\textrm{Ds}} = T_{\textrm{Ds}, \textrm{S2J2}}N_{\textrm{S2J2}}/H_{\textrm{Ds}, \textrm{S2J2}} = \frac{(q^2 - q)(q^7 + 2q^6 + 2q^5 + 2q^4 + 2q^3 + 2q^2 + q)}{2\cdot 1} = \frac{q^9 + q^8 - q^3 - q^2}{2}$ subspaces of type Ds, hence %opmerkelijk dat die precies hetzelfde zijn.
\item $\Theta_\textrm{D} = q^9 + q^8 - q^3- q^2$ subspaces of type D in total, and 
\item $\Theta_{\textrm{J}} = T_{\textrm{J}, \textrm{F1J3}}N_{\textrm{F1J3}}/Q_{\textrm{J}, \textrm{F1J3}} = \frac{q^2[6]_q}{1} = [8]_q - [2]_q$ subspaces of type J.
\end{itemize}
\end{corollary}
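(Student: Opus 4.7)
The plan is to apply the double-counting identity $\Theta_X H_{X,Y} = T_{X,Y}\,N_Y$ (highlighted at the start of this section) three times, once for each of $\Theta_{\textrm{Dn}}$, $\Theta_{\textrm{Ds}}$, $\Theta_{\textrm{J}}$, and then to add the first two to obtain $\Theta_{\textrm{D}}$. For each of the three subspace types in play, a single row of Table \ref{Qvalues} has exactly one non-zero entry, which forces the choice of $Y$: type Dn pairs with F2J2, type Ds with S2J2, type J with F1J3, and in all three cases $H_{X,Y}=1$. That collapses the identity to the clean form $\Theta_X = T_{X,Y}\,N_Y$ with the $Y$ uniquely determined by $X$.

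Next I would read the values $T_{\textrm{Dn},\textrm{F2J2}}=q^2+q$, $T_{\textrm{Ds},\textrm{S2J2}}=q^2-q$, $T_{\textrm{J},\textrm{F1J3}}=q^2$ off Table \ref{Tvalues}, and pair them with the numbers $N_{\textrm{F2J2}}=(q^7-q)/2$, $N_{\textrm{S2J2}}=(q^7+2q^6+2q^5+2q^4+2q^3+2q^2+q)/2$, $N_{\textrm{F1J3}}=[6]_q$ that were just established in the preceding corollary. Multiplication produces the three displayed formulas. The $\Theta_{\textrm{J}}$ case is fastest: $q^2[6]_q = q^2+q^3+q^4+q^5+q^6+q^7 = [8]_q-[2]_q$ follows at once from the definition of $[n]_q$.

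The remainder is entirely mechanical. The only feature deserving a moment of pause is the a-priori non-obvious equality $\Theta_{\textrm{Dn}} = \Theta_{\textrm{Ds}}$, i.e.\ that the two visually different products $(q^2+q)(q^7-q)/2$ and $(q^2-q)(q^7+2q^6+2q^5+2q^4+2q^3+2q^2+q)/2$ both simplify to $(q^9+q^8-q^3-q^2)/2$. I would verify this either by direct polynomial expansion or, more transparently, by extracting the common factorization $q^2(q-1)(q+1)^2(q^2-q+1)(q^2+q+1)/2$ from both sides (using $q^7-q = q(q-1)(q+1)(q^2-q+1)(q^2+q+1)$ and $q^7+2q^6+2q^5+2q^4+2q^3+2q^2+q = q(q+1)^2(q^2-q+1)(q^2+q+1)$). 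Adding $\Theta_{\textrm{Dn}}$ and $\Theta_{\textrm{Ds}}$ then yields $\Theta_{\textrm{D}} = q^9+q^8-q^3-q^2$, completing the corollary. No conceptual obstacle is expected; if anything is slippery, it is making sure the tables are indexed correctly so that the right $T$ and $H$ values are picked for each double count.
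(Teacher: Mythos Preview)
Your proposal is correct and matches the paper's approach exactly: the corollary is proved by three applications of the double-counting identity $\Theta_X H_{X,Y} = T_{X,Y} N_Y$ with the table values and the $N_Y$ from the immediately preceding corollary, followed by summing $\Theta_{\textrm{Dn}}+\Theta_{\textrm{Ds}}$. Your additional remark on the factorization verifying $\Theta_{\textrm{Dn}}=\Theta_{\textrm{Ds}}$ is a nice sanity check that the paper leaves implicit.
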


\begin{remark}
By Lemma \ref{complete} every two-dimensional subspace of $\im(O)$ is of one of the six types Q, U, D, M, J, Z. It follows that the numbers in Table $\ref{Thetavalues}$ should add up to the total number $\qbinom{7}{2}$. Of course, since both are polynomials in $q$ we can verify the validity of this statement without knowing its interpretation. Doing so (for checking purposes), we see that the sum of the $\Theta_X$ equals 
%$\frac{1}{2}(q^{10} - q^9 + q^8 - q^7 + q^6 - q^5) + \frac{1}{2} (q^{10} + q^9 + q^8 + q^7 + q^6 + q^5) + (q^9 + q^8 - q^3 - q^2) + 2(q^7 + q^6 + q^5 + q^4 + q^3 + q^2) + (q^5 + q^4 + q^3 + q^2 + 1) = (q^{10} + q^8 + q^6) + (q^9 + q^8 - q^3 - q^2) + 2(q^7 + q^6 + q^5 + q^4 + q^3 + q^2) + (q^5 + q^4 + q^3 + q^2 + q + 1) =$
$q^{10} + q^9 + 2q^8 + 2q^7 + 3q^6 + 3q^5 + 3q^4 + 2q^3 + 2q^2 + q + 1$ while $\qbinom{7}{2}$ expands to $q^{10} + q^9 + 2q^8 + 2q^7 + 3q^6 + 3q^5 + 3q^4 + 2q^3 + 2q^2 + q + 1$ as well.
\end{remark}

\begin{corollary}
Every type U subspace of $\im(O)$ is contained in 
\begin{itemize}
\item $H_{\textrm{U}, \textrm{S2J2}} = \frac{T_{\textrm{U}, \textrm{S2J2}}N_{\textrm{S2J2}}}{\Theta_\textrm{U}} = \frac{q(q^7 + 2q^6 + 2q^5 + 2q^4 + 2q^3 + 2q^2 + q)}{q^7 + q^6 + q^5 + q^4 + q^3 + q^2} = q + 1$ %nice, maar moeten ook nog verifieren over F3.
subalgebras of type $S2J2$ 
\end{itemize}
in addition to the $H_{\textrm{U}, \textrm{M4}} = q^2$ subalagbras of type M4 we already verified it is contained in.
\end{corollary}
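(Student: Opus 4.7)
The plan is to apply the double-counting identity $\Theta_X H_{X, Y} = T_{X, Y} N_Y$ in the case $X = \textrm{U}$, $Y = \textrm{S2J2}$, using values already established in Tables \ref{Thetavalues}, \ref{Nvalues} and \ref{Tvalues}.

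First, I would verify that the number $H_{\textrm{U}, \textrm{S2J2}}$ is genuinely independent of the chosen type U subspace, so that the identity makes sense. By Corollary \ref{ZUtrans}, the automorphism group $\Aut(O)$ acts transitively on the set of type U subspaces of $\im(O)$, and by Lemma \ref{autonice} any automorphism $\phi$ permutes the set of four-dimensional subalgebras of $O$ while preserving their types (since the type of a subalgebra is determined by isomorphism-invariant data: the dimension of the Jacobson radical and the isomorphism class of the semi-simple quotient). Hence if $\phi(U_1) = U_2$, then $H \mapsto \phi(H)$ restricts to a bijection between the S2J2 subalgebras containing $U_1$ and those containing $U_2$.

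Next, substituting $T_{\textrm{U}, \textrm{S2J2}} = 2q$, $N_{\textrm{S2J2}} = \tfrac{1}{2}(q^7 + 2q^6 + 2q^5 + 2q^4 + 2q^3 + 2q^2 + q)$, and $\Theta_{\textrm{U}} = q^7 + q^6 + q^5 + q^4 + q^3 + q^2$ into the identity yields the displayed equality. Factoring, both numerator and denominator share a common factor of $q^2 [6]_q$ (using $q^6 + 2q^5 + 2q^4 + 2q^3 + 2q^2 + 2q + 1 = (q+1)[6]_q$), with a single leftover factor of $q+1$ in the numerator, so $H_{\textrm{U}, \textrm{S2J2}} = q+1$.

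As a sanity check, combining this with the previously established $H_{\textrm{U}, \textrm{M4}} = q^2$, $H_{\textrm{U}, \textrm{F2J2}} = 0$ and $H_{\textrm{U}, \textrm{F1J3}} = 0$, one finds that every type U subspace is contained in exactly $q^2 + q + 1 = \qbinom{3}{2}$ four-dimensional associative subalgebras of $O$, a pleasingly ``Gaussian'' total that matches, for instance, the number of two-dimensional subspaces of a three-dimensional space. There is no real obstacle here; the result is pure bookkeeping, and the only reason to single it out is that, unlike the earlier entries in Table \ref{Qvalues}, the value $H_{\textrm{U}, \textrm{S2J2}}$ could not be read off directly from the structural propositions of Section \ref{split}.
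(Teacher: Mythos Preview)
Your proposal is correct and follows exactly the paper's approach: the corollary in the paper is stated with the computation displayed in its statement and no further proof, relying on the double-counting identity $\Theta_X H_{X,Y} = T_{X,Y} N_Y$ introduced earlier in the section (where the independence from the particular subspace was already justified via Theorem~\ref{transitive}). Your write-up is in fact more detailed than the paper's, supplying the independence argument explicitly and the factorization of the quotient; the sanity check $q^2 + (q+1) = [3]_q$ is a nice extra that the paper only mentions later in Remark~\ref{UZmystery}.
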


\begin{remark}\label{UZmystery} In Theorem \ref{classification} the spaces of types U and Z were introduced as the only two-dimensional spaces $T$ such that $F1 \oplus T$ is closed under multiplication and hence does not generate its `own' block $\im(\langle T \rangle)$ in the collection $\mathcal{B}$ of Thm. \ref{main1}. It follows that if and when this collection $\mathcal{B}$ fails to be a $q$-Fano plane this is due to the type U and type Z subspaces. However, from this `a priori' standpoint one would expect that the failure would consist of type U and Z spaces not being contained in any block. It is somewhat remarkable that in reality the opposite happens and that the failure of $\mathcal{B}$ to be a $q$-Fano plane is due to the type U and type Z spaces being contained in more than one block rather than zero. Even more remarkable is that this number, $[3]_q$, is the same for both types of subspaces. It would be desirable to have a conceptual explanation for this latter fact to complement the computational proof above.
\end{remark}

In conclusion, we see that
\begin{corollary}
The number of blocks in the $q$-covering design of Thm. \ref{main2} in the special case that $F$ is the field $\mathbb{F}_q$ of $q$-elements (where $q$ is an odd prime power) equals $N_\textrm{M4} + N_\textrm{F2J2} + N_\textrm{S2J2} + N_\textrm{F1J3} = q^8 + q^7 + 2q^6 + 2q^5 + 3q^4 + 2q^3 + 2q^2 + q + 1$. 
\end{corollary}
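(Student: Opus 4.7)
The plan is to reduce the count to a sum over the four isomorphism types of four-dimensional associative subalgebra of $O$ identified in Notation \ref{subalgtypes}, and then simply add up the four values $N_{\text{M4}}$, $N_{\text{F2J2}}$, $N_{\text{S2J2}}$, $N_{\text{F1J3}}$ that were computed earlier in the section. No new structural argument is needed at this point; all the heavy lifting has already been done.

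First I would invoke Proposition \ref{QMDJ}, together with Corollary \ref{4ass} and Theorem \ref{classification}, to observe that every four-dimensional (necessarily associative) subalgebra $H$ of $O$ is generated by a two-dimensional subspace of type Q, M, D or J, and that this forces $H$ to belong to exactly one of the four classes M4, F2J2, S2J2, F1J3 (the split/non-split dichotomy for type D subalgebras being handled by Lemma \ref{Dns}). Next I would note that the map $H \mapsto \im(H)$ from four-dimensional subalgebras of $O$ to blocks of $\mathcal{B}$ is a bijection: since every $H$ is unital and quadratic we have the vector-space decomposition $H = F1 \oplus \im(H)$, so $\im(H)$ determines $H$. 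Therefore $|\mathcal{B}|$ equals $N_{\text{M4}} + N_{\text{F2J2}} + N_{\text{S2J2}} + N_{\text{F1J3}}$.

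The last step is the numerical bookkeeping. Substituting the four expressions
\begin{align*}
N_{\text{M4}}   &= q^8 + q^6 + q^4, \\
N_{\text{F2J2}} &= \tfrac{1}{2}(q^7 - q), \\
N_{\text{S2J2}} &= \tfrac{1}{2}(q^7 + 2q^6 + 2q^5 + 2q^4 + 2q^3 + 2q^2 + q), \\
N_{\text{F1J3}} &= [6]_q = q^5 + q^4 + q^3 + q^2 + q + 1,
\end{align*}
I would combine the two halved terms first, where the $-q$ and $+q$ cancel and the remaining sum $2q^7 + 2q^6 + 2q^5 + 2q^4 + 2q^3 + 2q^2$ halves cleanly to $q^7 + q^6 + q^5 + q^4 + q^3 + q^2$. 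Adding $N_{\text{M4}}$ and $N_{\text{F1J3}}$ to this and collecting terms by degree yields the claimed $q^8 + q^7 + 2q^6 + 2q^5 + 3q^4 + 2q^3 + 2q^2 + q + 1$.

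There is no real obstacle: the substantive content of the proof lies entirely upstream in the derivations of the four values $N_Y$ (which in turn rest on Theorem \ref{transitive} and the double counting identity $\Theta_X H_{X,Y} = T_{X,Y} N_Y$ used throughout the section). The corollary itself is a one-line bookkeeping consequence, and the only thing to be careful about is confirming that the cancellation $-q + q$ between $N_{\text{F2J2}}$ and $N_{\text{S2J2}}$ is correctly handled so that the halved sum is indeed a polynomial with integer coefficients.
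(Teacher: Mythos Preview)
Your proposal is correct and matches the paper's approach: the corollary in the paper has no explicit proof attached, since it is simply the sum of the four entries of Table~\ref{Nvalues}, and the text preceding Notation~\ref{subalgtypes} together with Proposition~\ref{QMDJ} already explains why every block corresponds to exactly one subalgebra of one of the four types. Your extra remark that $H\mapsto\im(H)$ is a bijection (because $H=F1\oplus\im(H)$) makes explicit something the paper leaves implicit, and your arithmetic is carried out correctly.
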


To complete the proof of Theorem \ref{mainintro} it only remains to check that

\begin{equation}\label{sixtwo}
q^8 + q^7 + 2q^6 + 2q^5 + 3q^4 + 2q^3 + 2q^2 + q + 1 = \qbinom{6}{2},
\end{equation}
which is left to the reader.

\subsection{The mysterious equality (\ref{sixtwo})}
The remarkable equality (\ref{sixtwo}) raises the following question, which makes sense for infinite as well as finite $F$:

\begin{question}
Is the set $\mathcal{H}$ of all 4-dimensional, (hence) associative subalgebras of the split Cayley algebra $O$ over $F$ in bijection with set of all two-dimensional subspaces of a fixed 6-dimensional vector space $S$ over $F$ in a natural %or otherwise cannonical 
way?
\end{question} 
Unsurprisingly, I don't know the answer to this question. If I had, I would have short-cut the current section by deriving the number $\qbinom{6}{2}$ in Theorem \ref{mainintro} directly from such a bijection. Now I will instead make the section even longer by showing that the most `obvious' approach to finding such a bijection doesn't work.

\begin{lemma}
Let $O$ be a Cayley algebra over $F$ and let $\mathcal{H}$ be the set of all its four-dimensional subalgebras. Then there exist no six-dimensional subspace $S \subset O$ satisfying:
\begin{enumerate}
\item Every two-dimensional subspace of $S$ is contained in exactly one element of $\mathcal{H}$
\item Every element of $\mathcal{H}$ contains exactly one two-dimensional subspace of $O$.
\end{enumerate}
\end{lemma}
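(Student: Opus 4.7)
Conditions (1) and (2) together force $\dim(S\cap H)=2$ for every $H\in\mathcal{H}$, and make $H\mapsto S\cap H$ the two-sided inverse of the bijection $U\mapsto H(U)$ given by (1). I would first dispose of the case $1\in S$: since $F1\subset H$ for every $H$, a 2-dimensional subspace $U\subset S$ with $1\notin U$ (which exists because $S$ is $6$-dimensional) would, via (1) and the dimension constraint, yield $S\cap H=U\ni 1$, a contradiction.

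Assuming $1\notin S$, the projection $\pi\colon O=F1\oplus V\to V$ along $F1$ restricts to a linear isomorphism $S\xrightarrow{\sim}W:=\pi(S)$, and $W$ is a hyperplane of $V$. Since $1\in H$ for every $H$, the relation $U\subset H$ is equivalent to $T_U\subset\im(H)$ where $T_U:=\pi(U)$, so the induced bijection $\Gr_3(W)$-style map $\Gr_2(W)\to\mathcal{H}$ has the property that every $T\in\Gr_2(W)$ is contained in the imaginary part of a unique $H\in\mathcal{H}$. If some block $\im(H_0)$ were contained in $W$, then the several 2-dimensional subspaces of $\im(H_0)$ would all correspond to $H_0$, violating injectivity. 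The lemma thus reduces to the claim that \emph{every hyperplane of $V$ contains at least one block $\im(H)\in\mathcal{B}$}.

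To establish this claim I would use the non-degenerate symmetric bilinear form $B(x,y)=-\tau(xy)$ on $V$ (non-degeneracy following from the simplicity of $O$ in Theorem \ref{CDsimple}), writing each hyperplane as $W=v^\perp$ for some non-zero $v\in V$. If $v$ is nilpotent, the transitivity of $\Aut(O)$ on nilpotent lines combined with Example \ref{typeJex} yields a subalgebra $H$ of type F1J3 with $v\in J(H)$, and by Lemma \ref{stronglyimaginary} the radical $J(H)=\im(H)$ is totally $B$-isotropic, so $J(H)\subset v^\perp=W$ as desired. If $v$ is non-nilpotent, I would pick a non-nilpotent $\alpha\in W$ and impose the three linear conditions $\beta\in v^\perp$, $\beta\in\alpha^\perp$, and $\alpha\beta\in v^\perp$ on $\beta\in V$; these cut out a subspace of dimension at least $4$, and for any non-nilpotent $\beta$ in it linearly independent from $\alpha$ the subspace $\spam(\alpha,\beta)$ is necessarily of type Q or M (the only types compatible with both $\alpha,\beta$ non-nilpotent and $\tau(\alpha\beta)=0$), so $\im(\langle\alpha,\beta\rangle)=\spam(\alpha,\beta,\alpha\beta)\subset W$ is the required block.

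The main obstacle is producing such a $\beta$ in the split case: one must ensure that the $4$-dimensional solution space for $\beta$ contains a non-nilpotent element not proportional to $\alpha$. The Witt-index bound of $3$ for $n|_V$ prevents this space from being totally isotropic for $n$, so non-nilpotent elements exist in it, and a linear-independence and genericity argument (varying $\alpha$ within the Zariski-open locus of non-nilpotents in $W$ if necessary) should complete the proof. Over fields admitting a Cayley division algebra there are no nilpotents in $V$ at all, making the argument immediate.
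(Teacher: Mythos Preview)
Your proof is correct and takes a genuinely different route from the paper's. The paper works directly inside $O$: it sets $S' := S \cap \im(O)$ (of dimension $\geq 5$) and uses the results of Section~\ref{split} (that type~U and type~Z planes lie in more than one block) to conclude from (1) that $S'$ contains no such planes. From this it extracts the key constraint $ab \notin S$ for all linearly independent $a,b \in S'$, and a dimension count on $L_a \colon S' \to O$ then forces some $ab \in S$; by the constraint this means $b \in Fa$, so $ab \in F1$, placing a nonzero multiple of $1$ in $S$ and readily contradicting (2).

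Your approach instead eliminates $1 \in S$ at the outset, projects $S$ isomorphically onto a hyperplane $W \subset V$, and reduces the lemma to the standalone fact that \emph{every hyperplane of $V$ contains at least one block $\im(H)$}. This is an attractive reduction: the intermediate statement is of independent interest, and your proof via the bilinear form $B$ and the Witt-index bound is clean. The paper's argument is more self-contained (no quadratic-form theory, only results already established there), while yours imports a little standard Witt theory but gives a more structural explanation.

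One small remark: your hesitation in the final paragraph is unnecessary. In the non-nilpotent-$v$ case, the solution space $X$ for $\beta$ has dimension $\geq 4$ and hence, by the Witt-index bound of $3$ on $V$, is not totally isotropic; so it contains a non-nilpotent $\beta$. Moreover $\alpha \notin \alpha^\perp$ (since $\alpha^2 \neq 0$), so $\alpha \notin X$, and therefore any nonzero $\beta \in X$ is automatically linearly independent from $\alpha$. No genericity argument or variation of $\alpha$ is needed.
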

\begin{proof}
We assume that such an $S$ is given and derive a contradition. Let $S' \coloneqq S \cap \im(O)$ so that $\dim S' \geq 5$. By Ex. \ref{problem}, Lemmas \ref{typeZex4}, \ref{Z0idealizer} (which do not depend on $F$ being finite) and Cor. \ref{ZUtrans}, the first of the two conditions implies that $S'$ contains no two-dimensional subspaces of type U or Z (cf also remark \ref{UZmystery}). In fact, combining this with Cor. \ref{almost} we see that the condition is equivalent to $S'$ containing no type U or Z subspaces. The second condition on $S$ implies that for each $H \in \mathcal{H}$ we have that $\dim (H \cap S) = 2$. Together these conditions imply

\begin{equation}\label{outsideS}
ab \nin S \textrm{ for all linearly independent } a, b \in S'.
\end{equation}
After all: when $ab \in \spam(a, b) \subset S'$ we have, by Lemma \ref{uvinU} and Proposition \ref{3dim}, that $\spam(a, b)$ is of type either U or Z, contradicting the first condition in the lemma. When $ab \nin \spam(a, b)$ on the other hand, this means that the algebra $\langle a, b \rangle \supseteq \spam(1, a, b, ab)$ is at least four dimensional, hence exactly four dimensional by Lemma \ref{3of4} and hence an element of $\mathcal{H}$. The second condition then implies that $\spam(1, a, b, ab) \cap S = \spam(a, b)$ and hence $ab \nin S$.

Now let $a \in S'$ be non-zero and consider left multiplication by $a$ as a linear map: $L_a: S' \to O$. Since $0 \in S$, the kernel of this map is contained in the linear subspace $Fa$ of $S'$ by (\ref{outsideS}) and hence the image $L_a(S')$ of the at least five-dimensional space $S'$ under this map is at least 4-dimensional. It follows that $L_a(S') \cap S$ is at least two-dimensional and so in particular contains a non-zero element $c$. In more down to earth terms this means that there are $a, b \in S'$, $c \in S$ such that $ab = c$. By (\ref{outsideS}) this means that $b$ is a scalar multiple of $a$, but as $a$ is imaginary, this in turn implies that $c$ is a non-zero scalar multiple of 1. 

On one hand this means that $c \nin \im(O)$ and hence $c \nin S'$. But on the other hand this means that $c$ is contained in $S \cap H$ for every $H \in \mathcal{H}$. Let $U \subset S'$ be any two-dimensional subspace and let $H \in \mathcal{H}$ be the unique four-dimensional subalgebra containing $U$, which exists by the first condition from the lemma. Then for every $d \in U$ we have on one hand that $\spam(c, d) \neq U$ and on the other hand that $\spam(c, d)$ is a two-dimensional subpace of $S$ contained in $H$. This contradicts the second condition.
\end{proof}
%Lukt het nu met die $\qbinom{a}{b}$?

\bibliographystyle{alpha}
\bibliography{ref}
\end{document}